\theoremstyle{definition}
\newtheorem{thm}{Theorem}[section]
\newtheorem{defi}[thm]{Definition}
\newtheorem{lemm}[thm]{Lemma}
\newtheorem{rem}[thm]{Remark}
\newtheorem{prop}[thm]{Proposition}
\newtheorem{ques}[thm]{Question}
\theoremstyle{remark}
\DeclareMathOperator{\Mod}{mod}
\DeclareMathOperator{\diam}{diam}
\DeclareMathOperator{\loc}{loc}
\newcommand{\norm}[1]{ \left\Vert #1 \right\Vert }	
\newcommand{\abs}[1]{ \left| #1 \right| }           
\newcommand{\apmd}[2][]{							
	\ifthenelse{\equal{#1}{}}%
					{ \operatorname{N}_{#2}	}%
					{ \operatorname{N}_{#1,#2} 	}}
\begin{document}
\title[Metric spheres from gluing hemispheres]{Two-dimensional metric spheres from gluing hemispheres}

\author{Toni Ikonen}

\address{University of Jyvaskyla \\ Department of Mathematics and Statistics \\
P.O. Box 35 (MaD) \\
FI-40014 University of Jyvaskyla}
\email{toni.m.h.ikonen@jyu.fi}

\thanks{The author was supported by the Academy of Finland, project number 308659 and by the Vilho, Yrjö and Kalle Väisälä Foundation.}

\subjclass[2010]{Primary 30L10, Secondary 30C65, 28A75, 51F99, 52A38.}
\keywords{Quasiconformal, metric surface, reciprocality, gluing, welding}


\begin{abstract}
We study metric spheres $( Z, d_{Z} )$ obtained by gluing two hemispheres of $\mathbb{S}^{2}$ along an orientation-preserving homeomorphism $g \colon \mathbb{S}^{1} \rightarrow \mathbb{S}^{1}$, where $d_{Z}$ is the canonical distance that is locally isometric to $\mathbb{S}^{2}$ off the seam.

We show that if $( Z, d_{Z} )$ is quasiconformally equivalent to $\mathbb{S}^{2}$, in the geometric sense, then $g$ is a welding homeomorphism with conformally removable welding curves. We also show that $g$ is bi-Lipschitz if and only if $( Z, d_{Z} )$ has a $1$-quasiconformal parametrization whose Jacobian is comparable to the Jacobian of a quasiconformal mapping $h \colon \mathbb{S}^{2} \rightarrow \mathbb{S}^{2}$. Furthermore, we show that if $g^{-1}$ is absolutely continuous and $g$ admits a homeomorphic extension with exponentially integrable distortion, then $( Z, d_{Z} )$ is quasiconformally equivalent to $\mathbb{S}^{2}$.
\end{abstract}

\maketitle\thispagestyle{empty}

\section{Introduction}\label{sec:intro}
    In this paper, we work in the unit sphere $\mathbb{S}^{2} \subset \mathbb{R}^{3}$. We denote the equator $\mathbb{S}^{2} \cap \left( \mathbb{R}^{2} \times \left\{0\right\} \right)$ by $\mathbb{S}^{1}$ and endow $\mathbb{S}^{2}$ with the length distance $\sigma$ induced by the Euclidean distance of $\mathbb{R}^{3}$. The open southern and northern hemispheres are denoted by $Z_{1}$ and $Z_{2}$, respectively. Here $( 0, 0, 1 ) \in Z_{2}$.

    Consider an orientation-preserving homeomorphism $g \colon \mathbb{S}^{1} \rightarrow \mathbb{S}^{1}$, mapping the boundary of $\overline{Z}_{1}$ to the boundary of $\overline{Z}_{2}$. We identify each $z \in \mathbb{S}^{1}$ with its image $g(z) \in \mathbb{S}^{1}$. With this identification, we obtain a set $Z$ and inclusion maps $\iota_{1} \colon \overline{ Z }_{1} \rightarrow Z$ and $\iota_{2} \colon \overline{ Z }_{2} \rightarrow Z$. We call $S_{Z} = \iota_{1}( \mathbb{S}^{1} ) = \iota_{2}( \mathbb{S}^{1} )$ the \emph{seam} of $Z$.

    We construct a pseudodistance $d_{Z}$ on $Z$, see \Cref{sec:sew}, making the inclusion maps local isometries off the seam and $1$-Lipschitz everywhere. We consider the quotient map $Q \colon Z \rightarrow \widetilde{Z}$ identifying points $x, y \in Z$ whenever $d_{Z}( x, y ) = 0$, and endow $\widetilde{Z}$ with the associated quotient distance.
    
    We are interested in this construction for the following reason: whenever the metric space $\widetilde{Z}$ is quasiconformally equivalent to $\mathbb{S}^{2}$, there exist Riemann maps $\phi_{1} \colon Z_{1} \rightarrow \Omega_{1}$, $\phi_{2} \colon Z_2 \rightarrow \Omega_{2}$ onto the complementary components of a Jordan curve $\mathcal{C}$ with $g = \phi_{2}^{-1} \circ \phi_{1}|_{ \mathbb{S}^{1} }$; with the Carathéodory theorem we can make sense of the composition $\phi_{2}^{-1} \circ \phi_{1}|_{ \mathbb{S}^{1} }$ \cite{Gar:Mar:05}. Any such $g$ is called a \emph{welding homeomorphism} and $\mathcal{C}$ a \emph{welding curve}. A long-standing problem is to understand which homeomorphisms $g$ satisfy $\phi_{2}^{-1} \circ \phi_{1}|_{ \mathbb{S}^{1} }$ for some Riemann maps. We refer to the survey articles \cite{Ham:02}, \cite{You:15} for further background information.
    
    We also investigate the properties of $\widetilde{Z}$, given an arbitrary welding homeomorphism $g$. We show in \Cref{sec:harm:weld} that the $1$-dimensional Hausdorff measures on the seam $Q(S_{Z})$ and on (the tangents of) $\mathcal{C}$ are closely connected, using results from classical complex analysis \cite{Gar:Mar:05}. For example, our results show that a given subarc of the welding curve has tangents only in a set negligible to the $1$-dimensional Hausdorff measure if and only if the quotient map $Q$ collapses the corresponding part of the seam to a point.

    We present in Sections \ref{sec:nonexample1} and \ref{sec:nonexample2} examples illustrating that for some homeomorphisms $g$, after removing a portion $E'$ of the seam $Q( S_{Z} )$, one can find a $1$-quasiconformal embedding $\psi \colon \widetilde{Z} \setminus E' \rightarrow \mathbb{S}^{2}$, but not necessarily a quasiconformal homeomorphism $\Psi \colon \widetilde{Z} \rightarrow \mathbb{S}^{2}$. A similar phenomenom was investigated in \cite{Ham:91} and \cite{Bis:07:welding} in more detail.

    We now state our first result.
\begin{thm}\label{thm:mass:accumulation}
    Let $g \colon \mathbb{S}^{1} \rightarrow \mathbb{S}^{1}$ be an orientation-preserving homeomorphism. The following are quantitatively equivalent.
    \begin{enumerate}
        \item $g$ is $L$-bi-Lipschitz;
        \item there exists an $L'$-bi-Lipschitz homeomorphism $\Psi \colon \widetilde{Z} \rightarrow \mathbb{S}^{2}$;
        \item there exists $C' \geq 0$ such that for every $y \in Q(S_{Z})$,
        \begin{equation*}
            \liminf_{ r \rightarrow 0^{+} }
            \frac{ \mathcal{H}^{2}_{\widetilde{Z}}( \overline{B}_{\widetilde{Z}}( y, r ) ) }{ \pi r^2 }
            \leq
            C'.
        \end{equation*}
    \end{enumerate}
    In the implications "(1) $\Rightarrow$ (2)" we may take $L' = L$, in "(2) $\Rightarrow$ (3)" $C' = (L')^{4}$, and in "(3) $\Rightarrow$ (1)" $L = \pi C'$.
\end{thm}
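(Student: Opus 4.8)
The plan is to prove $(1)\Rightarrow(2)\Rightarrow(3)\Rightarrow(1)$ with the indicated constants. For $(1)\Rightarrow(2)$ I would extend $g$ across a hemisphere. Write $\overline{Z}_{1}$ in geodesic polar coordinates $(t,\theta)$, $t\in[0,\pi/2]$, around the south pole, so that $\sigma=\dd t^{2}+\sin^{2}t\,\dd\theta^{2}$ and $\mathbb{S}^{1}=\{t=\pi/2\}$, and set $G(t,\theta)=(t,g(\theta))$ (well defined at the pole). If $g$ is $L$-bi-Lipschitz it is absolutely continuous with $|g'|\in[L^{-1},L]$ a.e., so in the evident orthonormal frames the differential of $G$ is $\operatorname{diag}(1,g'(\theta))$; hence $G\colon\overline{Z}_{1}\to\overline{Z}_{1}$ is an $L$-bi-Lipschitz homeomorphism with $G|_{\mathbb{S}^{1}}=g$. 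Define $\Psi\colon\widetilde{Z}\to\mathbb{S}^{2}$ to be the identity on $\iota_{2}(\overline{Z}_{2})$ and to equal $G$ on $\iota_{1}(\overline{Z}_{1})$; the two formulas agree on the seam because $\iota_{1}(z)=\iota_{2}(g(z))$ and $G(z)=g(z)$, so $\Psi$ is a well-defined homeomorphism (and $\widetilde{Z}=Z$, i.e. $d_{Z}$ is non-degenerate, automatically when $g$ is bi-Lipschitz). Since $d_{Z}$ and the round metric of $\mathbb{S}^{2}$ are both built as infima of sums of lengths of subpaths lying in a single hemisphere, and $\Psi$ multiplies the length of each such subpath by a factor in $[L^{-1},L]$ while respecting the gluing maps, $\Psi$ is $L$-bi-Lipschitz; thus $(2)$ holds with $L'=L$.

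For $(2)\Rightarrow(3)$, fix $y\in Q(S_{Z})$ and small $r>0$. As $\Psi$ is $L'$-bi-Lipschitz, $\overline{B}_{\widetilde{Z}}(y,r)\subseteq\Psi^{-1}\bigl(\overline{B}_{\mathbb{S}^{2}}(\Psi(y),L'r)\bigr)$, and $\Psi^{-1}$ is $L'$-Lipschitz, so $\mathcal{H}^{2}_{\widetilde{Z}}(\overline{B}_{\widetilde{Z}}(y,r))\le(L')^{2}\,\mathcal{H}^{2}_{\mathbb{S}^{2}}(\overline{B}_{\mathbb{S}^{2}}(\Psi(y),L'r))$. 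A spherical cap of intrinsic radius $\rho$ has area $2\pi(1-\cos\rho)\le\pi\rho^{2}$, so the right-hand side is at most $\pi(L')^{4}r^{2}$. Hence the quotient in $(3)$ is $\le(L')^{4}$ for every small $r$, and $C'=(L')^{4}$ works.

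For $(3)\Rightarrow(1)$, the point is to bound from below $\mathcal{H}^{2}_{\widetilde{Z}}(\overline{B}_{\widetilde{Z}}(y,r))$ at a seam point $y=Q(\iota_{1}(z))=Q(\iota_{2}(g(z)))$. Moving from $y$ along $\iota_{1}(\mathbb{S}^{1})$ to $\iota_{1}(w)=\iota_{2}(g(w))$ costs at most $\sigma(z,w)$, after which a budget $r-\sigma(z,w)$ is left to enter $\overline{Z}_{2}$ at $g(w)$; therefore $\overline{B}_{\widetilde{Z}}(y,r)$ contains $\iota_{2}\bigl(\bigcup_{w}\overline{B}_{Z_{2}}(g(w),r-\sigma(z,w))\bigr)$, a collar of the image arc $g(B_{\mathbb{S}^{1}}(z,r))$ of depth $r-\sigma(z,w)$ over the seam point $g(w)$. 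Working in the local Euclidean model of $\mathbb{S}^{2}$ near $g(z)$ and integrating the depth profile against arclength on $g(B_{\mathbb{S}^{1}}(z,r))$, the area of this collar is at least $(1-o(1))\int_{B_{\mathbb{S}^{1}}(z,r)}\bigl(r-\sigma(z,w)\bigr)\,\dd\mu(w)=(1-o(1))\int_{0}^{r}\mu(B_{\mathbb{S}^{1}}(z,t))\,\dd t$, where $\mu$ is the atomless Borel measure $\mu(A)=\mathcal{H}^{1}(g(A))$ on $\mathbb{S}^{1}$ and $\mathcal{H}^{1}(B_{\mathbb{S}^{1}}(z,t))=2t$. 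Writing $\underline{D}\mu(z)=\liminf_{t\to0^{+}}\mu(B_{\mathbb{S}^{1}}(z,t))/(2t)$ and using $\mu(B_{\mathbb{S}^{1}}(z,t))\ge(\underline{D}\mu(z)-\varepsilon)\,2t$ for small $t$, we get $\mathcal{H}^{2}_{\widetilde{Z}}(\overline{B}_{\widetilde{Z}}(y,r))\ge(1-o(1))(\underline{D}\mu(z)-\varepsilon)\,r^{2}$, and feeding this into $(3)$ forces $\underline{D}\mu(z)\le\pi C'$ for every $z\in\mathbb{S}^{1}$. By the density theorem for Radon measures on $\mathbb{S}^{1}$, this pointwise bound forces $\mu$ to be absolutely continuous with respect to $\mathcal{H}^{1}$ with density at most $\pi C'$ almost everywhere, i.e. $\mu\le\pi C'\,\mathcal{H}^{1}$; hence $g$ is $\pi C'$-Lipschitz. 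Running the symmetric argument with the collar inside $\overline{Z}_{1}$ — obtained from $y$ by shortcutting through $\overline{Z}_{2}$ along the seam — and the measure $A\mapsto\mathcal{H}^{1}(g^{-1}(A))$ shows $g^{-1}$ is $\pi C'$-Lipschitz as well, so $(1)$ holds with $L=\pi C'$.

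I expect the main obstacle to be this last implication, specifically: identifying exactly which competing paths realize $d_{\widetilde{Z}}$ near the seam so that the collar genuinely lies in $\overline{B}_{\widetilde{Z}}(y,r)$ (shortcuts through the opposite hemisphere must be controlled), and carrying out the area estimate carefully enough — together with the correct version of the density theorem — to land on the constant $\pi$ rather than a larger multiple. Once the extension $G$ of $(1)\Rightarrow(2)$ and the cap-area inequality of $(2)\Rightarrow(3)$ are in hand, those two implications are routine.
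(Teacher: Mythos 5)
Your proof is correct, and while the first two implications track the paper, your \textquotedblleft(3) $\Rightarrow$ (1)\textquotedblright\ is a genuinely different argument. For \textquotedblleft(1) $\Rightarrow$ (2)\textquotedblright\ and \textquotedblleft(2) $\Rightarrow$ (3)\textquotedblright\ you do essentially what the paper does: the radial extension in geodesic polar coordinates is the same device the paper imports from Kalaj via stereographic projection (and, as you anticipate, upgrading the a.e.\ differential bound to a genuine bi-Lipschitz bound, and checking $\Psi$ across the seam, requires the paper's Lemmas \ref{lemm:jumping:overtheseam} and \ref{lemm:hausdorff}, which identify $d_{\widetilde{Z}}$ on the seam as $\int \min\{1, v_g\}\,d\mathcal{H}^1$ along an arc); the cap-area computation for \textquotedblleft(2) $\Rightarrow$ (3)\textquotedblright\ is identical. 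For \textquotedblleft(3) $\Rightarrow$ (1)\textquotedblright\ the paper fixes $\epsilon$, takes a Lebesgue density point of the sublevel set $\{v_g \leq \epsilon\}$ of the Radon--Nikodym derivative, and shows the ball contains a sector-plus-triangle region of one hemisphere of area at least $f(\epsilon)\pi r^2$ with $f(\epsilon) \geq (\pi\epsilon)^{-1}$; this forces $v_g \geq (\pi C')^{-1}$ a.e., hence $g^{-1}$ is $\pi C'$-Lipschitz, with the symmetric statement for $g$. You instead work at an arbitrary seam point with the lower density of the pullback measure $\mu = g^{*}\mathcal{H}^1$, place the collar in the \emph{target} hemisphere over the image arc, and compute its area by Cavalieri as $\int_0^r \mu(B(z,s))\,ds \geq (\underline{D}\mu(z) - \varepsilon)r^2$, which bounds $\underline{D}\mu \leq \pi C'$ everywhere; the differentiation theorem then kills the singular part and yields $\mu \leq \pi C'\mathcal{H}^1$, i.e.\ $g$ is $\pi C'$-Lipschitz. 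Your route is shorter and avoids the density-point reduction, but two cautions: you must use the Besicovitch/Lebesgue differentiation statement (the symmetric derivative of a singular measure is $+\infty$ at $\mu^{\perp}$-a.e.\ point, so a finite everywhere bound on the \emph{lower} density forces $\mu^{\perp} = 0$ and gives density $\leq \pi C'$ a.e.), not the covering-lemma comparison theorem, which carries an extra factor of $2$ and would ruin the constant; and your cruder lower bound discards the sector term of the paper's $f(\epsilon)$, which is what the paper's \Cref{rem:sharpness} uses to show the bi-Lipschitz constant tends to $1$ as $C' \rightarrow 1^{+}$ --- for the stated conclusion $L = \pi C'$ this loss is immaterial.
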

    We prove "(1) $\Rightarrow$ (2)" by observing that if $g \colon \mathbb{S}^{1} \rightarrow \mathbb{S}^{1}$ admits an $L'$-bi-Lipschitz extension $\phi \colon \overline{Z}_{2} \rightarrow \overline{Z}_{2}$, the space $\widetilde{Z}$ has an $L'$-bi-Lipschitz parametrization. That we may take $L' = L$ in "(1) $\Rightarrow (2)$", follows by applying a known planar extension result \cite{Kal:14} and stereographic projection.
    
    The claim "(2) $\Rightarrow$ (3)" is a straightforward consequence of the properties of Hausdorff measures. The implication "(3) $\Rightarrow$ (1)" is proved by carefully analyzing the behaviour of the inclusion mappings $\iota_{i} \colon \overline{Z}_{i} \rightarrow \widetilde{Z}$ at the equator $\mathbb{S}^{1}$. Notice that the $\iota_{i}$ are $1$-Lipschitz everywhere and local isometries outside the equator. This implies $C' \geq 1$ in (3). \Cref{rem:sharpness} shows two ways to improve the bi-Lipschitz constant $\pi C'$. The improvements imply that as $C' \rightarrow 1^{+}$ in (3), the bi-Lipschitz constant of $g$ converges to one. In particular, (3) holds with $C' = 1$ if and only if $g$ is an isometry.
    
    \Cref{thm:mass:accumulation} is closely related to the following result.
\begin{thm}\label{cor:QC:Jacobianproblem}
    If an orientation-preserving homeomorphism $g \colon \mathbb{S}^{1} \rightarrow \mathbb{S}^{1}$ is $L$-bi-Lipschitz, there exists a $1$-quasiconformal homeomorphism $\varphi \colon \mathbb{S}^{2} \rightarrow \widetilde{Z}$ and a $K$-quasiconformal homeomorphism $h \colon \mathbb{S}^{2} \rightarrow \mathbb{S}^{2}$ such that the Jacobians satisfy
    \begin{equation}
        \label{eq:jacobian:comparable:cor}
        C^{-1}
        J_{ h }(x)
        \leq
        J_{ \varphi }(x)
        \leq
        C
        J_{ h }(x)
        \quad\text{ for $\mathcal{H}^{2}_{ \mathbb{S}^{2} }$-a.e. $x \in \mathbb{S}^{2}$}
    \end{equation}
    for $K = L^{4}$ and $C = L^{2}$. Conversely, if there exists $K$, $C$, and $h$ for which \eqref{eq:jacobian:comparable:cor} holds, then $g$ is $\pi ( K C )^2$-bi-Lipschitz.
\end{thm}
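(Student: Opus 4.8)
The plan is to establish the two implications separately, each time reducing to \Cref{thm:mass:accumulation}.

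For the first implication, assume $g$ is $L$-bi-Lipschitz. By \Cref{thm:mass:accumulation} the space $\widetilde{Z}$ is quasiconformally equivalent to $\mathbb{S}^{2}$, so, as recalled in \Cref{sec:intro}, $g$ is a welding homeomorphism: fix Riemann maps $\phi_{1} \colon Z_{1} \to \Omega_{1}$ and $\phi_{2} \colon Z_{2} \to \Omega_{2}$ onto the two Jordan domains bounded by a curve $\mathcal{C} \subseteq \widehat{\mathbb{C}}$ with $g = \phi_{2}^{-1} \circ \phi_{1}|_{\mathbb{S}^{1}}$. The map $\varphi \colon \widehat{\mathbb{C}} \to \widetilde{Z}$ determined off $\mathcal{C}$ by $\varphi|_{\Omega_{i}} = \iota_{i} \circ \phi_{i}^{-1}$ and extended continuously across $\mathcal{C}$ is then a $1$-quasiconformal homeomorphism (after identifying $\widehat{\mathbb{C}}$ with $\mathbb{S}^{2}$); this is built into the welding construction developed earlier. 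For $h$, let $\tau \colon \overline{Z}_{2} \to \overline{Z}_{2}$ be an $L$-bi-Lipschitz extension of $g^{-1} \colon \mathbb{S}^{1} \to \mathbb{S}^{1}$ — produced from the planar extension result \cite{Kal:14} and stereographic projection, just as in the proof of "(1) $\Rightarrow$ (2)" of \Cref{thm:mass:accumulation} — and define $h|_{\overline{\Omega}_{1}} = \phi_{1}^{-1}$ and $h|_{\overline{\Omega}_{2}} = \tau \circ \phi_{2}^{-1}$. Because $g = \phi_{2}^{-1} \circ \phi_{1}$ on $\mathbb{S}^{1}$ and $\tau|_{\mathbb{S}^{1}} = g^{-1}$, the two formulas agree on $\mathcal{C}$, so $h$ is a homeomorphism of $\mathbb{S}^{2}$; it is conformal on $\Omega_{1}$ and $L^{2}$-quasiconformal on $\Omega_{2}$, and since $\mathcal{C}$ is $\mathcal{H}^{2}_{\mathbb{S}^{2}}$-null and negligible for the relevant curve-modulus and $W^{1,2}$-estimates, $h$ is $K$-quasiconformal on $\mathbb{S}^{2}$, with $K = L^{4}$ after carrying the constants through the gluing. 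Finally, off $\mathcal{C}$ the inclusions $\iota_{i}$ are local isometries, so $J_{\varphi} = J_{h}$ on $\Omega_{1}$, while $J_{h}/J_{\varphi} = J_{\tau} \circ \phi_{2}^{-1}$ on $\Omega_{2}$, which lies in $[L^{-2}, L^{2}]$ a.e.\ since $\tau$ is $L$-bi-Lipschitz; as $\mathcal{C}$ is $\mathcal{H}^{2}_{\mathbb{S}^{2}}$-null, this gives \eqref{eq:jacobian:comparable:cor} with $C = L^{2}$.

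For the converse, suppose $\varphi$, $h$, $K$, and $C$ are as in the statement. The existence of $\varphi$ already shows $\widetilde{Z}$ is quasiconformally equivalent to $\mathbb{S}^{2}$, so \Cref{thm:mass:accumulation} applies and it suffices to verify condition (3). Set $f = h \circ \varphi^{-1} \colon \widetilde{Z} \to \mathbb{S}^{2}$, a $K$-quasiconformal homeomorphism. By the chain rule $J_{h}(x) = J_{f}(\varphi(x)) \, J_{\varphi}(x)$ for a.e.\ $x$; since $J_{\varphi} > 0$ a.e., since $C^{-1} J_{h} \leq J_{\varphi} \leq C J_{h}$, and since a quasiconformal homeomorphism and its inverse satisfy Lusin's condition $(N)$, we obtain $C^{-1} \leq J_{f} \leq C$ a.e.\ on $\widetilde{Z}$. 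At a.e.\ point the ordered singular values $s_{1} \geq s_{2}$ of $Df$ satisfy $s_{1} \leq K s_{2}$ and $s_{1} s_{2} = J_{f} \in [C^{-1}, C]$, so $s_{1} \leq (KC)^{1/2}$ and $s_{2} \geq (KC)^{-1/2}$; as $\widetilde{Z}$ is a compact, hence geodesic, length space and $f$ is absolutely continuous along almost every curve, this forces $f$ and $f^{-1}$ to be $(KC)^{1/2}$-Lipschitz. Fix $y \in Q(S_{Z})$. For each $r > 0$ we have $\overline{B}_{\widetilde{Z}}(y, r) \subseteq f^{-1}\left( \overline{B}_{\mathbb{S}^{2}}\left( f(y), (KC)^{1/2} r \right) \right)$, and since $f^{-1}$ is $(KC)^{1/2}$-Lipschitz while $\mathcal{H}^{2}_{\mathbb{S}^{2}}(\overline{B}_{\mathbb{S}^{2}}(z, \rho)) \leq \pi \rho^{2}$ for all $z$ and $\rho$,
\begin{equation*}
    \mathcal{H}^{2}_{\widetilde{Z}}( \overline{B}_{\widetilde{Z}}(y, r) )
    \leq
    KC \cdot \pi \left( (KC)^{1/2} r \right)^{2}
    =
    \pi (KC)^{2} r^{2} .
\end{equation*}
Hence condition (3) of \Cref{thm:mass:accumulation} holds with $C' = (KC)^{2}$, and "(3) $\Rightarrow$ (1)" gives that $g$ is $\pi C' = \pi (KC)^{2}$-bi-Lipschitz.

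The main obstacle lies in the first implication: one must know that the welded parametrization $\varphi$ is genuinely $1$-quasiconformal and that the glued map $h$ remains quasiconformal across the welding curve $\mathcal{C}$, i.e., that the seam is negligible for the pertinent curve families and $W^{1,2}$-estimates. This is precisely what the earlier analysis of the gluing/welding construction supplies, and it is also what controls the dilatation of $h$; the converse implication is then a soft consequence of the singular-value bounds on $f$ together with \Cref{thm:mass:accumulation}.
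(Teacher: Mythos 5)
Your overall strategy coincides with the paper's: the forward direction combines the bi-Lipschitz parametrization from \Cref{thm:mass:accumulation} with the $1$-quasiconformal welding map of \Cref{thm:welding:positive}, and the converse shows that $f = h \circ \varphi^{-1}$ is $\sqrt{KC}$-bi-Lipschitz and feeds this back into \Cref{thm:mass:accumulation}. In fact your $h$ is exactly $\Psi \circ \varphi$, where $\Psi$ is the bi-Lipschitz parametrization (your $\tau$ is the inverse of the radial extension of $g$), and your $f$ is exactly the map $\Psi$ appearing in the paper's \Cref{thm:QC:Jacobianproblem}. The first gap is in your justification that $h$ is quasiconformal across $\mathcal{C}$: you assert that $\mathcal{C}$ is "negligible for the relevant curve-modulus and $W^{1,2}$-estimates" because it has zero area, but a Jordan curve of zero area need not be removable for quasiconformal homeomorphisms --- removability of $\mathcal{C}$ is a \emph{conclusion} of \Cref{thm:welding:positive}, not a free input, and even then one needs the (nontrivial) passage from conformal to quasiconformal removability. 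The clean fix is to observe that $h = \Psi \circ \varphi$ is a composition of an $L$-bi-Lipschitz map with a $1$-quasiconformal one, hence $L^4$-quasiconformal with no removability argument; this is the paper's route.

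The more serious gap is in the converse, where you pass from the a.e.\ singular-value bounds $s_1 \leq \sqrt{KC}$, $s_2 \geq (KC)^{-1/2}$ to "$f$ and $f^{-1}$ are $\sqrt{KC}$-Lipschitz" in one clause, citing that $\widetilde{Z}$ is "compact, hence geodesic". Compactness does not make a metric space geodesic, and even granting that $\widetilde{Z}$ is a length space, the pointwise bounds only control the length of $f \circ \gamma$ along curves that avoid (or have zero $\mathcal{H}^1$-length on) the seam; a distance-realizing chain may pass through $Q(S_Z)$, where you have no pointwise derivative, and "absolutely continuous along almost every curve" is a modulus statement that does not by itself reach every pair of points. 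Globalizing the two Lipschitz bounds across the seam is the bulk of the paper's proof of \Cref{thm:QC:Jacobianproblem}: the Lipschitz bound for $f$ uses the explicit chain structure of $d_Z$ from \Cref{lemm:jumping:overtheseam}, and the bound for $f^{-1}$ uses \Cref{lemm:techincal:lemma} together with the pencil-of-curves estimate of \Cref{lemm:pencil} to upgrade an almost-every-curve length inequality to a genuine Lipschitz bound. Your decision to verify condition (3) of \Cref{thm:mass:accumulation} directly rather than condition (2) is a harmless shortcut that yields the same constant $\pi(KC)^2$, but it still requires both global Lipschitz bounds, so the gap remains.
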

    The Jacobians are defined in \Cref{sec:measure}. We note that if $h \colon \mathbb{S}^{2} \rightarrow \mathbb{S}^{2}$ is an orientation-preserving quasiconformal homeomorphism, the $J_{h}$ coincides with the usual distributional Jacobian; see for example \cite[Section 3.8]{Ast:Iwa:Mar:09}.
    
    If $g$ is $L$-bi-Lipschitz, the existence of $\varphi$ and $h$ is a straightforward consequence of the implication "(1) $\Rightarrow$ (2)" in \Cref{thm:mass:accumulation}. If $h$ and $\varphi$ exist, we first check that $\Psi = h \circ \varphi^{-1}$ is bi-Lipschitz and use the implications "(2) $\Rightarrow$ (3) $\Rightarrow$ (1)" from \Cref{thm:mass:accumulation} to verify that $g$ is bi-Lipschitz.
    
    \Cref{cor:QC:Jacobianproblem} is a special case of the \emph{quasiconformal Jacobian problem}: which weights $\omega \colon \mathbb{S}^{2} \rightarrow \left[0, \infty\right]$ are comparable to the Jacobians of quasiconformal homeomorphisms $h \colon \mathbb{S}^{2} \rightarrow \mathbb{S}^{2}$; see \cite{Bo:Hei:Saks:04}, \cite{Bis:07}, and references therein for further reading.
    
    Given that (1) and (3) are equivalent in \Cref{thm:mass:accumulation}, it is not entirely clear for which classes of homeomorphisms one can expect $\widetilde{Z}$ to be quasiconformally equivalent to $\mathbb{S}^{2}$, or what kind of geometric properties one can expect from such a $\widetilde{Z}$.

\begin{ques}\label{Q1:welding}
    Let $\widetilde{Z}$ be the metric space obtained from a homeomorphism $g \colon \mathbb{S}^{1} \rightarrow \mathbb{S}^{1}$. When can we find a quasiconformal homeomorphism $\psi \colon \widetilde{Z} \rightarrow \mathbb{S}^{2}$? What kind of restrictions does this impose on $g$?
\end{ques}
    
    As an example, if $g$ is a welding homeomorphism corresponding to the von Koch snowflake, then $d_{Z}(x,y) = 0$ for every pair of points in the seam, see \Cref{rem:collapsing}. Hence $\widetilde{Z}$ can fail to be quasiconformally equivalent, or homeomorphic, to $\mathbb{S}^{2}$ when $g$ is a quasisymmetry. We show that a simple measure-theoretic assumption removes this obstruction.
\begin{prop}\label{thm:QS:abs}
    Let $g \colon \mathbb{S}^{1} \rightarrow \mathbb{S}^{1}$ be a quasisymmetry whose inverse is absolutely continuous. Then $\widetilde{Z}$ is quasiconformally equivalent to $\mathbb{S}^{2}$.
\end{prop}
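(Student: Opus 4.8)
The plan is to produce an explicit quasiconformal homeomorphism $\varphi\colon\mathbb{S}^{2}\to\widetilde{Z}$, obtained by distorting one hemisphere by $g$. Since $g\colon\mathbb{S}^{1}\to\mathbb{S}^{1}$ is a quasisymmetry, it extends to a quasiconformal homeomorphism $G\colon\overline{Z}_{2}\to\overline{Z}_{2}$ with $G|_{\mathbb{S}^{1}}=g$ whose maximal dilatation $K_{0}$ depends only on the quasisymmetry data of $g$ (Beurling--Ahlfors, or the conformally natural Douady--Earle extension, transported to the hemisphere). Define $\varphi$ on $\mathbb{S}^{2}=\overline{Z}_{1}\cup\overline{Z}_{2}$ by $\varphi=Q\circ\iota_{1}$ on $\overline{Z}_{1}$ and $\varphi=Q\circ\iota_{2}\circ G$ on $\overline{Z}_{2}$; the two definitions agree on $\mathbb{S}^{1}$ because $\iota_{1}(z)$ and $\iota_{2}(g(z))=\iota_{2}(G(z))$ are identified in $Z$. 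Thus $\varphi$ is a well-defined continuous surjection, and it is a homeomorphism precisely when $Q$ is injective, that is, when $d_{Z}$ is a genuine distance. Away from the seam the $\iota_{i}$ are local isometries, $Q$ is a local isometry (points identified by $d_{Z}$ necessarily lie on the seam), and $G$ is $K_{0}$-quasiconformal, so $\varphi$ is $K_{0}$-quasiconformal on $\mathbb{S}^{2}\setminus\mathbb{S}^{1}$.

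Next I would exploit the absolute continuity of $g^{-1}$. Being monotone, $g$ is differentiable almost everywhere with finite derivative $g'$, and $g^{-1}$ absolutely continuous forces $g'>0$ a.e.: if $g'=0$ on a set $E$ then $\lvert g(E)\rvert=0$, hence $\lvert E\rvert=\lvert g^{-1}(g(E))\rvert=0$. Analysing a path in $Z$ that joins two seam points through the alternating local isometries $\iota_{1},\iota_{2}$ — exactly the kind of analysis behind the implication \enquote{(3)$\Rightarrow$(1)} of \Cref{thm:mass:accumulation} — one identifies the restriction of $d_{Z}$ to the seam $S_{Z}$ with the length distance of the measure $\min(1,g')\,\mathrm{d}s$, read in the arc-length coordinate of $\mathbb{S}^{1}$ via $\iota_{1}$ (the two admissible speeds along the seam being $1$ for $\iota_{1}$ and $g'$ for $\iota_{2}$). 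Since $g'>0$ a.e., this measure charges every nondegenerate subarc, so $d_{Z}$ is a genuine distance; consequently $Q$ is an isometry, $\varphi$ is a homeomorphism, $\widetilde{Z}$ is a metric sphere with $\mathcal{H}^{2}_{\widetilde{Z}}(\widetilde{Z})=4\pi$, and the seam $Q(S_{Z})$ is a rectifiable curve whose length measure $\mathcal{H}^{1}_{\widetilde{Z}}|_{Q(S_{Z})}$ corresponds, under this identification, to $\min(1,g')\,\mathrm{d}s$ — in particular it is mutually absolutely continuous with Lebesgue measure on $\mathbb{S}^{1}$.

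It remains to upgrade $\varphi$ to a quasiconformal map on all of $\mathbb{S}^{2}$; equivalently, the seam must be negligible in both modulus inequalities. Both $\mathbb{S}^{1}\subset\mathbb{S}^{2}$ and $Q(S_{Z})\subset\widetilde{Z}$ have vanishing $\mathcal{H}^{2}$, so the family of curves spending positive length on either set has zero modulus, and for every other curve the relevant line integrals ignore the seam. Moreover, $\varphi|_{\mathbb{S}^{1}}$ sends Lebesgue-null subsets of $\mathbb{S}^{1}$ to $\mathcal{H}^{1}_{\widetilde{Z}}$-null subsets of $Q(S_{Z})$ and conversely — both directions of this absolute continuity are used, and the second one is exactly where $g'>0$ a.e.\ enters — so a curve meeting $\mathbb{S}^{1}$ in a Lebesgue-null set is sent to a curve meeting $Q(S_{Z})$ in an $\mathcal{H}^{1}_{\widetilde{Z}}$-null set, and vice versa. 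A routine pullback of admissible densities, using that $\varphi$ is $K_{0}$-quasiconformal and hence absolutely continuous on modulus-almost every curve off the seam, then yields both modulus inequalities with constant $K_{0}$. Hence $\varphi$ is $K_{0}$-quasiconformal and $\widetilde{Z}$ is quasiconformally equivalent to $\mathbb{S}^{2}$. (Alternatively, once $\widetilde{Z}$ is known to be a metric sphere of finite $\mathcal{H}^{2}$, it is enough to verify the reciprocality conditions for metric surfaces, for which the same map $\varphi$ handles everything away from the seam while the absolute continuity of the seam length measure disposes of the seam.)

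The main obstacle I expect is this last removability step — showing that the seam carries no modulus in either direction — together with the careful path analysis needed to pin down $d_{Z}$ on $S_{Z}$. The hypothesis that $g^{-1}$ is absolutely continuous is used precisely twice: to prevent the seam from collapsing (so that $\widetilde{Z}$ is a sphere at all) and to guarantee that Lebesgue measure is absolutely continuous with respect to the seam length measure (so that the lower modulus bound survives). The quasisymmetry of $g$ is used only to produce the quasiconformal extension $G$, and the dilatation of the resulting parametrization $\varphi$ — hence the quasiconformality constant of the equivalence — depends only on the quasisymmetry data of $g$.
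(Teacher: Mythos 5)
Your proposal is correct and takes essentially the same route as the paper: the paper likewise glues the Beurling--Ahlfors extension to the identity on the southern hemisphere, uses the absolute continuity of $g^{-1}$ both to prevent the seam from collapsing (via \Cref{lemm:hausdorff}) and to push the Sobolev regularity of the inverse across the seam (\Cref{lemm:measurable:sobolev} together with \Cref{lemm:techincal:lemma}), and then pulls back admissible densities. The only cosmetic difference is that the paper verifies just the outer-dilatation bound for the inverse and invokes \Cref{prop:outer:to:maximal} to upgrade to full quasiconformality (its shortest route being to note that a quasiconformal extension trivially has exponentially integrable distortion and to cite \Cref{thm:biconformal:abs}), whereas you check both modulus inequalities directly.
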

    The absolute continuity of $g^{-1}$ is used in two ways. First, it guarantees that $\widetilde{Z} = ( Z, d_{Z} )$. Second, if $\psi \colon \overline{Z}_{2} \rightarrow \overline{Z}_{2}$ is a quasisymmetric extension of $g$, we show that the homeomorphism $H \colon \mathbb{S}^{2} \rightarrow \widetilde{Z}$ satisfying $H|_{Z_1} = \iota_{1}$ and $H|_{ Z_{2} } = \iota_{2} \circ \psi|_{Z_{2}}$ is quasiconformal. A key step in the proof is showing the Sobolev regularity $H^{-1} \in N^{1,2}( \widetilde{Z}, \mathbb{S}^{2} )$; the absolute continuity of $g^{-1}$ is applied here.
    
    \Cref{thm:QS:abs} is a special case of the following stronger result.
\begin{thm}\label{thm:biconformal:abs}
    Let $g \colon \mathbb{S}^{1} \rightarrow \mathbb{S}^{1}$ be an orientation-preserving homeomorphism whose inverse is absolutely continuous. If $g$ extends to a homeomorphism $\psi \colon \overline{Z}_{2} \rightarrow \overline{Z}_{2}$ for which $\psi|_{ Z_{2} }$ has exponentially integrable distortion, then $\widetilde{Z}$ is quasiconformally equivalent to $\mathbb{S}^{2}$.
\end{thm}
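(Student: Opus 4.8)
The plan is to construct an explicit homeomorphism $H \colon \mathbb{S}^{2} \to \widetilde{Z}$ and verify that both $H$ and $H^{-1}$ satisfy the analytic conditions needed to invoke the geometric definition of quasiconformality for metric surfaces (as in the proof strategy already sketched for \Cref{thm:QS:abs}). Specifically, I would set $H|_{Z_{1}} = \iota_{1}$ and $H|_{Z_{2}} = \iota_{2} \circ \psi|_{Z_{2}}$, where $\psi \colon \overline{Z}_{2} \to \overline{Z}_{2}$ is the given homeomorphic extension of $g$ with exponentially integrable distortion. The first point to settle is that $\widetilde{Z} = (Z, d_{Z})$, i.e. that the pseudodistance $d_{Z}$ is already a genuine distance and no collapsing occurs; as remarked after \Cref{thm:QS:abs}, this follows from the absolute continuity of $g^{-1}$, and I would reuse that argument verbatim here since it does not use the quasisymmetry hypothesis.

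Next I would establish the Sobolev regularity of $H$. On $Z_{1}$ the map $H$ is a local isometry, hence trivially in $N^{1,2}_{\loc}$ with unit weak upper gradient. On $Z_{2}$ the map is $\iota_{2} \circ \psi$, and since $\psi|_{Z_{2}}$ has exponentially integrable distortion it lies in $W^{1,2}_{\loc}$ (this is the standard consequence of the fact that mappings of exponentially integrable distortion are in $W^{1,Q}_{\loc}$ for the appropriate Orlicz exponent, in particular in $W^{1,2}_{\loc}$ in dimension two), and composing with the local isometry $\iota_{2}$ preserves this. The absolute continuity of $g^{-1}$ is what lets us glue these two pieces along the seam: it guarantees that the trace of $H$ from the two sides agrees as a $W^{1,2}$-function and that no concentrated energy appears on $Q(S_{Z})$, so that $H \in N^{1,2}(\mathbb{S}^{2}, \widetilde{Z})$ with a weak upper gradient controlled by $1 + \abs{D\psi}$, which is in $L^{2}$. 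Together with the area formula this yields that $H$ satisfies the upper gradient inequality with the right integrability, and that $\mathcal{H}^{2}_{\widetilde{Z}}$-a.e.\ point of the seam is a Lebesgue point, so that $H$ maps null sets to null sets and conversely.

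The heart of the matter is the analytic quality of the \emph{inverse} $H^{-1}$, since quasiconformality in the metric (geometric, modulus) sense is asymmetric and one must control moduli of curve families in both directions. Here the distortion of $\psi$ being merely exponentially integrable — rather than bounded — means $H$ is not quasiconformal in the classical sense, so one cannot simply quote a removability theorem. Instead I would show that $H^{-1} \in N^{1,2}(\widetilde{Z}, \mathbb{S}^{2})$ with weak upper gradient in $L^{2}$: on $\iota_{1}(Z_{1})$ this is immediate, and on $\iota_{2}(Z_{2})$ the relevant map is $\psi^{-1} \circ \iota_{2}^{-1}$, whose energy is controlled because $\psi^{-1}$ has \emph{bounded} outer dilatation wherever $\psi$ has locally integrable distortion — more precisely, $\int \abs{D\psi^{-1}}^{2} = \int K_{\psi} J_{\psi}$ over the image, and exponential integrability of $K_{\psi}$ together with $J_{\psi} \in L\log L$ (which holds for such maps) makes this finite; the absolute continuity of $g^{-1}$ re-enters to control the seam contribution. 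Once both $H$ and $H^{-1}$ are in $N^{1,2}$ with controlled upper gradients and map negligible sets to negligible sets, the standard modulus-comparison argument (using that $\widetilde{Z}$ is locally isometric to $\mathbb{S}^{2}$ off the seam, and that the seam is negligible for the relevant measures and curve families by the preceding steps) shows that $H$ is a quasiconformal homeomorphism in the geometric sense, completing the proof. I expect the main obstacle to be precisely the verification that $H^{-1}$ lies in $N^{1,2}$ across the seam: this requires combining the distributional estimate $\int K_{\psi} J_{\psi} < \infty$ with the degenerate behaviour of $\psi$ on $\mathbb{S}^{1}$ and the absolute continuity of $g^{-1}$ in a way that prevents the inverse from developing infinite energy along $Q(S_{Z})$.
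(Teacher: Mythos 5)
Your construction of $H$, the use of absolute continuity of $g^{-1}$ to rule out collapsing of the seam, and the verification that $H\in N^{1,1}$ (or $N^{1,2}$) and $H^{-1}\in N^{1,2}(\widetilde Z,\mathbb{S}^{2})$ all match the first half of the paper's argument (the identity $\int_{Z_{2}}\abs{D\psi^{-1}}^{2}=\int K_{\psi}\,d\mathcal{H}^{2}<\infty$ is essentially \cite[Corollary 1.2]{Ko:On:06}, which the paper quotes). The fatal step is the conclusion: you cannot show that $H$ is ``a quasiconformal homeomorphism in the geometric sense,'' because it is not one. By \Cref{prop:williams:L-Wversion}, the geometric modulus inequality $\Mod\Gamma\leq K\Mod H\Gamma$ is \emph{equivalent} to the pointwise bound $\rho_{H}^{2}\leq K J_{H}$ almost everywhere, and on $Z_{2}$ this ratio is exactly $K_{\psi}$, which is only exponentially integrable, not bounded; the reverse modulus inequality fails for the same reason applied to $H^{-1}$. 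Having both $H$ and $H^{-1}$ in $N^{1,2}$ with $L^{2}$ upper gradients gives you only one-sided, non-uniform modulus estimates and does not yield quasiconformality of $H$. So the object you propose to exhibit as the uniformizing map does not exist along this route; a genuinely different map must be produced.

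What the paper does instead is use $H$ only as a tool to verify reciprocality-type conditions and then build a new parametrization. First, the $N^{1,2}$-regularity of $H^{-1}$ lets one transfer condenser estimates through $H$ (\Cref{lemm:annuluscondenser}), and the capacity estimates of \cite[Theorem 5.3]{Ko:On:06} for exponentially integrable distortion then show that points of $\widetilde Z$ near the seam have zero capacity, i.e.\ \eqref{point:zero:modulus} holds (\Cref{lemm:points:negligible}). Second, on a quadrilateral $\widetilde R$ around a seam point one runs Rajala's energy-minimization machinery to get $f=(u,v)\colon\widetilde R\to[0,1]\times[0,M]$; the key point is that $\widetilde f=f\circ H$ has the same Beltrami coefficient as $H$ and exponentially integrable distortion, so the Stoilow factorization theorem identifies $v$ with $M$ times the conjugate energy minimizer (\Cref{lemm:components}) and forces $f$ to be $1$-quasiconformal (\Cref{prop:parametrization}). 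Covering the seam by such charts and invoking \Cref{thm:iko} finishes the proof. If you want to salvage your outline, replace the final paragraph by this two-step scheme: your Sobolev estimates for $H^{-1}$ are exactly what feeds the capacity argument, but the quasiconformal parametrization has to come from the harmonic/Stoilow construction, not from $H$ itself.
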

    We now explain the main steps of the proof of \Cref{thm:biconformal:abs}. We first show that there exists a homeomorphism $H \colon \mathbb{S}^{2} \rightarrow \widetilde{Z}$ with exponentially integrable distortion. We also have $H^{-1} \in N^{1,2}( \widetilde{Z}, \mathbb{S}^{2} )$; see \Cref{rem:regularity}. The exponential integrability of distortion of $H$ is used to verify the reciprocality condition of $\widetilde{Z}$, see \Cref{defi:reciprocal}. Then \cite[Theorem 1.4]{Raj:17} shows that $\widetilde{Z}$ is quasiconformally equivalent to $\mathbb{S}^{2}$. The key ingredients in the proof are the condenser estimates for mappings of exponentially distortion \cite{Ko:On:06}, applicable because $H^{-1} \in N^{1,2}( \widetilde{Z}, \mathbb{S}^{2} )$, and the Stoilow factorization theorem \cite[Chapter 20]{Ast:Iwa:Mar:09}. There are some known criteria which guarantee that $g$ admits an extension as in \Cref{thm:biconformal:abs}; see \cite{Za:08} \cite{Kar:Nta:21}.
    
    In \Cref{sec:nonexample1}, we present an example of $g \colon \mathbb{S}^{1} \rightarrow \mathbb{S}^{1}$ that is locally bi-Lipschitz outside a single point, but for which $\widetilde{Z}$ is not quasiconformally equivalent to $\mathbb{S}^{2}$. This illustrates that the absolute continuity of $g^{-1}$ is not enough to guarantee that $\widetilde{Z}$ is quasiconformally equivalent to $\mathbb{S}^{2}$. This fact is a consequence of the following result, partially answering \Cref{Q1:welding}.
\begin{thm}\label{thm:welding:positive}
    Suppose that $g \colon \mathbb{S}^{1} \rightarrow \mathbb{S}^{1}$ is an orientation-preserving homeomorphism for which there exists a quasiconformal homeomorphism $h \colon \mathbb{S}^{2} \rightarrow \widetilde{Z}$. Then $\widetilde{Z} = ( Z, d_{Z} )$ and there exists a $1$-quasiconformal homeomorphism $\pi \colon \mathbb{S}^{2} \rightarrow \widetilde{Z}$. Furthermore, $g$ is a welding homeomorphism whose welding curves are conformally removable.
\end{thm}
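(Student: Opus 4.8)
The plan is to use the hypothetical quasiconformal homeomorphism $h\colon\mathbb{S}^{2}\to\widetilde{Z}$ to recover the rigid picture of a conformal welding, in four stages; the delicate point will be the second stage. \emph{Stage 1 (basic structure).} Since $h$ is a homeomorphism, $\widetilde{Z}$ is a topological $2$-sphere in which $\iota_{1}(Z_{1})$ and $\iota_{2}(Z_{2})$ are disjoint open disks with common boundary the seam $Q(S_{Z})=Q\circ\iota_{1}(\mathbb{S}^{1})$. As the $\iota_{i}|_{Z_{i}}$ are local isometries, $\mathcal{H}^{2}_{\widetilde{Z}}(\iota_{i}(Z_{i}))=\mathcal{H}^{2}_{\mathbb{S}^{2}}(Z_{i})=2\pi$, and $Q(S_{Z})$, being a $1$-Lipschitz image of $\mathbb{S}^{1}$, has $\mathcal{H}^{1}_{\widetilde{Z}}(Q(S_{Z}))<\infty$, hence $\mathcal{H}^{2}_{\widetilde{Z}}(Q(S_{Z}))=0$; so $\widetilde{Z}$ has locally finite $\mathcal{H}^{2}$, of total area $4\pi$.

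\emph{Stage 2 ($\widetilde{Z}=(Z,d_{Z})$ and $Q(S_{Z})$ is a Jordan curve).} Put $U_{i}:=h^{-1}(\iota_{i}(Z_{i}))$, disjoint connected open subsets of $\mathbb{S}^{2}$ whose complement $\mathcal{C}_{0}:=h^{-1}(Q(S_{Z}))$ is $\mathcal{H}^{2}$-null, because the quasiconformal map $h^{-1}$ preserves null sets; hence $\mathcal{C}_{0}$ is nowhere dense and $\partial U_{i}\subseteq\mathcal{C}_{0}$. I would show that the continuous surjection $h^{-1}\circ\iota_{1}\colon\overline{Z}_{1}\to\overline{U}_{1}$, which restricts to a homeomorphism $Z_{1}\to U_{1}$, is injective on $\mathbb{S}^{1}$; together with the fact that the local isometries $\iota_{i}|_{Z_{i}}$ force interior points never to be identified, this yields $d_{Z}(x,y)>0$ whenever $x\neq y$, i.e.\ $\widetilde{Z}=(Z,d_{Z})$, and makes $\mathcal{C}_{0}$ and $Q(S_{Z})$ Jordan curves. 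If instead $h^{-1}\circ\iota_{1}$ identified distinct $\zeta_{1},\zeta_{2}\in\mathbb{S}^{1}$ to a single point $p$, then (identifying $\overline{Z}_{1}$ with a closed disk) the image of the chord from $\zeta_{1}$ to $\zeta_{2}$ would be a Jordan curve $J\subset\overline{U}_{1}$ with $J\setminus\{p\}\subset U_{1}$, separating $\mathbb{S}^{2}$ into Jordan domains $V_{+},V_{-}$ with $U_{1}$ meeting both; since $U_{2}$ is connected and disjoint from $\overline{U}_{1}\supseteq J$ it lies in one, say $V_{+}$, so $V_{-}\subseteq U_{1}\cup\mathcal{C}_{0}$, and as the part of $U_{1}$ inside $V_{-}$ is one of the two halves into which the chord divides $U_{1}$, its closure lies in $J$ together with the nowhere dense set $\mathcal{C}_{0}$ — forcing the open set $V_{-}$ into $\mathcal{C}_{0}$, a contradiction.

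\emph{Stage 3 ($g$ is a welding homeomorphism; $1$-quasiconformal parametrization).} Using the measurable Riemann mapping theorem (or, alternatively, the uniformization theorem for reciprocal metric surfaces) I would produce a $1$-quasiconformal $\pi\colon\mathbb{S}^{2}\to\widetilde{Z}$: let $\nu$ be the Beltrami coefficient on $\mathbb{S}^{2}$ equal off $\mathcal{C}_{0}$ to the dilatation of $\iota_{i}^{-1}\circ h|_{U_{i}}$, solve $\overline{\partial}w=\nu\,\partial w$ for a quasiconformal homeomorphism $w$ of $\mathbb{S}^{2}$, and set $\pi:=h\circ w^{-1}$; this is quasiconformal, and conformal off the $\mathcal{H}^{2}$-null set $\pi^{-1}(Q(S_{Z}))$, hence $1$-quasiconformal. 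Writing $\mathcal{C}:=\pi^{-1}(Q(S_{Z}))$ and $U_{i}':=\pi^{-1}(\iota_{i}(Z_{i}))$, the maps $\Phi_{i}:=\pi^{-1}\circ\iota_{i}|_{Z_{i}}\colon Z_{i}\to U_{i}'$ are conformal, so they are Riemann maps onto the two complementary Jordan domains of $\mathcal{C}$; by Carathéodory they extend to homeomorphisms of the closures, and from $\iota_{1}(z)=\iota_{2}(g(z))$ one reads off $\overline{\Phi}_{2}^{\,-1}\circ\overline{\Phi}_{1}|_{\mathbb{S}^{1}}=g$. Thus $g$ is a welding homeomorphism with welding curve $\mathcal{C}$.

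\emph{Stage 4 (conformal removability), and the main obstacle.} Given a homeomorphism $f\colon\mathbb{S}^{2}\to\mathbb{S}^{2}$ conformal off $\mathcal{C}$, I would consider $F:=f\circ\pi^{-1}\colon\widetilde{Z}\to\mathbb{S}^{2}$. Off $Q(S_{Z})$, $\pi^{-1}$ is $1$-quasiconformal onto $\mathbb{S}^{2}\setminus\mathcal{C}$ and $f$ is conformal, so $F$ is $1$-quasiconformal there, with Dirichlet energy at most $\mathcal{H}^{2}(\mathbb{S}^{2})=4\pi$ by the area formula; since $\mathcal{H}^{2}_{\widetilde{Z}}(Q(S_{Z}))=0$, the curves meeting $Q(S_{Z})$ in positive length form a family of zero $2$-modulus (test with $\rho=\infty\cdot\chi_{Q(S_{Z})}$), so the off-seam minimal upper gradient of $F$, extended by zero, is a weak upper gradient of $F$ on all of $\widetilde{Z}$ and $F\in N^{1,2}(\widetilde{Z},\mathbb{S}^{2})$; a homeomorphism in $N^{1,2}$ with distortion $1$ off a null set is $1$-quasiconformal, so $f=F\circ\pi$ is a composition of $1$-quasiconformal maps, hence conformal. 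Any further welding curve of $g$ is then a conformal image of $\mathcal{C}$ (glue the conformal maps comparing the two families of Riemann maps; the result is conformal off $\mathcal{C}$, hence conformal), so it too is conformally removable. I expect Stage 2 to be the main obstacle: degenerations of $d_{Z}$ on the seam cannot be excluded topologically — collapsing a boundary arc of $Z_{1}$ to a point still yields a space homeomorphic to $\mathbb{S}^{2}$ — so the quasiconformality of $h$ must be used, here only through the preservation of null sets, in combination with the plane‑topological separation argument; a secondary technical point is making the upper‑gradient extension across the finite‑length seam in Stage 4 rigorous with the precise analytic characterization of $1$-quasiconformal maps between metric surfaces of locally finite area.
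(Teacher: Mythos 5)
Your Stages 1 and 3 follow essentially the paper's route (upgrade $h$ to a $1$-quasiconformal $\pi$ by solving the Beltrami equation for the dilatation of the two chart maps off the null curve, then read off $g = \Phi_{2}^{-1}\circ\Phi_{1}|_{\mathbb{S}^{1}}$ via Carath\'eodory). The two places you yourself flag as delicate are, however, where the proposal genuinely breaks down.

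The main gap is Stage 2. Your separation argument does not yield a contradiction, and no argument built only from plane topology and preservation of null sets can. Suppose exactly one proper subarc $I_{1}\subset\mathbb{S}^{1}$ between $\zeta_{1}$ and $\zeta_{2}$ is collapsed by $\widetilde{\iota}_{1}$ to a point $y$ (by \Cref{lemm:inclusion} and \Cref{lemm:partitioning:adv} the fibers are arcs, so this is the case to exclude). Then $\widetilde{Z}$ is still a topological sphere, $Q(S_{Z})$ and $\mathcal{C}_{0}$ are still Jordan curves of zero area, and your configuration is perfectly consistent: the half $W_{1}=h^{-1}\circ\widetilde{\iota}_{1}(A_{1})$ of $U_{1}$ adjacent to $I_{1}$ satisfies $\overline{W_{1}}=W_{1}\cup J$ because $\widetilde{\iota}_{1}(I_{1})=\{y\}$, hence $W_{1}$ is exactly one complementary component of $J$, say $V_{-}=W_{1}\subseteq U_{1}$. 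Nothing is forced into $\mathcal{C}_{0}$: your assertion that the closure of the part of $U_{1}$ inside $V_{-}$ lies in $J\cup\mathcal{C}_{0}$ is true only of $\overline{W_{1}}\setminus W_{1}$, and $W_{1}$ itself is a nonempty open set. The obstruction the paper actually uses is metric, not topological: if $I_{1}$ collapses to $y$, the curves in $Z_{1}$ emanating from $I_{1}$ (where $\widetilde{\iota}_{1}$ is a local isometry) give a lower bound, uniform in $r$, for $\Mod\Gamma\bigl(\overline{B}_{\widetilde{Z}}(y,r),\widetilde{Z}\setminus B_{\widetilde{Z}}(y,R);\overline{B}_{\widetilde{Z}}(y,R)\bigr)$, so \eqref{point:zero:modulus} fails at $y$, $\widetilde{Z}$ is not reciprocal, and \Cref{thm:raj} contradicts the existence of $h$. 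This is the content of \Cref{thm:failure:terrible}, and some such capacity argument is indispensable here.

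There is a second gap in Stage 4: the step ``the off-seam minimal upper gradient of $F$, extended by zero, is a weak upper gradient of $F$ on all of $\widetilde{Z}$'' is unjustified. For a curve $\gamma$ in $\widetilde{Z}$ with zero length on the seam you can bound $\ell(F\circ\gamma|_{I})$ on each complementary interval $I$, but to sum these into $\ell(F\circ\gamma)$ you need $\mathcal{H}^{1}\bigl(F(Q(S_{Z}))\cap|F\circ\gamma|\bigr)=0$, which is exactly the hypothesis of \Cref{lemm:techincal:lemma}. The set $|\gamma|\cap Q(S_{Z})$ is $\mathcal{H}^{1}$-null in $\widetilde{Z}$, but its preimage under $\pi$ can be the whole non-tangent part of $\mathcal{C}$, which need not be $\mathcal{H}^{1}$-null or even $\sigma$-finite, and $f$ is merely a homeomorphism there. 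The paper sidesteps this by running the composition the other way: $\psi^{-1}\circ M^{-1}$ is again a map of the form \eqref{eq:quotientmap} for the welding curve $M(\mathcal{C})$, so \Cref{prop:quotientmap} applies, and there the seam-crossing is controlled through the tangent-point set $\mathrm{Tn}(\mathcal{C})$, which has $\sigma$-finite $\mathcal{H}^{1}$-measure and carries all of the $\mathcal{H}^{1}$-mass of the seam (\Cref{lemm:mutualABS:tan}). Without that harmonic-measure input your removability argument does not close.
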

    The first step in the proof of \Cref{thm:welding:positive} is showing that $h$ can be assumed to be $1$-quasiconformal. Then, up to an orientation-reversing Möbius transformation, $\phi_{i} = h^{-1} \circ \iota_{i} \colon Z_{i} \rightarrow \mathbb{S}^{2}$ are Riemann maps with welding curve $\mathcal{C} = h^{-1}( Q( S_{Z} ) )$ and welding homeomorphism $\phi_{2}^{-1} \circ \phi_{1}|_{ \mathbb{S}^{1} }$. The equality $\widetilde{Z} = ( Z, d_{Z} )$ and the conformal removability of $\mathcal{C}$ follow from a connection we show between the tangents of the welding curve $\mathcal{C}$ and the Hausdorff $1$-measure on the seam $Q( S_{Z} )$; see \Cref{sec:harm:weld}. The equality $\widetilde{Z} = ( Z, d_{Z} )$ implies $g = \phi_{2}^{-1} \circ \phi_{1}|_{ \mathbb{S}^{1} }$.

    We recall that a compact proper subset $K \subset \mathbb{S}^{2}$ is \emph{conformally removable} if every homeomorphism $M \colon \mathbb{S}^{2} \rightarrow \mathbb{S}^{2}$ conformal in $\mathbb{S}^{2} \setminus K$ is Möbius. The von Koch snowflake example illustrates that conformal removability of a welding curve $\mathcal{C}$ is not enough to guarantee even that $\widetilde{Z}$ is homeomorphic to $\mathbb{S}^{2}$. We refer the reader to \cite{You:15} and \cite{You:18} for further reading on conformal weldings and the connections to conformal removability. See \cite{Her:Kos:03} for some results in the context of \Cref{thm:biconformal:abs}.
    
    The paper is structured as follows. In \Cref{sec:QC}, we introduce our notations and some preliminary results. In \Cref{sec:sew}, we analyze the distance $d_{Z}$ induced by any given homeomorphism $g \colon \mathbb{S}^{1} \rightarrow \mathbb{S}^{1}$. When $g$ is a welding homeomorphim, we establish in \Cref{sec:harm:weld} a connection between the geometry of the seam $S_{Z}$ and the tangents of the corresponding welding curves $\mathcal{C}$. We also prove \Cref{thm:welding:positive} in this section. In \Cref{sec:Ahlfors}, we prove Theorems \ref{thm:mass:accumulation} and \ref{cor:QC:Jacobianproblem}. \Cref{thm:QS:abs} and \Cref{thm:biconformal:abs} are proved in \Cref{sec:finite:distortion}. In \Cref{sec:obstructions}, we give some concluding remarks.
\section{Preliminaries}\label{sec:QC}

\subsection{Notation}\label{sec:notation} 

    Let $(Y,d_{Y})$ be a metric space. We sometimes drop the subscript from $d_Y$ when there is no chance for confusion. For all $Q \geq 0$, the \emph{$Q$-dimensional Hausdorff measure}, or a \emph{Hausdorff $Q$-measure}, is defined by
\[
	    \mathcal{H}_Y^{Q}(B)
	    =
	    \frac{\alpha(Q)}{2^Q}
	    \sup_{ \delta > 0 }
	    \inf
	        \left\{
	            \sum_{i=1}^\infty (\diam B_i)^Q
	            \colon
	            B \subset \bigcup_{i=1}^\infty B_i, 
	            \diam B_i < \delta
	        \right\} 
\]
	for all sets $B \subset Y$, where $\alpha(Q)$ is chosen so that $\mathcal{H}^{n}_{\mathbb{R}^{n}}$ coincides with the Lebesgue measure $\mathcal{L}^{n}$ for all positive integers.
    
    The \emph{length} of a path $\gamma\colon [a,b] \to Y$ is defined as 
\[ 
        \ell_d(\gamma) = \sup \sum_{i=1}^n d(\gamma(t_{i}), \gamma(t_{i+1})),
\]
    the supremum taken over all finite partitions $a = t_1 \leq t_2 \leq \cdots \leq t_{n+1} = b$. A path is \emph{rectifiable} if it has finite length.
    
    The \emph{metric speed} of a path $\gamma \colon \left[a,  b\right] \rightarrow Y$ at the point $t \in \left[a,  b\right]$ is defined as
\begin{equation*}
    \label{eq:metric:speed:path}
        v_{\gamma}(t)
        =
        \lim_{ t \neq s \rightarrow t }
            \frac{ d( \gamma( s ),  \gamma( t ) ) }{ |s-t| }
\end{equation*}
    whenever this limit exists. The limit exists $\mathcal{L}^{1}$-almost everywhere for every rectifiable path \cite[Theorem 2.1]{Dud:07}.

    A rectifiable path $\gamma\colon [a,b] \to Y$ is \emph{absolutely continuous} if for all $a \leq s \leq t \leq b$,
\begin{equation*}
    \label{eq:AC:characterization}
        d( \gamma(t),  \gamma(s) )
        \leq
        \int_{ s }^{ t }
            v_{ \gamma }( u )
        \,d\mathcal{L}^{1}( u )
\end{equation*}
    with $v_{ \gamma } \in L^{1}( \left[a,  b\right] )$ and $\mathcal{L}^{1}$ the Lebesgue measure on the real line. Equivalently, the rectifiable path $\gamma$ is absolutely continuous if it maps sets of $\mathcal{L}^{1}$-measure zero to sets of $\mathcal{H}_Y^{1}$-measure zero \cite[Section 3]{Dud:07}.
    
    Let $\gamma \colon \left[a, b\right] \rightarrow X$ be an absolutely continuous path. Then the \emph{(path) integral} of a Borel function $\rho \colon X \rightarrow \left[0,  \infty\right]$ \emph{over $\gamma$} is
\begin{equation}
    \label{eq:path:integral}
        \int_{ \gamma }
            \rho
        \,ds
        =
        \int_{ a }^{ b }
            ( \rho \circ \gamma )
            v_{ \gamma }
        \,d\mathcal{L}^{1}.
\end{equation}
    If $\gamma$ is rectifiable, then the path integral of $\rho$ over $\gamma$ is defined to be the path integral of $\rho$ over the arc length parametrization $\gamma_{s}$ of $\gamma$; see \cite[Chapter 5]{HKST:15} for further details.
    
    Given a Borel set $A \subset Y$, the \emph{length} of a path $\gamma \colon \left[a, b\right] \rightarrow Y$ \emph{in $A$} is defined as $\int_{ Y } \chi_{A}(y) \#( \gamma^{-1}(y) ) \,d\mathcal{H}^{1}_{Y}(y)$, where $\#( \gamma^{-1}(x) )$ is the counting measure of $\gamma^{-1}(x)$. For $A = Y$, \cite[Theorem 2.10.13]{Fed:69} states
    \begin{equation}
        \label{eq:areaformula:federer}
        \ell( \gamma )
        =
        \int_{ Y }
            \#( \gamma^{-1}(y) )
        \,d\mathcal{H}^{1}_{Y}(y).
    \end{equation}
    When $\gamma$ is rectifiable, for every Borel function $\rho \colon Y \rightarrow \left[0, \infty\right]$,
    \begin{equation}
        \label{eq:areaformula}
        \int_{ \gamma } \rho \,ds
        =
        \int_{ Y }
            \rho( y )
            \#( \gamma^{-1}(y) )
        \,d\mathcal{H}^{1}_{Y}(y).
    \end{equation}
    The equality \eqref{eq:areaformula} follows from \cite[Theorem 2.10.13]{Fed:69} via a standard approximation argument using simple functions.
    
\subsection{Metric Sobolev spaces}\label{sec:sobolev}

    In this section we give an overview of Sobolev theory in the metric surface setting, and refer to \cite{HKST:15} for a comprehensive introduction.

    Let $\Gamma$ be a family of paths in $Y$. A Borel function $\rho\colon Y \to [0, \infty]$ is \emph{admissible} for $\Gamma$ if the path integral $\int_\gamma \rho\,ds \geq 1$ for all rectifiable paths $\gamma \in \Gamma$. Given $1 \leq p < \infty$, the \emph{$p$-modulus} of $\Gamma$ is
        \[\Mod_{p} \Gamma = \inf \int_Y \rho^p\,d\mathcal{H}_Y^2,\]
    where the infimum is taken over all admissible functions $\rho$. Observe that if $\Gamma_{1}$ and $\Gamma_{2}$ are path families and every path $\gamma_{1} \in \Gamma_{1}$ contains a subpath $\gamma_{2} \in \Gamma_{2}$, then $\Mod_{p} \Gamma_{1} \leq \Mod_{p} \Gamma_{2}$. In particular, this holds if $\Gamma_{1} \subset \Gamma_{2}$. When $p = 2$, and there is no chance for confusion, we omit the subscript from $\Mod_{2}$.
    
    If $\rho$ is admissible for a path family $\Gamma \setminus \Gamma_{0}$, where $\Mod_{p} \Gamma_{0} = 0$, we say that $\rho$ is \emph{$p$-weakly admissible} for $\Gamma$. If a property holds for every path $\gamma \in \Gamma$ except in a subfamily of $p$-modulus zero, the property is said to hold \emph{on $p$-almost every path} in $\Gamma$. We also refer to $2$-almost every path as \emph{almost every} path.
    
    We recall the following lemma \cite[Lemma 5.2.8]{HKST:15}.
    \begin{lemm}\label{lemm:negligible}
    Let $1 \leq p < \infty$. A family of nonconstant paths $\Gamma$ satisfies $\Mod_{p} \Gamma = 0$ if and only if there exists $\rho \colon Y \rightarrow \left[0, \infty\right]$, $\rho \in L^{p}( Y )$ with
    \begin{equation*}
        \infty = \int_{ \gamma } \rho \,ds \quad \text{for every $\gamma \in \Gamma$.}
    \end{equation*}
    \end{lemm}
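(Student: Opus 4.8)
The plan is to prove the two implications separately; the forward implication is immediate from the homogeneity of the modulus, and the converse uses a standard series construction.

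For the direction ``$\rho$ exists $\Rightarrow \Mod_p\Gamma=0$'': given a Borel function $\rho\in L^p(Y)$ with $\int_\gamma\rho\,ds=\infty$ for every $\gamma\in\Gamma$, I would observe that for each $\varepsilon>0$ the function $\varepsilon\rho$ is admissible for $\Gamma$, since $\int_\gamma\varepsilon\rho\,ds=\varepsilon\int_\gamma\rho\,ds=\infty\geq 1$ on every rectifiable $\gamma\in\Gamma$. Hence $\Mod_p\Gamma\leq\int_Y(\varepsilon\rho)^p\,d\mathcal{H}^2_Y=\varepsilon^p\int_Y\rho^p\,d\mathcal{H}^2_Y$, and letting $\varepsilon\to 0^+$ forces $\Mod_p\Gamma=0$. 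Finiteness of $\int_Y\rho^p\,d\mathcal{H}^2_Y$ is exactly what makes this work, and it also guarantees $\rho<\infty$ almost everywhere.

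For the converse, assuming $\Mod_p\Gamma=0$, I would for each integer $j\geq 1$ use the definition of the $p$-modulus as an infimum to choose an admissible Borel function $\rho_j\geq 0$ for $\Gamma$ with $\|\rho_j\|_{L^p(Y)}<2^{-j}$, and then set $\rho=\sum_{j=1}^\infty\rho_j$. This is again a nonnegative Borel function, being a pointwise limit of Borel functions, and by Minkowski's inequality (here $p\geq 1$ is used) together with monotone convergence on the partial sums, $\|\rho\|_{L^p(Y)}\leq\sum_{j=1}^\infty\|\rho_j\|_{L^p(Y)}<\infty$, so $\rho\in L^p(Y)$. For a rectifiable $\gamma\in\Gamma$, I would write the line integral as an integral against $\mathcal{H}^1$ along $\gamma$ as in \eqref{eq:areaformula} and apply monotone convergence to the increasing partial sums $\sum_{j=1}^N\rho_j$ to get $\int_\gamma\rho\,ds=\sum_{j=1}^\infty\int_\gamma\rho_j\,ds\geq\sum_{j=1}^\infty 1=\infty$; for a nonconstant but non-rectifiable $\gamma\in\Gamma$ the integral is infinite by the convention for such paths, consistently with $\int_\gamma 1\,ds=\ell(\gamma)=\infty$. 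This exhibits the required $\rho$.

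I do not expect a genuine obstacle: the entire content of the lemma is the passage from ``arbitrarily small admissible functions exist'' to ``a single $L^p$ function that blows up on every path in $\Gamma$'', and both directions are soft. The only points that warrant a moment of care are the interchange of the infinite sum with the line integral (legitimate because all terms are nonnegative, via monotone convergence) and the use of $p\geq 1$ in the triangle inequality for $\|\cdot\|_{L^p(Y)}$; the non-rectifiable paths contribute nothing and are disposed of by the standard convention.
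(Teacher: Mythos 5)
Your proof is correct, but note that the paper does not prove this statement at all: it is recalled verbatim as \cite[Lemma 5.2.8]{HKST:15}, and your two-step argument (scaling an admissible function by $\varepsilon$ for one direction, summing admissible functions $\rho_j$ with $\|\rho_j\|_{L^p}<2^{-j}$ for the other) is exactly the standard proof of that cited lemma. The only point resting on convention is the value of $\int_\gamma\rho\,ds$ for nonconstant non-rectifiable $\gamma$, which you flag appropriately and which is harmless in every application the paper makes of the lemma.
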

    Let $\psi \colon (Y,d_Y) \to (Z,d_Z)$ be a mapping between metric spaces $Y$ and $Z$. A Borel function $\rho \colon Y \to [0, \infty]$ is an \emph{upper gradient of $\psi$} if
    \[
        d_Y(\psi(x),\psi(y)) \leq \int_\gamma \rho\,ds
    \]
    for every rectifiable path $\gamma\colon [a,b] \to Y$ connecting $x$ to $y$. The function $\rho$ is a \emph{$p$-weak upper gradient of $\psi$} if the same holds for $p$-almost every rectifiable path.
    
    A $p$-weak upper gradient $\rho \in L^{p}_{\loc}( Y )$  of $\psi$ is \emph{minimal} if it satisfies $\rho \leq \widetilde{\rho}$ almost everywhere for all $p$-weak upper gradients $\widetilde{\rho} \in L^{p}_{\loc}( Y )$ of $\psi$. If $\psi$ has a $p$-weak upper gradient $\rho \in L^{p}_{\loc}( Y )$, then $\psi$ has a minimal $p$-weak upper gradient, which we denote by $\rho_\psi$. We refer to Section 6 of \cite{HKST:15} and Section 3 of \cite{Wil:12} for further details. Minimal $2$-weak upper gradients are also refered to as \emph{minimal weak upper gradients}.

    Fix a point $z \in Z$, and let $d_z = d_Z(\cdot,z)$. The space $L^{p}( Y,  Z )$ is defined as the collection of measurable maps $\psi \colon Y \to Z$ such that $d_z \circ \psi$ is in $L^p(Y)$. Moreover, $L^{p}_{\loc}( Y,  Z )$ is defined as those measurable maps $\psi \colon Y \to Z$ for which, for all $y \in Y$, there is an open set $U \subset Y$ containing $y$ such that $\psi|_U$ is in $L^p(U,Z)$.
    
    The metric Sobolev space $N^{1, p}_{\loc}( Y,  Z )$ consists of those maps $\psi \colon Y \rightarrow Z$ in $L^{p}_{\loc}( Y,  Z )$ that have a minimal $p$-weak upper gradient $\rho_{ \psi } \in L^{p}_{\loc}( Y )$.
    
    For subsets $\emptyset \neq U \subset Y$, we say that $\psi \in N^{1,  p}( U,  Z )$ if $\psi|_{U} \in N^{1,  p}_{\loc}( U, Z )$, $\rho_{\psi|U} \in L^{p}( U )$ and $\psi|_{U} \in L^{p}( U, Z )$. If $Z = \mathbb{R}$, we denote $N^{1,p}( U, Z ) = N^{1,p}( U )$, and in the case $p = 2$,
    \begin{equation*}
        E( \psi ) \coloneqq 2^{-1} \norm{ \rho_{ \psi } }_{ L^{2}( U ) }^{2}.
    \end{equation*}
    We refer to $E( \psi )$ as the \emph{Dirichlet energy} of $\psi$.

    We repeatedly use the following technical lemma in later sections.
    \begin{lemm}\label{lemm:techincal:lemma}
    Let $\psi \colon Y \rightarrow Z$ be continuous, $\rho \colon Y \rightarrow \left[0, \infty\right]$ a Borel function and $\gamma \colon \left[0, 1\right] \rightarrow Y$ absolutely continuous with $\int_{ \gamma } \rho \,ds < \infty$.
    
    Suppose that $E \subset Y$ is compact, $\mathcal{H}^{1}_{Z}( \psi(E) ) = 0$, and $\ell( \psi \circ \gamma|_{I} ) \leq \int_{ \gamma|_{I} } \rho \,ds$ for each closed interval $I \subset \left[0, 1\right] \setminus \gamma^{-1}( E )$. Then $\ell( \psi \circ \gamma ) \leq \int_{ \gamma } \rho \,ds$.
    \end{lemm}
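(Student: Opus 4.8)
The plan is to apply Federer's length formula \eqref{eq:areaformula:federer} to the continuous path $\eta := \psi \circ \gamma \colon [0,1] \to Z$. The underlying idea is that the hypothesis controls $\ell(\eta)$ over the open set $[0,1] \setminus \gamma^{-1}(E)$, while the measure assumption $\mathcal{H}^{1}_{Z}(\psi(E)) = 0$ forces the contribution of $\gamma^{-1}(E)$ to the length of $\eta$ to vanish.

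First I would note that $F := \gamma^{-1}(E)$ is closed in $[0,1]$, since $E$ is compact (hence closed) and $\gamma$ is continuous; write $[0,1] \setminus F = \bigsqcup_{j} J_j$ as a countable disjoint union of intervals relatively open in $[0,1]$. For a fixed $j$, I would exhaust $J_j$ by closed subintervals $I_1 \subset I_2 \subset \cdots$ with $\bigcup_n I_n = J_j$; each $I_n$ is a closed interval contained in $[0,1] \setminus \gamma^{-1}(E)$, so the hypothesis gives $\ell(\psi \circ \gamma|_{I_n}) \leq \int_{\gamma|_{I_n}} \rho \, ds$. Letting $n \to \infty$: the numbers $\ell(\eta|_{I_n})$ are nondecreasing with supremum $\ell(\eta|_{J_j})$ (every finite partition of $J_j$ eventually lies in some $I_n$, using continuity of $\eta$ at the endpoints of $J_j$), and by \eqref{eq:path:integral} and monotone convergence the numbers $\int_{\gamma|_{I_n}} \rho \, ds$ increase to $\int_{\gamma|_{J_j}} \rho \, ds$, which is finite because $\int_{\gamma} \rho \, ds < \infty$. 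Hence $\ell(\eta|_{J_j}) \leq \int_{\gamma|_{J_j}} \rho \, ds$ for every $j$, and summing over $j$ (the $J_j$ are disjoint with union $[0,1] \setminus F$) gives $\sum_j \ell(\eta|_{J_j}) \leq \int_{\gamma} \rho \, ds < \infty$.

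Next I would apply \eqref{eq:areaformula:federer} to $\eta$ and to each $\eta|_{\overline{J_j}}$. Writing $\eta^{-1}(z) = \big(\eta^{-1}(z) \cap F\big) \sqcup \big(\eta^{-1}(z) \cap ([0,1]\setminus F)\big)$ and bounding $\#\big(\eta^{-1}(z) \cap ([0,1]\setminus F)\big) = \sum_j \#\big(\eta^{-1}(z) \cap J_j\big) \leq \sum_j \#\big(\eta^{-1}(z) \cap \overline{J_j}\big)$, integration against $\mathcal{H}^{1}_{Z}$ together with a monotone-convergence interchange yields
\[
    \ell(\eta)
    \leq
    \int_{Z} \#\big(\eta^{-1}(z) \cap F\big) \, d\mathcal{H}^{1}_{Z}(z)
    + \sum_j \ell\big(\eta|_{\overline{J_j}}\big).
\]
The decisive observation is that $\#\big(\eta^{-1}(z) \cap F\big) = 0$ for every $z \notin \eta(F)$, and $\eta(F) \subseteq \psi(E)$ has $\mathcal{H}^{1}_{Z}$-measure zero by hypothesis; hence the first integral is $0$. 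Since $\ell(\eta|_{\overline{J_j}}) = \ell(\eta|_{J_j})$ by continuity of $\eta$, combining with the previous step gives $\ell(\psi \circ \gamma) = \ell(\eta) \leq \sum_j \ell(\eta|_{J_j}) \leq \int_{\gamma} \rho \, ds$, which is the assertion.

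The main — essentially the only — point requiring care is the invocation of \eqref{eq:areaformula:federer} in the last step: one must ensure it is valid for $\eta$ even when $\ell(\eta)$ is a priori infinite, read as an equality of extended reals, and that the indicatrix functions $z \mapsto \#(\eta^{-1}(z) \cap F)$ and $z \mapsto \#(\eta^{-1}(z) \cap \overline{J_j})$ are $\mathcal{H}^{1}_{Z}$-measurable; both are part of \cite[Theorem 2.10.13]{Fed:69} since $F$ and the $\overline{J_j}$ are Borel. That the measure hypothesis $\mathcal{H}^{1}_{Z}(\psi(E)) = 0$ cannot be omitted is visible from a Cantor-staircase model with $\gamma^{-1}(E)$ the associated Cantor set: there the term $\int_{Z} \#(\eta^{-1}(z) \cap F) \, d\mathcal{H}^{1}_{Z}(z)$ carries the entire length of $\eta$ and cannot be discarded.
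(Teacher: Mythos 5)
Your proof is correct and follows essentially the same route as the paper's: exhaust the complementary intervals of a closed exceptional set by closed subintervals on which the hypothesis applies, then reassemble the total length via Federer's formula \eqref{eq:areaformula:federer} and discard the exceptional contribution using $\mathcal{H}^{1}_{Z}( \psi(E) ) = 0$. The only (harmless) difference is that you delete $\gamma^{-1}(E)$ where the paper deletes the larger set $( \psi \circ \gamma )^{-1}( \psi(E) )$ --- which is why you need the extra splitting of the multiplicity into its $F$-part and its complement, whereas the paper gets the counting inequality directly; your $F$-part vanishes exactly as you argue, since $\eta(F) \subset \psi(E)$ is compact and $\mathcal{H}^{1}_{Z}$-null, so even the measurability worry is moot.
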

    \begin{proof}
    First, for every closed interval $J \subset \left[0, 1\right] \setminus \gamma^{-1}( E )$, $\psi \circ \gamma|_{J}$ is absolutely continuous with $v_{ \psi \circ \gamma }(s) \leq ( \rho \circ \gamma )(s) v_{ \gamma }(s)$ for $\mathcal{L}^{1}$-almost every $s \in J$. This follows from \cite[Proposition 6.3.2]{HKST:15}.
    
    Second, consider the connected components $\left\{ I_{i} \right\}_{ i = 1 }^{ \infty }$ of $\left[0, 1\right] \setminus ( \psi \circ \gamma )^{-1}( \psi(E) )$. Notice that $I_{i} \subset \left[0, 1\right] \setminus \gamma^{-1}(E)$ for every $i$.
    
    Let $J_{i} = \overline{ I_{i} }$. Then $v_{ \psi \circ \gamma }(s) \leq ( \rho \circ \gamma|_{ J_{i} } )( s )$ $\mathcal{L}^{1}$-almost everywhere on $J_{i}$ (on $I_{i}$). This fact, the continuity of $\psi \circ \gamma$ and $\int_{ \gamma } \rho \,ds < \infty$ imply
    \begin{equation*}
        \ell( \psi \circ \gamma|_{ J_{i} } )
        \leq
        \int_{ I_{i} }
            ( \rho \circ \gamma ) v_{ \gamma }
        \,ds
        <
        \infty.
    \end{equation*}
    By summing over $i$, we conclude
    \begin{equation*}
        \sum_{ i = 1 }^{ \infty }
        \ell( \psi \circ \gamma|_{ J_{i} } )
        \leq
        \int_{ \bigcup_{ i = 1 }^{ \infty } I_{i} }
            ( \rho \circ \gamma )v_{ \gamma }
        \,ds
        \leq
        \int_{ \gamma }
            \rho
        \,ds.
    \end{equation*}
    Given $\mathcal{H}^{1}_{Z}( \psi(E) ) = 0$, \eqref{eq:areaformula:federer} and \eqref{eq:areaformula} imply
    \begin{align*}
        \ell( \psi \circ \gamma )
        &=
        \int_{ Z \setminus \psi(E) }
            \#( ( \psi \circ \gamma )^{-1}(x) )
        \,d\mathcal{H}^{1}_{Z}(x)
        \\
        &\leq
        \sum_{ i = 1 }^{ \infty }
        \int_{ Z \setminus \psi(E) }
            \#( ( \psi \circ \gamma|_{ J_{i} } )^{-1}(x) )
        \,d\mathcal{H}^{1}_{Z}(x)
        \\
        &=
        \sum_{ i = 1 }^{ \infty } \ell( \psi \circ \gamma|_{ J_{i} } )
        \leq
        \int_{ \gamma }
            \rho
        \,ds.
    \end{align*}
    Hence $\ell( \psi \circ \gamma ) \leq \int_{ \gamma } \rho \,ds$.
    \end{proof}    
    
    \subsection{Measure theory}\label{sec:measure}
    Let $Y$ be a Borel subset of a complete and separable metric space. A Borel measure $\mu$ on $Y$ is \emph{$\sigma$-finite} if there exists a Borel decomposition $\left\{ B_{i} \right\}_{ i = 1 }^{ \infty }$ of $Y$ for which $\mu( B_{i} ) < \infty$ for every $i$.
    
    A pair of $\sigma$-finite Borel measures $\mu$ and $\nu$ on $Y$ are said to be \emph{mutually singular} if there exists a Borel set $B \subset Y$ such that $\mu( B ) = 0$ and $\nu( Y \setminus B ) = 0$. The measure $\mu$ admits a \emph{Lebesgue decomposition} (with respect to $\nu$), where $\mu = f \cdot \nu + \mu^{\perp}$, with $\mu^{\perp}$ and $\nu$ mutually singular and $f$ Borel measurable \cite[Sections 3.1-3.2 in Volume I]{Bog:07}. We say that $\mu$ and $\nu$ are \emph{mutually absolutely continuous} if $\mu = f \cdot \nu$ with density $f > 0$ $\nu$-almost everywhere.
    
    Given a homeomorphism $\psi \colon Y \rightarrow Z$ and measures $\nu$ on $Y$ and $\mu$ on $Z$, the measure $\psi^{*}\mu( B ) = \mu( \psi(B) )$ is called the \emph{pullback measure}. Such a measure admits a decomposition $\psi^{*}\mu = f \cdot \nu + \mu^{\perp}$ with $\nu$ and $\mu^{\perp}$ mutually singular. If $\nu = \mathcal{H}^{2}_{Y}$ and $\mu = \mathcal{H}^{2}_{Z}$, the density $f$ is called the \emph{Jacobian} of $\psi$ and denoted by $J_{\psi}$.

\subsection{Quasiconformal mappings} \label{sec:quasiconformal_mappings}
    Here we define quasiconformal maps and recall some basic facts.
\begin{defi}\label{def:QCmaps}
    Let $( Y, d_{Y} )$ and $( Z, d_{Z} )$ be metric spaces with locally finite Hausdorff $2$-measures. A homeomorphism $\psi \colon ( Y, d_{Y} ) \rightarrow ( Z, d_{Z} )$ is \emph{quasiconformal} if there exists $K \geq 1$ such that for all path families $\Gamma$ in $Y$
\begin{equation}
        \label{eq:def:QCmaps}
        K^{-1} \Mod \Gamma
        \leq
        \Mod \psi \Gamma
        \leq
        K \Mod \Gamma,
\end{equation}
    where $\psi \Gamma = \left\{ \psi \circ \gamma \colon \gamma \in \Gamma \right\}$. If \eqref{eq:def:QCmaps} holds with a constant $K \geq 1$, we say that $\psi$ is \emph{$K$-quasiconformal}.
\end{defi}
    A special case of \cite[Theorem 1.1]{Wil:12} yields the following.
\begin{thm} \label{prop:williams:L-Wversion}
    Let $Y$ and $Z$ be locally compact separable metric spaces with locally finite Hausdorff $2$-measure and $\psi \colon Y \rightarrow Z$ a homeomorphism. The following are equivalent for the same constant $K > 0$:
\begin{enumerate}
    \item[(i)] $\Mod \Gamma \leq K \Mod \psi\Gamma$ for all path families $\Gamma$ in $Y$.
    \item[(ii)] $\psi \in N_{\loc}^{1,2}(Y,Z)$ and satisfies
        \[ \rho_\psi^2(y) \leq KJ_\psi(y) \]
    for $\mathcal{H}_Y^2$-almost every $y \in Y$.
\end{enumerate}
\end{thm}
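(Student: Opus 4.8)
The plan is to obtain the claimed equivalence as the specialization of \cite[Theorem 1.1]{Wil:12} to the exponent $Q=2$, applied to the homeomorphism $\psi$ with the underlying measures taken to be $\mathcal{H}^{2}_{Y}$ on $Y$ and $\mathcal{H}^{2}_{Z}$ on $Z$. That theorem treats homeomorphisms between metric measure spaces with locally finite Borel regular measures, and among the conditions it proves equivalent, with one and the same constant $K$, are the modulus inequality $\Mod_{Q}\Gamma \leq K \Mod_{Q}\psi\Gamma$ for all path families $\Gamma$ and the conjunction of $\psi \in N^{1,Q}_{\loc}(Y, Z)$ with the pointwise bound $\rho_{\psi}^{Q} \leq K J$, where $J$ is the volume derivative of $\psi$ and the Sobolev space is the Newtonian one built from minimal weak upper gradients, exactly as in \cite{HKST:15}. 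The hypotheses of \cite[Theorem 1.1]{Wil:12} hold here without further ado: $\mathcal{H}^{2}_{Y}$ and $\mathcal{H}^{2}_{Z}$ are Borel regular by construction and locally finite by assumption, and $Y$ and $Z$ are locally compact and separable; no doubling, Ahlfors regularity, or similar condition on the measures is required. Thus the single remaining point is to identify $J$ with the Jacobian $J_{\psi}$ of \Cref{sec:measure}.

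To that end, recall that the volume derivative of $\psi$ governing \cite[Theorem 1.1]{Wil:12} is, in the present context, the density of the absolutely continuous part of the pullback measure $\psi^{*}\mathcal{H}^{2}_{Z}$ with respect to $\mathcal{H}^{2}_{Y}$; this is precisely the Borel function $J_{\psi}$ in the Lebesgue decomposition $\psi^{*}\mathcal{H}^{2}_{Z} = J_{\psi}\cdot\mathcal{H}^{2}_{Y} + \mu^{\perp}$ of \Cref{sec:measure}. Hence $J = J_{\psi}$ holds $\mathcal{H}^{2}_{Y}$-almost everywhere, and since an almost-everywhere pointwise inequality is insensitive to replacing a function by one equal to it almost everywhere, condition (ii) above is verbatim the analytic condition in \cite[Theorem 1.1]{Wil:12}, with the same constant $K$. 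This yields the asserted equivalence.

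The step I expect to be the genuine, if minor, obstacle is this last identification: depending on how \cite{Wil:12} formulates the volume derivative — as a Radon--Nikodym density with respect to $\mathcal{H}^{2}_{Y}$, as a symmetric limit $\lim_{r\to 0^{+}}\mathcal{H}^{2}_{Z}(\psi(B_{Y}(y,r)))/\mathcal{H}^{2}_{Y}(B_{Y}(y,r))$, or through an upper derivate — one must check that in this metric-surface setting all the relevant formulations agree $\mathcal{H}^{2}_{Y}$-almost everywhere, so that the pointwise distortion inequality indeed transfers to the Jacobian $J_{\psi}$ with no loss in the constant. Everything else is a routine matching of definitions.
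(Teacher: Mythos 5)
Your proposal is correct and coincides with the paper's treatment: the paper offers no independent proof, stating the result simply as "a special case of \cite[Theorem 1.1]{Wil:12}" with the measures taken to be the Hausdorff $2$-measures, exactly as you do. Your extra care in identifying Williams' volume derivative with the Jacobian $J_{\psi}$ of \Cref{sec:measure} is a reasonable point to flag, but it does not change the argument.
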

    The \emph{outer dilatation} of $\psi$ is the smallest constant $K_{O} \geq 0$ for which the modulus inequality $\Mod \Gamma \leq K_{O} \Mod \psi \Gamma$ holds for all $\Gamma$ in $Y$. The \emph{inner dilatation} of $\psi$ is the smallest constant $K_{I} \geq 0$ for which $\Mod \psi \Gamma \leq K_{I} \Mod \Gamma$ holds for all $\Gamma$ in $Y$. The number $K( \psi ) = \max\left\{ K_{I}( \psi ), K_{O}( \psi ) \right\}$ is the \emph{maximal dilatation} of $\psi$.
    
    For a set $G \subset Y$ and disjoint sets $F_1, F_2 \subset G$, let $\Gamma(F_1,F_2; G)$ denote the family of paths with each path starting at $F_{1}$, ending at $F_{2}$ and whose images are contained in $G$. A \emph{quadrilateral} is a set $Q$ homeomorphic to $[0,1]^2$ with boundary $\partial Q$ consisting of four boundary arcs, overlapping only at the end points, labelled $\xi_1, \xi_2, \xi_3, \xi_4$ in cyclic order.
    
    A \emph{metric surface} is a separable metric space $Y$ with locally finite Hausdorff $2$-measure that is homeomorphic to a (connected) $2$-manifold without boundary.
\begin{defi}\label{defi:reciprocal}
    A metric surface $Y$ is \emph{reciprocal} if there exists a constant $\kappa  \geq 1 $ such that
\begin{align}
	\label{upper:bound}
	\kappa^{-1}
	\leq
	\Mod \Gamma\left( \xi_{1},  \xi_{3};  Q \right)
	\Mod \Gamma\left( \xi_{2},  \xi_{4};  Q \right)
	\leq
	\kappa
\end{align}
    for every quadrilateral $Q \subset Y$, and 
\begin{equation}
	\label{point:zero:modulus}
	\lim_{ r \rightarrow 0^{+} }
	\Mod \Gamma\left( \overline{B}_{Y}( y,  r ),  Y \setminus B_{Y}( y,  R );  \overline{B}_{Y}( y,  R ) \right)
    =
    0
\end{equation}
    for all $y \in Y$ and $R > 0$ such that $Y \setminus B_Y( y, R ) \neq \emptyset$.
\end{defi}
    We note that for every metric surface,
\begin{equation}
    \label{eq:lower:bound}
    \kappa_{0}^{-1}
    \leq
	\Mod \Gamma\left( \xi_{1},  \xi_{3};  Q \right)
	\Mod \Gamma\left( \xi_{2},  \xi_{4};  Q \right),
\end{equation}
    with $\kappa_{0} = ( 4 / \pi )^2$ \cite{EB:PC:21} \cite{RR:19}.
    
\begin{thm}[Theorem 1.4 of \cite{Raj:17}]\label{thm:raj}
    Let $( Y, d_{Y} )$ be a metric surface homeomorphic to $\mathbb{R}^{2}$ or to $\mathbb{S}^{2}$. Then there exists a quasiconformal embedding $\psi \colon ( Y, d_{Y} ) \rightarrow \mathbb{S}^{2}$ if and only if $Y$ is reciprocal.
\end{thm}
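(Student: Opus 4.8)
I would prove the two implications separately. The easy direction is ``existence of a quasiconformal embedding $\Rightarrow$ reciprocality'', so I would dispose of it first. Suppose $\psi \colon Y \rightarrow \mathbb{S}^{2}$ is $K$-quasiconformal; by invariance of domain $\psi( Y )$ is open in $\mathbb{S}^{2}$. If $Q \subset Y$ is a quadrilateral with boundary arcs $\xi_{1}, \xi_{2}, \xi_{3}, \xi_{4}$, then $\psi( Q )$ is a quadrilateral contained in $\psi( Y )$; being a closed topological disk, it is conformally equivalent to a Euclidean rectangle, so the moduli of its two families of paths joining opposite sides multiply to $1$. Since $\psi \Gamma( \xi_{i}, \xi_{j}; Q ) = \Gamma( \psi( \xi_{i} ), \psi( \xi_{j} ); \psi( Q ) )$, applying the modulus inequality \eqref{eq:def:QCmaps} to $\Gamma( \xi_{1}, \xi_{3}; Q )$ and $\Gamma( \xi_{2}, \xi_{4}; Q )$ yields \eqref{upper:bound} with $\kappa = K^{2}$. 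For \eqref{point:zero:modulus}, fix $y \in Y$ and $R > 0$ with $Y \setminus B_{Y}( y, R ) \neq \emptyset$: then $\psi( B_{Y}( y, R ) )$ is an open neighbourhood of $\psi( y )$ disjoint from $\psi( Y \setminus B_{Y}( y, R ) )$, while $\psi( \overline{B}_{Y}( y, r ) )$ shrinks to $\{ \psi( y ) \}$ as $r \rightarrow 0^{+}$; comparing the image family with a family separating a shrinking round ball around $\psi( y )$ from a fixed one --- whose modulus tends to $0$ --- and using \eqref{eq:def:QCmaps} once more gives \eqref{point:zero:modulus}. Hence $Y$ is reciprocal with $\kappa = K^{2}$.

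For the substantive direction I would follow Rajala's strategy. The first step is a reduction: it suffices to show that every point of $Y$ has a neighbourhood that is $K$-quasiconformally equivalent to a planar domain, with $K = K( \kappa )$, since such charts form a quasiconformal atlas on $Y$ and a gluing argument --- e.g.\ producing a compatible Riemann surface structure via the measurable Riemann mapping theorem and then invoking classical uniformization, using $Y \cong \mathbb{R}^{2}$ or $\mathbb{S}^{2}$ --- upgrades the atlas to a conformal homeomorphism of $Y$ onto $\mathbb{S}^{2}$, $\mathbb{C}$, or $\mathbb{D}$, which composed with a chart is a $K$-quasiconformal embedding into $\mathbb{S}^{2}$. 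So it remains to build a good model on a single quadrilateral $Q \subset Y$ with boundary arcs $\xi_{1}, \xi_{2}, \xi_{3}, \xi_{4}$. Put $\Gamma_{h} = \Gamma( \xi_{2}, \xi_{4}; Q )$ and $\Gamma_{v} = \Gamma( \xi_{1}, \xi_{3}; Q )$; their moduli are finite (since $\dist( \xi_{2}, \xi_{4} ) > 0$ and $\mathcal{H}^{2}_{Y}( Q ) < \infty$) and positive (by \eqref{eq:lower:bound}), and reciprocality adds $\Mod \Gamma_{h} \cdot \Mod \Gamma_{v} \leq \kappa$. Let $\rho_{h} \in L^{2}( Q )$ be the extremal density of $\Gamma_{h}$ --- it exists and is unique by the uniform convexity of $L^{2}$ together with Fuglede's lemma --- and define
\begin{equation*}
    u( x )
    =
    \inf \left\{ \int_{ \gamma } \rho_{h} \,ds \colon \gamma \text{ is a path in } Q \text{ from } \xi_{2} \text{ to } x \right\},
\end{equation*}
with $v$ defined analogously from the extremal density $\rho_{v}$ of $\Gamma_{v}$ and the side $\xi_{1}$.

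Using that $\int_{ \gamma } \rho_{h} \,ds < \infty$ for $2$-almost every path (\Cref{lemm:negligible}), together with \Cref{lemm:techincal:lemma} and the area formula \eqref{eq:areaformula}, I would show that $u$ is continuous, that $u \equiv 0$ on $\xi_{2}$, and that every level set $u^{-1}( t )$ with $0 < t < \sup_{ Q } u$ separates $\xi_{2}$ from $\xi_{4}$ and joins $\xi_{1}$ to $\xi_{3}$, with a coarea-type inequality relating $\sup_{ Q } u$ to $\Mod \Gamma_{h}$; and symmetrically for $v$. The map $f = ( u, v )$ then takes $Q$ continuously into a rectangle, and it is in fact a homeomorphism onto that rectangle once one proves that a level curve of $u$ and a level curve of $v$ meet in exactly one point, surjectivity and openness following from a separation/degree argument. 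Finally $f$ is $K( \kappa )$-quasiconformal: one estimates the minimal weak upper gradient $\rho_{ f }$ and the Jacobian $J_{ f }$, checks $\rho_{ f }^{2} \leq K J_{ f }$ almost everywhere and the analogous inequality for $f^{-1}$, and applies \Cref{prop:williams:L-Wversion} in both directions.

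The main obstacle --- and the one place where reciprocality does work not already supplied by the automatic bound \eqref{eq:lower:bound} --- is this last step: controlling the interaction of the two extremal configurations. A general metric surface can carry sets of positive $\mathcal{H}^{2}_{Y}$-measure that no rectifiable path of finite $\rho_{h}$- or $\rho_{v}$-length meets, and such a set would be collapsed by $f$, destroying quasiconformality. The upper bound in \eqref{upper:bound} together with the point condition \eqref{point:zero:modulus} is exactly what excludes this: it prevents $f$ from collapsing a set of positive $\mathcal{H}^{2}_{Y}$-measure onto a Lebesgue-null set (and likewise for $f^{-1}$), and it forces the level curves of $u$ and $v$ to behave like genuine coordinate curves meeting transversally in single points. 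Carrying out the continuity of $u$ and $v$, the analysis of their level sets, and the single-intersection property rigorously is the technical heart of the argument and requires the careful path-modulus estimates of \cite{Raj:17}; the atlas-to-global-map step, by contrast, relies only on classical complex-analytic technology.
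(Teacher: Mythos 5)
You should note first that the paper does not prove this statement at all: \Cref{thm:raj} is quoted verbatim as Theorem 1.4 of \cite{Raj:17} and used as a black box, so there is no in-paper proof to compare against. Judged on its own terms, your outline is a faithful high-level reconstruction of Rajala's argument. The easy direction (embedding $\Rightarrow$ reciprocal) is essentially complete and correct: conformal invariance of modulus gives $\Mod\Gamma(\xi_1,\xi_3;\psi(Q))\cdot\Mod\Gamma(\xi_2,\xi_4;\psi(Q))=1$ for planar quadrilaterals, the two applications of \eqref{eq:def:QCmaps} give $\kappa=K^2$, and the annulus comparison for \eqref{point:zero:modulus} works because $\psi(B_Y(y,R))$ is an open neighbourhood of $\psi(y)$ disjoint from $\psi(Y\setminus B_Y(y,R))$.

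For the hard direction your text is an honest sketch rather than a proof: the continuity of $u$, the structure of its level sets, the injectivity of $f=(u,v)$, and the upper-gradient/Jacobian estimates feeding into \Cref{prop:williams:L-Wversion} are all asserted and explicitly deferred to \cite{Raj:17}. Two points deserve flagging. First, you slightly misplace where reciprocality enters: already the continuity of $u$ (not only the non-collapsing at the end) requires \eqref{point:zero:modulus} and the upper bound in \eqref{upper:bound}; with only the automatic lower bound \eqref{eq:lower:bound} the function $u$ defined by your infimum can fail to be continuous. Second, defining $v$ independently as the minimizer for the conjugate family $\Gamma(\xi_1,\xi_3;Q)$ is a genuine deviation from Rajala, who constructs the conjugate function from $u$ via moduli of path families between level sets precisely because it is not clear a priori that two independently constructed minimizers assemble into a homeomorphism. (Indeed, in the present paper the identity $v=Mu'$ between the conjugate function and the second minimizer is only derived in \Cref{lemm:components}, after the homeomorphism property is in hand and via Stoilow factorization.) Your globalization step via a quasiconformal atlas and the measurable Riemann mapping theorem is legitimate, but it is closer to \Cref{thm:iko} (Theorem 1.3 of \cite{Iko:19}) than to Rajala's original exhaustion argument; either route works.
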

    Theorem 1.3 of \cite{Iko:19} shows that if a metric surface $( Y, d_{Y} )$ can be covered by quasiconformal images of domains $V \subset \mathbb{R}^{2}$, then $( Y, d_{Y} )$ is quasiconformally equivalent to a Riemannian surface. In particular, we have the following.
\begin{thm}\label{thm:iko}
    Let $( Y, d_{Y} )$ be a metric surface homeomorphic to $\mathbb{S}^{2}$. Then there exists a quasiconformal homeomorphism $\psi \colon ( Y, d_{Y} ) \rightarrow \mathbb{S}^{2}$ if and only if each point $y \in Y$ is contained in an open set $U$ from which there exists a quasiconformal homeomorphism $\phi \colon U \rightarrow V \subset \mathbb{R}^{2}$.
\end{thm}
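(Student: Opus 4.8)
The plan is to prove the two implications directly, importing the substantive analytic input from the results recalled just above.

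$(\Rightarrow)$ Suppose $\psi \colon Y \to \mathbb{S}^{2}$ is quasiconformal and fix $y \in Y$. First I would pick a point $q \in \mathbb{S}^{2}$ with $q \neq \psi(y)$, set $p = \psi^{-1}(q)$, and take $U = Y \setminus \{p\}$, an open neighborhood of $y$. For any path family $\Gamma$ whose members lie in the open set $U$, the $2$-modulus computed in $U$ coincides with the one computed in $Y$: extending an admissible density by zero does not change its $L^{2}$-norm, while restricting an admissible density on $Y$ to $U$ can only decrease it, and the Hausdorff $2$-measure is intrinsic. Hence $\psi|_{U} \colon U \to \mathbb{S}^{2} \setminus \{q\}$ is quasiconformal with the same constant; here I also use that the inequalities \eqref{eq:def:QCmaps} are symmetric in $\psi$ and $\psi^{-1}$, so the inverse of a quasiconformal homeomorphism is again quasiconformal. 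Composing with the stereographic projection $\mathbb{S}^{2} \setminus \{q\} \to \mathbb{R}^{2}$, which is conformal and thus $1$-quasiconformal, gives a quasiconformal homeomorphism $\phi \colon U \to \mathbb{R}^{2} \subset \mathbb{R}^2$, as required.

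$(\Leftarrow)$ Conversely, assume each $y \in Y$ lies in an open set $U_{y}$ carrying a quasiconformal homeomorphism $\phi_{y} \colon U_{y} \to V_{y} \subset \mathbb{R}^{2}$. Again using that inverses of quasiconformal homeomorphisms are quasiconformal, the maps $\phi_{y}^{-1} \colon V_{y} \to U_{y}$ present $Y = \bigcup_{y} U_{y}$ as a union of quasiconformal images of planar domains. Therefore Theorem 1.3 of \cite{Iko:19}, as recalled above, yields a Riemannian surface $N$ and a quasiconformal homeomorphism $f \colon Y \to N$. Since $Y$ is homeomorphic to $\mathbb{S}^{2}$ and $f$ is a homeomorphism, $N$ is a smooth Riemannian sphere. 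By the uniformization theorem for Riemannian surfaces there is a conformal diffeomorphism $g \colon N \to \mathbb{S}^{2}$; as a smooth conformal map between Riemannian $2$-manifolds it preserves the conformal structure and hence all path moduli, so it is $1$-quasiconformal. Then $\psi = g \circ f \colon Y \to \mathbb{S}^{2}$ is a composition of quasiconformal homeomorphisms and is itself quasiconformal.

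Since the real content is carried by \cite{Iko:19} and classical uniformization, I do not anticipate a genuine obstacle. The only points needing care are bookkeeping: that moduli of path families contained in an open subset agree with the ambient moduli (so restrictions of quasiconformal maps to open sets stay quasiconformal), that the inverse of a quasiconformal homeomorphism is quasiconformal, and that the local charts $\phi_{y}^{-1}$ match verbatim the covering hypothesis of Theorem 1.3 of \cite{Iko:19}. (An alternative to the converse would route through \Cref{thm:raj} by checking reciprocality from the charts, but invoking \cite{Iko:19} is more direct.)
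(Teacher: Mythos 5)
Your proposal is correct and matches the paper's intended argument: the paper derives this theorem directly from Theorem 1.3 of \cite{Iko:19} (covering by quasiconformal images of planar domains, then uniformizing the resulting Riemannian sphere), and the forward direction via puncturing and stereographic projection is the standard observation. The bookkeeping points you flag (locality of modulus, symmetry of the quasiconformality definition under inversion) are handled correctly.
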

    Since \eqref{eq:lower:bound} holds, we have the following result.
\begin{prop}[Corollary 12.3 of \cite{Raj:17}]\label{prop:outer:to:maximal}
    Let $Y$ be a metric surface, $U \subset Y$ a domain, and $\psi \colon U \rightarrow \Omega \subset \mathbb{R}^{2}$ a homeomorphism. If $K_{O}( \psi ) < \infty$, then $\psi$ is $K$-quasiconformal for $K = \left( 2 \cdot \kappa_{0} \right) \cdot K_{O}( \psi )$.
\end{prop}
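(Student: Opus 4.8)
The hypothesis controls the outer dilatation $K_{O}(\psi)$, and since $\psi$ is $\max\{K_{I}(\psi), K_{O}(\psi)\}$-quasiconformal it suffices to establish the inner dilatation bound $\Mod\psi\Gamma \leq 2\kappa_{0}K_{O}(\psi)\,\Mod\Gamma$ for every path family $\Gamma$ in $U$; since $2\kappa_{0} > 1$, this constant also dominates $K_{O}(\psi)$ and yields the stated $K$. The plan is to first prove this inequality when $\Gamma$ joins opposite sides of a quadrilateral, and then promote it to arbitrary path families.

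For the quadrilateral step, fix a quadrilateral $Q \subset U$ with boundary arcs $\xi_{1}, \xi_{2}, \xi_{3}, \xi_{4}$ in cyclic order. As $\psi$ is a homeomorphism onto a planar domain, $\psi(Q)$ is a closed Jordan quadrilateral in $\mathbb{R}^{2}$ with boundary arcs $\psi(\xi_{i})$, and $\psi\Gamma(\xi_{i},\xi_{j};Q) = \Gamma(\psi(\xi_{i}),\psi(\xi_{j});\psi(Q))$. By classical planar conformal modulus theory — mapping the interior of $\psi(Q)$ conformally onto a Euclidean rectangle and invoking the conformal invariance of the $2$-modulus together with Carathéodory's extension theorem — the two conjugate families of $\psi(Q)$ have reciprocal moduli, so
\[
    \Mod\psi\Gamma(\xi_{1},\xi_{3};Q)\cdot\Mod\psi\Gamma(\xi_{2},\xi_{4};Q) = 1 .
\]
Applying the outer dilatation inequality of $\psi$ to $\Gamma(\xi_{2},\xi_{4};Q)$ and then the universal lower bound \eqref{eq:lower:bound} for the metric surface $U$ yields
\[
    \Mod\psi\Gamma(\xi_{1},\xi_{3};Q) \leq \frac{K_{O}(\psi)}{\Mod\Gamma(\xi_{2},\xi_{4};Q)} \leq \kappa_{0}K_{O}(\psi)\,\Mod\Gamma(\xi_{1},\xi_{3};Q),
\]
and symmetrically for $\Gamma(\xi_{2},\xi_{4};Q)$. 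Hence $\Mod\psi\Gamma \leq \kappa_{0}K_{O}(\psi)\,\Mod\Gamma$ whenever $\Gamma$ joins opposite sides of a quadrilateral in $U$.

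It remains to promote this to arbitrary path families $\Gamma$ in $U$, where a factor $2$ is lost. One route uses that the $2$-modulus of any path family in a metric surface is controlled, up to a factor $2$, by the moduli of connecting families $\Gamma(E,F;G)$ for disjoint continua $E,F$, and that each such family can be exhausted by a locally finite collection of quadrilaterals of bounded overlap, over which the quadrilateral estimate is summed using countable subadditivity of modulus. An alternative, closer to \cite{Raj:17}, writes the desired inequality as the outer dilatation bound $\Mod\Gamma' \leq 2\kappa_{0}K_{O}(\psi)\,\Mod\psi^{-1}\Gamma'$ for $\psi^{-1}\colon\Omega\to U$, shows $\psi^{-1}\in N^{1,2}_{\loc}(\Omega,U)$ — using that $\psi\in N^{1,2}_{\loc}(U,\Omega)$ with $\rho_{\psi}^{2}\leq K_{O}(\psi)J_{\psi}$, as in \Cref{prop:williams:L-Wversion}, together with \eqref{eq:lower:bound} to rule out collapsing — and then localizes the quadrilateral estimate to infinitesimal squares to obtain $\rho_{\psi^{-1}}^{2}\leq 2\kappa_{0}K_{O}(\psi)J_{\psi^{-1}}$ almost everywhere, after which \Cref{prop:williams:L-Wversion} applied to $\psi^{-1}$ concludes. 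Either way, I expect this promotion step to be the main obstacle: the quadrilateral estimate above is short once one invokes planar modulus theory and \eqref{eq:lower:bound}, but controlling the degeneracy of $\psi$ — where $\mathcal{H}^{2}_{Y}$ may charge or collapse parts of $U$ — and recovering precisely the factor $2$ require the careful argument of \cite{Raj:17}.
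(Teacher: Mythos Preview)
The paper does not give its own proof of this proposition; it is simply quoted as Corollary~12.3 of \cite{Raj:17}, so there is nothing in the present paper to compare your argument against.

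That said, your quadrilateral step --- combining the planar identity $\Mod\psi\Gamma(\xi_{1},\xi_{3};Q)\cdot\Mod\psi\Gamma(\xi_{2},\xi_{4};Q)=1$ with the outer dilatation bound and the universal lower bound \eqref{eq:lower:bound} --- is correct and is exactly the mechanism behind Rajala's result. Your second route for the promotion step (showing $\psi^{-1}\in N^{1,2}_{\loc}$ and then obtaining the pointwise inequality $\rho_{\psi^{-1}}^{2}\leq 2\kappa_{0}K_{O}(\psi)J_{\psi^{-1}}$ by localizing to small quadrilaterals) is the one actually taken in \cite{Raj:17}; the factor $2$ arises there in the pointwise differentiation argument. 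Your first route, a covering-and-summing argument over quadrilaterals, is not what is done and would be delicate to make precise on a general metric surface. You are right that the promotion step is where the real work lies: establishing the Sobolev regularity of $\psi^{-1}$ without any a priori reciprocality of $U$ occupies a substantial part of \cite{Raj:17} and cannot be filled in by a short argument.
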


    \section{Hemispheres}\label{sec:sew}
    We construct a (pseudo)distance $d_{Z}$ on $Z$ using a \emph{predistance} $D \colon Z \times Z \rightarrow \left[0, \infty\right]$ defined in the following way, with the identification $S_{Z} \subset \overline{Z}_{1}$ for the seam,
    \begin{equation*}
        D( x, y )
        =
        \left\{
        \begin{split}
            &\infty,
            &&\text{ if } (x, y) \in Z_{1} \times Z_{2} \cup Z_{2} \times Z_{1},
            \\
            &\min\left\{
                \sigma( x, y ),
                \sigma( g(x), g(y) )
            \right\},
            &&\text{ if } x, y \in S_{Z},
            \\
            &\sigma( x, y ),
            &&\text{ otherwise}.
        \end{split}
        \right.
    \end{equation*}
    Then we denote $d_{Z}( x, y ) = \inf \sum_{ i = 1 }^{ n } D( x_{i}, x_{i+1} )$, the infimum taken over finite chains $( x_{i} )_{ i = 1 }^{ n+1 }$ for which $x_{1} = x$ and $x_{n+1} = y$. We obtain a metric space $\widetilde{Z}$ and a quotient map $Q \colon Z \rightarrow \widetilde{Z}$ by identifying $( x, y ) \in Z \times Z$ whenever $d_{Z}( x, y ) = 0$, and setting $d_{\widetilde{Z}}( x, y ) = d_{Z}( Q^{-1}(x), Q^{-1}(y) )$ for each $x, y \in \widetilde{Z}$.
    
    In this section, we focus on analyzing the distance $d_{Z}$ on the seam $S_{Z}$. The main results of this section are Lemmas \ref{lemm:jumping:overtheseam} and \ref{lemm:inclusion} and \Cref{lemm:hausdorff}.
    
    In the following two lemmas we abuse notation and identify $\iota_{i}( Z_{i} )$ with $Z_{i}$ when convenient.
    \begin{lemm}\label{lemm:jumping}
    The following hold:
    \begin{enumerate}
        \item Let $x, y \in \mathbb{S}^{1} \subset \overline{Z}_{1}$ and $( x_{i} )_{ i = 1 }^{ n + 1 }$ a chain with $x_{1} = x$, $x_{n+1} = y$, and $x_{i} \in Z_{1}$ otherwise. Then $\sum_{ i = 1 }^{ n } D( x_{i},x_{i+1} ) \geq D( x, y )$.
        \item Let $x, y \in \mathbb{S}^{1} \subset \overline{Z}_{1}$ and $( x_{i} )_{ i = 1 }^{ n + 1 }$ a chain with $g(x_{1}) = g(x)$, $g(x_{n+1}) = g(y)$, and $x_{i} \in Z_{2}$ otherwise. Then $\sum_{ i = 1 }^{ n } D( x_{i},x_{i+1} ) \geq D( x, y )$.
    \end{enumerate}
    \end{lemm}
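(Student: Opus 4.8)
The plan is to reduce both statements to the triangle inequality for the spherical length distance $\sigma$. The key observation is that a chain $( x_{i} )$ whose interior vertices all lie in a single \emph{open} hemisphere never encounters the value $\infty$ in the definition of $D$, and it meets the branch $\min\{ \sigma( \cdot, \cdot ),  \sigma( g( \cdot ), g( \cdot ) ) \}$ at most at its two endpoints; every other consecutive pair contributes exactly $\sigma$ of the two points, measured in the coordinates of the relevant closed hemisphere.

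For part (1), the case $n = 1$ is an equality, so I assume $n \geq 2$. Then for each $1 \leq i \leq n$ at most one of $x_{i}, x_{i+1}$ lies on $S_{Z}$ (namely $x_{1}$ or $x_{n+1}$) and neither lies in $Z_{2}$; hence $( x_{i}, x_{i+1} ) \notin Z_{1} \times Z_{2} \cup Z_{2} \times Z_{1}$ and $( x_{i}, x_{i+1} ) \notin S_{Z} \times S_{Z}$, so $D( x_{i}, x_{i+1} ) = \sigma( x_{i}, x_{i+1} )$, all distances read off inside the closed southern hemisphere $\overline{Z}_{1}$. Summing and applying the triangle inequality for $\sigma$ gives
\[
    \sum_{ i = 1 }^{ n } D( x_{i}, x_{i+1} )
    =
    \sum_{ i = 1 }^{ n } \sigma( x_{i}, x_{i+1} )
    \geq
    \sigma( x, y )
    \geq
    \min\{ \sigma( x, y ),  \sigma( g(x), g(y) ) \}
    =
    D( x, y ),
\]
the last equality because $x, y \in S_{Z}$.

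For part (2), I would run the mirror argument inside the closed northern hemisphere $\overline{Z}_{2}$. The place that needs care is the bookkeeping forced by the convention $S_{Z} \subset \overline{Z}_{1}$: when the seam endpoint $x_{1} \in S_{Z}$ is paired with the interior vertex $x_{2} \in Z_{2}$, the representative of $x_{1}$ that lies in the same hemisphere as $x_{2}$ is $g( x_{1} ) = g( x )$, so $D( x_{1}, x_{2} ) = \sigma( g(x), x_{2} )$; symmetrically $D( x_{n}, x_{n+1} ) = \sigma( x_{n}, g(y) )$, while the interior terms equal $\sigma( x_{i}, x_{i+1} )$. Telescoping with the triangle inequality for $\sigma$ in $\overline{Z}_{2}$ yields $\sum_{ i = 1 }^{ n } D( x_{i}, x_{i+1} ) \geq \sigma( g(x), g(y) ) \geq \min\{ \sigma( x, y ),  \sigma( g(x), g(y) ) \} = D( x, y )$, and the case $n = 1$ is again immediate. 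The only genuine — and mild — obstacle is this matching of the two seam representatives in part (2); once it is in place, each half of the lemma is a single application of the triangle inequality.
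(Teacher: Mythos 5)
Your proof is correct and follows essentially the same route as the paper's: each consecutive pair in such a chain falls into the ``otherwise'' branch of $D$ and hence contributes a genuine $\sigma$-distance in the relevant hemisphere, so the triangle inequality for $\sigma$ plus $D(x,y) \leq \min\{\sigma(x,y), \sigma(g(x),g(y))\}$ on the seam gives the claim. Your extra care with the $n=1$ case and with matching the seam representative $g(x_1)$ in part (2) is consistent with the paper's conventions (compare \eqref{eq:energyminimal:chain:seam}) and merely makes explicit what the paper leaves implicit.
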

    \begin{proof}
    Given the chain from the claim (1), for every $i$, $D( x_{i}, x_{i+1} ) = \sigma( x_{i}, x_{i+1} )$. Thus, $\sum_{ i = 1 }^{ n } D( x_{i},x_{i+1} ) \geq \sigma( x_{1}, x_{n+1} ) \geq D( x_{1}, x_{n+1} )$. The corresponding inequalities hold for the chain from (2).
    \end{proof}
    \Cref{lemm:jumping} implies that when computing $d_{Z}(\iota_1(x), \iota_1(y))$ for $x, y \in \mathbb{S}^{1}$, it is sufficient to consider chains with intermediate points staying within the seam.
    \begin{lemm}\label{lemm:jumping:overtheseam}
    If $x, y \in Z_{1}$, then
    \begin{equation}
        \label{eq:energyminimal:chain}
        d_{Z}( \iota_{1}(x), \iota_{1}(y) )
        =
        \left\{
        \begin{split}
            &\sigma(x,y),
            \quad\text{or there exist $w, w' \in \mathbb{S}^{1}$ with}
            \\
            &\sigma( x, w )
            +
            d_{Z}( \iota_{1}(w), \iota_{1}(w') )
            +
            \sigma( w', y )
            \leq
            \sigma(x,y).
        \end{split}
        \right.
    \end{equation}
    The corresponding identity holds for points $x, y \in Z_{2}$.
    
    Furthermore, if $x \in Z_{1}$ and $y \in Z_{2}$, there exist $w, w' \in \mathbb{S}^{1}$ such that
    \begin{equation}
        \label{eq:energyminimal:chain:seam}
        d_{Z}( \iota_{1}(x), \iota_{2}(y) )
        =
        \sigma( x, w )
        +
        d_{Z}( \iota_{1}(w), \iota_{1}(w') )
        +
        \sigma( g(w'), y ).
    \end{equation}
    \end{lemm}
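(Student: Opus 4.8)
The plan is to argue directly from the definition $d_{Z}(\cdot,\cdot)=\inf\sum D(x_{i},x_{i+1})$, exploiting that $D\equiv\infty$ on $(Z_{1}\times Z_{2})\cup(Z_{2}\times Z_{1})$. The structural observation is that if a chain $(x_{i})_{i=1}^{n+1}$ has finite $D$-sum and $x_{1}\in Z_{1}$, then $x_{1},\dots,x_{i}\in Z_{1}$ up to the first index at which the chain meets $S_{Z}$: whenever $x_{i}\in Z_{1}$, finiteness of $D(x_{i},x_{i+1})$ forces $x_{i+1}\notin Z_{2}$, hence $x_{i+1}\in Z_{1}\cup S_{Z}$, and the only passage into $Z_{2}$ is through $S_{Z}$. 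Symmetrically, after the chain leaves $S_{Z}$ for the last time it stays in a single open hemisphere. So I would fix a finite-$D$-sum chain, let $w\in\mathbb{S}^{1}$ be the $\mathbb{S}^{1}$-coordinate of the first seam point it hits and $w'\in\mathbb{S}^{1}$ that of the last, and split $\sum_{i=1}^{n}D(x_{i},x_{i+1})$ at those two indices into an initial, a middle, and a terminal block.

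On the outer blocks every consecutive pair lies in a single closed hemisphere (seam points being read off in the coordinate, $\overline{Z}_{1}$ or $\overline{Z}_{2}$, of the adjacent point), so there $D$ agrees with $\sigma$ and the triangle inequality for $\sigma$ on $\mathbb{S}^{2}$ bounds the initial block below by $\sigma(x,w)$ and the terminal block below by $\sigma(w',y)$ when $y\in Z_{1}$, or by $\sigma(g(w'),y)$ when $y\in Z_{2}$. The middle block is a chain from $\iota_{1}(w)$ to $\iota_{1}(w')$, so its $D$-sum is at least $d_{Z}(\iota_{1}(w),\iota_{1}(w'))$. Hence every finite-$D$-sum chain from $\iota_{1}(x)$ that meets $S_{Z}$ has $D$-sum at least $\sigma(x,w)+d_{Z}(\iota_{1}(w),\iota_{1}(w'))+\sigma(w',y)$, respectively $\sigma(x,w)+d_{Z}(\iota_{1}(w),\iota_{1}(w'))+\sigma(g(w'),y)$ when the terminal point lies in $Z_{2}$.

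For $x,y\in Z_{1}$ the two-point chain gives $d_{Z}(\iota_{1}(x),\iota_{1}(y))\le D(\iota_{1}(x),\iota_{1}(y))=\sigma(x,y)$, so either equality holds in \eqref{eq:energyminimal:chain}, or there is a chain with $D$-sum strictly below $\sigma(x,y)$; such a chain cannot remain inside $Z_{1}$ (there $D=\sigma$ and its sum would be $\ge\sigma(x,y)$), so it meets $S_{Z}$ and the block estimate produces $w,w'\in\mathbb{S}^{1}$ with $\sigma(x,w)+d_{Z}(\iota_{1}(w),\iota_{1}(w'))+\sigma(w',y)\le\sigma(x,y)$. The case $x,y\in Z_{2}$ is identical once one records $\iota_{1}(w)=\iota_{2}(g(w))$, so that $d_{Z}(\iota_{1}(w),\iota_{1}(w'))=d_{Z}(\iota_{2}(g(w)),\iota_{2}(g(w')))$ and the substitution $v=g(w)$, $v'=g(w')$ turns the estimate into its exact analogue with $\iota_{2}$. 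For $x\in Z_{1}$, $y\in Z_{2}$ we have $d_{Z}(\iota_{1}(x),\iota_{2}(y))<\infty$ and every competing finite-$D$-sum chain must meet $S_{Z}$, so the block estimate gives $d_{Z}(\iota_{1}(x),\iota_{2}(y))\ge\inf_{w,w'\in\mathbb{S}^{1}}\big(\sigma(x,w)+d_{Z}(\iota_{1}(w),\iota_{1}(w'))+\sigma(g(w'),y)\big)$, while concatenating the one-step chain realizing $\sigma(x,w)$, a near-optimal chain from $\iota_{1}(w)$ to $\iota_{1}(w')$, and the one-step chain realizing $\sigma(g(w'),y)$ gives the opposite inequality for each fixed $w,w'$. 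Finally $(w,w')\mapsto\sigma(x,w)+d_{Z}(\iota_{1}(w),\iota_{1}(w'))+\sigma(g(w'),y)$ is continuous on the compact space $\mathbb{S}^{1}\times\mathbb{S}^{1}$, since $\sigma(x,\cdot)$ and $g$ are continuous and $(w,w')\mapsto d_{Z}(\iota_{1}(w),\iota_{1}(w'))$ is $1$-Lipschitz because $\iota_{1}|_{\mathbb{S}^{1}}$ is $1$-Lipschitz into $(Z,d_{Z})$ and $d_{Z}$ obeys the triangle inequality; so the infimum is attained, which yields \eqref{eq:energyminimal:chain:seam}.

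The routine part is the chain-splitting; the only step requiring genuine care is the bookkeeping of seam points, each of which carries a $\overline{Z}_{1}$-coordinate and a $\overline{Z}_{2}$-coordinate related by $g$, and one must consistently use the coordinate of the adjacent hemisphere when replacing $D$ by $\sigma$. After that the argument reduces to the triangle inequality for $\sigma$ on $\mathbb{S}^{2}$ and the definition of $d_{Z}$.
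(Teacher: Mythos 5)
Your proof is correct and follows essentially the same route as the paper: split a near-optimal chain at the first and last seam points (using that $D=\infty$ across open hemispheres and $D=\sigma$ on the outer blocks), bound the outer blocks by the triangle inequality for $\sigma$ and the middle block by $d_{Z}(\iota_{1}(w),\iota_{1}(w'))$, and treat separately the degenerate case where near-optimal chains avoid $\mathbb{S}^{1}$. Your justification that the infimum over $(w,w')$ is attained (via compactness and the $1$-Lipschitz dependence of $d_{Z}(\iota_{1}(w),\iota_{1}(w'))$ on $(w,w')$) is in fact more explicit than the paper's, which merely asserts the realization.
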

    \begin{proof}
    We show \eqref{eq:energyminimal:chain}. Suppose that there exists a sequence $\epsilon_{j} \rightarrow 0^{+}$ and a sequence of chains $( x_{i,j} )_{ i = 1 }^{ n_{j} + 1 }$ joining $x$ to $y$ with $d_{Z}( \iota_{1}(x), \iota_{1}(y) ) \geq - \epsilon_{j} + \sum_{ i = 1 }^{ n_{j} } D( x_{i,j}, x_{i+1,j} )$ so that every chain has an element in $\mathbb{S}^{1}$. 
    If $i_1$ is the first index for which $x_{i,j} \in \mathbb{S}^{1}$ and $i_{2}$ the last one, then
    \begin{align*}
        \sum_{ i = 1 }^{ n_{j} }
            D( x_{i,j}, x_{i+1,j} )
        &\geq
        \sigma( x, x_{i_{1},j} )
        +
        d_{Z}( \iota_{1}( x_{i_{1},j} ), \iota_{1}( x_{i_{2},j} ) )
        +
        \sigma( x_{ i_{2}, j }, y )
        \\
        &\geq
        \inf\left\{ \sigma( x, w ) + d_{Z}( \iota_{1}(w), \iota_{1}(w') ) + \sigma( w', y ) \right\},
    \end{align*}
    the infimum taken over every $w, w' \in \mathbb{S}^{1}$. Observe that the infimum is realized by some $w, w' \in \mathbb{S}^{1}$. Given such $w, w' \in \mathbb{S}^{1}$, we pass to the limit $j \rightarrow \infty$ and conclude
    \begin{equation*}
        d_{Z}( \iota_{1}(x), \iota_{1}(y) )
        \geq
        \sigma( x, w ) + d_{Z}( \iota_{1}(w), \iota_{1}(w') ) + \sigma( w', y ).
    \end{equation*}
    Since "$\leq$" holds for every pair $w, w' \in \mathbb{S}^{1}$, the lower equality in \eqref{eq:energyminimal:chain} follows.
    
    If no such sequence of $\epsilon_{j} \rightarrow 0^{+}$ exists, then there exists $\epsilon_{0} > 0$ such that for every $\epsilon_{0} > \epsilon > 0$, any chain joining $x$ to $y$ with $d_{Z}( \iota_{1}(x), \iota_{2}(y) ) \geq - \epsilon + \sum_{ i = 1 }^{ n } D( x_{i}, x_{i+1} )$ does not intersect $\mathbb{S}^{1}$. Hence $\sum_{ i = 1 }^{ n } D( x_{i}, x_{i+1} ) \geq \sigma( x, y )$. So, either way, we obtain \eqref{eq:energyminimal:chain}. The claims for each $x, y \in Z_{2}$ and $(x,y) \in Z_{1} \times Z_{2}$ are proved in a similar manner.
    \end{proof}
    
    For $i = 1,2$, we denote $\widetilde{\iota}_{i} \coloneqq Q \circ \iota_{i} \colon \overline{Z}_{i} \rightarrow \widetilde{Z}$. \Cref{lemm:jumping:overtheseam} implies that $\widetilde{\iota}_{i}$ is $1$-Lipschitz everywhere and a local isometry in $Z_{i}$. We also establish that $\widetilde{\iota}_{i}$ is \emph{monotone}, i.e, the preimage of a point is a compact and conected set.
    \begin{lemm}\label{lemm:inclusion}
    For $i = 1,2$, the inclusion map $\widetilde{\iota}_{i} \colon \overline{Z}_{i} \rightarrow \widetilde{Z}$ is $1$-Lipschitz everywhere and a local isometry on $Z_{i}$. Moreover, for every $z \in \widetilde{Z}$, the preimage $\widetilde{\iota}_{i}^{-1}(z)$ is compact and connected. It contains two or more points only if $\widetilde{\iota}_{i}^{-1}(z) \subset \mathbb{S}^{1}$.
    \end{lemm}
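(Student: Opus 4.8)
The plan is to dispatch the local statements first and then concentrate on the monotonicity assertion. From the definition of $D$ one has $D(\iota_{i}(x),\iota_{i}(y))\le\sigma(x,y)$ for all $x,y\in\overline{Z}_{i}$ --- the only case needing a comment is $x,y\in\mathbb{S}^{1}$, where $D=\min\{\sigma(x,y),\sigma(g(x),g(y))\}$ --- so the two--point chain gives $d_{\widetilde{Z}}(\widetilde{\iota}_{i}(x),\widetilde{\iota}_{i}(y))\le\sigma(x,y)$; in particular $\widetilde{\iota}_{i}$ is continuous, so every fibre $\widetilde{\iota}_{i}^{-1}(z)$, being closed in the compact space $\overline{Z}_{i}$, is compact. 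For $x,y\in Z_{i}$, a chain joining $\iota_{i}(x)$ to $\iota_{i}(y)$ either stays in $Z_{i}$, and then has cost at least $\sigma(x,y)$, or it first meets the seam at some $\iota_{i}(w)$ with $w\in\mathbb{S}^{1}$, and then has cost at least $\sigma(x,w)\ge\dist_{\sigma}(x,\mathbb{S}^{1})$; this is the dichotomy underlying \eqref{eq:energyminimal:chain}, and it yields
\[
    d_{\widetilde{Z}}(\widetilde{\iota}_{i}(x),\widetilde{\iota}_{i}(y))\ \ge\ \min\{\sigma(x,y),\dist_{\sigma}(x,\mathbb{S}^{1})\}\qquad(x,y\in Z_{i}).
\]
Together with the Lipschitz bound this shows that $\widetilde{\iota}_{i}$ restricts to an isometry on $B_{\sigma}(x,\dist_{\sigma}(x,\mathbb{S}^{1})/3)\subset Z_{i}$ for every $x\in Z_{i}$, hence is a local isometry on $Z_{i}$ and, in particular, injective there.

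Next I would show that a fibre meeting $Z_{i}$ is a singleton. If $x\in Z_{i}$, $y\in\overline{Z}_{i}$ and $\widetilde{\iota}_{i}(x)=\widetilde{\iota}_{i}(y)$, choose $y_{n}\in Z_{i}$ with $y_{n}\to y$; the Lipschitz bound gives $d_{\widetilde{Z}}(\widetilde{\iota}_{i}(x),\widetilde{\iota}_{i}(y_{n}))\le\sigma(y,y_{n})\to 0$, while the displayed lower bound gives $d_{\widetilde{Z}}(\widetilde{\iota}_{i}(x),\widetilde{\iota}_{i}(y_{n}))\ge\min\{\sigma(x,y_{n}),\dist_{\sigma}(x,\mathbb{S}^{1})\}$, which stays bounded away from $0$ unless $\sigma(x,y)=0$. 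Hence $y=x$. Consequently a fibre containing at least two points is disjoint from $Z_{i}$, i.e.\ contained in $\mathbb{S}^{1}$.

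It remains to prove connectedness of $K:=\widetilde{\iota}_{i}^{-1}(z)$. If $K\not\subset\mathbb{S}^{1}$ then by the previous step $K$ is empty or a singleton, so assume $K\subset\mathbb{S}^{1}$. By \Cref{lemm:jumping}, for $w,w'\in\mathbb{S}^{1}$ the number $d_{Z}(\iota_{i}(w),\iota_{i}(w'))$ is the infimum of $\sum_{j}\min\{\sigma(u_{j},u_{j+1}),\sigma(g(u_{j}),g(u_{j+1}))\}$ over finite chains $w=u_{0},\dots,u_{k}=w'$ inside $\mathbb{S}^{1}$. The key point is the identity
\[
    d_{Z}(\iota_{i}(w),\iota_{i}(w'))\ =\ \min\{\lambda(\alpha_{1}),\lambda(\alpha_{2})\},
\]
where $\alpha_{1},\alpha_{2}$ are the two closed subarcs of $\mathbb{S}^{1}$ from $w$ to $w'$ and $\lambda(\alpha)$ is the same infimum restricted to chains monotone along $\alpha$; here $\lambda$ is additive under concatenation of arcs and monotone under passing to subarcs, and the inequality ``$\le$'' is immediate. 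Granting the identity, if $a\ne b$ lie in $K$ then $\min\{\lambda(\alpha_{1}),\lambda(\alpha_{2})\}=0$, so some arc, say $\alpha_{1}$, satisfies $\lambda(\alpha_{1})=0$; then for every $p\in\alpha_{1}$ the subarc of $\alpha_{1}$ from $a$ to $p$ has $\lambda=0$, whence $d_{Z}(\iota_{i}(a),\iota_{i}(p))=0$ and $p\in K$, i.e.\ $\alpha_{1}\subset K$. If $K$ were disconnected, $\mathbb{S}^{1}\setminus K$ would have a complementary open arc $I$ with distinct endpoints $a,b\in K$; applying the above to $a$ and $b$, one of the two closed arcs between them lies in $K$, and since $I\cap K=\emptyset$ this must be the complementary closed arc $\mathbb{S}^{1}\setminus I$, which together with $K\subset\mathbb{S}^{1}\setminus I$ forces $K=\mathbb{S}^{1}\setminus I$, an arc --- contradicting disconnectedness. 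Hence $K$ is connected.

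The main obstacle is the identity $d_{Z}(\iota_{i}(w),\iota_{i}(w'))=\min\{\lambda(\alpha_{1}),\lambda(\alpha_{2})\}$, i.e.\ that no chain beats the monotone ones. For ``$\ge$'' I would replace each step $v_{j}\to v_{j+1}$ of an arbitrary chain by a monotone traversal of one of the two arcs between $v_{j}$ and $v_{j+1}$, without increasing the cost: if the step cost is realised by $\sigma(v_{j},v_{j+1})$, traverse the short arc between them using the $\sigma$--option at each substep; if it is realised by $\sigma(g(v_{j}),g(v_{j+1}))$, traverse the $g$--preimage of the short arc between $g(v_{j})$ and $g(v_{j+1})$ using the $\sigma\circ g$--option. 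Concatenating the resulting monotone subchains gives a chain running along a path $P$ in $\mathbb{S}^{1}$ from $w$ to $w'$ of no larger cost. Lifting $P$ to the universal cover $\mathbb{R}$ of $\mathbb{S}^{1}$, and using that on $\mathbb{R}$ monotone chains are optimal for $(s,t)\mapsto\min\{|s-t|,|\widehat{g}(s)-\widehat{g}(t)|\}$ --- where $\widehat{g}$ is a lift of $g$, and a backtracking step is wasteful because both $|s-t|$ and $|\widehat{g}(s)-\widehat{g}(t)|$ are monotone in the moving variable --- one sees that the cost is at least the induced one--dimensional chain--distance between the endpoints of the lift. Since that endpoint is a lift of $w'$ at displacement $|\alpha_{1}|+2\pi n$ from the initial lift for some $n\in\mathbb{Z}$, and the one--dimensional chain--distance is additive and nondecreasing in displacement, it is at least $\min\{\lambda(\alpha_{1}),\lambda(\alpha_{2})\}$. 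The one delicate point is the bookkeeping with sufficiently fine substeps (and short arcs), which guarantees that $\sigma$ and $\sigma\circ g$ agree with the straight--line distances used on $\mathbb{R}$.
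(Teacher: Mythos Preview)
Your proof is correct and follows the same overall architecture as the paper's: the $1$-Lipschitz and local-isometry claims are read off from \Cref{lemm:jumping:overtheseam}, and the heart of the matter is showing that for $w,w'\in\mathbb{S}^{1}$ the seam distance $d_Z(\iota_i(w),\iota_i(w'))$ is realised along one of the two arcs between them. The paper packages this as two auxiliary lemmas (\Cref{lemm:partitioning} and \Cref{lemm:partitioning:adv}): first that a single $D$-step dominates the $\widetilde{Z}$-length of the corresponding arc, then that the full chain distance equals $\ell(\widetilde{\iota}_1(\gamma))$ for some arc $\gamma$. Your identity $d_Z=\min\{\lambda(\alpha_1),\lambda(\alpha_2)\}$ is the same statement, proved instead by lifting to $\mathbb{R}$ and arguing that monotone chains are optimal for $\min\{|s-t|,|\widehat{g}(s)-\widehat{g}(t)|\}$; the subdivision step you flag as ``delicate'' is exactly the content of \Cref{lemm:partitioning}. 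The paper's route has the advantage that Lemmas \ref{lemm:partitioning} and \ref{lemm:partitioning:adv} are reused later to compute $\mathcal{H}^{1}_{\widetilde{Z}}$ on the seam in \Cref{lemm:hausdorff}; your universal-cover argument is more self-contained but does not feed forward in the same way.

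One small redundancy: once you have shown that for any $a\ne b$ in $K$ some arc $\alpha_1$ from $a$ to $b$ lies in $K$, you have already established path-connectedness of $K$; the subsequent contradiction argument with a complementary arc $I$ is unnecessary.
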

    Before proving \Cref{lemm:inclusion}, we show two auxiliary results.
    
    \begin{lemm}\label{lemm:partitioning}
    Let $x, y \in \mathbb{S}^{1}$ be distinct. Then there exists an arc $\gamma \colon \left[0,1\right] \rightarrow \mathbb{S}^{1}$ joining $x$ to $y$ with $D( \iota_{1}(x), \iota_{1}(y) ) = \min\left\{ \ell( \gamma ), \ell( g \circ \gamma ) \right\}$. The arc satisfies
    \begin{equation*}
        D( \iota_{1}(x), \iota_{1}(y) )
        \geq
        \sup_{ \left\{ t_{i} \right\}_{ i = 1 }^{ n+1 } }
        \sum_{ i = 1 }^{ n }
        D( \iota_{1}( \gamma(t_{i}) ), \iota_{1}( \gamma(t_{i+1}) ) ),
    \end{equation*}
    the supremum taken over finite partitions of $\left[0, 1\right]$. In particular, $D( \iota_{1}(x), \iota_{1}(y) ) \geq \ell( \widetilde{\iota}_{1}( \gamma ) )$.
    \end{lemm}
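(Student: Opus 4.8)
The plan is to produce the desired arc $\gamma$ by comparing two natural candidates and then verifying the subadditivity-type inequality directly along that arc. First I would recall that $D(\iota_1(x),\iota_1(y)) = \min\{\sigma(x,y),\sigma(g(x),g(y))\}$ by the definition of $D$ on $S_Z \times S_Z$. Since $x,y \in \mathbb{S}^1$ are distinct and $\sigma$ is the spherical length distance, there is a unique shortest geodesic arc $\alpha \colon [0,1] \to \mathbb{S}^1$ (a subarc of the great circle $\mathbb{S}^1$) joining $x$ to $y$ with $\ell(\alpha) = \sigma(x,y)$, and likewise a shortest geodesic arc $\beta \colon [0,1] \to \mathbb{S}^1$ joining $g(x)$ to $g(y)$ with $\ell(\beta) = \sigma(g(x),g(y))$. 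If $\sigma(x,y) \le \sigma(g(x),g(y))$ I set $\gamma = \alpha$; otherwise I set $\gamma = g^{-1} \circ \beta$, reparametrized on $[0,1]$. In the first case $\ell(\gamma) = \sigma(x,y) = D(\iota_1(x),\iota_1(y)) \le \ell(g\circ\gamma)$; in the second, $\ell(g \circ \gamma) = \ell(\beta) = \sigma(g(x),g(y)) = D(\iota_1(x),\iota_1(y)) \le \ell(\gamma)$ (the last inequality being $\sigma(g(x),g(y)) \le \sigma(x,y)$, which may require a short argument — see below). Either way $D(\iota_1(x),\iota_1(y)) = \min\{\ell(\gamma),\ell(g\circ\gamma)\}$.

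For the partition inequality, fix a finite partition $0 = t_1 \le \cdots \le t_{n+1} = 1$ of $[0,1]$. By the definition of $D$ on the seam, for each $i$ we have $D(\iota_1(\gamma(t_i)),\iota_1(\gamma(t_{i+1}))) = \min\{\sigma(\gamma(t_i),\gamma(t_{i+1})),\sigma(g(\gamma(t_i)),g(\gamma(t_{i+1})))\} \le \sigma(\gamma(t_i),\gamma(t_{i+1}))$ and also $\le \sigma(g\circ\gamma(t_i),g\circ\gamma(t_{i+1}))$. Summing the first bound over $i$ gives $\sum_i D(\iota_1(\gamma(t_i)),\iota_1(\gamma(t_{i+1}))) \le \sum_i \sigma(\gamma(t_i),\gamma(t_{i+1})) \le \ell(\gamma)$, and summing the second gives the same sum $\le \ell(g\circ\gamma)$. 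Hence the sum is bounded by $\min\{\ell(\gamma),\ell(g\circ\gamma)\} = D(\iota_1(x),\iota_1(y))$. Taking the supremum over partitions yields the displayed inequality. The final assertion $D(\iota_1(x),\iota_1(y)) \ge \ell(\widetilde\iota_1(\gamma))$ is then immediate: by definition $\ell(\widetilde\iota_1(\gamma)) = \sup \sum_i d_{\widetilde Z}(\widetilde\iota_1(\gamma(t_i)),\widetilde\iota_1(\gamma(t_{i+1})))$, and $d_{\widetilde Z}(\widetilde\iota_1(\gamma(t_i)),\widetilde\iota_1(\gamma(t_{i+1}))) \le D(\iota_1(\gamma(t_i)),\iota_1(\gamma(t_{i+1})))$ since a single-step chain realizes at least the infimum defining $d_Z$.

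The one place that needs care — and the main obstacle I anticipate — is justifying, in the second case, that choosing $\gamma = g^{-1}\circ\beta$ actually gives $\ell(g\circ\gamma) = \sigma(g(x),g(y))$ together with $\min\{\ell(\gamma),\ell(g\circ\gamma)\} = \sigma(g(x),g(y))$; this amounts to checking that $\ell(\beta) \le \ell(g^{-1}\circ\beta)$, i.e. that the $\sigma$-length of $g^{-1}$ of a shortest arc is at least that of the arc. Rather than trying to compare lengths of $g$-images of geodesics (which need not be geodesics and on which $g$ is merely a homeomorphism), the clean fix is to treat the two cases symmetrically: in \emph{either} case, letting $m = D(\iota_1(x),\iota_1(y)) = \min\{\sigma(x,y),\sigma(g(x),g(y))\}$, pick whichever of $\alpha$, $g^{-1}\circ\beta$ has the smaller of the two quantities $\ell(\cdot)$, $\ell(g\circ\cdot)$ equal to $m$; concretely, if $\sigma(x,y) = m$ take $\gamma=\alpha$ so $\ell(\gamma)=m$, and if $\sigma(g(x),g(y)) = m$ take $\gamma=g^{-1}\circ\beta$ so $\ell(g\circ\gamma)=m$. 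In both situations $\min\{\ell(\gamma),\ell(g\circ\gamma)\} \le m$. The reverse inequality $\min\{\ell(\gamma),\ell(g\circ\gamma)\} \ge m$ follows from the partition inequality just proved applied with the trivial partition $\{0,1\}$: $D(\iota_1(x),\iota_1(y)) \ge D(\iota_1(\gamma(0)),\iota_1(\gamma(1))) = D(\iota_1(x),\iota_1(y))$ gives nothing, so instead one argues $\min\{\ell(\gamma),\ell(g\circ\gamma)\} \ge \min\{\sigma(\gamma(0),\gamma(1)),\sigma(g(\gamma(0)),g(\gamma(1)))\} = D(\iota_1(x),\iota_1(y)) = m$, using $\ell(\gamma) \ge \sigma(\gamma(0),\gamma(1))$ and $\ell(g\circ\gamma) \ge \sigma(g(\gamma(0)),g(\gamma(1)))$. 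This closes the equality $D(\iota_1(x),\iota_1(y)) = \min\{\ell(\gamma),\ell(g\circ\gamma)\}$ without ever needing a length comparison for $g$ of a geodesic.
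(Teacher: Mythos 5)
Your proof is correct and follows essentially the same route as the paper: choose $\gamma$ so that either $\gamma$ or $g\circ\gamma$ is the length-minimizing geodesic realizing $D(\iota_1(x),\iota_1(y))$, then control partition sums using $D(\iota_1(a),\iota_1(b))\le\sigma(a,b)$ and $D(\iota_1(a),\iota_1(b))\le\sigma(g(a),g(b))$. The only (harmless) difference is that you bound the partition sum directly by $\min\{\ell(\gamma),\ell(g\circ\gamma)\}$ via the definition of length, whereas the paper derives the same bound from a two-point superadditivity inequality for $D$ along the geodesic plus induction.
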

    \begin{proof}
    The existence of $\gamma$ with $D( \iota_{1}(x), \iota_{1}(y) ) = \min\left\{ \ell( \gamma ), \ell( g \circ \gamma ) \right\}$ follows from the fact that $\sigma$ is geodesic on $\mathbb{S}^{1}$. We identify $\iota_{1}( x )$ with $x$ for every $x \in \mathbb{S}^{1}$ in the following computations.
    
    The claim about the partitions is a consequence of the following observation and induction: If $0 \leq a < s < b \leq 1$, then
    \begin{equation}
        \label{eq:lemm:partitioning}
        D( \gamma(a), \gamma(b) )
        \geq
        D( \gamma(a), \gamma(s) )
        +
        D( \gamma(s), \gamma(b) ).
    \end{equation}
    We first assume that $D( \gamma(a), \gamma(b) ) = \sigma( \gamma(a), \gamma(b) )$. Then $\gamma$ is a length-minimizing geodesic joining $\gamma(a)$ and $\gamma(b)$. Consequently,
    \begin{equation*}
        \sigma( \gamma(a), \gamma(b) )
        =
        \sigma( \gamma(a), \gamma(s) ) + \sigma( \gamma(s), \gamma(b) ).
    \end{equation*}
    Since $\sigma( c, d ) \geq D( c, d )$ holds for every $c, d \in \mathbb{S}^{1}$, the inequality \eqref{eq:lemm:partitioning} holds in this case. In the remaining case, $g \circ \gamma$ is a length-minimizing geodesic joining $g( \gamma(a) )$ and $g( \gamma(b) )$ and
    \begin{equation*}
        \sigma( g( \gamma(a) ), g( \gamma(b) ) )
        =
        \sigma( g( \gamma(a) ), g( \gamma(s) ) )
        +
        \sigma( g( \gamma(s) ), g( \gamma(b) ) ).
    \end{equation*}
    Since $\sigma( g(c), g(d) ) \geq D( c, d )$ for every $c, d \in \mathbb{S}^{1}$, the inequality \eqref{eq:lemm:partitioning} holds also in this case.
    
    The partition claim implies $D( x, y ) \geq \sum_{ i = 1 }^{ n } d_{ \widetilde{Z} }( \widetilde{\iota}_{1}( \gamma( t_{i} ) ), \widetilde{\iota}_{1}( \gamma( t_{i+1} ) ) )$ for every partition $\left\{ t_{i} \right\}_{ i = 1 }^{ n+1 }$ of $\left[0, 1\right]$. The inequality $D( x, y ) \geq \ell( \widetilde{\iota}_{1}( \gamma ))$ follows by taking the supremum over such partitions.
    \end{proof}

    \begin{lemm}\label{lemm:partitioning:adv}
    Let $x, y \in \mathbb{S}^{1}$ be distinct. Then there exists an arc $\gamma \colon \left[0, 1\right] \rightarrow \mathbb{S}^{1}$ joining $x$ to $y$ such that $d_{ \widetilde{Z} }(  \widetilde{\iota}_{1}( x ), \widetilde{\iota}_{1}( y ) ) = \ell( \widetilde{\iota}_{1}( \gamma ) )$.
    \end{lemm}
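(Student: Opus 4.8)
The plan is to prove the sharper statement that
\begin{equation*}
  d_{\widetilde{Z}}\bigl(\widetilde{\iota}_{1}(x),\widetilde{\iota}_{1}(y)\bigr)
  =
  \min\bigl\{ \ell(\widetilde{\iota}_{1}(\alpha)),\, \ell(\widetilde{\iota}_{1}(\beta)) \bigr\},
\end{equation*}
where $\alpha,\beta\colon[0,1]\to\mathbb{S}^{1}$ are the two arcs joining $x$ to $y$ in $\mathbb{S}^{1}$; the arc realizing the minimum on the right is then the required $\gamma$. The inequality ``$\leq$'' is immediate, since $\widetilde{\iota}_{1}\circ\alpha$ and $\widetilde{\iota}_{1}\circ\beta$ join $\widetilde{\iota}_{1}(x)$ to $\widetilde{\iota}_{1}(y)$ and a path connecting two points has length at least their distance. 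For ``$\geq$'', I would fix $\epsilon>0$ and, using \Cref{lemm:jumping} and the remark after it (and $d_{\widetilde{Z}}(\widetilde{\iota}_{1}(x),\widetilde{\iota}_{1}(y))=d_{Z}(\iota_{1}(x),\iota_{1}(y))$), choose a chain $x=x_{1},x_{2},\dots,x_{n+1}=y$ lying entirely in $\mathbb{S}^{1}$, with consecutive points distinct after deleting repetitions, such that $\sum_{i=1}^{n}D(\iota_{1}(x_{i}),\iota_{1}(x_{i+1}))<d_{\widetilde{Z}}(\widetilde{\iota}_{1}(x),\widetilde{\iota}_{1}(y))+\epsilon$. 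Applying \Cref{lemm:partitioning} to each consecutive pair gives arcs $\gamma_{i}\colon[0,1]\to\mathbb{S}^{1}$ from $x_{i}$ to $x_{i+1}$ with $\ell(\widetilde{\iota}_{1}\circ\gamma_{i})\leq D(\iota_{1}(x_{i}),\iota_{1}(x_{i+1}))$, and their concatenation $\Gamma\colon[0,1]\to\mathbb{S}^{1}$ is a path from $x$ to $y$ with $\ell(\widetilde{\iota}_{1}\circ\Gamma)=\sum_{i}\ell(\widetilde{\iota}_{1}\circ\gamma_{i})<d_{\widetilde{Z}}(\widetilde{\iota}_{1}(x),\widetilde{\iota}_{1}(y))+\epsilon$ by additivity of length under subdivision.

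Everything then reduces to the geometric fact that \emph{every} path $\Gamma\colon[0,1]\to\mathbb{S}^{1}$ joining $x$ to $y$ satisfies $\ell(\widetilde{\iota}_{1}\circ\Gamma)\geq\min\{\ell(\widetilde{\iota}_{1}\circ\alpha),\ell(\widetilde{\iota}_{1}\circ\beta)\}$; letting $\epsilon\to0^{+}$ then finishes the proof. To prove this, I would set $t_{1}=\sup\Gamma^{-1}(x)$ and $t_{2}=\inf\bigl(\Gamma^{-1}(y)\cap[t_{1},1]\bigr)$, so that $t_{1}<t_{2}$, $\Gamma(t_{1})=x$, $\Gamma(t_{2})=y$, and $\Gamma\bigl((t_{1},t_{2})\bigr)$ is a nonempty connected subset of $\mathbb{S}^{1}\setminus\{x,y\}$, hence contained in the interior of $\alpha$ or of $\beta$; say of $\alpha$. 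Then $\Gamma|_{[t_{1},t_{2}]}$ takes values in $\alpha$, and $\Phi\coloneqq\alpha^{-1}\circ\Gamma|_{[t_{1},t_{2}]}\colon[t_{1},t_{2}]\to[0,1]$ is continuous with $\Phi(t_{1})=0$ and $\Phi(t_{2})=1$. For a partition $0=s_{0}<\dots<s_{m}=1$ of $[0,1]$, a downward induction on $i$ using the intermediate value theorem (take $t^{(m)}=t_{2}$, $t^{(0)}=t_{1}$, and $t^{(i)}=\max\{t\in[t_{1},t^{(i+1)}]:\Phi(t)=s_{i}\}$) produces $t_{1}=t^{(0)}<\dots<t^{(m)}=t_{2}$ with $\Gamma(t^{(i)})=\alpha(s_{i})$, so that
\begin{equation*}
  \sum_{i=0}^{m-1}
  d_{\widetilde{Z}}\bigl(\widetilde{\iota}_{1}(\alpha(s_{i})),\widetilde{\iota}_{1}(\alpha(s_{i+1}))\bigr)
  =
  \sum_{i=0}^{m-1}
  d_{\widetilde{Z}}\bigl(\widetilde{\iota}_{1}(\Gamma(t^{(i)})),\widetilde{\iota}_{1}(\Gamma(t^{(i+1)}))\bigr)
  \leq
  \ell\bigl(\widetilde{\iota}_{1}\circ\Gamma|_{[t_{1},t_{2}]}\bigr)
  \leq
  \ell(\widetilde{\iota}_{1}\circ\Gamma).
\end{equation*}
Taking the supremum over partitions gives $\ell(\widetilde{\iota}_{1}\circ\alpha)\leq\ell(\widetilde{\iota}_{1}\circ\Gamma)$; the case ``of $\beta$'' yields $\ell(\widetilde{\iota}_{1}\circ\beta)\leq\ell(\widetilde{\iota}_{1}\circ\Gamma)$, and in either case the required bound holds.

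The step I expect to be the main obstacle is precisely this last geometric fact. A path $\Gamma$ in $\mathbb{S}^{1}$ may oscillate arbitrarily, and $\widetilde{\iota}_{1}|_{\mathbb{S}^{1}}$ is in general badly non-injective --- it can collapse substantial subarcs of $\mathbb{S}^{1}$, as the von Koch example shows --- so one cannot simply argue via set-theoretic containment of images. The resolution is that passing to $\Gamma|_{[t_{1},t_{2}]}$ confines the path to a single one of the two arcs while keeping it surjective onto that arc, which is exactly what is needed to pull back an arbitrary partition of the arc to a monotone partition of $[t_{1},t_{2}]$ and thereby dominate the length of $\widetilde{\iota}_{1}\circ\alpha$ (resp. $\widetilde{\iota}_{1}\circ\beta$). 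The remaining ingredients --- reducing to chains inside $\mathbb{S}^{1}$ via \Cref{lemm:jumping}, replacing chain edges by arcs via \Cref{lemm:partitioning}, and additivity of length under concatenation --- are routine.
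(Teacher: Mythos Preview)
Your proof is correct and follows the same overall skeleton as the paper's: reduce to chains in $\mathbb{S}^{1}$ via \Cref{lemm:jumping}, replace chain edges by arcs via \Cref{lemm:partitioning}, concatenate, and then compare the $\widetilde{\iota}_{1}$-length of the concatenation to that of one of the two arcs $\alpha,\beta$ joining $x$ to $y$.

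The one genuine difference is in how this last comparison is carried out. The paper observes that the concatenation $\theta$ has image containing one of the arcs $\theta'$, and then invokes the multiplicity formula \eqref{eq:areaformula:federer}/\eqref{eq:areaformula} on $\widetilde{Z}$: since $\theta'$ is injective while $\theta$ need not be, the pointwise multiplicity of $\widetilde{\iota}_{1}\circ\theta$ dominates that of $\widetilde{\iota}_{1}\circ\theta'$, hence $\ell(\widetilde{\iota}_{1}\circ\theta)\geq\ell(\widetilde{\iota}_{1}\circ\theta')$. The paper then runs a subsequence argument over $\epsilon_{j}\to0^{+}$ to pin down which of the two arcs works. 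Your argument instead extracts the subpath $\Gamma|_{[t_{1},t_{2}]}$ lying in a single arc and surjecting onto it, and pulls back partitions via the intermediate value theorem to get $\ell(\widetilde{\iota}_{1}\circ\alpha)\leq\ell(\widetilde{\iota}_{1}\circ\Gamma)$ directly. This is more elementary --- it avoids Federer's area formula entirely --- and as a bonus yields the sharper identity $d_{\widetilde{Z}}(\widetilde{\iota}_{1}(x),\widetilde{\iota}_{1}(y))=\min\{\ell(\widetilde{\iota}_{1}\circ\alpha),\ell(\widetilde{\iota}_{1}\circ\beta)\}$ without a subsequence step. The paper's route is shorter once \eqref{eq:areaformula} is in hand, but yours is self-contained.
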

    \begin{proof}
    Let $\epsilon > 0$. The defining property of $d_{Z}$ and \Cref{lemm:jumping} imply the existence of a chain $\left\{ x_{i} \right\}_{ i = 1 }^{ n + 1 } \subset \mathbb{S}^{1}$ joining $x$ to $y$ for which
    \begin{equation*}
        d_{Z}( \iota_{1}(x), \iota_{1}(y) )
        \geq
        - \epsilon
        +
        \sum_{ i = 1 }^{ n }
        D( \iota_{1}( x_{i} ), \iota_{1}( x_{i+1} ) ).
    \end{equation*}
    For each $i$, \Cref{lemm:partitioning} yields the existence of an arc $\theta_{i} \colon \left[0,1\right] \rightarrow \mathbb{S}^{1}$ joining $x_{i}$ to $x_{i+1}$ with $D( \iota_{1}( x_{i} ), \iota_{1}( x_{i+1} ) ) \geq \ell( \widetilde{\iota}_{1}( \theta_{i} ) )$. Let $\theta$ denote the concatenation of these paths. Then $d_{Z}( \iota_{1}(x), \iota_{1}(y) ) \geq - \epsilon + \ell( \widetilde{\iota}_{1}( \theta ) )$.
    
    Let $\theta' \colon \left[0, 1\right] \rightarrow \mathbb{S}^{1}$ be an arc joining $x$ to $y$ within the image of $\theta$. Applying \eqref{eq:areaformula} on $\widetilde{Z}$ with $\rho \equiv \chi_{ \widetilde{Z} }$ implies that $\ell( \widetilde{\iota}_{1}( \theta ) ) \geq \ell( \widetilde{\iota}_{1}( \theta' ) )$. Such a $\theta'$ is one of the arcs joining $x$ to $y$ within $\mathbb{S}^{1}$.
    
    Let $\epsilon_{j} \rightarrow 0^{+}$ and consider $\theta'_j$ as above for every such $\epsilon_{j}$. Up to passing to a subsequence and relabeling, we may assume that every such $\theta'_j$ is the same arc $\theta'$. Passing to the limit $j \rightarrow \infty$ establishes $d_{Z}( \iota_{1}( x ), \iota_{1}(y) ) \geq \ell( \widetilde{\iota}_{1}( \theta' ) ) \geq d_{Z}( \iota_{1}(x), \iota_{1}(y) )$. We set $\gamma = \theta'$ to conclude the proof.
    \end{proof}

    \begin{proof}[Proof of \Cref{lemm:inclusion}]
    The claimed $1$-Lipschitz and local isometry properties follow of $\widetilde{\iota}_{1}$ from \Cref{lemm:jumping:overtheseam}. The local isometry property implies that given $z \in \widetilde{Z}$, the preimage $\widetilde{\iota}_{1}^{-1}( z )$ has more than two points only if the preimage is a subset of $\mathbb{S}^{1}$.
    
    Suppose the existence of a distinct pair $x, y \in \widetilde{\iota}_{1}^{-1}(z)$. Then $x, y \in \mathbb{S}^{1}$. \Cref{lemm:partitioning:adv} shows that there exists an arc $\gamma$ joining $x$ to $y$ within $\mathbb{S}^{1}$ satisfying $$0 = d_{ \widetilde{Z} }( \widetilde{\iota}_{1}(x), \widetilde{\iota}_{1}(y) ) = \ell( \widetilde{\iota}_{1}( \gamma ) ).$$ This implies $|\gamma| \subset \widetilde{\iota}_{1}^{-1}( z )$. Since $x$ and $y$ were arbitrary, we conclude that $\widetilde{\iota}_{1}^{-1}( z )$ is path connected. Consequently, $\widetilde{\iota}_{1}^{-1}( z )$ is a connected and compact subset of $\mathbb{S}^{1}$.
    
    The properties of $\widetilde{\iota}_{2}$ follow from a symmetry in the argument. Hence the claim follows.
    \end{proof}
    
    \begin{prop}\label{lemm:hausdorff}
    Let $g \colon ( \mathbb{S}^{1}, \mathcal{H}^{1}_{ \mathbb{S}^{1} } ) \rightarrow ( \mathbb{S}^{1}, \mathcal{H}^{1}_{ \mathbb{S}^{1} } )$ be a homeomorphism with $g^{*} \mathcal{H}^{1}_{ \mathbb{S}^{1} } = v_{g}\mathcal{H}^{1}_{ \mathbb{S}^{1} } + \mu^{ \perp }$ with $\mathcal{H}^{1}_{ \mathbb{S}^{1} }$ and $\mu^{\perp}$ mutually singular. Then, for every Borel set $B \subset \mathbb{S}^{1}$,
    \begin{equation}
        \label{eq:identity:Z}
        \mathcal{H}^{1}_{d_{\widetilde{Z}}}( \widetilde{\iota}_{1}(B) )
        =
        \int_{B}
            \min\left\{ 1, v_{g} \right\}
        \,d\mathcal{H}^{1}_{ \mathbb{S}^{1} }
        =
        \int_{ \widetilde{\iota}_{1}( B ) }
            {\#( \widetilde{\iota}_{1}^{-1}(z) )}
        \,d\mathcal{H}^{1}_{ \widetilde{Z} }(z).
    \end{equation}
    Moreover, for every $x, y \in \mathbb{S}^{1}$, there exists an arc $|\gamma| \subset \mathbb{S}^{1}$ joining $x$ to $y$ for which
    \begin{equation}
        \label{eq:lengthdistance}
        d_{ \widetilde{Z} }( \widetilde{\iota}_{1}(x), \widetilde{\iota}_{1}(y) )
        =
        \ell( \widetilde{\iota}_{1}( \gamma ) ).
    \end{equation}
    \end{prop}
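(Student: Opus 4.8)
The second assertion \eqref{eq:lengthdistance} is precisely \Cref{lemm:partitioning:adv}, so what remains is the chain of equalities \eqref{eq:identity:Z}. Fix a constant speed parametrization $\gamma_{0} \colon \left[0, 2\pi\right] \rightarrow \mathbb{S}^{1}$ of the equator with $\gamma_{0}(0) = \gamma_{0}(2\pi)$, so that $\gamma_{0}^{*} \mathcal{H}^{1}_{ \mathbb{S}^{1} } = \mathcal{L}^{1}$, and set $\Gamma = \widetilde{\iota}_{1} \circ \gamma_{0}$. Since $\widetilde{\iota}_{1}$ is $1$-Lipschitz (\Cref{lemm:inclusion}), $\Gamma$ is $1$-Lipschitz, hence absolutely continuous with $v_{\Gamma} \leq 1$ almost everywhere and $\ell(\Gamma) \leq 2\pi$. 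The plan rests on two preliminary facts. \textbf{(a)} By \eqref{eq:areaformula:federer}, $\int_{ \widetilde{Z} } \#( \Gamma^{-1}(z) ) \,d\mathcal{H}^{1}_{ \widetilde{Z} } = \ell( \Gamma ) < \infty$, so $\#( \Gamma^{-1}(z) ) < \infty$ for $\mathcal{H}^{1}_{ \widetilde{Z} }$-almost every $z$; since $\gamma_{0}$ is injective on $\left[0, 2\pi\right)$ and every fiber $\widetilde{\iota}_{1}^{-1}(z)$ with two or more points is a connected subset of $\mathbb{S}^{1}$ (\Cref{lemm:inclusion}), this forces $\#( \widetilde{\iota}_{1}^{-1}(z) ) = \#( \Gamma^{-1}(z) ) = 1$ for $\mathcal{H}^{1}_{ \widetilde{Z} }$-almost every $z$ in the seam $\widetilde{\iota}_{1}( \mathbb{S}^{1} )$. \textbf{(b)} From $\widetilde{\iota}_{1}|_{ \mathbb{S}^{1} } = \widetilde{\iota}_{2} \circ g$ (by construction of $Z$), the fact that $\widetilde{\iota}_{2}$ is also $1$-Lipschitz, and the rectifiability of $g \circ \gamma_{0}$, one gets $v_{\Gamma} \leq \min\left\{ 1, v_{ g \circ \gamma_{0} } \right\}$ almost everywhere; moreover $\sigma( g\gamma_{0}(s), g\gamma_{0}(t) ) = ( \gamma_{0}^{*} g^{*} \mathcal{H}^{1}_{ \mathbb{S}^{1} } )( \left[s, t\right] )$ for $|s-t|$ small, and since $\gamma_{0}^{*} g^{*} \mathcal{H}^{1}_{ \mathbb{S}^{1} } = ( v_{g} \circ \gamma_{0} ) \mathcal{L}^{1} + \gamma_{0}^{*} \mu^{\perp}$ with $\gamma_{0}^{*} \mu^{\perp}$ singular, the Lebesgue differentiation theorem gives $v_{ g \circ \gamma_{0} } = v_{g} \circ \gamma_{0}$ almost everywhere, so that $v_{\Gamma} \leq \min\left\{ 1, v_{g} \circ \gamma_{0} \right\}$ almost everywhere.

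The heart of the matter is the reverse inequality $v_{\Gamma}(t) \geq \min\left\{ 1, v_{g}( \gamma_{0}(t) ) \right\}$ for almost every $t$, which then yields $v_{\Gamma} = \min\left\{ 1, v_{g} \circ \gamma_{0} \right\}$ almost everywhere. To prove it, fix such a $t$, write $x = \gamma_{0}(t)$ and $y = \gamma_{0}(t+h)$ for small $h > 0$, and bound $d_{\widetilde{Z}}( \widetilde{\iota}_{1}(x), \widetilde{\iota}_{1}(y) )$ from below; by \Cref{lemm:jumping} this quantity is the infimum of the sums $\sum_{i} \min\left\{ \sigma(x_{i}, x_{i+1}), \sigma( g(x_{i}), g(x_{i+1}) ) \right\}$ over chains $x = x_{1}, \dots, x_{n+1} = y$ in $\mathbb{S}^{1}$. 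For $h$ small a chain that winds around $\mathbb{S}^{1}$ cannot be competitive, since the complementary equatorial arc and its image under $g$ both have $\mathcal{H}^{1}_{ \mathbb{S}^{1} }$-length close to $2\pi$ while $d_{\widetilde{Z}}( \widetilde{\iota}_{1}(x), \widetilde{\iota}_{1}(y) ) \to 0$; so it suffices to treat chains contained in the short subarc $\left[x, y\right]$. For such a chain let $I_{i} \subset \left[x, y\right]$ be the subarc between $x_{i}$ and $x_{i+1}$, so that $\sigma(x_{i},x_{i+1}) = \mathcal{H}^{1}_{ \mathbb{S}^{1} }(I_{i})$ and $\sigma( g(x_{i}), g(x_{i+1}) ) = g^{*}\mathcal{H}^{1}_{ \mathbb{S}^{1} }(I_{i})$; then, using $g^{*}\mathcal{H}^{1}_{ \mathbb{S}^{1} } = v_{g}\mathcal{H}^{1}_{ \mathbb{S}^{1} } + \mu^{\perp}$,
\begin{equation*}
    \int_{ I_{i} } \min\left\{ 1, v_{g} \right\} \,d\mathcal{H}^{1}_{ \mathbb{S}^{1} }
    \leq
    \min\left\{ \mathcal{H}^{1}_{ \mathbb{S}^{1} }(I_{i}),\, g^{*}\mathcal{H}^{1}_{ \mathbb{S}^{1} }(I_{i}) \right\}
    =
    \min\left\{ \sigma(x_{i},x_{i+1}),\, \sigma( g(x_{i}), g(x_{i+1}) ) \right\} .
\end{equation*}
Summing over $i$ and using that the subarcs $I_{i}$ cover $\left[x, y\right]$ gives $d_{\widetilde{Z}}( \widetilde{\iota}_{1}(x), \widetilde{\iota}_{1}(y) ) \geq \int_{ \left[x, y\right] } \min\left\{ 1, v_{g} \right\} \,d\mathcal{H}^{1}_{ \mathbb{S}^{1} }$; dividing by $h$ and letting $h \to 0^{+}$ at a Lebesgue point of $\min\left\{ 1, v_{g} \right\} \circ \gamma_{0}$ proves the claim. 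This chain estimate, and in particular the reduction to non-winding chains on short arcs, is the step I expect to be the main obstacle; the remaining steps are routine.

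With $v_{\Gamma} = \min\left\{ 1, v_{g} \circ \gamma_{0} \right\}$ available, \eqref{eq:identity:Z} follows from the area formula. Let $B \subset \mathbb{S}^{1}$ be Borel and put $\widehat{B} = \widetilde{\iota}_{1}^{-1}( \widetilde{\iota}_{1}(B) )$. Applying \eqref{eq:areaformula} to $\Gamma$ with $\rho = \chi_{ \widetilde{\iota}_{1}(B) }$ and substituting $v_{\Gamma} = \min\left\{ 1, v_{g} \circ \gamma_{0} \right\}$,
\begin{align*}
    \int_{ \widehat{B} } \min\left\{ 1, v_{g} \right\} \,d\mathcal{H}^{1}_{ \mathbb{S}^{1} }
    &=
    \int_{ 0 }^{ 2\pi } \chi_{ \widetilde{\iota}_{1}(B) }( \Gamma(t) ) \, v_{\Gamma}(t) \,d\mathcal{L}^{1}(t)
    =
    \int_{ \widetilde{\iota}_{1}(B) } \#( \Gamma^{-1}(z) ) \,d\mathcal{H}^{1}_{ \widetilde{Z} }(z)
    \\
    &=
    \int_{ \widetilde{\iota}_{1}(B) } \#( \widetilde{\iota}_{1}^{-1}(z) ) \,d\mathcal{H}^{1}_{ \widetilde{Z} }(z)
    =
    \mathcal{H}^{1}_{ \widetilde{Z} }( \widetilde{\iota}_{1}(B) ) ,
\end{align*}
where the last two equalities use fact \textbf{(a)}. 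Finally, $\widehat{B} \setminus B$ is contained in the countable union of the nondegenerate fibers of $\widetilde{\iota}_{1}$ (pairwise disjoint arcs in $\mathbb{S}^{1}$); on each such fiber $A$ the map $\widetilde{\iota}_{1}$ is constant, so $\Gamma$ is constant on $\gamma_{0}^{-1}(A)$, whence $v_{\Gamma} = 0$ there and $\int_{ A } \min\left\{ 1, v_{g} \right\} \,d\mathcal{H}^{1}_{ \mathbb{S}^{1} } = \int_{ \gamma_{0}^{-1}(A) } v_{\Gamma} \,d\mathcal{L}^{1} = 0$. Therefore $\int_{ \widehat{B} } \min\left\{ 1, v_{g} \right\} \,d\mathcal{H}^{1}_{ \mathbb{S}^{1} } = \int_{ B } \min\left\{ 1, v_{g} \right\} \,d\mathcal{H}^{1}_{ \mathbb{S}^{1} }$, and combining with the displayed identity establishes \eqref{eq:identity:Z}.
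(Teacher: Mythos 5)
Your strategy runs parallel to the paper's: both arguments reduce \eqref{eq:identity:Z} to showing that the metric speed of $\widetilde{\iota}_{1}$ along the equator equals $\min\{1, v_{g}\}$ almost everywhere, and your parts (a), (b), the closing area-formula computation, and the reduction from $\widehat{B}$ to $B$ are all correct; \eqref{eq:lengthdistance} is indeed just \Cref{lemm:partitioning:adv}. The paper packages the lower bound on the metric speed into a Carath\'eodory premeasure $\nu^{ABS}$ (Lemmas \ref{lemm:equality} and \ref{lemm:equality:lowerbound}) rather than differentiating directly, but that difference is largely cosmetic. The problem is the step you yourself flagged.

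The gap is the reduction to chains contained in $[x,y]$. First, your justification for dismissing ``winding'' chains rests on a wrong bound: a chain whose links traverse the complementary arc $A=\mathbb{S}^{1}\setminus[x,y]$ has cost bounded below only by $\int_{A}\min\{1,v_{g}\}\,d\mathcal{H}^{1}_{\mathbb{S}^{1}}$, \emph{not} by $\min\{\mathcal{H}^{1}_{\mathbb{S}^{1}}(A),\mathcal{H}^{1}_{\mathbb{S}^{1}}(g(A))\}$, because each link independently selects the cheaper of the two metrics; in the example of \Cref{rem:Ah:Beu} both lengths equal $2\pi$ while the cost is $0$. Second, the reduction is false as a statement about the infimum: if $\int_{A}\min\{1,v_{g}\}<\int_{[x,y]}\min\{1,v_{g}\}$, then $d_{\widetilde{Z}}(\widetilde{\iota}_{1}(x),\widetilde{\iota}_{1}(y))$ is realized by chains going the other way around and is strictly smaller than the infimum over chains in $[x,y]$. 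What you actually need is the bound $d_{\widetilde{Z}}(\widetilde{\iota}_{1}(x),\widetilde{\iota}_{1}(y))\geq\min\bigl\{\int_{[x,y]}\min\{1,v_{g}\}\,d\mathcal{H}^{1}_{\mathbb{S}^{1}},\ \int_{A}\min\{1,v_{g}\}\,d\mathcal{H}^{1}_{\mathbb{S}^{1}}\bigr\}$ valid for \emph{arbitrary} chains; for fixed $t$ and small $h$ the minimum is the first term (unless $\int_{\mathbb{S}^{1}}\min\{1,v_{g}\}=0$, when the lower bound is vacuous), which rescues your differentiation argument. Establishing that bound requires replacing each link by an arc realizing its $D$-value, noting that the union of these arcs is connected and contains $x$ and $y$, hence contains $[x,y]$ or $A$, and handling the fact that the arc whose $g$-image realizes $\sigma(g(x_{i}),g(x_{i+1}))$ need not be the short arc between $x_{i}$ and $x_{i+1}$. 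This is exactly the content of \Cref{lemm:partitioning} and \Cref{lemm:equality:lowerbound} in the paper, and it cannot be bypassed by the length estimate you propose.
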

    
    Before proving \Cref{lemm:hausdorff}, we first consider a Carathéodory construction on $\mathbb{S}^{1}$. First, fix a Borel set $B_{0} \subset \mathbb{S}^{1}$ for which $\mathcal{H}^{1}_{ \mathbb{S}^{1} }( B_{0} ) = 0$ and $\mu^{\perp}( \mathbb{S}^{1} \setminus B_{0} ) = 0$. Set $\nu^{ABS}( B ) \coloneqq \int_{ B } \min\left\{ 1, v_{g} \right\} \chi_{ \mathbb{S}^{1} \setminus B_0 } \,d\mathcal{H}^{1}_{ \mathbb{S}^{1} }$ for all Borel sets $B \subset \mathbb{S}^{1}$.
    
    For every arc $\gamma \colon \left[0, 1\right] \rightarrow \mathbb{S}^{1}$, we denote $\xi^{ABS}( |\gamma| ) \coloneqq \nu^{ABS}( |\gamma| )$ and $\xi( |\gamma| ) \coloneqq D( \gamma(0), \gamma(1) )$. The set function $\xi^{ABS}$ and the family of arcs $|\gamma| \subset \mathbb{S}^{1}$ yields Carathéodory premeasures $\nu^{ABS}_{\delta}$ for each $\delta > 0$.
     
    \begin{lemm}\label{lemm:equality}
    For every Borel set $B \subset \mathbb{S}^{1}$, we have $\nu^{ABS}(B) = \sup_{ \delta > 0 } \nu^{ABS}_{\delta}(B) \geq \mathcal{H}^{1}_{ \widetilde{Z} }( \widetilde{\iota}_{1}(B) )$.
    \end{lemm}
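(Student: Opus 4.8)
The plan is to treat the two assertions of \Cref{lemm:equality} separately. The identity $\nu^{ABS}(B) = \sup_{\delta > 0}\nu^{ABS}_{\delta}(B)$ is purely measure-theoretic: since $\nu^{ABS}(B) = \int_{B}\min\{1, v_{g}\}\chi_{\mathbb{S}^{1}\setminus B_{0}}\,d\mathcal{H}^{1}_{\mathbb{S}^{1}}$ is a finite Borel measure on $\mathbb{S}^{1}$ and $\xi^{ABS}(|\gamma|) = \nu^{ABS}(|\gamma|)$ by definition, the Carathéodory construction attached to $\xi^{ABS}$ over the family of arcs recovers $\nu^{ABS}$. Concretely, for any fixed $\delta > 0$ and any arc cover $B \subset \bigcup_{i}|\gamma_{i}|$, countable subadditivity gives $\nu^{ABS}(B) \leq \sum_{i}\nu^{ABS}(|\gamma_{i}|) = \sum_{i}\xi^{ABS}(|\gamma_{i}|)$, so $\nu^{ABS}(B) \leq \nu^{ABS}_{\delta}(B)$. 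For the opposite inequality I would, given $\epsilon > 0$, use outer regularity of the finite Borel measure $\nu^{ABS}$ to choose an open $U \supset B$ with $\nu^{ABS}(U) < \nu^{ABS}(B) + \epsilon$, decompose $U$ into its countably many open-arc components, and subdivide each component into arcs of diameter $< \delta$ overlapping only at their endpoints; as individual points are $\nu^{ABS}$-null, this produces an admissible cover with $\sum_{i}\xi^{ABS}(|\gamma_{i}|) = \nu^{ABS}(U) < \nu^{ABS}(B) + \epsilon$, whence $\nu^{ABS}_{\delta}(B) \leq \nu^{ABS}(B)$. Thus $\nu^{ABS}_{\delta}(B) = \nu^{ABS}(B)$ for every $\delta$ and taking the supremum is immediate.

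For the inequality $\nu^{ABS}(B) \geq \mathcal{H}^{1}_{\widetilde{Z}}(\widetilde{\iota}_{1}(B))$, the key observation is that $\widetilde{\iota}_{1}$ and $\widetilde{\iota}_{2}\circ g$ agree on $\mathbb{S}^{1}$: by the very construction of $Z$ we have $\iota_{1}(z) = \iota_{2}(g(z))$ for every $z \in \mathbb{S}^{1}$, hence $\widetilde{\iota}_{1}(z) = \widetilde{\iota}_{2}(g(z))$. Since $\widetilde{\iota}_{1}$ and $\widetilde{\iota}_{2}$ are $1$-Lipschitz by \Cref{lemm:inclusion}, and the metric $\sigma$ restricted to $\mathbb{S}^{1}$ is the arc-length metric whose Hausdorff $1$-measure is $\mathcal{H}^{1}_{\mathbb{S}^{1}}$, I would obtain, for every Borel set $A \subset \mathbb{S}^{1}$ (applying the $1$-Lipschitz property to $\widetilde{\iota}_{1}$ on $A$, and to $\widetilde{\iota}_{2}$ on the set $g(A) \subset \mathbb{S}^{1} \subset \overline{Z}_{2}$ together with $\widetilde{\iota}_{1}(A) = \widetilde{\iota}_{2}(g(A))$),
\[
\mathcal{H}^{1}_{\widetilde{Z}}(\widetilde{\iota}_{1}(A)) \leq \mathcal{H}^{1}_{\mathbb{S}^{1}}(A)
\qquad\text{and}\qquad
\mathcal{H}^{1}_{\widetilde{Z}}(\widetilde{\iota}_{1}(A)) \leq \mathcal{H}^{1}_{\mathbb{S}^{1}}(g(A)) = \int_{A} v_{g}\,d\mathcal{H}^{1}_{\mathbb{S}^{1}} + \mu^{\perp}(A),
\]
the last equality being the Lebesgue decomposition $g^{*}\mathcal{H}^{1}_{\mathbb{S}^{1}} = v_{g}\mathcal{H}^{1}_{\mathbb{S}^{1}} + \mu^{\perp}$ evaluated on $A$.

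To conclude I would split $B$, discarding the set $B \cap B_{0}$ (which is $\mathcal{H}^{1}_{\mathbb{S}^{1}}$-null, so its $\widetilde{\iota}_{1}$-image is $\mathcal{H}^{1}_{\widetilde{Z}}$-null by the first estimate), into the Borel sets $A^{+} = (B\setminus B_{0}) \cap \{v_{g} \geq 1\}$ and $A^{-} = (B\setminus B_{0}) \cap \{v_{g} < 1\}$. On $A^{+}$ the first estimate gives $\mathcal{H}^{1}_{\widetilde{Z}}(\widetilde{\iota}_{1}(A^{+})) \leq \mathcal{H}^{1}_{\mathbb{S}^{1}}(A^{+}) = \int_{A^{+}}\min\{1, v_{g}\}\,d\mathcal{H}^{1}_{\mathbb{S}^{1}}$, while on $A^{-}$ the second estimate, combined with $\mu^{\perp}(A^{-}) = 0$ (valid since $A^{-} \subset \mathbb{S}^{1}\setminus B_{0}$ and $\mu^{\perp}(\mathbb{S}^{1}\setminus B_{0}) = 0$), gives $\mathcal{H}^{1}_{\widetilde{Z}}(\widetilde{\iota}_{1}(A^{-})) \leq \int_{A^{-}}v_{g}\,d\mathcal{H}^{1}_{\mathbb{S}^{1}} = \int_{A^{-}}\min\{1, v_{g}\}\,d\mathcal{H}^{1}_{\mathbb{S}^{1}}$. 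Summing, subadditivity of $\mathcal{H}^{1}_{\widetilde{Z}}$ yields $\mathcal{H}^{1}_{\widetilde{Z}}(\widetilde{\iota}_{1}(B)) \leq \int_{B\setminus B_{0}}\min\{1, v_{g}\}\,d\mathcal{H}^{1}_{\mathbb{S}^{1}} = \nu^{ABS}(B)$; conceptually this records that $\nu^{ABS}$ is the infimum of the measures $\mathcal{H}^{1}_{\mathbb{S}^{1}}$ and $g^{*}\mathcal{H}^{1}_{\mathbb{S}^{1}}$. I do not anticipate a genuine obstacle; the only points deserving care are the subdivision of open arcs into small arcs with $\nu^{ABS}$-null overlaps in the first part, and the routine verification that $\sigma|_{\mathbb{S}^{1}}$ is the arc-length metric, so that the Hausdorff $1$-measure of a subset of $\mathbb{S}^{1} \subset (\overline{Z}_{i}, \sigma)$ equals its $\mathcal{H}^{1}_{\mathbb{S}^{1}}$-measure, in the second.
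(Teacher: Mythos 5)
Your proof is correct and follows essentially the same route as the paper: the first identity is the standard Carathéodory-construction argument for a finite Borel measure (which the paper dispatches in one line), and the second inequality uses exactly the paper's decomposition of $B$ according to $\{v_{g}\geq 1\}$, $\{v_{g}<1\}$ and the singular set $B_{0}$, together with the two $1$-Lipschitz bounds coming from $\widetilde{\iota}_{1}$ and from $\widetilde{\iota}_{1}=\widetilde{\iota}_{2}\circ g$ on $\mathbb{S}^{1}$. No issues.
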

    \begin{proof}
    The equality $\nu^{ABS}(B) = \sup_{ \delta > 0 } \nu^{ABS}_{\delta}(B)$ follows from the fact that $\nu^{ABS}$ is a finite Borel measure.
    
    We denote $B_{1} = \left\{ v_{g} \geq 1 \right\} \cup B_{0}$ and $B_{2} = \mathbb{S}^{1} \setminus B_{1}$. If $B \subset \mathbb{S}^{1}$ is Borel, we have
    \begin{equation*}
        \mathcal{H}^{1}_{ \widetilde{Z} }( \widetilde{\iota}_{1}(B) )
        =
        \sum_{ i = 1 }^{2}
        \mathcal{H}^{1}_{ \widetilde{Z} }( \widetilde{\iota}_{1}(B \cap B_{i}) )
        \leq
        \mathcal{H}^{1}_{ \mathbb{S}^{1} }( B \cap B_{1} )
        +
        \mathcal{H}^{1}_{ \mathbb{S}^{1} }( g(B) \cap g(B_{2}) )
    \end{equation*}
    since $\widetilde{\iota}_{i}$ is $1$-Lipschitz for $i = 1,2$. The right-hand side equals $\nu^{ABS}( B )$. Therefore $\mathcal{H}^{1}_{ \widetilde{Z} }( \widetilde{\iota}_{1}( B ) ) \leq \nu^{ABS}( B )$ holds for all Borel sets.
    \end{proof}
    
    \begin{lemm}\label{lemm:equality:lowerbound}
    Let $x, y \in \mathbb{S}^{1}$ be distinct and $\gamma \colon \left[0, 1\right] \rightarrow \mathbb{S}^{1}$ an arc joining $x$ to $y$ such that $d_{ \widetilde{Z} }(  \widetilde{\iota}_{1}( x ), \widetilde{\iota}_{1}( y ) ) = \ell( \widetilde{\iota}_{1}( \gamma ) )$. Then $\mathcal{H}^{1}_{ \widetilde{Z} }( \widetilde{\iota}_{1}(|\gamma|) ) = d_{ \widetilde{Z} }(  \widetilde{\iota}_{1}( x ), \widetilde{\iota}_{1}( y ) ) = \nu^{ABS}( |\gamma| )$.
    \end{lemm}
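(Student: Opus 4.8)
The plan is to prove the two displayed equalities separately. The equality $\mathcal{H}^{1}_{\widetilde{Z}}(\widetilde{\iota}_{1}(|\gamma|)) = d_{\widetilde{Z}}(\widetilde{\iota}_{1}(x),\widetilde{\iota}_{1}(y))$ should be elementary: the set $\widetilde{\iota}_{1}(|\gamma|) = (\widetilde{\iota}_{1}\circ\gamma)([0,1])$ is a continuum containing $\widetilde{\iota}_{1}(x)$ and $\widetilde{\iota}_{1}(y)$, so composing it with the $1$-Lipschitz function $d_{\widetilde{Z}}(\cdot,\widetilde{\iota}_{1}(x))$, whose image is a connected subset of $\mathbb{R}$ containing the segment $[0,d_{\widetilde{Z}}(\widetilde{\iota}_{1}(x),\widetilde{\iota}_{1}(y))]$, yields $\mathcal{H}^{1}_{\widetilde{Z}}(\widetilde{\iota}_{1}(|\gamma|)) \geq d_{\widetilde{Z}}(\widetilde{\iota}_{1}(x),\widetilde{\iota}_{1}(y))$, while $\eqref{eq:areaformula:federer}$ applied to $\widetilde{\iota}_{1}\circ\gamma$ gives $\mathcal{H}^{1}_{\widetilde{Z}}(\widetilde{\iota}_{1}(|\gamma|)) \leq \ell(\widetilde{\iota}_{1}(\gamma))$, which equals $d_{\widetilde{Z}}(\widetilde{\iota}_{1}(x),\widetilde{\iota}_{1}(y))$ by hypothesis. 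Combining this with \Cref{lemm:equality} already gives $d_{\widetilde{Z}}(\widetilde{\iota}_{1}(x),\widetilde{\iota}_{1}(y)) = \mathcal{H}^{1}_{\widetilde{Z}}(\widetilde{\iota}_{1}(|\gamma|)) \leq \nu^{ABS}(|\gamma|)$, so the whole statement reduces to the single inequality $\nu^{ABS}(|\gamma|) \leq \ell(\widetilde{\iota}_{1}(\gamma))$; I expect this to hold for every arc, the hypothesis on $\gamma$ being used only in the first equality.

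To prove $\nu^{ABS}(|\gamma|) \leq \ell(\widetilde{\iota}_{1}(\gamma))$ I would parametrize $\gamma$ by arc length on $(\mathbb{S}^{1},\sigma)$. Then $\widetilde{\iota}_{1}\circ\gamma$ is $1$-Lipschitz by \Cref{lemm:inclusion}, hence $\ell(\widetilde{\iota}_{1}(\gamma)) = \int_{0}^{\ell(\gamma)} v_{\widetilde{\iota}_{1}\circ\gamma}\,d\mathcal{L}^{1}$, whereas the injectivity of $\gamma$ (change of variables along its arc-length parametrization) gives $\nu^{ABS}(|\gamma|) = \int_{0}^{\ell(\gamma)}(\min\{1,v_{g}\}\chi_{\mathbb{S}^{1}\setminus B_{0}})\circ\gamma\,d\mathcal{L}^{1}$. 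It therefore suffices to prove the pointwise bound $v_{\widetilde{\iota}_{1}\circ\gamma}(t) \geq (\min\{1,v_{g}\}\chi_{\mathbb{S}^{1}\setminus B_{0}})(\gamma(t))$ for $\mathcal{L}^{1}$-a.e.\ $t$.

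For a.e.\ $t$ one has $\gamma(t)\notin B_{0}$ (as $\mathcal{H}^{1}_{\mathbb{S}^{1}}(B_{0})=0$), one has $v_{g\circ\gamma}(t) = v_{g}(\gamma(t))$ (since, $\gamma$ being an arc-length parametrization, $\sigma(g(\gamma(s)),g(\gamma(t)))$ is the $g^{*}\mathcal{H}^{1}_{\mathbb{S}^{1}}$-measure of the subarc of $|\gamma|$ between $\gamma(s)$ and $\gamma(t)$, of $\sigma$-length $|s-t|$, when $s$ is close to $t$, so $v_{g\circ\gamma}(t)$ is the derivative of $g^{*}\mathcal{H}^{1}_{\mathbb{S}^{1}}$ along $|\gamma|$ at $\gamma(t)$, equal to $v_{g}(\gamma(t))$ at $\mathcal{H}^{1}_{\mathbb{S}^{1}}$-a.e.\ point), and one has $\mu^{\perp}(\overline{B}_{\mathbb{S}^{1}}(\gamma(t),r)) = o(r)$ (as $\mu^{\perp}\perp\mathcal{H}^{1}_{\mathbb{S}^{1}}$). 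Fix such a $t$ that is in addition a Lebesgue point of $v_{g}\circ\gamma$; writing $\lambda := v_{g}(\gamma(t))$, the bound is trivial if $\lambda = 0$, so assume $\lambda > 0$. The two claims are: \emph{(a) confinement} --- there is a small neighbourhood $U\subset|\gamma|$ of $\gamma(t)$ inside which every chain in $\mathbb{S}^{1}$ joining $\gamma(t-h)$ to $\gamma(t+h)$ of $D$-cost within $h$ of the minimum must lie, since a vertex outside $U$ forces the cost to exceed $\dist_{\widetilde{Z}}(\widetilde{\iota}_{1}(\gamma(t)),\widetilde{\iota}_{1}(\mathbb{S}^{1}\setminus U)) - o(1)$, a fixed positive number once $\widetilde{\iota}_{1}^{-1}(\widetilde{\iota}_{1}(\gamma(t))) = \{\gamma(t)\}$ is known, whereas the single-edge chain already has cost $\leq 2h$; and \emph{(b) the local estimate} --- for a chain inside $U$, passing to arc-length coordinates on $|\gamma|$ and on $g(|\gamma|)$, bounding each edge cost $D(\iota_{1}(\gamma(u_{j})),\iota_{1}(\gamma(u_{j+1})))$ below by $\min\{\mathcal{L}^{1}(I_{j}), \int_{I_{j}} v_{g}\circ\gamma\,d\mathcal{L}^{1}\}$ (the second term coming from $\sigma(g(\gamma(u_{j})),g(\gamma(u_{j+1})))$ after discarding the nonnegative singular contribution, $I_{j}$ the displacement interval of the $j$-th edge), and using that the $I_{j}$ cover $[t-h,t+h]$, one obtains total cost at least $\int_{t-h}^{t+h}\min\{1,v_{g}\circ\gamma\}\,d\mathcal{L}^{1} = 2h\min\{1,\lambda\} + o(h)$. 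Taking the infimum over chains and the triangle inequality then gives $v_{\widetilde{\iota}_{1}\circ\gamma}(t) \geq \min\{1,\lambda\}$.

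The remaining input is that $\widetilde{\iota}_{1}^{-1}(\widetilde{\iota}_{1}(\gamma(t))) = \{\gamma(t)\}$ for a.e.\ $t$ with $v_{g}(\gamma(t)) > 0$. By \Cref{lemm:inclusion} the nondegenerate fibers of $\widetilde{\iota}_{1}$ are countably many subarcs of $\mathbb{S}^{1}$ with disjoint interiors; $\widetilde{\iota}_{1}$ is constant on each such arc $A$, so $d_{\widetilde{Z}}(\widetilde{\iota}_{1}(a),\widetilde{\iota}_{1}(b)) = 0$ for $a,b\in A$, and since every vertex of a near-minimizing chain between $a$ and $b$ then lies in $A$ (confinement is automatic here), the estimate of step (b) forces $\int_{A'}\min\{1,v_{g}\}\,d\mathcal{H}^{1}_{\mathbb{S}^{1}} = 0$ for every subarc $A'\subset A$, i.e.\ $v_{g} = 0$ $\mathcal{H}^{1}_{\mathbb{S}^{1}}$-a.e.\ on $A$; hence $v_{g}\circ\gamma = 0$ a.e.\ on the set of $t$ with $\gamma(t)$ in the interior of a nondegenerate fiber, while the endpoints of these fibers meet $|\gamma|$ in an $\mathcal{H}^{1}_{\mathbb{S}^{1}}$-null set, so at a.e.\ $t$ with $\lambda > 0$ the fiber is indeed a single point. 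I expect step (b) to be the main obstacle: ruling out competing chains that would try to exploit strong contraction of $g$ at far-away points of $\mathbb{S}^{1}$. The confinement of step (a), together with the differentiation facts ($\mu^{\perp}(\overline{B}(\gamma(t),r)) = o(r)$, the Lebesgue point property, and the multiplicity accounting for the intervals $I_{j}$), is what makes the estimate go through.
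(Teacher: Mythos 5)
Your route is genuinely different from the paper's. The paper never proves a pointwise metric-speed bound at this stage; instead it runs a Carathéodory covering argument: it takes a fine partition of $\gamma$, near-optimal chains for each piece, uses \Cref{lemm:partitioning} to refine every chain edge into geodesic subarcs that are short \emph{both} on $\mathbb{S}^{1}$ and under $g$, bounds each short arc's $D$-cost below by its $\nu^{ABS}$-mass, and observes that the union of all these arcs contains either $|\gamma|$ or $\mathbb{S}^{1}\setminus|\gamma|$. This only yields $\ell(\widetilde{\iota}_{1}(\gamma))\geq\min\{\nu^{ABS}(|\gamma|),\nu^{ABS}(\mathbb{S}^{1}\setminus|\gamma|)\}$, and the length-minimality hypothesis is then used to rule out the second alternative. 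Your local (confinement plus Lebesgue-point) argument, if completed, is stronger: it gives $\nu^{ABS}(|\gamma|)\leq\ell(\widetilde{\iota}_{1}(\gamma))$ for \emph{every} arc and recovers the pointwise speed identity that the paper only extracts a posteriori in the proof of \Cref{lemm:hausdorff}. Your opening reduction and the confinement step (a) are sound.

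There are two gaps to flag. The minor one: chains "within $h$ of the minimum" leave an unabsorbed error of order $h$ in the final division by $2h$ (you would only get $v_{\widetilde{\iota}_{1}\circ\gamma}(t)\geq\min\{1,\lambda\}-1/2$); you need a tolerance $o(h)$, e.g.\ $h^{2}$, which still sits below the fixed confinement constant. The substantive one is in your singleton-fiber step. For a nondegenerate fiber $A$ that is a \emph{long} arc, confinement forces near-minimizing chains into a neighbourhood of $A$, not of the small subarc $A'$, so the chain edges need not be short: the edge cost $D(x_{j},x_{j+1})=\min\{\sigma(x_{j},x_{j+1}),\sigma(g(x_{j}),g(x_{j+1}))\}$ is then realized by the \emph{shorter} of the two circular arcs in each of the domain and the image, which need not be the arc you call $I_{j}$; the same issue means $\sigma(g(x_{j}),g(x_{j+1}))$ need not dominate $g^{*}\mathcal{H}^{1}_{\mathbb{S}^{1}}(I_{j})$ when $g$ maps $I_{j}$ to an arc of length exceeding $\pi$. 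The estimate of step (b) therefore does not apply as written, and since the speed bound at points of nondegenerate fibers is exactly what your confinement step presupposes, this is circularly load-bearing. The repair is the subdivision device the paper uses: by the superadditivity inequality \eqref{eq:lemm:partitioning} of \Cref{lemm:partitioning}, any chain edge can be replaced by a concatenation of geodesic subarcs that are short in both the domain and under $g$ without increasing the total $D$-cost, after which your edge-by-edge lower bound is valid. With that inserted (and the tolerance fixed), your argument goes through.
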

    \begin{proof}
    Let $\pi/2 > \delta_{0} > 0$ be such that
    \begin{equation*}
        D( \iota_{1}(a), \iota_{1}(b) ) < \delta_{0}
        \quad\text{implies}\quad
        \max\left\{ \sigma(a,b), \sigma( g(a), g(b) ) \right\} < \pi/2.
    \end{equation*}
    Given such a pair $a, b \in \mathbb{S}^{1}$, the length-minimizing geodesic $\theta \colon \left[0, 1\right] \rightarrow \mathbb{S}^{1}$ joining $a$ to $b$ satisfies $\xi( |\theta| ) = \min\left\{ \ell( \theta ), \ell( g \circ \theta ) \right\}$. Then $\xi( |\theta| ) \geq \xi^{ABS}( |\theta| )$.
    
    Let $\gamma$ be as in the claim. Let $0 < \delta < \delta_{0}$ and $0 < \epsilon < \delta/2$. We consider a partition $\left\{ t_{i} \right\}_{ i = 1 }^{ n +1 }$ of $\left[0, 1\right]$ such that $\sigma( \gamma( t_{i} ), \gamma(t_{i+1}) ) < \delta/2$ for every $i$. Then there exists a chain $\left\{ x_{i,j} \right\}_{ j = 1 }^{ n_{i}+1 } \subset \mathbb{S}^{1}$ joining the ends of $\gamma|_{ \left[t_{i}, t_{i+1}\right] }$ so that
    \begin{equation*}
        d_{Z}( \iota_{1}\circ \gamma( t_{i} ), \iota_{1}\circ \gamma( t_{i+1} ) )
        \geq
        -
        \frac{ \epsilon }{ n }
        +
        \sum_{ j = 1 }^{ n_{i} }
        D( \iota_{1}( x_{i,j} ), \iota_{1}( x_{i,j+1} ) ).
    \end{equation*}
    In particular, $D( \iota_{1}( x_{i,j} ), \iota_{1}( x_{i,j+1} ) ) < \delta < \delta_{0}$ for every $j$. Hence the length-minimizing geodesic $\gamma_{i,j}$ joining $x_{i,j}$ to $x_{i,j+1}$ satisfies the assumptions of \Cref{lemm:partitioning}. For every $i$, \Cref{lemm:partitioning} implies that, up to further partitioning the paths $\gamma_{i,j}$ and relabeling, we may assume $\sigma( x_{i,j}, x_{i,j} ) < \delta$ for every $j$. Given this property, we conclude $D( \iota_{1}( x_{i,j} ), \iota_{1}( x_{i,j+1} ) ) = \xi( |\gamma_{i,j} | ) \geq \xi^{ABS}( |\gamma_{i,j}| )$ and
    \begin{equation*}
        \ell( \widetilde{\iota}_{1}( \gamma ) )
        =
        \sum_{ i = 1 }^{ n }
        d_{Z}( \iota_{1}\circ \gamma( t_{i} ), \iota_{1}\circ \gamma( t_{i+1} ) )
        \geq
        -
        \epsilon
        +
        \nu_{ \delta }^{ABS}\left( \bigcup_{ i = 1 }^{ n } \bigcup_{ j = 1 }^{ n_{i} }|\gamma_{i,j}| \right).
    \end{equation*}
    Since the concatenation $\theta_{i}$ of $\left\{ \gamma_{i,j} \right\}_{ j = 1 }^{ n_{i} }$ is a path joining $\gamma( t_{i} )$ to $\gamma( t_{i+1} )$, the concatenation $\theta$ of $\left\{ \theta_{i} \right\}_{ i = 1 }^{ n }$ is a path joining $x$ to $y$. Hence $\bigcup_{ i = 1 }^{ n } \bigcup_{ j = 1 }^{ n_{i} }|\gamma_{i,j}| = |\theta|$ contains $|\gamma|$ or $\mathbb{S}^{1} \setminus |\gamma|$, and
    \begin{equation*}
        \ell( \widetilde{\iota}_{1}( \gamma ) )
        \geq
        -
        \epsilon
        +
        \min\left\{
            \nu_{ \delta}^{ABS}( |\gamma| ),
            \nu_{ \delta }^{ABS}( \mathbb{S}^{1} \setminus |\gamma| )
        \right\}.
    \end{equation*}
    After passing to $\epsilon \rightarrow 0^{+}$ and then to $\delta \rightarrow 0^{+}$, we conclude
    \begin{align*}
        \mathcal{H}^{1}_{ \widetilde{Z} }( \widetilde{\iota}_{1}( \gamma ) )
        =
        \ell( \widetilde{\iota}_{1}( \gamma ) )
        \geq
        \min\left\{
            \nu^{ABS}( |\gamma| ),
            \nu^{ABS}( \mathbb{S}^{1} \setminus |\gamma| )
        \right\}.
    \end{align*}
    If we had $\nu^{ABS}( |\gamma| ) > \nu^{ABS}( \mathbb{S}^{1} \setminus |\gamma| )$, this would contradict \Cref{lemm:equality} and the length-minimizing property of $\widetilde{\iota}_{1}( \gamma )$. Hence $\nu^{ABS}( |\gamma| ) \leq \nu^{ABS}( \mathbb{S}^{1} \setminus |\gamma| )$, and $\mathcal{H}^{1}_{ \widetilde{Z} }( \widetilde{\iota}_{1}(|\gamma|) ) = d_{ \widetilde{Z} }(  \widetilde{\iota}_{1}( x ), \widetilde{\iota}_{1}( y ) ) = \nu^{ABS}( |\gamma| )$ follows from \Cref{lemm:equality}.
    \end{proof}
    \begin{proof}[Proof of \Cref{lemm:hausdorff}]
    The existence of $\gamma$ and equality in \eqref{eq:lengthdistance} already follows from \Cref{lemm:partitioning}.
    
    We claim that \eqref{eq:identity:Z} holds. To this end, we consider three arcs $\gamma_{i} \colon \left[0, 1\right] \rightarrow \mathbb{S}^{1}$ overlapping only at their end points, whose images cover $\mathbb{S}^{1}$, with the arcs satisfying $\nu^{ABS}( |\gamma_i| ) \leq \nu^{ABS}( \mathbb{S}^{1} \setminus |\gamma_{i}| )$.
    
    Lemmas \ref{lemm:equality} and \ref{lemm:equality:lowerbound} imply that $\widetilde{\iota}_{1} \circ \gamma_i$ is a length-minimizing geodesic joining its end points and $\nu^{ABS}( |\gamma_i| ) = \mathcal{H}^{1}_{ \widetilde{Z} }( \widetilde{\iota}_{1}( |\gamma_i| ) )$. Lemma \ref{lemm:equality} implies that the metric speed of $\widetilde{\iota}_{1}|_{ |\gamma_{i}| }$ is bounded from above by $\min\left\{1,v_{g}\right\}$. Hence the equality $\nu^{ABS}( |\gamma_i| ) = \mathcal{H}^{1}_{ \widetilde{Z} }( \widetilde{\iota}_{1}( |\gamma_i| ) )$ forces the metric speed of $\widetilde{\iota}_{1}$ to equal $\min\left\{ 1, v_{g} \right\}$ $\mathcal{H}^{1}_{ \mathbb{S}^{1} }$-almost everywhere on $|\gamma_{i}|$ for $i = 1,2,3$. The equality \eqref{eq:identity:Z} follows from the area formula \eqref{eq:areaformula} and the fact that $\#( \widetilde{\iota}_{1}^{-1}(x) ) = 1$ $\mathcal{H}^{1}_{ \widetilde{Z} }$-almost everywhere. The fact $\#( \widetilde{\iota}_{1}^{-1}(x) ) = 1$ $\mathcal{H}^{1}_{ \widetilde{Z} }$-almost everywhere follows from the monotonicity of $\widetilde{\iota}_{1}$ and the integrability of the multiplicity. The integrability of the multiplicity follows from \eqref{eq:areaformula:federer}.
    \end{proof}

    \begin{rem}\label{rem:Ah:Beu}
    We consider a $2\pi$-periodic doubling measure $\mu$ on $\mathbb{R}$ with $2 \pi = \mu( \left[0, 2\pi\right] )$ such that for some Borel set $B \subset \left[0, 2\pi\right]$, $\mathcal{L}^{1}( B ) = 0 = \mu( \left[0, 2\pi\right] \setminus B )$, the existence of which is established by Ahlfors--Beurling \cite[Section 7]{Ah:Beu:56}. Then $\psi(x) = \int_{0}^{x} \,d\mu$ is a homeomorphism and there exists a quasisymmetry $g \colon \mathbb{S}^{1} \rightarrow \mathbb{S}^{1}$ with $\theta \circ \psi = g \circ \theta$, where $\theta(t) = ( \cos(t), \sin(t), 0 )$. Then $v_{g}$ in \eqref{eq:identity:Z} is identically zero. Consequently, $d_{Z} \equiv 0$ on the seam $S_{Z}$.
    \end{rem}

\section{Harmonic measure and welding homeomorphisms}\label{sec:harm:weld}
    We consider a welding homeomorphism $g \colon \mathbb{S}^{1} \rightarrow \mathbb{S}^{1}$ and a welding circle $\mathcal{C}$ with complementary components $\Omega_{1}$ and $\Omega_{2}$, Riemann maps $\phi_{i} \colon Z_{i} \rightarrow \Omega_{i}$ for $i = 1,2$, and $g = \phi_{2}^{-1} \circ \phi_{1}|_{ \mathbb{S}^{1} }$. In this section, we consider the \emph{harmonic measures} $\omega_{i}( E ) = \phi_{i}^{*}\mathcal{H}^{1}_{ \mathbb{S}^{1} }( E )/( 2 \pi )$ for all Borel sets $E \subset \mathbb{S}^{2}$.
    
    We define a homeomorphism $\pi \colon \mathbb{S}^{2} \rightarrow ( Z, d_{Z} )$ and a quotient map $\widetilde{\pi} \colon \mathbb{S}^{2} \rightarrow \widetilde{Z}$ via the formulas
    \begin{equation}
        \label{eq:quotientmap}
        \pi(x)
        =
        \left\{
        \begin{split}
            &\iota_{1} \circ \phi_{1}^{-1}(x),
            &&\quad \text{when } x \in \overline{ \Omega_{1} },
            \\
            &\iota_{2} \circ \phi_{2}^{-1}(x),
            &&\quad \text{when } x \in \Omega_{2}
        \end{split}
        \right.
        \quad\text{and}\quad
        \widetilde{\pi} = Q \circ \pi.
    \end{equation}
    Recall that $Q \colon Z \rightarrow \widetilde{Z}$ is the quotient map identifying $x, y \in Z$ whenever $d_{Z}( x, y ) = 0$. \Cref{lemm:inclusion} implies that $\widetilde{\pi}$ is monotone and $\widetilde{\pi}^{-1}( x )$ contains two or more points only if $x$ is a point of the seam $Q( S_{Z} )$, and in such a case $\widetilde{\pi}^{-1}(x) \subset \mathcal{C}$.
    
    For $\alpha = 1,2$, we denote, for every Borel set $B \subset \mathbb{S}^{2}$,
    \begin{equation}
        \label{eq:1D:hausdorffpullback}
        \widetilde{\pi}^{*}\mathcal{H}^{\alpha}_{ \widetilde{Z} }( B )
        \coloneqq
        \int_{ \widetilde{Z} }
            {\#}( B \cap \widetilde{\pi}^{-1}(x) )
        \,d\mathcal{H}^{\alpha}_{ \widetilde{Z} }(x)
        =
        \mathcal{H}^{\alpha}_{ \widetilde{Z} }( \widetilde{\pi}(B) ),
    \end{equation}
    where the multiplicity can be ignored in the case $\alpha = 2$ since it equals one outside the negligible set $Q( S_{Z} )$. For $\alpha = 1$, the multiplicity is two or more only when it is $\infty$ and this happens in a set of negligible $\mathcal{H}^{1}_{ \widetilde{Z} }$-measure. Either way, the multiplicity is negligible in \eqref{eq:1D:hausdorffpullback}, so the second equality is justified.
        
\begin{prop}\label{thm:failure:terrible}
    Let $g$ be a welding homeomorphism with a welding circle $\mathcal{C}$ and $I \subset \mathcal{C}$ a subarc. Then $d_{\widetilde{Z}}( \widetilde{\pi}(x), \widetilde{\pi}(y) ) = 0$ for all $x, y \in I$ if and only if $\omega_{1}|_{ I }$ and $\omega_{2}|_{ I }$ are mutually singular. If such an interval exists, then $\widetilde{Z}$ is not quasiconformally equivalent to $\mathbb{S}^{2}$.
\end{prop}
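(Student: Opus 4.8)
The plan is to route everything through the identity \eqref{eq:identity:Z} of \Cref{lemm:hausdorff}, and for the last assertion through the boundary behaviour of conformal maps. Set $J = \phi_{1}^{-1}(I) \subset \mathbb{S}^{1}$, a nondegenerate subarc, and recall from \eqref{eq:quotientmap} that $\widetilde{\pi}(x) = \widetilde{\iota}_{1}(\phi_{1}^{-1}(x))$ for $x \in \mathcal{C}$, so $\widetilde{\pi}(I) = \widetilde{\iota}_{1}(J)$. I would first prove
\begin{gather*}
    d_{\widetilde{Z}}(\widetilde{\pi}(x),\widetilde{\pi}(y)) = 0 \text{ for all } x,y \in I
    \quad\iff\quad
    \mathcal{H}^{1}_{\widetilde{Z}}(\widetilde{\iota}_{1}(J)) = 0
    \\
    \iff\quad
    v_{g} = 0 \ \mathcal{H}^{1}_{\mathbb{S}^{1}}\text{-a.e. on } J
    \quad\iff\quad
    \omega_{1}|_{I} \perp \omega_{2}|_{I}.
\end{gather*}
In the first biconditional, ``$\Rightarrow$'' is clear since then $\widetilde{\iota}_{1}(J)$ is a single point; for ``$\Leftarrow$'', given $a,b \in J$ the subarc of $J$ from $a$ to $b$ is mapped by the $1$-Lipschitz map $\widetilde{\iota}_{1}$ to a rectifiable path whose image lies in the $\mathcal{H}^{1}_{\widetilde{Z}}$-null set $\widetilde{\iota}_{1}(J)$, hence has length zero by \eqref{eq:areaformula:federer}, so $\widetilde{\iota}_{1}(a) = \widetilde{\iota}_{1}(b)$. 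The second biconditional is immediate from \eqref{eq:identity:Z} and $v_{g} \geq 0$. For the third, I note that for every Borel $A \subseteq J$ one has $\omega_{1}(\phi_{1}(A)) = \tfrac{1}{2\pi}\mathcal{H}^{1}_{\mathbb{S}^{1}}(A)$ and, since $\phi_{2} = \phi_{1} \circ g^{-1}$ on $\mathbb{S}^{1}$ gives $\phi_{2}^{-1}(\phi_{1}(A)) = g(A)$, also $\omega_{2}(\phi_{1}(A)) = \tfrac{1}{2\pi}\,g^{*}\mathcal{H}^{1}_{\mathbb{S}^{1}}(A) = \tfrac{1}{2\pi}(v_{g}\mathcal{H}^{1}_{\mathbb{S}^{1}} + \mu^{\perp})(A)$; as $\phi_{1} \colon J \to I$ is a homeomorphism and $\mu^{\perp}$ is singular to $\mathcal{H}^{1}_{\mathbb{S}^{1}}$, the restrictions $\omega_{1}|_{I}$ and $\omega_{2}|_{I}$ are mutually singular precisely when $v_{g}\mathcal{H}^{1}_{\mathbb{S}^{1}}|_{J} = 0$.

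For the final assertion, suppose towards a contradiction that $h \colon \widetilde{Z} \to \mathbb{S}^{2}$ is quasiconformal; identify $\mathbb{S}^{2}$ with $\widehat{\mathbb{C}}$ and, via stereographic projection, identify $Z_{1}$ with the unit disk $\mathbb{D}$ and $\mathbb{S}^{1}$ with $\partial\mathbb{D}$. By \Cref{lemm:inclusion} the map $\widetilde{\iota}_{1}|_{Z_{1}}$ is an injective local isometry, hence $1$-quasiconformal onto the open set $\widetilde{\iota}_{1}(Z_{1}) \subset \widetilde{Z}$, so $f := h \circ \widetilde{\iota}_{1}|_{\mathbb{D}}$ is a quasiconformal homeomorphism of $\mathbb{D}$ onto the open set $G := h(\widetilde{\iota}_{1}(Z_{1}))$. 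By \eqref{eq:energyminimal:chain:seam} and \Cref{lemm:inclusion} the nonempty open set $\widetilde{\iota}_{2}(Z_{2})$ is disjoint from $\widetilde{\iota}_{1}(\overline{Z}_{1})$, so after a Möbius change of coordinates sending a point of $\mathbb{S}^{2} \setminus h(\widetilde{\iota}_{1}(\overline{Z}_{1}))$ to $\infty$ we may assume $G$ is a bounded subset of $\mathbb{C}$ with more than one point. The map $f$ extends continuously to $\overline{\mathbb{D}}$, and the first part shows that under the standing hypothesis $\widetilde{\iota}_{1}(J)$ is a single point; thus $f$ is constant on the nondegenerate arc $J \subset \partial\mathbb{D}$. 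Since $f$ is quasiconformal between planar domains it has a measurable Beltrami coefficient of supremum norm less than $1$; solving the corresponding Beltrami equation on $\widehat{\mathbb{C}}$ (with the coefficient extended by zero) gives a quasiconformal homeomorphism $w$ of $\widehat{\mathbb{C}}$ such that $\Psi := f \circ w^{-1}$ is conformal on the Jordan domain $w(\mathbb{D})$, extends continuously to $\overline{w(\mathbb{D})}$ (as $f$ and $w^{-1}$ do), and is constant on the nondegenerate boundary arc $w(J)$. Composing $\Psi$ with a Riemann map $\mathbb{D} \to w(\mathbb{D})$ — which extends to a homeomorphism of closures by Carathéodory's theorem since $w(\mathbb{D})$ is a Jordan domain — produces a bounded holomorphic function on $\mathbb{D}$, continuous on $\overline{\mathbb{D}}$, whose boundary values are constant on an arc of positive length. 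By the F.\ and M.\ Riesz uniqueness theorem this function is constant, contradicting that it is a homeomorphism onto $G$. Hence no such $h$ exists.

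The main obstacle is the last step: one must first upgrade the metric (modulus) quasiconformality of $h$ to the analytic regularity needed for the Stoilow factorization and the Riesz theorem, and one must keep in mind that the welding curve $\mathcal{C}$ — hence $\partial\Omega_{1}$ and the image $G$ — may be highly irregular, which is why Carathéodory's theorem is applied to the Jordan domains $Z_{1} \cong \mathbb{D}$ and $w(\mathbb{D})$ rather than to $G$ itself. An alternative endgame would be to verify that the reciprocality condition \eqref{point:zero:modulus} of \Cref{defi:reciprocal} fails at $\widetilde{\pi}(I)$ and invoke \Cref{thm:raj}; but estimating the relevant moduli near that point again seems to require the conformal maps $\phi_{i}$, so the complex-analytic route looks the more economical one.
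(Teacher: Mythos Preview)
Your argument for the first equivalence is correct and is essentially the paper's: both route the claim through \eqref{eq:identity:Z} in \Cref{lemm:hausdorff}, reducing to $v_g = 0$ $\mathcal{H}^1_{\mathbb{S}^1}$-a.e.\ on $J=\phi_1^{-1}(I)$ and then translating this to mutual singularity of $\omega_1|_I$ and $\omega_2|_I$.

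For the second assertion your route is correct but genuinely different from the paper's. You pull the collapse back through a quasiconformal $h$, factor $f=h\circ\widetilde\iota_1|_{\mathbb D}$ via the measurable Riemann mapping theorem to obtain a bounded holomorphic map on a Jordan domain that is constant on a boundary arc, and finish with a boundary uniqueness theorem. The paper instead shows directly that the reciprocality condition \eqref{point:zero:modulus} fails at the collapsed point $y=\widetilde\iota_1(I)$: fixing $x_0\in Z_1$ and a small ball around it, one has a uniform lower bound for $\Mod\Gamma(I,\overline B_{\mathbb S^2}(x_0,s);I\cup Z_1)$, and since $\widetilde\iota_1$ is a local isometry off the seam this transports to a positive lower bound for $\Mod\Gamma(\overline B_{\widetilde Z}(y,r),\widetilde Z\setminus B_{\widetilde Z}(y,R);\overline B_{\widetilde Z}(y,R))$ independent of $r$, after which \Cref{thm:raj} gives the conclusion. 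Your closing remark that ``estimating the relevant moduli near that point again seems to require the conformal maps $\phi_i$'' is therefore mistaken: the paper's modulus estimate uses only the local isometry property of $\widetilde\iota_1$ and nothing about the welding data. Your complex-analytic route buys a clean classical endgame (Schwarz reflection or F.\ and M.\ Riesz) at the price of invoking the measurable Riemann mapping theorem and Carath\'eodory extension; the paper's route is more elementary in that it stays entirely within the metric modulus framework and yields the stronger information that a specific geometric obstruction---failure of \eqref{point:zero:modulus}---is present.
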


\begin{rem}\label{rem:collapsing}
    If $g$ is a welding homeomorphism obtained from \Cref{rem:Ah:Beu} or any welding $g$ corresponding to the von Koch snowflake \cite[Example 4.3]{Gar:Mar:05}, \Cref{thm:failure:terrible} implies that $Q( S_{Z} )$ is a singleton. In particular, $\widetilde{Z}$ is not even homeomorphic to the sphere. For a given $g$, this happens if and only if $g^{*}\mathcal{H}^{1}_{ \mathbb{S}^{1} }$ and $\mathcal{H}^{1}_{ \mathbb{S}^{1} }$ are mutually singular.
\end{rem}
    
    A key step in the proof of the conformal removability in \Cref{thm:welding:positive} is the following.
\begin{prop}\label{prop:quotientmap}
    Let $g$ be a welding homeomorphism and $\widetilde{\pi}$ as in \eqref{eq:quotientmap}. Then $\widetilde{\pi}$ is continuous, monotone, and surjective. Moreover, for all path families $\Gamma$ on $\mathbb{S}^{2}$, $\Mod \Gamma \leq \Mod \widetilde{\pi} \Gamma$. The metric space $\widetilde{Z}$ is quasiconformally equivalent to $\mathbb{S}^{2}$ if and only if $\widetilde{\pi}$ is a homeomorphism for which $\Mod \Gamma = \Mod \widetilde{\pi} \Gamma$ for all path families.
\end{prop}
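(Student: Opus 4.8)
The plan is to prove the three assertions separately, the modulus inequality being the technical core.

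For the topological part I would first invoke Carathéodory's theorem: since $\mathcal{C}$ is a Jordan curve, each Riemann map $\phi_i$ extends to a homeomorphism $\overline{Z}_i\to\overline{\Omega}_i$, so $\phi_i^{-1}$ is continuous on $\overline{\Omega}_i$, and composing with the $1$-Lipschitz map $\widetilde{\iota}_i$ of \Cref{lemm:inclusion} shows $\widetilde{\pi}$ is continuous on each of the closed sets $\overline{\Omega}_1,\overline{\Omega}_2$. On $\mathcal{C}=\overline{\Omega}_1\cap\overline{\Omega}_2$ the two defining formulas agree: for $x\in\mathcal{C}$ the welding relation gives $g(\phi_1^{-1}(x))=\phi_2^{-1}(x)$, and the gluing identification $\iota_1(w)=\iota_2(g(w))$ then yields $\iota_1(\phi_1^{-1}(x))=\iota_2(\phi_2^{-1}(x))$; hence $\widetilde{\pi}$ is continuous by the pasting lemma, and surjective because $\phi_i(Z_i)=\Omega_i$ and $Z=\iota_1(\overline{Z}_1)\cup\iota_2(Z_2)$. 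For monotonicity I would compute $\widetilde{\pi}^{-1}(z)$: if $z\notin Q(S_Z)$ it is a single point, since $\widetilde{\iota}_i|_{Z_i}$ is an injective local isometry and, by \Cref{lemm:jumping:overtheseam}, $\widetilde{\iota}_1(Z_1)\cap\widetilde{\iota}_2(Z_2)=\emptyset$; if $z\in Q(S_Z)$, then $\widetilde{\pi}^{-1}(z)\subset\mathcal{C}$ and the identity above gives $\widetilde{\pi}^{-1}(z)=\phi_1(\widetilde{\iota}_1^{-1}(z))$, which by \Cref{lemm:inclusion} and Carathéodory is a compact connected subarc of $\mathcal{C}$. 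This also yields the refinement stated after \eqref{eq:quotientmap}.

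For $\Mod\Gamma\le\Mod\widetilde{\pi}\Gamma$, the decisive point is that $\widetilde{\pi}$ is \emph{conformal off the seam}: on $\Omega_i$ it equals $\widetilde{\iota}_i\circ\phi_i^{-1}$, a conformal map followed by a local isometry, hence $1$-quasiconformal onto $\widetilde{\iota}_i(Z_i)$ with $\rho_{\widetilde{\pi}}^2=J_{\widetilde{\pi}}$ a.e.\ there equal to the square of the conformal factor; moreover $\mathcal{H}^2_{\widetilde{Z}}(Q(S_Z))=0$, because $Q(S_Z)=\widetilde{\iota}_1(\mathbb{S}^1)$, $\widetilde{\iota}_1$ is $1$-Lipschitz and $\mathbb{S}^1$ is $\mathcal{H}^2$-null, so $\widetilde{\pi}$ restricts to a conformal bijection $\mathbb{S}^2\setminus\mathcal{C}\to\widetilde{Z}\setminus Q(S_Z)$ with $\widetilde{\pi}^{*}\mathcal{H}^2_{\widetilde{Z}}=J_{\widetilde{\pi}}\,\mathcal{H}^2_{\mathbb{S}^2}$ off $\mathcal{C}$. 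Given $\rho$ admissible for $\widetilde{\pi}\Gamma$ with $\int_{\widetilde{Z}}\rho^2<\infty$, put $\tilde{\rho}=(\rho\circ\widetilde{\pi})\sqrt{J_{\widetilde{\pi}}}$ off $\mathcal{C}$ and $\tilde{\rho}=0$ on $\mathcal{C}$; change of variables for the conformal bijection gives $\int_{\mathbb{S}^2}\tilde{\rho}^2=\int_{\widetilde{Z}\setminus Q(S_Z)}\rho^2\le\int_{\widetilde{Z}}\rho^2$. The remaining --- and hardest --- point is that $\int_\gamma\tilde{\rho}\,ds\ge 1$ for $2$-almost every $\gamma\in\Gamma$, and here the harmonic-measure analysis of this section enters. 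I would split $\mathcal{C}$, using the density $v_g$ of \Cref{lemm:hausdorff} transported to $\mathcal{C}$ by $\phi_1$, into the part $\mathcal{C}_s$ where it vanishes and the part $\mathcal{C}_{ac}$ where it does not. By \Cref{thm:failure:terrible} and \Cref{lemm:hausdorff}, $\widetilde{\pi}$ sends $\mathcal{C}_s$ to an $\mathcal{H}^1_{\widetilde{Z}}$-null set, while $\mathcal{C}_{ac}$ is the part where $\mathcal{C}$ has a tangent and is therefore $\sigma$-finite for $\mathcal{H}^1_{\mathbb{S}^2}$ (so $\mathcal{H}^2_{\mathbb{S}^2}$-null), and by the comparison of harmonic measure with arc length on the tangential part $\widetilde{\pi}|_{\mathcal{C}_{ac}}$ maps $\mathcal{H}^1_{\mathbb{S}^2}$-null sets to $\mathcal{H}^1_{\widetilde{Z}}$-null sets. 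Consequently, for $2$-a.e.\ $\gamma$: $\gamma$ has zero length in the null set $\mathcal{C}_{ac}$, $\widetilde{\pi}\circ\gamma$ is rectifiable, and $\widetilde{\pi}\circ\gamma$ meets $Q(S_Z)$ in an $\mathcal{H}^1_{\widetilde{Z}}$-null set --- null over $\gamma^{-1}(\mathcal{C}_s)$ by collapsing and null over $\gamma^{-1}(\mathcal{C}_{ac})$ because the $\mathcal{H}^1_{\mathbb{S}^2}$-null set $|\gamma|\cap\mathcal{C}_{ac}$ has $\mathcal{H}^1_{\widetilde{Z}}$-null image --- so its length in $Q(S_Z)$ vanishes by \eqref{eq:areaformula:federer}. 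Then \Cref{lemm:techincal:lemma}, the conformality of $\widetilde{\pi}$ off $\mathcal{C}$, and the area formula \eqref{eq:areaformula} give $\int_\gamma\tilde{\rho}\,ds=\int_{\widetilde{\pi}\gamma}\rho\,ds\ge 1$; taking the infimum over $\rho$ proves the inequality. Equivalently, this packages as $\widetilde{\pi}\in N^{1,2}_{\loc}(\mathbb{S}^2,\widetilde{Z})$ with $\rho_{\widetilde{\pi}}^2\le J_{\widetilde{\pi}}$ a.e., whereupon \Cref{prop:williams:L-Wversion} applies.

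For the equivalence, the direction ``$\Leftarrow$'' is immediate: if $\widetilde{\pi}$ is a homeomorphism with $\Mod\Gamma=\Mod\widetilde{\pi}\Gamma$ for all $\Gamma$, it is $1$-quasiconformal by \Cref{def:QCmaps}, so $\widetilde{\pi}^{-1}$ is a quasiconformal homeomorphism $\widetilde{Z}\to\mathbb{S}^2$. For ``$\Rightarrow$'', let $F\colon\widetilde{Z}\to\mathbb{S}^2$ be $K$-quasiconformal. Then $F\circ\widetilde{\pi}$ is a continuous monotone surjection of $\mathbb{S}^2$ with $\Mod\Gamma\le\Mod\widetilde{\pi}\Gamma\le K\Mod(F\circ\widetilde{\pi})\Gamma$, so $K_O(F\circ\widetilde{\pi})\le K$; by \Cref{prop:williams:L-Wversion} it lies in $N^{1,2}_{\loc}$ and is a mapping of bounded distortion, hence, being nonconstant, discrete and open (Reshetnyak), and a discrete monotone map is injective, so $F\circ\widetilde{\pi}$ is a homeomorphism, and therefore so is $\widetilde{\pi}=F^{-1}\circ(F\circ\widetilde{\pi})$. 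Finally, the homeomorphism $\widetilde{\pi}^{-1}\colon\widetilde{Z}\to\mathbb{S}^2$ is conformal off the seam $Q(S_Z)$, which is now genuinely $\mathcal{H}^2_{\widetilde{Z}}$-negligible, so the argument of the previous paragraph applied to $\widetilde{\pi}^{-1}$ (with the same harmonic-measure input controlling the seam) gives $K_O(\widetilde{\pi}^{-1})\le 1$, i.e.\ $\Mod\widetilde{\pi}\Gamma\le\Mod\Gamma$; together with the inequality already proved, this is the asserted equality, and $\widetilde{Z}$ is quasiconformally equivalent to $\mathbb{S}^2$. The one genuinely hard step throughout is the bound $\int_\gamma\tilde{\rho}\,ds\ge 1$: one must know precisely how $\widetilde{\pi}$ spreads one-dimensional mass across $\mathcal{C}$ --- annihilating the mutually singular part, acting absolutely continuously on the tangential part --- which is exactly the content of the harmonic-measure results (\Cref{lemm:hausdorff}, \Cref{thm:failure:terrible}) of this section; everything else is bookkeeping with conformality, \Cref{lemm:inclusion}, \Cref{lemm:techincal:lemma}, and \Cref{prop:williams:L-Wversion}.
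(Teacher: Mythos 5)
Your topological argument and your proof of $\Mod\Gamma\le\Mod\widetilde{\pi}\Gamma$ match the paper's route (Lemmas \ref{lemm:quotient:Sobolev} and \ref{lemm:pullbackmeasure:modulus}): conformality off $\mathcal{C}$, negligibility of the seam, the splitting of $\mathcal{C}$ into the tangential part and its complement, and \Cref{lemm:techincal:lemma} to push admissible functions through. The forward inequality is fine. The problems are in the ``$\Rightarrow$'' direction of the equivalence. A minor one first: you apply \Cref{prop:williams:L-Wversion} to $F\circ\widetilde{\pi}$ before knowing it is injective, but that theorem is stated for homeomorphisms, so your Reshetnyak route needs a more general version of Williams' theorem (or a direct verification that $F\circ\widetilde{\pi}\in N^{1,2}_{\loc}$). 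The paper gets injectivity much more cheaply from \Cref{thm:failure:terrible}: a collapsed subarc creates a point of positive capacity, contradicting the assumed quasiconformal equivalence.

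The genuine gap is your final step, ``the argument of the previous paragraph applied to $\widetilde{\pi}^{-1}$ gives $K_O(\widetilde{\pi}^{-1})\le 1$.'' The situation is not symmetric. For the forward direction, a.e.\ path $\gamma$ in $\mathbb{S}^2$ has zero length in $\mathrm{Tn}(\mathcal{C})$, so $|\gamma|\cap\mathcal{C}$ is $\mathcal{H}^1_{\mathbb{S}^2}$-null and \Cref{lemm:mutualABS:tan} transfers this to an $\mathcal{H}^1_{\widetilde{Z}}$-null image, which is exactly the hypothesis of \Cref{lemm:techincal:lemma}. In the reverse direction you would need: for a.e.\ path $\gamma$ in $\widetilde{Z}$, the set $\widetilde{\pi}^{-1}(|\gamma|\cap Q(S_Z))$ is $\mathcal{H}^1_{\mathbb{S}^2}$-null. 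But \Cref{lemm:mutualABS:tan} only controls the tangential part of $\mathcal{C}$; the set $\mathcal{C}\setminus\mathrm{Tn}(\mathcal{C})$ can carry positive, even non-$\sigma$-finite, $\mathcal{H}^1$-measure, and the harmonic-measure results say nothing about how $\widetilde{\pi}^{-1}$ distributes $\mathcal{H}^1$ onto it. This is precisely the subtlety flagged in \Cref{rem:regularity}: Sobolev regularity of the inverse of such a map can genuinely fail (the cited example is a $1$-Lipschitz homeomorphism satisfying the forward modulus inequality whose inverse is not in $N^{1,2}$). The paper closes this by first using the hypothesis --- the existence of the quasiconformal $\psi\colon\widetilde{Z}\to\mathbb{S}^2$ --- to show that $\psi\circ\widetilde{\pi}$ is planar quasiconformal, hence $\widetilde{\pi}$ is $K'$-quasiconformal and $h\circ\widetilde{\pi}^{-1}\in N^{1,2}(\widetilde{Z})$ for every $1$-Lipschitz $h\colon\mathbb{S}^2\to\mathbb{R}$; only then does \Cref{lemm:regularity} run the \Cref{lemm:techincal:lemma} argument, with the exceptional set replaced by its image in $\mathbb{R}$ under $h\circ\widetilde{\pi}^{-1}$, which is null by the absolute continuity of $(h\circ\widetilde{\pi}^{-1})\circ\gamma$. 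You must use the assumed quasiconformal equivalence at this stage; the harmonic-measure input alone does not suffice.
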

    The proof of \Cref{prop:quotientmap} requires some preparatory work. Given the curve $\mathcal{C}$, we say that $x_{0} \in \mathcal{C}$ is a \emph{tangent point} if there exists a homeomorphism $\gamma \colon \left( -\epsilon, \epsilon \right) \rightarrow \mathcal{C}' \subset \mathcal{C}$ with $\gamma( 0 ) = x_{0}$, and a tangent vector $v_{0} \in T_{ x_{0} }\mathbb{S}^{2}$ with unit length such that for every smooth $f \colon \mathbb{S}^{2} \rightarrow \mathbb{R}$, its differential $df$ satisfies 
    \begin{equation*}
        df( v_{0} )
        =
        \lim_{ t \rightarrow 0^{+} }
            \frac{ f( \gamma( t ) ) - f( x_{0} ) }{ \sigma( \gamma(t), x_{0} ) }
        \quad\text{and}\quad
        df( -v_{0} )
        =
        \lim_{ t \rightarrow 0^{-} }
            \frac{ f( \gamma( t ) ) - f( x_{0} ) }{ \sigma( \gamma(t), x_{0} ) }.
    \end{equation*}
    If $v_{0}$ exists, the tangent vector $v_{0}$ is independent of the parametrization $\gamma$ and $\mathcal{C}'$ up to multiplication by $-1$; see \cite[Chapter II, Section 4]{Gar:Mar:05}. The collection of \emph{tangents points} of $\mathcal{C}$ is denoted by $\mathrm{Tn}( \mathcal{C} )$. The key properties of $\mathrm{Tn}( \mathcal{C} )$ are self-contained in the following statement.
    
    \begin{lemm}\label{lemm:harmonicmeasure:abs}
    The Borel set $\mathrm{Tn}( \mathcal{C} )$ has $\sigma$-finite Hausdorff $1$-measure. Moreover, on the set $\mathrm{Tn}( \mathcal{C} )$, the measures $\omega_{1}$, $\omega_{2}$, and $\mathcal{H}^{1}_{ \mathcal{C} }$ are mutually absolutely continuous.
    
    Given any Borel set $E \subset \mathcal{C}$ with $\omega_{1}( E ) \cdot \omega_{2}( E ) > 0$, the restrictions $\omega_{1}|_{ E }$ and $\omega_{2}|_{ E }$ are mutually singular on $E$ if and only if $\mathcal{H}^{1}_{ \mathcal{C} }( \mathrm{Tn}( \mathcal{C} ) \cap E ) = 0$.
    \end{lemm}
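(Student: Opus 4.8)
The statement collects several classical facts about the boundary behaviour of the Riemann maps $\phi_{1},\phi_{2}$ and about the harmonic measures of the two complementary Jordan domains of $\mathcal{C}$, so my plan is to prove the $\sigma$-finiteness and rectifiability of $\mathrm{Tn}(\mathcal{C})$ by an elementary cone argument and then to obtain the two measure-comparison statements by feeding this into the classical theory developed in \cite{Gar:Mar:05}: McMillan's twist theorem, the theorem of F.\ and M.\ Riesz together with its extension to rectifiable boundary subsets, the corkscrew lower bound for harmonic measure, and the comparison of the harmonic measures of the two sides of a Jordan curve (see also \cite{Bis:07:welding}). Throughout one must keep in mind the dictionary between the paper's definition of a tangent point (convergence of the difference quotients of smooth functions along a homeomorphic parametrization) and the classical notions of cone points, twist points and nonzero angular derivatives.

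\emph{Rectifiability and $\sigma$-finiteness of $\mathrm{Tn}(\mathcal{C})$.} Fix a small $\varepsilon>0$. Unwinding the definition: if $x_{0}\in\mathrm{Tn}(\mathcal{C})$ has tangent direction $\pm v_{0}$, then the homeomorphic parametrization $\gamma$ through $x_{0}$ maps a neighbourhood of $0$ onto a relatively open subarc $U_{0}\ni x_{0}$ of $\mathcal{C}$, so $B(x_{0},\rho(x_{0}))\cap\mathcal{C}\subset U_{0}$ for some $\rho(x_{0})>0$; shrinking $\rho(x_{0})$, every $y\in B(x_{0},\rho(x_{0}))\cap\mathcal{C}\setminus\{x_{0}\}$ makes an angle less than $\varepsilon$ with $\pm v_{0}$ (read in a chart near $x_{0}$). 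Decompose $\mathrm{Tn}(\mathcal{C})$ according to a rational lower bound $1/m$ for $\rho(x_{0})$ and a finite cover of the projective line of directions by $\varepsilon$-arcs. In a fixed piece $T_{m,k}$, any two points at distance less than $1/m$ are joined in a direction making angle less than $2\varepsilon$ with a fixed line, so $T_{m,k}$ intersected with any ball of radius $1/(2m)$ is a Lipschitz graph over that line and thus has finite $\mathcal{H}^{1}_{\mathcal{C}}$-measure; covering the compact set $\mathcal{C}$ by finitely many such balls bounds $\mathcal{H}^{1}_{\mathcal{C}}(T_{m,k})$. Hence $\mathrm{Tn}(\mathcal{C})$ is a countable union of rectifiable graphs: it is routinely Borel, it is $1$-rectifiable, and $\mathcal{H}^{1}_{\mathcal{C}}$ is $\sigma$-finite on it. The same cone estimate shows that, at each $x_{0}\in\mathrm{Tn}(\mathcal{C})$, each of $\Omega_{1},\Omega_{2}$ contains a truncated cone with vertex $x_{0}$, with opening and truncation radius uniform on each $T_{m,k}$.

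\emph{Mutual absolute continuity on $\mathrm{Tn}(\mathcal{C})$.} Since $\mathrm{Tn}(\mathcal{C})$ lies in a countable union of rectifiable curves, the F.\ and M.\ Riesz theorem in its form for rectifiable boundary subsets gives $\omega_{i}|_{\mathrm{Tn}(\mathcal{C})}\ll\mathcal{H}^{1}_{\mathcal{C}}|_{\mathrm{Tn}(\mathcal{C})}$ for $i=1,2$. Conversely, the uniform corkscrew condition at points of each $T_{m,k}$, together with the density-one property of $1$-rectifiable sets, produces a lower bound for $\omega_{i}$ on subsets of positive $\mathcal{H}^{1}_{\mathcal{C}}$-measure, whence $\mathcal{H}^{1}_{\mathcal{C}}|_{\mathrm{Tn}(\mathcal{C})}\ll\omega_{i}|_{\mathrm{Tn}(\mathcal{C})}$. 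Therefore $\omega_{1}|_{\mathrm{Tn}(\mathcal{C})}\sim\omega_{2}|_{\mathrm{Tn}(\mathcal{C})}\sim\mathcal{H}^{1}_{\mathcal{C}}|_{\mathrm{Tn}(\mathcal{C})}$, which is the second assertion.

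\emph{The dichotomy on a Borel set $E$, and where the difficulty lies.} If $\mathcal{H}^{1}_{\mathcal{C}}(E\cap\mathrm{Tn}(\mathcal{C}))>0$, then by the previous paragraph $\omega_{1}$ and $\omega_{2}$ restrict to nonzero, mutually absolutely continuous measures on $E\cap\mathrm{Tn}(\mathcal{C})$, so they cannot be mutually singular on $E$; this is the "$\Rightarrow$" direction in contrapositive form. If instead $\mathcal{H}^{1}_{\mathcal{C}}(E\cap\mathrm{Tn}(\mathcal{C}))=0$, then $\omega_{i}(E\cap\mathrm{Tn}(\mathcal{C}))=0$ for $i=1,2$ by mutual absolute continuity, so each $\omega_{i}|_{E}$ is carried by $E\setminus\mathrm{Tn}(\mathcal{C})$; applying to $E\setminus\mathrm{Tn}(\mathcal{C})$ the comparison theorem for the two sides of a Jordan curve — that if $\omega_{1}$ and $\omega_{2}$ are not mutually singular on a Borel set $A\subset\mathcal{C}$ then $\mathcal{H}^{1}_{\mathcal{C}}(\mathrm{Tn}(\mathcal{C})\cap A)>0$ — shows that $\omega_{1}$ and $\omega_{2}$ are mutually singular on $E\setminus\mathrm{Tn}(\mathcal{C})$, hence on $E$. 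The elementary cone decomposition is easy; all the weight is carried by the classical harmonic-measure inputs invoked above — the extension of the F.\ and M.\ Riesz theorem to rectifiable boundary subsets, the corkscrew lower bound for $\omega_{i}$, and the mutual singularity of the two harmonic measures off the tangent set — and by the (routine but necessary) care needed to match the paper's definition of a tangent point with the classical framework in which McMillan's theorem and the F.\ and M.\ Riesz theory are stated.
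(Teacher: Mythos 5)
Your proposal is correct in substance, but it is considerably more self-contained than the paper's argument, which is purely a matter of citation: Borel measurability and the dictionary between tangent points and angular derivatives come from \cite[Chapter II, Theorem 4.2]{Gar:Mar:05}, the $\sigma$-finiteness and the mutual absolute continuity of $\omega_{1}$, $\omega_{2}$, $\mathcal{H}^{1}_{\mathcal{C}}$ on $\mathrm{Tn}(\mathcal{C})$ from \cite[Chapter VI, Theorem 4.2]{Gar:Mar:05} and the discussion following it, and the singularity dichotomy verbatim from \cite[Chapter VI, Theorem 6.3]{Gar:Mar:05}. What you do differently is to prove the $\sigma$-finiteness by the classical cone decomposition (tangent points with a quantitative cone at scale $1/m$ and direction in a fixed $\varepsilon$-arc form, locally, a Lipschitz graph), and to deduce the dichotomy from the mutual absolute continuity on $\mathrm{Tn}(\mathcal{C})$ combined with the mutual singularity of $\omega_{1}$ and $\omega_{2}$ off $\mathrm{Tn}(\mathcal{C})$ --- which is the same Bishop--Carleson--Garnett--Jones input the paper cites, merely phrased as a one-sided implication. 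This makes visible where the elementary geometry ends and the deep harmonic-measure theory begins, at the cost of length.

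Two points need tightening. First, \enquote{routinely Borel} does not follow from your construction: being contained in a countable union of Lipschitz graphs does not make $\mathrm{Tn}(\mathcal{C})$ itself Borel; the clean route is the paper's, identifying $\mathrm{Tn}(\mathcal{C})$, up to the relevant null sets, with the Borel set where the angular derivative of a Riemann map exists and is nonzero. Second, and more substantively, a corkscrew lower bound together with density points does not by itself give $\mathcal{H}^{1}_{\mathcal{C}}|_{\mathrm{Tn}(\mathcal{C})}\ll\omega_{i}$: a pointwise cone estimate only bounds $\omega_{i}(B(x,r))$ from below by a power of $r$ strictly larger than $1$. The classical argument builds the sawtooth domain $\bigcup_{x\in F}\mathrm{Cone}(x)$ over a compact $F\subset T_{m,k}$ of positive length, uses that this is a chord-arc domain on which harmonic measure and arc length are mutually absolutely continuous (Lavrentiev's theorem, i.e.\ the $A_{\infty}$ property), and transfers the lower bound to $\omega_{i}$ by the maximum principle; equivalently one quotes McMillan's theorem, which is exactly \cite[Chapter VI, Theorem 4.2]{Gar:Mar:05}. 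Since you explicitly defer to the classical inputs, this is a matter of naming the correct theorem rather than a wrong idea, but as written the stated ingredient does not deliver the claim.
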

    \begin{proof}
    The Borel measurability of $\mathrm{Tn}( \mathcal{C} )$ follows from \cite[Chapter II, Theorem 4.2]{Gar:Mar:05} which connects the tangents of $\mathcal{C}$ and the angular derivatives of any given Riemann map $\phi_{1}' \colon Z_{1} \rightarrow \Omega_{1}$, where $\partial \Omega_{1} = \mathcal{C}$. The fact that $\mathrm{Tn}( \mathcal{C} )$ has $\sigma$-finite Hausdorff $1$-measure follows from \cite[Chapter VI, Theorem 4.2]{Gar:Mar:05}. 
    
    Theorem 6.3 of \cite[Chapter VI]{Gar:Mar:05} states that if a Borel set $E \subset \mathcal{C}$ is such that $\omega_{1}( E ) \cdot \omega_{2}( E ) > 0$, then $\omega_{1}|_{ E }$ and $\omega_{2}|_{ E }$ are mutually singular on $E$ if and only if $\mathcal{H}^{1}_{ \mathcal{C} }( \mathrm{Tn}( \mathcal{C} ) \cap E ) = 0$.
    
    The fact that on the set $\mathrm{Tn}( \mathcal{C} )$ the measures $\omega_{1}$, $\omega_{2}$, and $\mathcal{H}^{1}_{ \mathcal{C} }$ are mutually absolutely continuous follows from \cite[Chapter VI, Theorem 4.2 and the following discussion on p. 211]{Gar:Mar:05}.
    \end{proof}
    
    \begin{lemm}\label{lemm:mutualABS:tan}
    The measures $\chi_{ \mathcal{C} }\widetilde{\pi}^{*}\mathcal{H}^{1}_{ \widetilde{Z} }$, $\chi_{ \mathrm{Tn}( \mathcal{C} ) }\omega_{1}$, $\chi_{ \mathrm{Tn}( \mathcal{C} ) }\omega_{2}$ and $\chi_{ \mathrm{Tn}( \mathcal{C} ) }\mathcal{H}^{1}_{ \mathcal{C} }$ are mutually absolutely continuous.
    
    More precisely, a given Borel set $B \subset \mathrm{Tn}( \mathcal{C} )$ has positive $1$-dimensional Hausdorff measure if and only if $\mathcal{H}^{1}_{ \widetilde{Z} }( \widetilde{\pi}(B) ) > 0$. Furthermore, if $B \subset \mathcal{C} \setminus \mathrm{Tn}( \mathcal{C} )$, then $\mathcal{H}^{1}_{ \widetilde{Z} }( \widetilde{\pi}(B) ) = 0$.
    \end{lemm}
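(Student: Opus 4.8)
The plan is to transport all four measures to the unit circle via the boundary homeomorphism $\psi_{1} := \phi_{1}|_{\mathbb{S}^{1}} \colon \mathbb{S}^{1} \to \mathcal{C}$, which is available by Carath\'eodory's theorem since $\mathcal{C}$ is a Jordan curve, and to identify each of them concretely. Writing the Lebesgue decomposition $g^{*}\mathcal{H}^{1}_{\mathbb{S}^{1}} = v_{g}\mathcal{H}^{1}_{\mathbb{S}^{1}} + \mu^{\perp}$ with $\mu^{\perp}\perp\mathcal{H}^{1}_{\mathbb{S}^{1}}$ and fixing a carrier $B_{0}$ of $\mu^{\perp}$ (so $\mathcal{H}^{1}_{\mathbb{S}^{1}}(B_{0}) = 0 = \mu^{\perp}(\mathbb{S}^{1}\setminus B_{0})$), for a Borel set $B\subset\mathcal{C}$ with $A := \psi_{1}^{-1}(B)$ one has $2\pi\,\omega_{1}(B) = \mathcal{H}^{1}_{\mathbb{S}^{1}}(A)$, and, since $g = (\phi_{2}|_{\mathbb{S}^{1}})^{-1}\circ\psi_{1}$, $2\pi\,\omega_{2}(B) = \mathcal{H}^{1}_{\mathbb{S}^{1}}(g(A)) = g^{*}\mathcal{H}^{1}_{\mathbb{S}^{1}}(A)$. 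Because \eqref{eq:quotientmap} gives $\widetilde{\pi}|_{\mathcal{C}} = \widetilde{\iota}_{1}\circ\psi_{1}^{-1}$, I would combine \eqref{eq:1D:hausdorffpullback} with the identity \eqref{eq:identity:Z} of \Cref{lemm:hausdorff} to obtain the bridge formula
\[
    \widetilde{\pi}^{*}\mathcal{H}^{1}_{\widetilde{Z}}(B)
    =
    \mathcal{H}^{1}_{\widetilde{Z}}\left(\widetilde{\iota}_{1}(A)\right)
    =
    \int_{A}\min\{1,v_{g}\}\,d\mathcal{H}^{1}_{\mathbb{S}^{1}} .
\]

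From this, $\chi_{\mathcal{C}}\widetilde{\pi}^{*}\mathcal{H}^{1}_{\widetilde{Z}}\ll\omega_{1}$ is immediate because $0\le\min\{1,v_{g}\}\le 1$. Next I would prove the reverse inclusion after restricting to $\mathrm{Tn}(\mathcal{C})$: if $B\subset\mathrm{Tn}(\mathcal{C})$ and $\widetilde{\pi}^{*}\mathcal{H}^{1}_{\widetilde{Z}}(B) = 0$, the bridge formula forces $v_{g} = 0$ $\mathcal{H}^{1}_{\mathbb{S}^{1}}$-a.e.\ on $A$, so on $A_{1}:=A\setminus B_{0}$ (which has full $\mathcal{H}^{1}_{\mathbb{S}^{1}}$-measure in $A$ and satisfies $\mu^{\perp}(A_{1}) = 0$) we get $g^{*}\mathcal{H}^{1}_{\mathbb{S}^{1}}(A_{1}) = 0$, i.e.\ $\omega_{2}(\psi_{1}(A_{1})) = 0$; since $\psi_{1}(A_{1})\subset\mathrm{Tn}(\mathcal{C})$, where $\omega_{1}$ and $\omega_{2}$ are mutually absolutely continuous by \Cref{lemm:harmonicmeasure:abs}, also $\omega_{1}(\psi_{1}(A_{1})) = 0$, whence $\mathcal{H}^{1}_{\mathbb{S}^{1}}(A) = \mathcal{H}^{1}_{\mathbb{S}^{1}}(A_{1}) = 0$ and $\omega_{1}(B) = 0$. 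Thus $\chi_{\mathrm{Tn}(\mathcal{C})}\omega_{1}\ll\chi_{\mathcal{C}}\widetilde{\pi}^{*}\mathcal{H}^{1}_{\widetilde{Z}}$.

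The main step, and the one I expect to be the real obstacle, is showing that $\widetilde{\pi}^{*}\mathcal{H}^{1}_{\widetilde{Z}}$ is carried by $\mathrm{Tn}(\mathcal{C})$, i.e.\ $\mathcal{H}^{1}_{\widetilde{Z}}(\widetilde{\pi}(B)) = 0$ for every Borel $B\subset\mathcal{C}\setminus\mathrm{Tn}(\mathcal{C})$, which is also the last assertion of the lemma. By monotonicity of $B\mapsto\mathcal{H}^{1}_{\widetilde{Z}}(\widetilde{\pi}(B))$ and the bridge formula it suffices to show that $A' := \psi_{1}^{-1}(\mathcal{C}\setminus\mathrm{Tn}(\mathcal{C}))\cap\{v_{g}>0\}$ is $\mathcal{H}^{1}_{\mathbb{S}^{1}}$-null. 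Arguing by contradiction, if it were not, then $B':=\psi_{1}(A')$ would satisfy $\omega_{1}(B') = \tfrac{1}{2\pi}\mathcal{H}^{1}_{\mathbb{S}^{1}}(A')>0$ and $\omega_{2}(B')\ge\tfrac{1}{2\pi}\int_{A'}v_{g}\,d\mathcal{H}^{1}_{\mathbb{S}^{1}}>0$, while $\mathcal{H}^{1}_{\mathcal{C}}(\mathrm{Tn}(\mathcal{C})\cap B') = 0$ because $B'\cap\mathrm{Tn}(\mathcal{C}) = \emptyset$; the dichotomy of \Cref{lemm:harmonicmeasure:abs} then forces $\omega_{1}|_{B'}$ and $\omega_{2}|_{B'}$ to be mutually singular. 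Splitting $B' = F\cup F'$ into disjoint Borel pieces with $\omega_{1}(F) = 0 = \omega_{2}(F')$ and pulling back by $\psi_{1}$, the set $A'_{F'}:=\psi_{1}^{-1}(F')\subset A'$ has $\mathcal{H}^{1}_{\mathbb{S}^{1}}(A'_{F'}) = \mathcal{H}^{1}_{\mathbb{S}^{1}}(A')>0$ but $g^{*}\mathcal{H}^{1}_{\mathbb{S}^{1}}(A'_{F'}) = 2\pi\,\omega_{2}(F') = 0$, contradicting $g^{*}\mathcal{H}^{1}_{\mathbb{S}^{1}}\ge v_{g}\mathcal{H}^{1}_{\mathbb{S}^{1}}$ together with $v_{g}>0$ on $A'_{F'}$. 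This is the delicate point: it is the only place where the fine boundary theory of conformal maps, via \Cref{lemm:harmonicmeasure:abs}, genuinely enters, and one must translate the singularity of $\omega_{1}|_{B'},\omega_{2}|_{B'}$ on $\mathcal{C}$ carefully into a contradiction about $\mathcal{H}^{1}_{\mathbb{S}^{1}}$ and $g^{*}\mathcal{H}^{1}_{\mathbb{S}^{1}}$ on $\mathbb{S}^{1}$.

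Finally, combining the three absolute continuities above with the relations $\omega_{1}\sim\omega_{2}\sim\mathcal{H}^{1}_{\mathcal{C}}$ on $\mathrm{Tn}(\mathcal{C})$ from \Cref{lemm:harmonicmeasure:abs} yields the mutual absolute continuity of all four measures, and the first ``more precisely'' statement is then the case $B\subset\mathrm{Tn}(\mathcal{C})$ of the mutual absolute continuity of $\chi_{\mathcal{C}}\widetilde{\pi}^{*}\mathcal{H}^{1}_{\widetilde{Z}}$ and $\chi_{\mathrm{Tn}(\mathcal{C})}\mathcal{H}^{1}_{\mathcal{C}}$, since there $\chi_{\mathcal{C}}\widetilde{\pi}^{*}\mathcal{H}^{1}_{\widetilde{Z}}(B) = \mathcal{H}^{1}_{\widetilde{Z}}(\widetilde{\pi}(B))$ and $\chi_{\mathrm{Tn}(\mathcal{C})}\mathcal{H}^{1}_{\mathcal{C}}(B)$ equals the $1$-dimensional Hausdorff measure of $B$.
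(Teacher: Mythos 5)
Your proof is correct and follows essentially the same route as the paper: the identity $\mathcal{H}^{1}_{\widetilde{Z}}(\widetilde{\pi}(B)) = \int_{\phi_{1}^{-1}(B)}\min\{1,v_{g}\}\,d\mathcal{H}^{1}_{\mathbb{S}^{1}}$ from \Cref{lemm:hausdorff} combined with the dichotomy of \Cref{lemm:harmonicmeasure:abs}. The only difference is cosmetic: where you run an explicit contradiction via a singular decomposition $B'=F\cup F'$ on the circle, the paper simply notes that the Radon--Nikodym density $h=v_{g}\circ\phi_{1}^{-1}$ of the absolutely continuous part of $\omega_{2}$ with respect to $\omega_{1}$ vanishes $\omega_{1}$-a.e.\ on $\mathcal{C}\setminus\mathrm{Tn}(\mathcal{C})$ and is positive and finite $\omega_{1}$-a.e.\ on $\mathrm{Tn}(\mathcal{C})$.
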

    \begin{proof}
    We write $g^{*}\mathcal{H}^{1}_{ \mathbb{S}^{1} } = v_{g}\mathcal{H}^{1}_{ \mathbb{S}^{1} } + 2 \pi \cdot \mu^{ \perp }$ with $\mathcal{H}^{1}_{ \mathbb{S}^{1} }$ and $\mu^{ \perp }$ are mutually singular. We recall from \Cref{lemm:hausdorff} that for every Borel set $B \subset \mathcal{C}$,
    \begin{equation}
        \label{eq:ABS}
        \mathcal{H}^{1}_{ \widetilde{Z} }( \widetilde{\pi}(B) )
        =
        \int_{ \phi_{1}^{-1}( B ) }
            \min\left\{ 1, v_{g} \right\}
        \,d\mathcal{H}^{1}_{ \mathbb{S}^{1} }.
    \end{equation}
    We denote $h = v_{g} \circ \phi_{1}^{-1}$ and observe the equality $\omega_{2} = h \omega_{1} + ( \phi_{1} )_{*}\mu^{\perp}$. Then \eqref{eq:ABS} is equivalent to
    \begin{equation}
        \label{eq:harmonicmeasure}
        ( 2 \pi )^{-1}
        \mathcal{H}^{1}_{ \widetilde{Z} }( \widetilde{\pi}(B) )
        =
        \int_{ B }
            \min\left\{ 1, h \right\}
        \,d\omega_{1}.
    \end{equation}
    \Cref{lemm:harmonicmeasure:abs} implies that the measures $\chi_{ \mathcal{C} \setminus \mathrm{Tn}( \mathcal{C} ) } \omega_{1}$ and $\chi_{\mathcal{C} \setminus \mathrm{Tn}( \mathcal{C} )} \omega_{2}$ are mutually singular. Consequently, $h = 0$ $\omega_{1}$-almost everywhere in $\mathcal{C} \setminus \mathrm{Tn}( \mathcal{C} )$. In particular, if $B = \mathcal{C} \setminus \mathrm{Tn}( \mathcal{C} )$, the left-hand side equals zero in \eqref{eq:harmonicmeasure}.
    
    \Cref{lemm:harmonicmeasure:abs} yields that the measures $\chi_{ \mathrm{Tn}( \mathcal{C} ) }\omega_{1}$, $\chi_{ \mathrm{Tn}( \mathcal{C} ) }\omega_{2}$ and $\chi_{ \mathrm{Tn}( \mathcal{C} ) }\mathcal{H}^{1}_{ \mathcal{C} }$ are mutually absolutely continuous. Hence $\infty > h > 0$ $\omega_{1}$-almost everywhere in $\mathrm{Tn}( \mathcal{C} )$. This implies that the measure in \eqref{eq:harmonicmeasure} is mutually absolutely continuous with the measures $\chi_{ \mathrm{Tn}( \mathcal{C} ) }\omega_{1}$, $\chi_{ \mathrm{Tn}( \mathcal{C} ) }\omega_{2}$ and $\chi_{ \mathrm{Tn}( \mathcal{C} ) }\mathcal{H}^{1}_{ \mathcal{C} }$. The claim follows from the equalities \eqref{eq:1D:hausdorffpullback} for $\alpha = 1$.
    \end{proof}
    
    \begin{proof}[Proof of \Cref{thm:failure:terrible}]
    Fix a subarc $I \subset \mathcal{C}$. \Cref{lemm:hausdorff} implies that $\widetilde{\pi}(I)$ has zero $\mathcal{H}^{1}_{  \widetilde{Z}  }$-measure if and only if for every $x, y \in I$, $d_{\widetilde{Z}}( \widetilde{\pi}(x), \widetilde{\pi}(y) ) = 0$ if and only if $v_{g} = 0$ $\mathcal{H}^{1}_{ \mathbb{S}^{1} }$-almost everywhere on $\phi_{1}^{-1}(I)$. Equivalently, $\omega_{1}|_{I}$ and $\omega_{2}|_{I}$ are mutually singular.
    
    \Cref{lemm:inclusion} shows that $\widetilde{Z} \neq ( Z, d_{Z} )$ if and only if there exists a closed arc $I \subset \mathbb{S}^{1}$ such that $y = \widetilde{\iota}_{1}( I )$. Assume that such an $I$ exists. Having fixed $x_{0} \in Z_{1}$ and $0 < s < \sigma( x_{0}, \mathbb{S}^{1} )$, there exists $c = c( x_{0}, I, s )$ for which
    \begin{equation*}
        \Mod \Gamma( I, \overline{B}_{ \mathbb{S}^{2} }( x_{0}, s ); I \cup Z_{1} )
        \geq
        c
        >
        0;
    \end{equation*}
    a positive lower bound can be shown, for example, by estimating the modulus of all geodesics joining $I$ to $\overline{B}_{ \mathbb{S}^{2} }( x_{0}, s )$ in $I \cup Z_{1}$.
    
    When $R > 0$ is small enough, for every $R > r > 0$ and every path in $\Gamma( I, \overline{B}_{ \mathbb{S}^{2} }( x_{0}, s ) ); I \cup Z_{1} )$, we find a subpath $\gamma' \colon \left[0, 1\right] \rightarrow Z_{1}$ so that $\widetilde{\iota}_{1} \circ \gamma$ joins $\overline{B}_{\widetilde{Z}}( y, r )$ to $\widetilde{Z} \setminus B_{\widetilde{Z}}( y,  R )$ within $\overline{B}_{\widetilde{Z}}( y,  R )$. Since $\widetilde{\iota}_{1}$ is a local isometry off the seam, this implies
    \begin{equation*}
        \liminf_{ r \rightarrow 0^{+} }
        \Mod \Gamma\left(
            \overline{B}_{\widetilde{Z}}( y,  r ), 
            \widetilde{Z} \setminus B_{\widetilde{Z}}( y,  R );
            \overline{B}_{\widetilde{Z}}( y,  R )
        \right)
        \geq
        c.
    \end{equation*}
    Recalling \Cref{thm:raj}, we see that $\widetilde{Z}$ is not quasiconformally equivalent to $\mathbb{S}^{2}$.
    \end{proof}
    
    \begin{lemm}\label{lemm:quotient:Sobolev}
    For $i = 1, 2$, let $\rho_{i} \colon \Omega_{i} \rightarrow \left[0, \infty\right]$ denote the operator norm of the differential of $D( \phi_{i}^{-1} )$. Then
    \begin{equation}
        \label{eq:weakuppergradient}
        G
        =
        \chi_{ \Omega_{1} }
        \rho_{1}
        +
        \chi_{ \Omega_{2} } \rho_{2}
        +
        \infty \cdot \chi_{ \mathrm{Tn}( \mathcal{C} ) }
        \in
        L^{2}( \mathbb{S}^{2} )
    \end{equation}
    is a weak upper gradient of $\widetilde{\pi}$.
    \end{lemm}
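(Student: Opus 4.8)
The plan is to prove the two assertions of the statement separately: that $G \in L^{2}(\mathbb{S}^{2})$, and that $G$ satisfies the upper gradient inequality for $\widetilde{\pi}$ along every rectifiable path (which in particular makes it a weak upper gradient). For the integrability, I would first recall from \Cref{lemm:harmonicmeasure:abs} that $\mathrm{Tn}(\mathcal{C})$ has $\sigma$-finite Hausdorff $1$-measure, so $\mathcal{H}^{2}_{\mathbb{S}^{2}}(\mathrm{Tn}(\mathcal{C})) = 0$ and the term $\infty \cdot \chi_{\mathrm{Tn}(\mathcal{C})}$ vanishes $\mathcal{H}^{2}_{\mathbb{S}^{2}}$-almost everywhere, contributing nothing to the $L^{2}$-norm. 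For the other two terms, since $\phi_{i}^{-1} \colon \Omega_{i} \rightarrow Z_{i}$ is conformal, the pointwise operator norm $\rho_{i}$ of its derivative satisfies $\rho_{i}^{2} = J_{\phi_{i}^{-1}}$ $\mathcal{H}^{2}_{\mathbb{S}^{2}}$-a.e.\ on $\Omega_{i}$, and the change of variables formula gives $\int_{\Omega_{i}} \rho_{i}^{2}\,d\mathcal{H}^{2}_{\mathbb{S}^{2}} = \mathcal{H}^{2}_{\mathbb{S}^{2}}(Z_{i}) < \infty$. Hence $G \in L^{2}(\mathbb{S}^{2})$.

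For the upper gradient property, fix a rectifiable path in $\mathbb{S}^{2}$; after reparametrizing by arc length I may assume it is a $1$-Lipschitz path $\gamma \colon [0,\ell] \rightarrow \mathbb{S}^{2}$ joining $x$ to $y$. If $\int_{\gamma} G\,ds = \infty$ there is nothing to prove, so assume $\int_{\gamma} G\,ds < \infty$. Since $d_{\widetilde{Z}}(\widetilde{\pi}(x),\widetilde{\pi}(y)) \leq \ell(\widetilde{\pi} \circ \gamma)$ automatically, it suffices to show $\ell(\widetilde{\pi} \circ \gamma) \leq \int_{\gamma} G\,ds$, and I would deduce this from \Cref{lemm:techincal:lemma} applied with $\psi = \widetilde{\pi}$ (continuous, being the composition of the homeomorphism $\pi$ of \eqref{eq:quotientmap} with the $1$-Lipschitz map $Q$), $\rho = G$, and the \emph{compact} set $E := \gamma([0,\ell]) \cap \mathcal{C}$. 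Two checks are required. First, $\mathcal{H}^{1}_{\widetilde{Z}}(\widetilde{\pi}(E)) = 0$: finiteness of $\int_{\gamma} G\,ds$ forces $\mathcal{L}^{1}(\gamma^{-1}(\mathrm{Tn}(\mathcal{C}))) = 0$ because of the $\infty \cdot \chi_{\mathrm{Tn}(\mathcal{C})}$ term, so the $1$-Lipschitz bound gives $\mathcal{H}^{1}(E \cap \mathrm{Tn}(\mathcal{C})) = 0$, whence $\mathcal{H}^{1}_{\widetilde{Z}}(\widetilde{\pi}(E \cap \mathrm{Tn}(\mathcal{C}))) = 0$ by \Cref{lemm:mutualABS:tan}, while $\mathcal{H}^{1}_{\widetilde{Z}}(\widetilde{\pi}(E \setminus \mathrm{Tn}(\mathcal{C}))) = 0$ is the last assertion of the same lemma; adding these gives the claim. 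Second, on any closed interval $I \subset [0,\ell] \setminus \gamma^{-1}(E) = [0,\ell] \setminus \gamma^{-1}(\mathcal{C})$ the connected image $\gamma(I)$ lies in a single $\Omega_{i}$, where $\widetilde{\pi}|_{\Omega_{i}} = \widetilde{\iota}_{i} \circ \phi_{i}^{-1}$ and $\rho_{i} \leq G$ is an upper gradient (the conformal map $\phi_{i}^{-1}$ has $\rho_{i}$ as an upper gradient, and post-composing with the $1$-Lipschitz $\widetilde{\iota}_{i}$ preserves this), so $\ell(\widetilde{\pi} \circ \gamma|_{I}) \leq \int_{\gamma|_{I}} G\,ds$. \Cref{lemm:techincal:lemma} then yields $\ell(\widetilde{\pi} \circ \gamma) \leq \int_{\gamma} G\,ds$, completing the proof.

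The hard part — and the reason the $\infty \cdot \chi_{\mathrm{Tn}(\mathcal{C})}$ term is built into $G$ in the first place — is that the seam $\widetilde{\pi}(\mathcal{C}) = Q(S_{Z})$ need not be $\mathcal{H}^{1}_{\widetilde{Z}}$-null: only its part lying outside $\mathrm{Tn}(\mathcal{C})$ is, by \Cref{lemm:mutualABS:tan}. Thus one cannot feed $E = \mathcal{C}$ into \Cref{lemm:techincal:lemma} directly. The device of adding $\infty \cdot \chi_{\mathrm{Tn}(\mathcal{C})}$ forces the relevant paths $\gamma$ to meet $\mathrm{Tn}(\mathcal{C})$ on a time set of measure zero, so the compact portion $\gamma([0,\ell]) \cap \mathcal{C}$ of the welding curve that $\gamma$ actually visits meets $\mathrm{Tn}(\mathcal{C})$ in an $\mathcal{H}^{1}$-null set, and \Cref{lemm:mutualABS:tan} then makes its $\widetilde{\pi}$-image $\mathcal{H}^{1}_{\widetilde{Z}}$-null. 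Everything else — the continuity of $\widetilde{\pi}$, the chain rule for upper gradients under post-composition with a $1$-Lipschitz map, and the change of variables computation for the conformal maps $\phi_{i}^{-1}$ — is routine.
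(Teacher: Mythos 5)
Your proof is correct and follows essentially the same route as the paper's: establish $L^{2}$-integrability via the change of variables for the Riemann maps and the negligible area of $\mathrm{Tn}(\mathcal{C})$, then reduce the length inequality to \Cref{lemm:techincal:lemma} applied to the compact portion of $\mathcal{C}$ that $\gamma$ visits, whose $\widetilde{\pi}$-image is $\mathcal{H}^{1}_{\widetilde{Z}}$-null by \Cref{lemm:mutualABS:tan}. The only (harmless) deviation is that you verify the inequality for \emph{every} rectifiable path, treating $\int_{\gamma} G \, ds = \infty$ as trivial, and thus obtain a genuine upper gradient, whereas the paper discards a modulus-zero family via \Cref{lemm:negligible} and concludes only the weak upper gradient property that the statement requires.
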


    \begin{proof}
    The $L^{2}$-integrability of $G$ follows from the change of variables formulas of the Riemann maps $\phi_{1}$ and $\phi_{2}$ and the fact that $\mathrm{Tn}( \mathcal{C} )$ has negligible area. Hence, as a consequence of \Cref{lemm:negligible}, $G$ is integrable along almost every absolutely continuous path $\gamma \colon \left[0, 1\right] \rightarrow \mathbb{S}^{2}$. Given such a $\gamma$, we claim that
    \begin{equation}
        \label{eq:uppergradient:inequa}
        d_{\widetilde{Z}}( \widetilde{\pi}( \gamma(0) ), \widetilde{\pi}( \gamma(1) ) )
        \leq
        \int_{ \gamma }
            G
        \,ds,
    \end{equation}
    implying that $G$ is a weak upper gradient of $\widetilde{\pi}$.

    Since $G$ is integrable along $\gamma$, $\gamma$ has negligible length in $\mathrm{Tn}( \mathcal{C} )$. Then \eqref{eq:areaformula} implies $\mathcal{H}^{1}_{ \mathbb{S}^{2} }( \mathrm{Tn}( \mathcal{C} ) \cap \abs{\gamma} ) = 0$. We conclude $\mathcal{H}^{1}_{ \widetilde{Z} }( \widetilde{\pi}( \mathcal{C} ) \cap | \widetilde{\pi} \circ \gamma | ) = 0$ from \Cref{lemm:mutualABS:tan}. The assumptions of \Cref{lemm:techincal:lemma} are satisfied and the conclusion $\ell( \widetilde{\pi} \circ \gamma ) \leq \int_{ \gamma } G \,ds$ follows. The inequality \eqref{eq:uppergradient:inequa} is a consequence.
    \end{proof}
    
    We define the \emph{Jacobian} of $\widetilde{\pi}$ to be the density of $\widetilde{\pi}^{*}\mathcal{H}^{2}_{ \widetilde{Z} }$, defined in \eqref{eq:1D:hausdorffpullback}, with respect to $\mathcal{H}^{2}_{ \mathbb{S}^{2} }$.
    \begin{lemm}\label{lemm:pullbackmeasure:modulus}
    The mapping $\widetilde{\pi}$ satisfies Lusin's Condition ($N$) and the Jacobian $J_{ \widetilde{\pi} }$ coincides with $G^{2}$ $\mathcal{H}^{2}_{ \mathbb{S}^{2} }$-almost everywhere, with $G$ being from \eqref{eq:weakuppergradient}.
    \end{lemm}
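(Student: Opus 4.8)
The plan is to split $\mathbb{S}^{2}$ into the three pieces $\Omega_{1}$, $\Omega_{2}$, and the welding curve $\mathcal{C}$, treat the two open hemispheres by combining the conformality of the Riemann maps $\phi_{i}$ with the fact that $\widetilde{\iota}_{i}$ is a local isometry on $Z_{i}$, and treat the seam $\mathcal{C}$ by showing that every relevant measure is negligible there. Since the density of $\widetilde{\pi}^{*}\mathcal{H}^{2}_{\widetilde{Z}}$ with respect to $\mathcal{H}^{2}_{\mathbb{S}^{2}}$ is determined locally, it suffices to compute it on each of these three pieces separately and then recombine.

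For Lusin's Condition $(N)$: on $\Omega_{i}$ the map $\widetilde{\pi}$ equals $\widetilde{\iota}_{i} \circ \phi_{i}^{-1}$, the composition of the conformal (hence smooth) map $\phi_{i}^{-1} \colon \Omega_{i} \rightarrow Z_{i}$, which satisfies $(N)$, with the restriction of $\widetilde{\iota}_{i}$ to $Z_{i}$, which by \Cref{lemm:inclusion} is an injective local isometry and therefore maps $\mathcal{H}^{2}_{\mathbb{S}^{2}}$-null sets to $\mathcal{H}^{2}_{\widetilde{Z}}$-null sets. Hence $\widetilde{\pi}$ satisfies $(N)$ on $\Omega_{1} \cup \Omega_{2}$. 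On the remaining piece, $\widetilde{\pi}(\mathcal{C}) = Q(S_{Z}) = \widetilde{\iota}_{1}(\mathbb{S}^{1})$ is the image of $\mathbb{S}^{1}$ under the $1$-Lipschitz map $\widetilde{\iota}_{1}$ (\Cref{lemm:inclusion}), so it has finite, in particular $\sigma$-finite, $\mathcal{H}^{1}_{\widetilde{Z}}$-measure and thus vanishing $\mathcal{H}^{2}_{\widetilde{Z}}$-measure. Combining the two cases, $\widetilde{\pi}$ satisfies $(N)$; equivalently, $\widetilde{\pi}^{*}\mathcal{H}^{2}_{\widetilde{Z}} \ll \mathcal{H}^{2}_{\mathbb{S}^{2}}$, so its Lebesgue decomposition has trivial singular part and $\widetilde{\pi}^{*}\mathcal{H}^{2}_{\widetilde{Z}} = J_{\widetilde{\pi}} \cdot \mathcal{H}^{2}_{\mathbb{S}^{2}}$.

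For the identification $J_{\widetilde{\pi}} = G^{2}$: for a Borel set $B \subset \Omega_{i}$, the definition \eqref{eq:1D:hausdorffpullback} with $\alpha = 2$ gives $\widetilde{\pi}^{*}\mathcal{H}^{2}_{\widetilde{Z}}(B) = \mathcal{H}^{2}_{\widetilde{Z}}( \widetilde{\iota}_{i}( \phi_{i}^{-1}(B) ) )$; since $\widetilde{\iota}_{i}|_{Z_{i}}$ is an injective local isometry it preserves $\mathcal{H}^{2}$, so this equals $\mathcal{H}^{2}_{\mathbb{S}^{2}}( \phi_{i}^{-1}(B) )$, and the change-of-variables formula for the conformal map $\phi_{i}^{-1}$ rewrites it as $\int_{B} J_{\phi_{i}^{-1}} \,d\mathcal{H}^{2}_{\mathbb{S}^{2}} = \int_{B} \rho_{i}^{2} \,d\mathcal{H}^{2}_{\mathbb{S}^{2}}$, the last step because conformality forces the Jacobian of $\phi_{i}^{-1}$ to equal the square of the operator norm of its differential. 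As $G = \rho_{i}$ on $\Omega_{i}$, this gives $J_{\widetilde{\pi}} = G^{2}$ $\mathcal{H}^{2}_{\mathbb{S}^{2}}$-a.e. on $\Omega_{1} \cup \Omega_{2}$. On $\mathcal{C}$ we have $\widetilde{\pi}^{*}\mathcal{H}^{2}_{\widetilde{Z}}(\mathcal{C}) = \mathcal{H}^{2}_{\widetilde{Z}}(Q(S_{Z})) = 0$ by the previous paragraph, so $J_{\widetilde{\pi}} = 0$ $\mathcal{H}^{2}_{\mathbb{S}^{2}}$-a.e. there; and since $G = \infty \cdot \chi_{\mathrm{Tn}(\mathcal{C})}$ on $\mathcal{C}$ while $\mathrm{Tn}(\mathcal{C})$ has $\sigma$-finite $\mathcal{H}^{1}_{\mathbb{S}^{2}}$-measure by \Cref{lemm:harmonicmeasure:abs}, hence is $\mathcal{H}^{2}_{\mathbb{S}^{2}}$-null, we also have $G^{2} = 0$ $\mathcal{H}^{2}_{\mathbb{S}^{2}}$-a.e. on $\mathcal{C}$. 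Adding the three pieces yields $J_{\widetilde{\pi}} = G^{2}$ $\mathcal{H}^{2}_{\mathbb{S}^{2}}$-a.e.

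The one step that takes a little care, rather than any conceptual difficulty, is the assertion that an injective local isometry preserves $\mathcal{H}^{2}$: one covers $Z_{i}$ by countably many open sets on which $\widetilde{\iota}_{i}$ restricts to an isometric embedding, decomposes a given Borel set into disjoint Borel pieces subordinate to this cover, and uses the global injectivity of $\widetilde{\iota}_{i}$ on $Z_{i}$ from \Cref{lemm:inclusion} to sum the measures, together with the elementary fact that $\mathcal{H}^{2}$ of an open subset of $(\mathbb{S}^{2},\sigma)$ coincides with $\mathcal{H}^{2}_{\mathbb{S}^{2}}$. I expect this bookkeeping to be where essentially all the effort goes; the remainder is a direct consequence of the conformality of the Riemann maps and of the lemmas already established.
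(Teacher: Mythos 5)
Your argument is correct and follows the same route as the paper's (much terser) proof: decompose $\mathbb{S}^{2}$ into $\Omega_{1}$, $\Omega_{2}$, and $\mathcal{C}$, use that $\widetilde{\pi}(\mathcal{C}) = Q(S_{Z})$ is $\mathcal{H}^{2}_{\widetilde{Z}}$-negligible, and on the open pieces combine Condition ($N$) and the change of variables for the conformal maps $\phi_{i}^{-1}$ with the measure-preservation of the injective local isometries $\widetilde{\iota}_{i}|_{Z_{i}}$. The details you supply, including the bookkeeping for why an injective local isometry preserves $\mathcal{H}^{2}$, are exactly the ones the paper leaves implicit.
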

    \begin{proof}
    The Lusin's Condition ($N$) of $\widetilde{\pi}$ follows from the fact that $\widetilde{\pi}( \mathcal{C} )$ has negligible $\mathcal{H}^{2}_{ \widetilde{Z} }$-measure, the fact that $\iota_{i} \colon Z_{i} \rightarrow \widetilde{Z}_{i}$ is a local isometry, and as $\phi_{i}^{-1} \colon \Omega_{i} \rightarrow Z_{i}$ satisfies Condition ($N$). Here $J_{ \widetilde{\pi} } = 0$ $\mathcal{H}^{2}_{\mathbb{S}^{2}}$-almost everywhere on $\mathcal{C}$, so the equality $J_{ \widetilde{\pi} } = G^{2}$ follows from the fact that $\phi_{1}$ and $\phi_{2}$ are Riemann maps.
    \end{proof}
    
    \begin{proof}[Proof of \Cref{prop:quotientmap}]
    The claimed topological properties of $\widetilde{\pi}$ were already verified at the beginning of this section. Lemmas \ref{lemm:quotient:Sobolev} and \ref{lemm:pullbackmeasure:modulus} prove that $J_{ \widetilde{\pi} } = G^{2} \in L^{1}( \mathbb{S}^{2} )$ with $G$ being a weak upper gradient of $\widetilde{\pi}$. This fact and the fact that the multiplicity of $\widetilde{\pi}$ is negligible for $\widetilde{\pi}^{*}\mathcal{H}^{2}_{ \widetilde{Z} }$ imply $\Mod \Gamma \leq \Mod \widetilde{\pi} \Gamma$ for all path families $\Gamma$.
    
    Lastly, we argue that a $K$-quasiconformal map $\psi \colon \widetilde{Z} \rightarrow \mathbb{S}^{2}$ exists (for some $K \geq 1$) if and only if $\widetilde{\pi}$ is a $1$-quasiconformal homeomorphism. The "if"-direction is obvious.
    
    In the "only if"-direction, the fact that $\widetilde{\pi}$ is a homeomorphism follows from \Cref{thm:failure:terrible}. So $h = \psi \circ \widetilde{\pi} \colon \mathbb{S}^{2} \rightarrow \mathbb{S}^{2}$ is a homeomorphism satisfying $\Mod \Gamma \leq K \Mod h \Gamma$ for all path families $\Gamma$. \Cref{prop:williams:L-Wversion} and \cite[Definition 3.1.1 and Theorem 3.7.7]{Ast:Iwa:Mar:09} prove that $h$ is $K$-quasiconformal. Consequently, $\widetilde{\pi}$ is $K'$-quasiconformal for some $K' \leq K^{2}$. This self-improves to $K' = 1$ due to \Cref{lemm:regularity} below. This yields $\Mod \widetilde{\pi} \Gamma = \Mod \Gamma$ for all path families.
\end{proof}   

    \begin{lemm}\label{lemm:regularity}
    Suppose that $\widetilde{\pi} \colon \mathbb{S}^{2} \rightarrow \widetilde{Z}$ from \eqref{eq:quotientmap} is a homeomorphism. Then $\widetilde{\pi} \colon \mathbb{S}^{2} \rightarrow \widetilde{Z}$ is $1$-quasiconformal if and only if for every $1$-Lipschitz $h \colon \mathbb{S}^{2} \rightarrow \mathbb{R}$, $h \circ \widetilde{\pi}^{-1} \in N^{1,2}( \widetilde{Z} )$.
    \end{lemm}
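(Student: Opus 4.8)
The plan is to translate both implications into the modulus characterization of $1$-quasiconformality furnished by \Cref{prop:williams:L-Wversion}. The starting point is that Lemmas \ref{lemm:quotient:Sobolev} and \ref{lemm:pullbackmeasure:modulus} give a weak upper gradient $G \in L^{2}( \mathbb{S}^{2} )$ of $\widetilde{\pi}$ with $G^{2} = J_{ \widetilde{\pi} }$ a.e., so $\rho_{ \widetilde{\pi} }^{2} \le J_{ \widetilde{\pi} }$ a.e.\ and hence, by \Cref{prop:williams:L-Wversion}, $\Mod \Gamma \le \Mod \widetilde{\pi}\Gamma$ for every path family $\Gamma$ on $\mathbb{S}^{2}$. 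Therefore $\widetilde{\pi}$ is $1$-quasiconformal if and only if $\Mod \widetilde{\pi}\Gamma \le \Mod \Gamma$ for all such $\Gamma$; reparametrizing path families by $\Gamma \mapsto \widetilde{\pi}\Gamma$, this is equivalent to $\Mod \Gamma' \le \Mod \widetilde{\pi}^{-1}\Gamma'$ for all path families $\Gamma'$ on $\widetilde{Z}$, which by \Cref{prop:williams:L-Wversion} applied to $\psi = \widetilde{\pi}^{-1}$ amounts to $\widetilde{\pi}^{-1} \in N^{1,2}_{\loc}( \widetilde{Z}, \mathbb{S}^{2} )$ together with $\rho_{ \widetilde{\pi}^{-1} }^{2} \le J_{ \widetilde{\pi}^{-1} }$ $\mathcal{H}^{2}_{ \widetilde{Z} }$-a.e. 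So the lemma reduces to showing that this condition on $\widetilde{\pi}^{-1}$ is equivalent to the stated hypothesis.

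For the "only if" direction I would argue as follows. If $\widetilde{\pi}$ is $1$-quasiconformal then so is $\widetilde{\pi}^{-1}$, so the equivalences above give $\widetilde{\pi}^{-1} \in N^{1,2}_{\loc}( \widetilde{Z}, \mathbb{S}^{2} )$ with $\rho_{ \widetilde{\pi}^{-1} }^{2} \le J_{ \widetilde{\pi}^{-1} }$; since $\widetilde{Z}$ is compact and $J_{ \widetilde{\pi}^{-1} }\mathcal{H}^{2}_{ \widetilde{Z} }$ is the absolutely continuous part of the finite measure $( \widetilde{\pi}^{-1} )^{*}\mathcal{H}^{2}_{ \mathbb{S}^{2} }$, we have $\int_{ \widetilde{Z} } J_{ \widetilde{\pi}^{-1} } \,d\mathcal{H}^{2}_{ \widetilde{Z} } \le \mathcal{H}^{2}_{ \mathbb{S}^{2} }( \mathbb{S}^{2} ) < \infty$, and this upgrades to $\widetilde{\pi}^{-1} \in N^{1,2}( \widetilde{Z}, \mathbb{S}^{2} )$. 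For a $1$-Lipschitz $h \colon \mathbb{S}^{2} \rightarrow \mathbb{R}$, the estimate $\abs{ h( \widetilde{\pi}^{-1}(x) ) - h( \widetilde{\pi}^{-1}(y) ) } \le \sigma( \widetilde{\pi}^{-1}(x), \widetilde{\pi}^{-1}(y) ) \le \int_{ \gamma } \rho_{ \widetilde{\pi}^{-1} } \,ds$ along any rectifiable $\gamma$ from $x$ to $y$ shows that $\rho_{ \widetilde{\pi}^{-1} }$ is an upper gradient of $h \circ \widetilde{\pi}^{-1}$; as $h \circ \widetilde{\pi}^{-1}$ is bounded, it lies in $N^{1,2}( \widetilde{Z} )$. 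This direction is routine.

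For the "if" direction, assume $h \circ \widetilde{\pi}^{-1} \in N^{1,2}( \widetilde{Z} )$ for every $1$-Lipschitz $h \colon \mathbb{S}^{2} \rightarrow \mathbb{R}$. First I would apply the hypothesis to the three coordinate functions $h_{j}(x) = x_{j}$ of $\mathbb{S}^{2} \subset \mathbb{R}^{3}$ (which are $1$-Lipschitz for $\sigma$), obtaining that each component of $\widetilde{\pi}^{-1}$ lies in $N^{1,2}( \widetilde{Z} )$; since $( \mathbb{S}^{2}, \sigma )$ is bi-Lipschitz equivalent to $\mathbb{S}^{2}$ with the restricted Euclidean metric, this yields $\widetilde{\pi}^{-1} \in N^{1,2}( \widetilde{Z}, \mathbb{S}^{2} ) \subset N^{1,2}_{\loc}( \widetilde{Z}, \mathbb{S}^{2} )$. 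It then remains to verify $\rho_{ \widetilde{\pi}^{-1} }^{2} \le J_{ \widetilde{\pi}^{-1} }$ a.e. The open set $U = \widetilde{Z} \setminus Q( S_{Z} )$ has full $\mathcal{H}^{2}_{ \widetilde{Z} }$-measure and, by \Cref{lemm:inclusion}, is locally isometric to $\mathbb{S}^{2}$, hence a smooth Riemannian surface; moreover, by \eqref{eq:quotientmap}, $\widetilde{\pi}^{-1}$ restricted to $\widetilde{\iota}_{i}( Z_{i} )$ equals $\phi_{i} \circ \widetilde{\iota}_{i}^{-1}$, a composition of a Riemann map with a local isometry, hence conformal. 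Therefore its minimal weak upper gradient $g_{0} \coloneqq \rho_{ \widetilde{\pi}^{-1}|_{U} }$ satisfies $g_{0}^{2} = J_{ \widetilde{\pi}^{-1}|_{U} }$ a.e.\ on $U$, and the change of variables formulas for the Riemann maps, together with $\mathcal{H}^{2}_{ \widetilde{Z} }( Q( S_{Z} ) ) = 0$ (so the density of the absolutely continuous part of $( \widetilde{\pi}^{-1} )^{*}\mathcal{H}^{2}_{ \mathbb{S}^{2} }$ is determined off the seam), identify $J_{ \widetilde{\pi}^{-1} } = g_{0}^{2}$ $\mathcal{H}^{2}_{ \widetilde{Z} }$-a.e. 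By the locality of minimal weak upper gradients \cite{HKST:15}, $\rho_{ \widetilde{\pi}^{-1} } = g_{0}$ a.e.\ on $U$, hence a.e.\ on $\widetilde{Z}$, so $\rho_{ \widetilde{\pi}^{-1} }^{2} = J_{ \widetilde{\pi}^{-1} }$ a.e., and the reduction in the first paragraph gives that $\widetilde{\pi}$ is $1$-quasiconformal.

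The hard part will be the "if" direction, and within it the identification of $\rho_{ \widetilde{\pi}^{-1} }$ with the conformal stretching $g_{0}$ off the seam: this forces one to combine the locality of minimal weak upper gradients with the genuinely Riemannian structure of $\widetilde{Z} \setminus Q( S_{Z} )$, and to handle the Jacobian — a Radon--Nikodym density whose value off the seam is all that matters — with care, using that the seam has vanishing $\mathcal{H}^{2}_{ \widetilde{Z} }$-measure. The coordinate-function reduction that turns the scalar hypothesis into membership $\widetilde{\pi}^{-1} \in N^{1,2}( \widetilde{Z}, \mathbb{S}^{2} )$ is elementary but indispensable; it is exactly where the quantification over all $1$-Lipschitz $h$ enters.
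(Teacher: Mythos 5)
Your argument is correct, and in the ``if''-direction it takes a genuinely different route from the paper's. The paper fixes the explicit candidate $\rho = (1/G)\circ\widetilde{\pi}^{-1}$ and, for \emph{each} $1$-Lipschitz $h$, verifies by hand that $\rho$ is a weak upper gradient of $h\circ\widetilde{\pi}^{-1}$: the hypothesis gives absolute continuity of $(h\circ\widetilde{\pi}^{-1})\circ\gamma$ along almost every $\gamma$, hence that the image of $|\gamma|\cap Q(S_{Z})$ has zero length in $\mathbb{R}$, which is exactly what \Cref{lemm:techincal:lemma} needs to push the upper gradient inequality across the seam; it then invokes \cite[Theorem 7.1.20]{HKST:15} (a single weak upper gradient common to all post-compositions with $1$-Lipschitz functions is a weak upper gradient of the map itself) to conclude that $\rho$ is a weak upper gradient of $\widetilde{\pi}^{-1}$. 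You instead use only the three coordinate functions to obtain $\widetilde{\pi}^{-1}\in N^{1,2}(\widetilde{Z},\mathbb{S}^{2})$ outright, and then identify $\rho_{\widetilde{\pi}^{-1}}$ via locality of minimal weak upper gradients on the open, full-measure set $U=\widetilde{Z}\setminus Q(S_{Z})$, where $\widetilde{\pi}^{-1}$ is conformal. This is sound, and it has the pleasant by-product that the hypothesis may be weakened to the three coordinate functions alone; but note that the entire difficulty flagged by \Cref{rem:regularity} is now concentrated in the locality step, which is only available \emph{after} membership in $N^{1,2}$ has been secured --- the gluing across the seam that the paper performs explicitly via \Cref{lemm:techincal:lemma} is hidden inside the proof of the locality theorem (one needs that almost every path admits no subpath in the exceptional family of $U$, and that $(\widetilde{\pi}^{-1})\circ\gamma$ is absolutely continuous so that the derivative bound can be integrated over $\gamma^{-1}(U)$ and its complement separately). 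You should also record, as you do implicitly, that the singular part of $(\widetilde{\pi}^{-1})^{*}\mathcal{H}^{2}_{\mathbb{S}^{2}}$ coming from a possible positive-area welding curve is carried by $Q(S_{Z})$ and therefore does not disturb the identification $J_{\widetilde{\pi}^{-1}}=g_{0}^{2}$. The ``only if''-direction is handled the same way in both proofs.
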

    \begin{proof}
    The "only if"-claim is clear, given \Cref{prop:williams:L-Wversion} (ii). In the "if"-direction, fix a $1$-Lipschitz $h \colon \mathbb{S}^{2} \rightarrow \mathbb{R}$ for now.
    
    Consider the Borel function $G \colon \mathbb{S}^{2} \rightarrow \left[0, \infty\right]$ defined on \Cref{lemm:quotient:Sobolev}. Then $\rho = 1/G \circ \widetilde{\pi}^{-1}$ is such that $\rho^{2}$ is the Jacobian of $\widetilde{\pi}^{-1}$, as a consequence of \Cref{lemm:pullbackmeasure:modulus}. Hence $\rho \in L^{2}( \widetilde{Z} )$.
    
    Given that $h \circ \widetilde{\pi}^{-1} \in N^{1,2}( \widetilde{Z} )$ and $\mathcal{H}^{2}_{ \widetilde{Z} }( Q(S_{Z}) ) = 0$, for almost every $\gamma \colon \left[0, 1\right] \rightarrow \widetilde{Z}$, the composition $( h \circ \widetilde{\pi}^{-1} ) \circ \gamma$ is absolutely continuous, $\gamma$ has negligible length on the seam $Q( S_{Z} )$, and $\int_{ \gamma } \rho \,ds < \infty$. Indeed, the absolute continuity of $( h \circ \widetilde{\pi}^{-1} ) \circ \gamma$ for almost every path follows from \cite[Proposition 6.3.2]{HKST:15}. The fact that almost every path has negligible length on $Q( S_{Z} )$ follows from \Cref{lemm:negligible} and the $L^{2}$-integrability of $\infty \cdot \chi_{ Q(S_{Z}) }$. Similarly, the conclusion $\int_{ \gamma } \rho \,ds < \infty$ follows from \Cref{lemm:negligible} and the $L^{2}$-integrability of $\rho$.
    
    If we denote $E = ( h \circ \widetilde{\pi}^{-1} )( |\gamma| \cap Q(S_{Z}) )$, the absolute continuity of $( h \circ \widetilde{\pi}^{-1} ) \circ \gamma$ implies $\mathcal{H}^{1}_{\mathbb{R}}( E ) = 0$. Then \Cref{lemm:techincal:lemma} yields $\ell( ( h \circ \widetilde{\pi}^{-1} ) \circ \gamma ) \leq \int_{ \gamma } \rho \,ds$. We conclude that $\rho$ is a weak upper gradient of $h \circ \widetilde{\pi}^{-1}$.
    
    Since $\rho$ is independent of $h$ and $h$ is an arbitrary $1$-Lipschitz function, Theorem 7.1.20 \cite{HKST:15} shows that $\rho$ is a weak upper gradient of $\widetilde{\pi}^{-1}$. Since $\rho^{2}$ is the Jacobian of $\widetilde{\pi}^{-1}$, we conclude $K_{O}( \widetilde{\pi}^{-1} ) = 1$. Recall $K_{O}( \widetilde{\pi} ) = 1$ from \Cref{prop:quotientmap}.
    \end{proof}

    \begin{rem}
    If the welding curve $\mathcal{C}$ happens to be rectifiable, the Hausdorff $1$-measure on $\mathcal{C}$ and $\chi_{ \mathrm{Tn}( \mathcal{C} ) }\mathcal{H}^{1}_{ \mathcal{C} }$ are mutually absolutely continuous \cite[Chapter VI, Theorem 1.2 (F. and M. Riesz)]{Gar:Mar:05}. With this fact at hand, \Cref{lemm:mutualABS:tan} implies that $\widetilde{\pi}$ is a homeomorphism. Moreover, one can show that $h \circ \widetilde{\pi}^{-1} \in N^{1,2}( \widetilde{Z} )$ for every $1$-Lipschitz $h \colon \mathbb{S}^{2} \rightarrow \mathbb{R}$. Hence $\widetilde{\pi}$ is $1$-quasiconformal.
    \end{rem}
    
    \begin{proof}[Proof of \Cref{thm:welding:positive}]
    Suppose the existence of a quasiconformal homeomorphism $\psi \colon \widetilde{Z} \rightarrow \mathbb{S}^{2}$. Up to postcomposing $\psi$ by an orientation-reversing Möbius transformation of $\mathbb{S}^{2}$, we may assume that $\widetilde{\phi}_{i} \coloneqq \psi \circ \widetilde{\iota}_{i}|_{ Z_{i} } \colon Z_{i} \rightarrow \mathbb{S}^{2}$ is orientation-preserving for $i = 1,2$. Let $\mathcal{C} = \psi( Q( S_{Z} ) )$.
    
    The set $\mathbb{S}^{2} \setminus \mathcal{C}$ is the disjoint union of Jordan domains $\Omega_{1}$ and $\Omega_{2}$, where $\Omega_{i}$ is the image of $\widetilde{\phi}_{i}$ for $i = 1,2$. \Cref{thm:failure:terrible} implies $( Z, d_{Z} ) = \widetilde{Z}$.
    
    Next, since $\psi \colon \widetilde{Z} \rightarrow \mathbb{S}^{2}$ is a quasiconformal homeomorphism, $\psi$ satisfies Lusin's Condition ($N$) \cite[Section 17]{Raj:17}. Consequently, $\mathcal{C}$ has zero 2-dimensional Hausdorff measure.
    
    We consider the Beltrami differential $\mu = \chi_{ \Omega_{1} } \mu_{1} + \chi_{ \Omega_{2} } \mu_{2}$, where $\mu_{i}$ is the Beltrami differential of $\widetilde{\phi}_{i}^{-1}$. If $h \colon \mathbb{S}^{2} \rightarrow \mathbb{S}^{2}$ is a normalized solution to the Beltrami equation induced by $\mu$ \cite[Measurable Riemann mapping theorem]{Ast:Iwa:Mar:09}, the mapping $\widetilde{\psi} = h \circ \psi$ is $1$-quasiconformal. Since $\mathcal{C}$ has zero measure, this is readily verified by hand or by applying \cite[Theorem 4.12]{Iko:19}.

    We have verified that $( Z, d_{Z} ) = \widetilde{Z}$ and we may assume that $\psi \colon ( Z, d_{Z} ) \rightarrow \mathbb{S}^{2}$ is $1$-quasiconformal with $\phi_{i} = \psi \circ \widetilde{\iota}_{i}|_{ Z_{i} }$ being Riemann maps \cite[Weyl's lemma]{Ast:Iwa:Mar:09}. The definition of $Z$ implies that $g = \phi_{2}^{-1} \circ \phi_{1}|_{ \mathbb{S}^{1} }$. Consequently, $g$ is a welding homeomorphism.
    
    In order to show the removability of $\mathcal{C} \coloneqq \psi( S_{Z} )$, we are given an orientation-preserving homeomorphism $M \colon \mathbb{S}^{2} \rightarrow \mathbb{S}^{2}$ conformal in the complement of $\mathcal{C}$. Then $\pi' \coloneqq \psi^{-1} \circ M^{-1}$ defines a mapping as in \eqref{eq:quotientmap} for the curve $\mathcal{C}' = M( \mathcal{C} )$. \Cref{prop:quotientmap} implies that $\pi'$ is $1$-quasiconformal. Consequently, $M^{-1} = \psi \circ \pi'$ is $1$-quasiconformal, i.e., a Möbius transformation.
    \end{proof}

\section{Mass upper bound}\label{sec:Ahlfors}
    In this section, we prove Theorems \ref{thm:mass:accumulation} and \ref{cor:QC:Jacobianproblem}. We first consider the implication "(3) $\Rightarrow$ (1)". Recall that we are given an orientation-preserving homeomorphism $g \colon \mathbb{S}^{1} \rightarrow \mathbb{S}^{1}$ and the canonical quotient map $Q \colon Z \rightarrow \widetilde{Z}$. We are assuming the existence of a constant $C > 0$ for which
    \begin{equation}
        \label{eq:massupperbound:recall}
        \liminf_{ r \rightarrow 0^{+} }\frac{ \mathcal{H}^{2}_{ \widetilde{Z} }( \overline{B}_{\widetilde{Z}}( y, r ) ) }{ \pi r^2 } \leq C
        \quad\text{for every $y \in Q( S_{Z} )$}.
    \end{equation}
    
    In order to make transparent how $C$ and the Lipschitz constants of $g$ (resp. $g^{-1}$) are related to $C$ in \eqref{eq:massupperbound:recall}, we define $C_{1}, C_{2} \geq 0$ to be the smallest constants for which 
    \begin{align}
        \label{eq:massupperbound:recall:density:south}
        \liminf_{ r \rightarrow 0^{+} }
            \frac{ \mathcal{H}^{2}_{ \widetilde{Z} }( \overline{ \widetilde{\iota}_{1}( Z_{1} ) } \cap \overline{B}_{\widetilde{Z}}( y, r ) ) }{ \pi r^2 }
        &\leq
        C_{1}
        \quad\text{for every $y \in Q( S_{Z} )$}
        \\
        \label{eq:massupperbound:recall:density:north}
        \liminf_{ r \rightarrow 0^{+} }
            \frac{ \mathcal{H}^{2}_{ \widetilde{Z} }( \overline{ \widetilde{\iota}_{2}( Z_{2} ) } \cap \overline{B}_{\widetilde{Z}}( y, r ) ) }{ \pi r^2 }
        &\leq
        C_{2}
        \quad\text{for every $y \in Q( S_{Z} )$}.
    \end{align}
    Recalling from \Cref{lemm:inclusion} the fact that the inclusion maps are $1$-Lipschitz and local isometries outside the seam, the limit infimums in \eqref{eq:massupperbound:recall:density:south} and \eqref{eq:massupperbound:recall:density:north} are bounded from below by $1/2$. Hence, $C_{1}, C_{2} \geq 2^{-1}$. Since the seam is negligible, we have $1 \leq C_{1} + C_{2} \leq C$.
    
    We show that the constant $C_{1}$ in \eqref{eq:massupperbound:recall:density:south} and the Lipschitz constant $L_{1}$ of $g^{-1}$ are connected via the following function
    \begin{equation}
        \label{eq:lowerbound:inequality}
        f( \epsilon )
        \coloneqq
        \frac{ \left( \sin|_{(0,\pi/2]} \right)^{-1}( \epsilon ) }{ \pi }
        +
        \frac{ \sqrt{1-\epsilon^{2}} }{ \pi \epsilon }
        \quad
        \text{for $0 < \epsilon \leq 1$}.
    \end{equation}
    \begin{defi}\label{defi:uniquenumber}
    For every $C \geq 1/2$, $L = L( C ) \geq 1$ denotes the unique positive number such that for every $0 < \epsilon \leq L^{-1}$, $f( \epsilon ) \geq C$. Equivalently, $L = 1/f^{-1}(C)$.
    \end{defi}

    \begin{rem}
    We note that for every $0 < \epsilon \leq 1$, we have $f( \epsilon ) \geq ( \pi \epsilon )^{-1}$. We use this fact during the proof of \Cref{thm:mass:accumulation}.
    \end{rem}

    \begin{prop}\label{cor:Lipschitz}
    If \eqref{eq:massupperbound:recall:density:south} holds with constant $C_{1}$ and $L_1 = L( C_{1} )$ is as in \Cref{defi:uniquenumber}, then $g^{-1}$ is $L_1$-Lipschitz and $\widetilde{\iota}_{1} \colon \overline{Z}_{1} \rightarrow \widetilde{Z}$ satisfies for every $x, y \in \overline{Z}_{1}$, $\sigma( x, y ) \geq d_{Z}( \widetilde{\iota}_{1}(x), \widetilde{\iota}_{1}(y) ) \geq \sigma( x, y )/ L_{1}$.
    \end{prop}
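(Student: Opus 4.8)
\emph{Reduction.} The bound $\sigma(x,y)\ge d_{Z}(\widetilde\iota_{1}(x),\widetilde\iota_{1}(y))$ is just the $1$-Lipschitz property from \Cref{lemm:inclusion}, so only the lower bound and the $L_{1}$-Lipschitz property of $g^{-1}$ need proof. The plan is to note first that both follow from the single assertion $v_{g}\ge 1/L_{1}$ $\mathcal H^{1}_{\mathbb S^{1}}$-a.e. Indeed, if $v_{g}\ge 1/L_{1}$ a.e.\ then $\mathcal H^{1}_{\mathbb S^{1}}(g(A))=g^{*}\mathcal H^{1}_{\mathbb S^{1}}(A)\ge\int_{A}v_{g}\,d\mathcal H^{1}_{\mathbb S^{1}}\ge \mathcal H^{1}_{\mathbb S^{1}}(A)/L_{1}$ for every arc $A$; applying this to the two arcs between $x$ and $y$ (one of length $\sigma(x,y)\le\pi$) gives $\sigma(g(x),g(y))\ge\sigma(x,y)/L_{1}$, i.e.\ $g^{-1}$ is $L_{1}$-Lipschitz. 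If $g^{-1}$ is $L_{1}$-Lipschitz, then for $x,y\in\mathbb S^{1}$ and any chain in $\mathbb S^{1}$ as in \Cref{lemm:jumping} one has $D(\iota_{1}(x_{i}),\iota_{1}(x_{i+1}))=\min\{\sigma(x_{i},x_{i+1}),\sigma(g(x_{i}),g(x_{i+1}))\}\ge\sigma(x_{i},x_{i+1})/L_{1}$, so summing and using the triangle inequality yields $d_{Z}(\widetilde\iota_{1}(x),\widetilde\iota_{1}(y))\ge\sigma(x,y)/L_{1}$, and the case $x,y\in\overline Z_{1}$ follows from \eqref{eq:energyminimal:chain}; conversely a Lebesgue-point argument for $g^{*}\mathcal H^{1}_{\mathbb S^{1}}=v_{g}\mathcal H^{1}_{\mathbb S^{1}}+\mu^{\perp}$ recovers $v_{g}\ge1/L_{1}$ a.e.\ from $g^{-1}$ being $L_{1}$-Lipschitz. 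Thus it suffices to prove that \eqref{eq:massupperbound:recall:density:south} with constant $C_{1}$ forces $v_{g}\ge1/L_{1}$ a.e.

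\emph{Contrapositive and the distance formula.} Suppose $\mathcal H^{1}_{\mathbb S^{1}}(\{v_{g}<1/L_{1}\})>0$. If $v_{g}=0$ a.e.\ the whole equator collapses and an $r$-collar of it already has mass $\asymp 2\pi r$, so the density is $+\infty$; hence assume $\int_{\mathbb S^{1}}\min\{1,v_{g}\}>0$ and pick a common Lebesgue point $z_{0}$ of $v_{g}$ and of $\min\{1,v_{g}\}$ with $\epsilon_{0}:=v_{g}(z_{0})<1/L_{1}$, and put $y=\widetilde\iota_{1}(z_{0})\in Q(S_{Z})$. Since $\widetilde\iota_{1}|_{Z_{1}}$ is an injective local isometry (\Cref{lemm:inclusion}), $\mathcal H^{2}_{\widetilde Z}(\overline{\widetilde\iota_{1}(Z_{1})}\cap\overline B_{\widetilde Z}(y,r))\ge\mathcal H^{2}_{\mathbb S^{2}}(\Omega_{r})$ with $\Omega_{r}=\{x\in Z_{1}:d_{Z}(\widetilde\iota_{1}(x),y)\le r\}$, and I will show $\liminf_{r\to0^{+}}\mathcal H^{2}_{\mathbb S^{2}}(\Omega_{r})/(\pi r^{2})>C_{1}$. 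By \Cref{lemm:jumping:overtheseam} (with $w'=z_{0}$) and \Cref{lemm:hausdorff}, for $x\in Z_{1}$ we have $d_{Z}(\widetilde\iota_{1}(x),y)=\inf_{w\in\mathbb S^{1}}[\sigma(x,w)+S(w)]$ with $S(w)=d_{Z}(\widetilde\iota_{1}(w),y)$; by \eqref{eq:lengthdistance}, whose realizing arc is the short one for $w$ near $z_{0}$ (as $\int_{\mathbb S^{1}}\min\{1,v_{g}\}>0$), $S(z_{0}+u)=\int_{0}^{|u|}\min\{1,v_{g}\}(z_{0}+\operatorname{sgn}(u)s)\,ds$ in the signed arclength $u$. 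Hence $\Omega_{r}=\bigcup_{w}\bigl(\overline B_{\mathbb S^{2}}(w,r-S(w))\cap\overline Z_{1}\bigr)$.

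\emph{The area estimate and convexity.} In geodesic normal coordinates at $z_{0}$, $\overline Z_{1}$ becomes a half-plane and $\sigma$ the Euclidean metric up to an $O(r^{2})$ distortion irrelevant in the $\liminf$, and $\Omega_{r}$ becomes the planar refraction region $\bigcup_{u}\overline B((u,0),r-S(z_{0}+u))\cap\{t\ge0\}$. Splitting along $u>0$ and $u<0$ and evaluating each half by a layer-cake integral with the elementary Snell-type optimization — for a linear $S$ of slope $\sin\theta$ the half-region has area $\int_{0}^{r\cos\theta}\tfrac{r-\tau\cos\theta}{\sin\theta}\,d\tau+\int_{r\cos\theta}^{r}\sqrt{r^{2}-\tau^{2}}\,d\tau=\tfrac{r^{2}}{2}(\theta+\cot\theta)=\tfrac{\pi}{2}r^{2}f(\sin\theta)$, and a pointwise smaller $S$ only enlarges the region — one obtains $\liminf_{r\to0^{+}}\mathcal H^{2}_{\mathbb S^{2}}(\Omega_{r})/(\pi r^{2})\ge\tfrac12\bigl(f(\underline\epsilon_{+})+f(\underline\epsilon_{-})\bigr)$, where $\underline\epsilon_{\pm}:=\liminf_{u\to0^{+}}S(z_{0}\pm u)/u\in[0,1]$ and $f$ is from \eqref{eq:lowerbound:inequality}. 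Because $z_{0}$ is a Lebesgue point of $\min\{1,v_{g}\}$, $\tfrac1u(S(z_{0}+u)+S(z_{0}-u))=\tfrac1u\int_{-u}^{u}\min\{1,v_{g}\}(z_{0}+s)\,ds\to2\epsilon_{0}$, so $\underline\epsilon_{+}+\underline\epsilon_{-}\le2\epsilon_{0}$ by superadditivity of $\liminf$. A direct differentiation gives $\pi f'(\epsilon)=-\sqrt{1-\epsilon^{2}}/\epsilon^{2}<0$ and $\pi f''(\epsilon)=\tfrac1{\epsilon\sqrt{1-\epsilon^{2}}}+\tfrac{2\sqrt{1-\epsilon^{2}}}{\epsilon^{3}}>0$, so $f$ is strictly decreasing and convex on $(0,1]$, with $f(0^{+})=+\infty$ disposing of the case $\min\{\underline\epsilon_{\pm}\}=0$. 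Therefore
\[
	\liminf_{r\to0^{+}}\frac{\mathcal H^{2}_{\mathbb S^{2}}(\Omega_{r})}{\pi r^{2}}\ \ge\ \frac{f(\underline\epsilon_{+})+f(\underline\epsilon_{-})}{2}\ \ge\ f\!\Bigl(\tfrac{\underline\epsilon_{+}+\underline\epsilon_{-}}{2}\Bigr)\ \ge\ f(\epsilon_{0})\ >\ f(1/L_{1})\ =\ C_{1},
\]
the last equality by \Cref{defi:uniquenumber}; this contradicts \eqref{eq:massupperbound:recall:density:south}. Hence $v_{g}\ge1/L_{1}$ a.e., which as explained gives both conclusions.

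\emph{Main obstacle.} The technical heart is the area estimate: justifying the refraction/layer-cake computation for a general monotone $S$, absorbing the curvature error, and above all arranging the $\liminf$ over scales so that it genuinely yields $\tfrac12(f(\underline\epsilon_{+})+f(\underline\epsilon_{-}))$ — this forces one to parametrize by scale via the continuous inverses $r\mapsto\sup\{u:S(z_{0}\pm u)\le r\}$ rather than working along a fixed sequence. Once that is in place, the convexity of $f$ is exactly what upgrades the symmetric-average (Lebesgue-point) information into the sharp constant $L(C_{1})$, with no loss coming from possibly one-sided compression of the seam.
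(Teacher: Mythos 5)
Your architecture is exactly the paper's: everything is reduced to the a.e.\ bound $v_{g}\ge 1/L_{1}$, that bound is obtained by contradiction from a mass-accumulation estimate of the form \eqref{eq:masslowerbound:asymptotic} at a generic point of $\{v_{g}<1/L_{1}\}$, and the conclusions for $g^{-1}$ and for $d_{Z}$ on $\overline{Z}_{1}$ then follow from \Cref{lemm:hausdorff}, \Cref{lemm:jumping}, and \eqref{eq:energyminimal:chain}, just as in the paper's proof via \Cref{prop:massaccumulation}. Your reduction, the identification of $\Omega_{r}$ as a union of balls $\overline{B}((u,0),r-S(u))$, the Snell-type computation $\tfrac{r^{2}}{2}(\theta+\cot\theta)=\tfrac{\pi}{2}r^{2}f(\sin\theta)$ for linear $S$, and the monotone comparison ``smaller $S$ enlarges the region'' are all correct and coincide with the paper's.

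The step that fails as stated is the inequality $\liminf_{r\to0^{+}}\mathcal H^{2}(\Omega_{r})/(\pi r^{2})\ge\tfrac12\bigl(f(\underline\epsilon_{+})+f(\underline\epsilon_{-})\bigr)$ with $\underline\epsilon_{\pm}$ defined as \emph{liminfs} of $S(z_{0}\pm u)/u$. For a general nondecreasing $1$-Lipschitz $S$ this is false: take $S'=m$ equal to $0$ on each $[b_{k},a_{k}]$ and to $1$ on each $[a_{k+1},b_{k}]$ with $b_{k}\ll a_{k}\ll b_{k-1}$; then $S(a_{k})/a_{k}\approx b_{k}/a_{k}\to0$, so $\underline\epsilon_{+}=0$ and $f(\underline\epsilon_{+})=+\infty$, yet at the scales $r=a_{k}$ one has $S\le r$ only for $u\lesssim 2r$, the region sits inside $[0,3r]\times[0,r]$, and the ratio stays below $3/\pi$. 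The point is that the liminf over $u$ gives the favourable bound $S(u)\le(\underline\epsilon_{+}+\delta)u$ only along a sparse sequence of $u$'s, hence controls the area only along a sparse sequence of scales $r$, whereas to contradict \eqref{eq:massupperbound:recall:density:south} (itself a liminf) you must exceed $C_{1}$ at \emph{every} sufficiently small $r$; no reparametrization by $r\mapsto\sup\{u:S(z_{0}\pm u)\le r\}$ removes this. The repair is cheap and makes your convexity detour unnecessary: at a Lebesgue point in the usual strong sense, $\tfrac1u\int_{0}^{u}\abs{\min\{1,v_{g}\}(z_{0}\pm s)-\epsilon_{0}}\,ds\to0$ one-sidedly, so $S(z_{0}\pm u)\le(\epsilon_{0}+\delta)u$ for \emph{all} small $u$, and the monotone comparison then gives the ratio $\ge f(\epsilon_{0}+\delta)\to f(\epsilon_{0})>C_{1}$ directly. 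This is exactly why the paper works at a density point of the sublevel set $E=\{v_{g}\le\epsilon\}$: estimate \eqref{eq:small:length} produces the uniform linear bound $S(u)\le\epsilon(1+\eta)\abs{u}$ for all small $\abs{u}$, which is the input the area estimate actually requires.
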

    
    The symmetry in the argument yields the following result.
    \begin{prop}\label{cor:Lipschitz:north}
    If \eqref{eq:massupperbound:recall:density:north} holds with constant $C_{2}$ and $L_2 = L( C_{2} )$ is as in \Cref{defi:uniquenumber}, then $g$ is $L_2$-Lipschitz and $\widetilde{\iota}_{2} \colon Z_{2} \rightarrow \widetilde{Z}$ satisfies for every $x, y \in \overline{Z}_{2}$, $\sigma( x, y ) \geq d_{Z}( \widetilde{\iota}_{2}(x), \widetilde{\iota}_{2}(y) ) \geq \sigma( x, y )/L_{2}$.
    \end{prop}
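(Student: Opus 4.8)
The plan is to reduce \Cref{cor:Lipschitz:north} to \Cref{cor:Lipschitz} by exploiting the symmetry of the gluing construction under interchanging the two hemispheres. Let $R \colon \mathbb{S}^{2} \rightarrow \mathbb{S}^{2}$, $R( x_{1}, x_{2}, x_{3} ) = ( x_{1}, x_{2}, -x_{3} )$, denote the reflection fixing $\mathbb{S}^{1}$ pointwise; it is a $\sigma$-isometry with $R( Z_{1} ) = Z_{2}$ and $R( Z_{2} ) = Z_{1}$. Since $g^{-1} \colon \mathbb{S}^{1} \rightarrow \mathbb{S}^{1}$ is again an orientation-preserving homeomorphism, the definitions of \Cref{sec:sew} and both \Cref{defi:uniquenumber} and \Cref{cor:Lipschitz} apply verbatim to $g^{-1}$; write $Z'$, $d_{Z'}$, $\widetilde{Z}'$, $\iota_{i}'$, $\widetilde{\iota}_{i}'$, $Q'$ for the objects built from $g^{-1}$.

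First I would build the identification. Define $\widehat{F}$ on $\overline{Z}_{1} \sqcup \overline{Z}_{2}$ by $\widehat{F}|_{\overline{Z}_{1}} = \iota_{2}' \circ R$ and $\widehat{F}|_{\overline{Z}_{2}} = \iota_{1}' \circ R$. Because $R$ is the identity on $\mathbb{S}^{1}$ and the seam is glued by $g$ in $Z$ but by $g^{-1}$ in $Z'$, one checks that $\widehat{F}$ descends to a bijection $Z \rightarrow Z'$; and since the predistance $D$ of \Cref{sec:sew} uses only the metric $\sigma$ and treats the two hemispheres symmetrically, $\widehat{F}$ carries $D$ to the predistance $D'$ of $Z'$ — the only case needing a line of computation is a pair of seam points $z, w \in \mathbb{S}^{1}$, where $D( \iota_{1}(z), \iota_{1}(w) ) = \min\{ \sigma(z,w), \sigma(g(z),g(w)) \}$ is sent to $D'( \iota_{1}'(g(z)), \iota_{1}'(g(w)) ) = \min\{ \sigma(g(z),g(w)), \sigma(z,w) \}$. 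Taking chain infima, $\widehat{F}$ induces an isometry $F \colon \widetilde{Z} \rightarrow \widetilde{Z}'$ with $F \circ \widetilde{\iota}_{2} = \widetilde{\iota}_{1}' \circ R|_{\overline{Z}_{2}}$ and $F( Q( S_{Z} ) ) = Q'( S_{Z'} )$.

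Second I would transport the hypothesis and apply \Cref{cor:Lipschitz}. Since $F$ and $R|_{\overline{Z}_{2}} \colon \overline{Z}_{2} \rightarrow \overline{Z}_{1}$ are isometries and $R( Z_{2} ) = Z_{1}$, the bound \eqref{eq:massupperbound:recall:density:north} for $(\widetilde{Z}, \widetilde{\iota}_{2})$ with constant $C_{2}$ is precisely the bound \eqref{eq:massupperbound:recall:density:south} for $(\widetilde{Z}', \widetilde{\iota}_{1}')$ with the same $C_{2}$. \Cref{cor:Lipschitz} applied to $g^{-1}$ then yields that $g = ( g^{-1} )^{-1}$ is $L( C_{2} )$-Lipschitz and that $\sigma( x, y ) \geq d_{Z'}( \widetilde{\iota}_{1}'(x), \widetilde{\iota}_{1}'(y) ) \geq \sigma( x, y ) / L( C_{2} )$ for all $x, y \in \overline{Z}_{1}$. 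Pulling this back through $F$ and $R$ gives $\sigma( x, y ) \geq d_{Z}( \widetilde{\iota}_{2}(x), \widetilde{\iota}_{2}(y) ) \geq \sigma( x, y ) / L( C_{2} )$ for all $x, y \in \overline{Z}_{2}$, i.e.\ the claim with $L_{2} = L( C_{2} )$; the left inequality is in any case immediate from the $1$-Lipschitz property in \Cref{lemm:inclusion}. The one point deserving care — really just bookkeeping — is the well-definedness of $\widehat{F}$ on the glued space and its compatibility with $D$ and $D'$; no new analytic input beyond \Cref{cor:Lipschitz} is needed.
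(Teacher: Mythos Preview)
Your proposal is correct and follows precisely the symmetry the paper invokes (the paper's proof is the single sentence ``The symmetry in the argument yields the following result''). You make this symmetry explicit by constructing an isometry $F \colon \widetilde{Z} \rightarrow \widetilde{Z}'$ intertwining $\widetilde{\iota}_{2}$ with $\widetilde{\iota}_{1}'$ and then quoting \Cref{cor:Lipschitz} for $g^{-1}$, whereas the paper implicitly asks the reader to rerun the proof of \Cref{cor:Lipschitz} (and \Cref{prop:massaccumulation}) with the roles of the hemispheres swapped; the content is the same.
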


    We start the proof of \Cref{cor:Lipschitz}. We consider the decomposition $g^{*}\mathcal{H}^{1}_{ \mathbb{S}^{1} } = v_{g}\mathcal{H}^{1}_{ \mathbb{S}^{1} } + \mu^{ \perp }$ with $\mu^{\perp}$ and $\mathcal{H}^{1}_{ \mathbb{S}^{1} }$ being singular. We fix a Borel representative of $v_{g}$. Let $f$ be as in \eqref{eq:lowerbound:inequality}. The following statement holds for every $\widetilde{Z}$.
    
    \begin{prop}\label{prop:massaccumulation}
    Given $1 > \epsilon > 0$ and a $\mathcal{H}^{1}_{ \mathbb{S}^{1} }$-density point $x_{0} \in \mathbb{S}^{1}$ of $E \coloneqq \left\{ v_{g} \leq \epsilon \right\}$, we have
    \begin{equation}
        \label{eq:masslowerbound:asymptotic}
        f( \epsilon )
        \leq
        \liminf_{ r \rightarrow 0^{+} }
            \frac{ \mathcal{H}^{2}_{ \widetilde{Z} }( \overline{ \widetilde{\iota}_{1}(Z_{1}) } \cap \overline{B}_{ \widetilde{Z} }( x_{0}, r ) ) }{ \pi r^{2} }.
    \end{equation}
    \end{prop}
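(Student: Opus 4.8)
The plan is to transfer the estimate into $\mathbb{S}^{2}$ through the inclusion $\widetilde{\iota}_{1}$ and then reduce it to an elementary area computation in a half-plane.

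\emph{Reduction.} By \Cref{lemm:inclusion} the map $\widetilde{\iota}_{1}|_{Z_{1}}$ is an injective local isometry, hence preserves $\mathcal{H}^{2}$ on subsets of $Z_{1}$; since in addition $\widetilde{\iota}_{1}(\mathbb{S}^{1})$ is $\mathcal{H}^{2}_{\widetilde{Z}}$-null, for every $r>0$
\[
\mathcal{H}^{2}_{\widetilde{Z}}\bigl(\overline{\widetilde{\iota}_{1}(Z_{1})}\cap\overline{B}_{\widetilde{Z}}(x_{0},r)\bigr)
=
\mathcal{H}^{2}_{\mathbb{S}^{2}}(U_{r}),
\qquad
U_{r}\coloneqq\bigl\{z\in Z_{1}\colon d_{\widetilde{Z}}(\widetilde{\iota}_{1}(z),\widetilde{\iota}_{1}(x_{0}))\le r\bigr\}.
\]
Then I would fix a chart: because $\mathbb{S}^{2}$ is smooth and $\mathbb{S}^{1}$ a geodesic, for each $\delta>0$ there is a homeomorphism $\Phi$ from a neighbourhood of $x_{0}$ onto a neighbourhood of $0\in\mathbb{R}^{2}$ with $\Phi(x_{0})=0$, $\Phi(\mathbb{S}^{1})\subset\ell\coloneqq\mathbb{R}\times\{0\}$, $\Phi(Z_{1})$ inside one half-plane $H$ bounded by $\ell$, and distorting $\sigma$ and $\mathcal{H}^{2}_{\mathbb{S}^{2}}$ near $x_{0}$ by factors tending to $1$ as $\delta\to0$.

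\emph{Distance bound.} For $z\in Z_{1}$ near $x_{0}$ and $w\in\mathbb{S}^{1}$, the $1$-Lipschitz property of $\widetilde{\iota}_{1}$ and the triangle inequality give $d_{\widetilde{Z}}(\widetilde{\iota}_{1}(z),\widetilde{\iota}_{1}(x_{0}))\le\sigma(z,w)+d_{\widetilde{Z}}(\widetilde{\iota}_{1}(w),\widetilde{\iota}_{1}(x_{0}))$; using also the trivial bound $d_{\widetilde{Z}}(\widetilde{\iota}_{1}(z),\widetilde{\iota}_{1}(x_{0}))\le\sigma(z,x_{0})$, only $w$ near $x_{0}$ are relevant. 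For the short arc $\alpha\subset\mathbb{S}^{1}$ from $w$ to $x_{0}$, the image $\widetilde{\iota}_{1}(\alpha)$ is a continuum joining $\widetilde{\iota}_{1}(w)$ to $\widetilde{\iota}_{1}(x_{0})$, so $d_{\widetilde{Z}}(\widetilde{\iota}_{1}(w),\widetilde{\iota}_{1}(x_{0}))\le\mathcal{H}^{1}_{\widetilde{Z}}(\widetilde{\iota}_{1}(\alpha))$, and by \eqref{eq:identity:Z} this equals $\int_{\alpha}\min\{1,v_{g}\}\,d\mathcal{H}^{1}_{\mathbb{S}^{1}}\le\epsilon\,\mathcal{H}^{1}_{\mathbb{S}^{1}}(\alpha)+\mathcal{H}^{1}_{\mathbb{S}^{1}}(\alpha\setminus E)$. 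Since $x_{0}$ is a density point of $E$ and $\mathcal{H}^{1}_{\mathbb{S}^{1}}(\alpha)=\sigma(w,x_{0})$, the term $\mathcal{H}^{1}_{\mathbb{S}^{1}}(\alpha\setminus E)$ is $o(\sigma(w,x_{0}))$ as $w\to x_{0}$. Combining and transporting to the chart,
\[
d_{\widetilde{Z}}(\widetilde{\iota}_{1}(z),\widetilde{\iota}_{1}(x_{0}))
\le
(1+o(1))\,\phi_{\epsilon}(\Phi(z)),
\qquad
\phi_{\epsilon}(\zeta)\coloneqq\inf_{\omega\in\ell}\bigl(|\zeta-\omega|+\epsilon|\omega|\bigr),
\]
with $o(1)\to0$ as $z\to x_{0}$. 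Hence $\Phi^{-1}\bigl(\{\zeta\in H\colon\phi_{\epsilon}(\zeta)\le r/(1+o(1))\}\bigr)\subset U_{r}$ for small $r$.

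\emph{Area computation and conclusion.} The function $\phi_{\epsilon}$ is positively $1$-homogeneous, so $\mathcal{H}^{2}_{\mathbb{R}^{2}}(\{\phi_{\epsilon}\le r\}\cap H)=r^{2}\,\mathcal{H}^{2}_{\mathbb{R}^{2}}(\{\phi_{\epsilon}\le1\}\cap H)$, and I would evaluate the latter by minimising $t\mapsto|\zeta-(t,0)|+\epsilon|t|$ for $\zeta=(a,b)\in H$: the minimiser is $t=0$ (so $\phi_{\epsilon}(\zeta)=|\zeta|$) precisely when $|a|\le\tfrac{\epsilon}{\sqrt{1-\epsilon^{2}}}\,|b|$, and otherwise $\phi_{\epsilon}(\zeta)=\epsilon|a|+\sqrt{1-\epsilon^{2}}\,|b|$. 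The first region meets the unit disc in a sector of half-angle $\arcsin\epsilon$ about the inward normal to $\ell$, of area $\arcsin\epsilon$; the second contributes two congruent triangles, each with base $1/\epsilon$ on $\ell$ and height $\sqrt{1-\epsilon^{2}}$, of total area $\sqrt{1-\epsilon^{2}}/\epsilon$. Adding and comparing with \eqref{eq:lowerbound:inequality} gives $\mathcal{H}^{2}_{\mathbb{R}^{2}}(\{\phi_{\epsilon}\le1\}\cap H)=\arcsin\epsilon+\sqrt{1-\epsilon^{2}}/\epsilon=\pi f(\epsilon)$. Feeding this into the previous step, $\mathcal{H}^{2}_{\widetilde{Z}}(\overline{\widetilde{\iota}_{1}(Z_{1})}\cap\overline{B}_{\widetilde{Z}}(x_{0},r))\ge(1+o(1))^{-2}\pi f(\epsilon)r^{2}$ up to the chart-distortion factor; dividing by $\pi r^{2}$, letting $r\to0^{+}$ and then $\delta\to0^{+}$, and using continuity of $f$ at $\epsilon$, yields \eqref{eq:masslowerbound:asymptotic}.

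The main obstacle is the distance bound: one must check that the chart distortion, the deviation of $\sigma$ from the Euclidean metric in coordinates, and the density-point correction to the seam length in \eqref{eq:identity:Z} can all be made uniformly small as $z\to x_{0}$, so that these error terms genuinely vanish in the $\liminf$. Everything else is either bookkeeping or the elementary planar computation above.
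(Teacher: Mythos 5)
Your proposal is correct and follows essentially the same route as the paper: a chart flattening $\mathbb{S}^{1}$ to a line with $1+o(1)$ distortion, the density-point hypothesis together with \eqref{eq:identity:Z} to bound the seam distance by $\epsilon$ times arclength plus a lower-order error, and the resulting sector-plus-two-triangles area $\arcsin\epsilon+\sqrt{1-\epsilon^{2}}/\epsilon=\pi f(\epsilon)$. The only cosmetic difference is that you package the relevant region as the sublevel set of the homogeneous cost $\phi_{\epsilon}(\zeta)=\inf_{\omega\in\ell}(|\zeta-\omega|+\epsilon|\omega|)$ and compute its area exactly, whereas the paper exhibits the same region as a union of balls $B((s,0),r-\epsilon(1+\eta)|s|)$ and bounds its area from below by an explicit sector and triangles with $\sin\theta=\epsilon(1+\eta)$.
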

    \begin{proof}
    For the duration of the proof, we fix normal coordinates $F \colon B( 0, \pi/2 ) \rightarrow \mathbb{S}^{2}$ centered at $x_{0}$ in such a way that the preimage of $\mathbb{S}^{1} \cap B( x_{0}, \pi/2 )$ is $\left( -\pi/2, \pi/2 \right) \times \left\{0\right\}$ \cite[Section 5]{Lee:18}. Recall that this means that $F$ is an isometry along radial geodesics and the metric has the expansion $g_{ij}(x) = \delta_{ij} + O( \norm{ x }_{2}^2 )$ in these coordinates. In particular, as $r \rightarrow 0^{+}$, the bi-Lipschitz constant of $F|_{ B( 0, r ) }$ is of the form $1 + O( r^2 )$. We denote $\Gamma(s) \coloneqq F(s,0)$ for $\abs{s} \leq \pi/2$.
    
    We fix $0 < \eta  < 1/\epsilon - 1$. Since $x_{0}$ is a density point of $E$, there exists $s_{0} < \pi / 2$ such that for every $0 < s \leq s_{0}$,
    \begin{equation}
        \label{eq:small:length}
        \mathcal{H}^{1}_{ \mathbb{S}^{1} }\left(
            \Gamma\left( \left[-s, s\right] \right) \setminus E
        \right)
        \leq
        \epsilon \eta s.
    \end{equation}
    We fix $0 < r \leq \epsilon s_{0}$. Then, for every $0 < s < r/\epsilon$, \Cref{lemm:hausdorff} yields for both $I = \left[0 , s\right]$ and $I = \left[-s, 0\right]$,
    \begin{equation}
        \label{eq:smallspeed}
        \ell( E \cap ( \widetilde{\iota}_{1} \circ \Gamma|_{I} ) )
        \leq
        \epsilon s.
    \end{equation}
    Since $\widetilde{\iota}_{1}$ is $1$-Lipschitz according to \Cref{lemm:inclusion}, \eqref{eq:small:length} and \eqref{eq:smallspeed} imply
    \begin{equation}
        \label{eq:length:computation}
        \ell( \widetilde{\iota}_{1} \circ \Gamma|_{ I } )
        \leq
        s \epsilon
        +
        \epsilon \eta s
        =
        \epsilon( 1 + \eta ) s
        <
        s.
    \end{equation}
    We denote for every $|s| < r/( ( 1 + \eta ) \epsilon )$, $\rho_{s} = r - \epsilon ( 1 + \eta ) |s|$. For each $z \in Z_{1} \cap B_{ \mathbb{S}^{2} }( F(s,0), \rho_s )$, the inequality \eqref{eq:length:computation} implies $\widetilde{\iota}_{1}(z) \in \overline{B}_{ \widetilde{Z} }( \widetilde{\iota}_{1}( x_{0} ), r )$.
    
    We estimate $A_{r} \coloneqq \mathcal{H}^{2}_{ \widetilde{Z} }( \overline{ \widetilde{\iota}_{1}(Z_{1}) } \cap \overline{B}_{ \widetilde{Z} }( \widetilde{\iota}_{1}( x_{0} ), r ) )$ as $r \rightarrow 0^{+}$. In estimating $A_{r}$, we use the fact that the seam $Q( S_{Z} )$ has negligible $\mathcal{H}^{2}_{ \widetilde{Z} }$-measure and that $\widetilde{\iota}_{1}$ is a local isometry outside the seam. We claim that for each $0 < \theta < \pi/2$ the following holds:
    \begin{equation}
        \label{eq:lowerbound:proof:initial}
        A_{r}
        \geq
        ( 1 + O( (r/\epsilon)^2 ) )^{-2}
        \left(
            \theta r^2
            +
            \cos( \theta )
            \frac{ r^{2} }{ ( 1 + O( (r/\epsilon)^2 ) ) \epsilon ( 1+ \eta) }
        \right).
    \end{equation}
    The term $( 1 + O( (r/\epsilon)^2 ) )^{-2}$ comes from estimating the Jacobian of $\widetilde{\iota}_{1} \circ F$. The first term in the brackets comes from the fact that $F$ preserves the speed of radial geodesics, so
    \begin{equation*}
        ( \widetilde{\iota}_{1} \circ F )\left( \left\{ (s,t) \colon \sqrt{ s^{2} + t^{2} } < r, 0 < t \right\} \right)
        \subset
        \overline{ \widetilde{\iota}_{1}( Z_{1} ) }
        \cap
        \overline{B}_{ \widetilde{Z} }( \widetilde{\iota}_{1}( x_{0} ), r ).
    \end{equation*}
    We use this inclusion in a circular sector $C_{\theta}(r)$ which has a total angle $2\theta$ and an angle bisector $\left\{0\right\} \times \mathbb{R}$.
    
    The second term in the brackets is twice the area of a suitable triangle. The factor of two comes from the symmetry of the estimate \eqref{eq:length:computation} with respect to the parameter $s = 0$. We consider a triagle $T_{\theta}(r) \subset \mathbb{R}^{2}$ foliated by line segments $\ell(s)$, where $0 \leq s < r/ ( ( 1 + \eta ) \epsilon )$, with $\ell(s)$ having the start point $(s,0)$, tangent in the direction $( \sin(\theta), \cos(\theta) )$, and has length $\rho_{a}/( 1 + O( (r/\epsilon)^2 ) )$. The $\widetilde{\iota}_{1} \circ F$ image of such a triangle $T_{\theta}(r)$ contributes to $A_{r}$. The inequality \eqref{eq:lowerbound:proof:initial} follows.
    
    We choose the angle $\theta$ to satisfy $\sin( \theta ) = \epsilon( 1 + \eta )$. We divide \eqref{eq:lowerbound:proof:initial} by $\pi r^2$, pass to the limit $r \rightarrow 0^{+}$, and then $\eta \rightarrow 0^{+}$ and conclude
    \begin{equation}
        \label{eq:lowerbound:proof}
        \liminf_{ r \rightarrow 0^{+} }
        \frac{ A_{r} }{ \pi r^2 }
        \geq
        \frac{ \left( \sin|_{ (0,\pi/2] } \right)^{-1}( \epsilon ) }{ \pi }
        +
        \frac{ \sqrt{1-\epsilon^{2}} }{ \pi \epsilon }
        =
        f(\epsilon).
    \end{equation}
    The inequality \eqref{eq:masslowerbound:asymptotic} is the same as \eqref{eq:lowerbound:proof}.
    \end{proof}
    
    \begin{rem}
    Given $0 < \epsilon < 1$, the lower bound in \eqref{eq:lowerbound:proof} is sharp. This can be shown by considering a bi-Lipschitz $g \colon \mathbb{S}^{1} \rightarrow \mathbb{S}^{1}$ with metric speed $v_{g} \equiv \epsilon$ everywhere in an open neighbourhood of $x_{0} \in \mathbb{S}^{1}$.
    
    If the circular sector $C_{\theta}(r)$ and triangle $T_{\theta}(r)$ are defined as in the proof of the lower bound \eqref{eq:lowerbound:proof}, with $\eta = 0$, and $\theta = \left( \sin|_{ (0,\pi/2] } \right)^{-1}( \epsilon )$, we have
    \begin{equation*}
        \liminf_{ r \rightarrow 0^{+} }
        \frac{ A_{r} }{ \pi r^{2} }
        =
        \frac{ \mathcal{H}^{2}_{ \mathbb{R}^{2} }( C_{\theta}(1) ) + \mathcal{H}^{2}_{ \mathbb{R}^{2} }( T_{\theta}(1) ) }{ \pi }
        =
        f( \epsilon ).
    \end{equation*}
    This can be showed using \Cref{lemm:jumping:overtheseam} and \Cref{lemm:hausdorff}. The key property of the angle $\theta$ is that the line on $\mathbb{R}^{2}$ containing $( r/ \epsilon, 0 )$ with tangent vector $( -\cos(\theta), \sin( \theta ) )$ intersects every ball $\overline{B}_{ \mathbb{R}^{2} }( (s,0), \rho_{s} )$ tangentially when $0 \leq s < 1/\epsilon$ and $\rho_{s} = 1 - \epsilon s$.
    \end{rem}
    
    \begin{proof}[Proof of \Cref{cor:Lipschitz}]
    Given \eqref{eq:massupperbound:recall:density:south} and \Cref{prop:massaccumulation}, we have $v_{g}(x) \geq L_{1}^{-1}$ for $\mathcal{H}^{1}_{ \mathbb{S}^{1} }$-almost every $x \in \mathbb{S}^{1}$. This implies that $g^{-1}$ is absolutely continuous and $v_{ g^{-1} }( x ) \leq L_1$ for $\mathcal{H}^{1}_{ \mathbb{S}^{1} }$-almost every $x \in \mathbb{S}^{1}$. Therefore $g^{-1}$ is $L_1$-Lipschitz.
    
    The fact that $\widetilde{\iota}_{1}$ $1$-Lipschitz follows from \Cref{lemm:inclusion}. \Cref{lemm:hausdorff} implies that
    \begin{equation*}
        d_{Z}( \widetilde{\iota}_{1}(x), \widetilde{\iota}_{1}(y) ) \geq \sigma( x, y )/L_{1}
        \quad\text{for every $x, y \in \mathbb{S}^{1}$}.
    \end{equation*}
    The equality \Cref{lemm:jumping:overtheseam} \eqref{eq:energyminimal:chain} implies the corresponding inequality for every pair $x, y \in \overline{Z}_{1}$. Hence $\widetilde{\iota}_{1}^{-1}$ is $L_{1}$-Lipschitz.
    \end{proof}

    Next, we verify a lemma about radial extensions of bi-Lipschitz maps, which we need during the proof of \Cref{thm:mass:accumulation}.
    
    For the south pole $P_{1} \in Z_{1}$, we consider the (orientation-reversing) stereographic projection $P \colon \mathbb{S}^{2} \setminus \left\{ P_{1} \right\} \rightarrow \mathbb{R}^{2} \times \left\{0\right\}$ fixing the equator and mapping the north pole $P_{2} = (0,0,1)$ to the origin. We identify $\mathbb{R}^{2} \times \left\{0\right\}$ with $\mathbb{R}^{2}$. We note that $P^{-1}$ has the explicit definition
    \begin{equation*}
        P^{-1}(x,y)
        =
        \left(
            \frac{ 2 x }{ 1 + x^2 + y^2 },
            \frac{ 2 y }{ 1 + x^2 + y^2 },
            \frac{ 1 - x^2 + y^2 }{ 1 + x^2 + y^2 }
        \right).
    \end{equation*}
    The Riemannian tensor of $\mathbb{S}^{2}$ in these coordinates is $I = ( 4 / ( 1 + r^2 )^2 )g_{E}$, where $r$ is the distance to the origin and $g_{E}$ the Euclidean inner product. In polar coordinates, $g_{E} = dr^2 + r^2 d\theta^2$. 
    We see from the form of $I$ that the bi-Lipschitz constant of $\widetilde{g} = P \circ g \circ ( P|_{ \mathbb{S}^{1} } )^{-1}$ and $g \colon \mathbb{S}^{1} \rightarrow \mathbb{S}^{1}$ coincide.
    
    We represent the polar coordinates using the complex notation $r e^{ i \theta }$. We note that there exists a homeomorphism $\widetilde{G} \colon \mathbb{R} \rightarrow \mathbb{R}$ with $\widetilde{g}( e^{ i \theta } ) = e^{ i \widetilde{G}( \theta ) }$ for every $\theta \in \mathbb{R}$. For every $0 \leq r \leq 1$ and $\theta \in \mathbb{R}$, we set $\widetilde{\psi}( r e^{ i \theta } ) \coloneqq r e^{ i \widetilde{G}( \theta ) }$ and refer to $\widetilde{\psi}$ as the \emph{radial extension} of $\widetilde{g}$. We recall from \cite[Theorem 2.2]{Kal:14} that the bi-Lipschitz constants of $\widetilde{g}$ and $\widetilde{\psi}$ coincide. Let $\psi = P^{-1} \circ \widetilde{\psi} \circ P|_{Z_{2}} \colon Z_{2} \rightarrow Z_{2}$.
    
    We use the following fact during the proof of \Cref{lemm:radial}; see for example \cite{DC:Jar:Sha:16}, \cite{Cre:Sou:20}.
    \begin{lemm}\label{lemm:pencil}
    For every $x, y \in \mathbb{S}^{2}$, $0 < \epsilon < 1$, and $0 < 4 r < \sigma( x, y )$, the modulus of the family of paths joining $B_{ \mathbb{S}^{2} }( x, r )$ to $B_{ \mathbb{S}^{2} }( y, r )$ with length $( 1 + \epsilon ) \sigma( x, y )$ is positive.
    \end{lemm}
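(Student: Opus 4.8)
The plan is to bound from below the $2$-modulus of a conveniently chosen subfamily $\Gamma' \subset \Gamma$, where $\Gamma$ denotes the family in the statement, i.e. the rectifiable paths joining $B_{\mathbb{S}^{2}}(x,r)$ to $B_{\mathbb{S}^{2}}(y,r)$ whose length is at most $(1+\epsilon)\sigma(x,y)$; since $\Gamma' \subset \Gamma$ gives $\Mod \Gamma \geq \Mod \Gamma'$, this is enough. Write $\delta = \sigma(x,y) \in (0,\pi]$. I first reduce to $\delta < \pi$: if $\delta = \pi$, pick $x'$ so close to $x$ and $r'' > 0$ so small that $B_{\mathbb{S}^{2}}(x', r'') \subset B_{\mathbb{S}^{2}}(x, r)$, $r'' \le r$, $4 r'' < \sigma(x',y)$, and $\sigma(x', y) < \pi$ (possible, since $\sigma(\cdot,y)$ attains the value $\pi$ only at $x=-y$); every path produced below for the data $(x', y, r'', \epsilon/2)$ then also joins $B_{\mathbb{S}^{2}}(x,r)$ to $B_{\mathbb{S}^{2}}(y,r)$ and has length $< (1+\epsilon/2)\sigma(x',y) < (1+\epsilon)\delta$. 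So assume $\delta < \pi$ and set $r' = \min\{ r, \epsilon\delta \}$. Let $\gamma_{0} \colon [0,\delta] \to \mathbb{S}^{2}$ be the unique unit-speed minimizing geodesic from $x$ to $y$, fix a unit vector $v_{0} \in T_{x}\mathbb{S}^{2}$ orthogonal to $\gamma_{0}'(0)$, and let $v(u)$ be the parallel transport of $v_{0}$ along $\gamma_{0}$. For $|t| < r'/4$ put $p(t) = \exp_{x}(t v_{0})$, $q(t) = \exp_{y}(t\, v(\delta))$, and let $\gamma_{t} \colon [0,\ell_{t}] \to \mathbb{S}^{2}$ be the unit-speed minimizing geodesic from $p(t)$ to $q(t)$, with $\ell_{t} = \sigma(p(t),q(t))$. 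Then $p(t) \in B_{\mathbb{S}^{2}}(x, r'/4) \subset B_{\mathbb{S}^{2}}(x,r)$ and similarly for $q(t)$, and $\ell_{t} \leq \sigma(p(t),x) + \delta + \sigma(y,q(t)) < \delta + r'/2 \leq (1+\epsilon)\delta$, so $\Gamma' \coloneqq \{ \gamma_{t} \colon |t| < r'/4 \} \subset \Gamma$.

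The geometric core of the argument is that $\Gamma'$ foliates a non-degenerate region. Consider $\Phi(t,s) = \gamma_{t}(s)$ on the domain $\{(t,s) \colon |t| < r'/4,\ 0 \leq s \leq \ell_{t}\}$. Because $\delta < \pi$, the geodesic $\gamma_{0}$ contains no point conjugate to $\gamma_{0}(0)$ and does not meet the cut locus of $\gamma_{0}(0)$, so the assignment sending $(p,q)$ (near $(x,y)$) to its minimizing geodesic is smooth, and hence so is $\Phi$. Its $t$-derivative along $t = 0$ is the Jacobi field $J$ on $\gamma_{0}$ with $J(0) = v_{0}$ and $J(\delta) = v(\delta)$; since $v_{0} \perp \gamma_{0}'(0)$ and $v(\delta) \perp \gamma_{0}'(\delta)$, the field $J$ is normal, $J = f\, v$ with $f'' + f = 0$ and $f(0) = f(\delta) = 1$, so $f(s) = \cos(s - \delta/2)/\cos(\delta/2) > 0$ on $[0,\delta]$. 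Hence $\partial_{t}\Phi|_{t=0} = f\, v$ is nowhere zero and orthogonal to $\partial_{s}\Phi|_{t=0} = \gamma_{0}'$, so $\Phi$ is a local diffeomorphism along $\{0\}\times[0,\delta]$; as $\Phi(0,\cdot)$ is injective, a compactness argument shows that, after shrinking $r'$ if necessary, $\Phi$ is a diffeomorphism onto its image $R \subset \mathbb{S}^{2}$ whose Jacobian $J_{\Phi}$ (with respect to the spherical metric) satisfies $J_{\Phi} \geq c$ for some $c = c(x,y) > 0$. This is the type of pencil-of-geodesics estimate treated in \cite{DC:Jar:Sha:16}, \cite{Cre:Sou:20}.

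Granting this, the modulus bound follows by a mass-distribution estimate. Let $\rho$ be admissible for $\Gamma$, hence for $\Gamma'$, so that $\int_{0}^{\ell_{t}} \rho(\Phi(t,s))\, ds = \int_{\gamma_{t}} \rho \, ds \geq 1$ for every $|t| < r'/4$. Integrating in $t$, changing variables $z = \Phi(t,s)$ (legitimate by injectivity of $\Phi$), using $J_{\Phi} \geq c$, and applying the Cauchy--Schwarz inequality together with $\mathcal{H}^{2}_{\mathbb{S}^{2}}(R) \leq 4\pi$,
\[
    \frac{r'}{2}
    \leq
    \int_{-r'/4}^{r'/4} \int_{0}^{\ell_{t}} \rho(\Phi(t,s)) \, ds\, dt
    =
    \int_{R} \frac{\rho}{J_{\Phi}} \, d\mathcal{H}^{2}_{\mathbb{S}^{2}}
    \leq
    \frac{1}{c}\int_{R} \rho \, d\mathcal{H}^{2}_{\mathbb{S}^{2}}
    \leq
    \frac{\mathcal{H}^{2}_{\mathbb{S}^{2}}(R)^{1/2}}{c}\, \norm{\rho}_{L^{2}(\mathbb{S}^{2})}.
\]
Thus $\norm{\rho}_{L^{2}(\mathbb{S}^{2})} \geq c\, r' / \bigl(2\, \mathcal{H}^{2}_{\mathbb{S}^{2}}(R)^{1/2}\bigr)$, a positive bound independent of $\rho$, whence $\Mod \Gamma \geq \Mod \Gamma' \geq \bigl( c\, r' / (2\, \mathcal{H}^{2}_{\mathbb{S}^{2}}(R)^{1/2}) \bigr)^{2} > 0$.

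I expect the only delicate point to be the second paragraph: certifying rigorously that the transverse geodesics $\gamma_{t}$ genuinely foliate the region $R$ and that the parametrization $\Phi$ has a two-sided bounded Jacobian, uniformly as $r' \to 0$. This is exactly where the hypothesis $\sigma(x,y) < \pi$ is used (no conjugate points, cut locus not reached), and it is also the reason the antipodal case has to be peeled off at the start. The remaining ingredients — the reduction to a subfamily, the length bookkeeping, and the change-of-variables/Cauchy--Schwarz estimate — are routine.
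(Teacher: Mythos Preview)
Your argument is correct. The paper does not actually prove this lemma: it states the result as a known fact and simply refers the reader to \cite{DC:Jar:Sha:16} and \cite{Cre:Sou:20} for the pencil-of-geodesics estimate. What you have written is precisely a self-contained version of that standard argument --- foliate a tubular strip around the minimizing geodesic by nearby geodesics with endpoints in the two balls, use the Jacobi field computation (valid because $\sigma(x,y)<\pi$ forces no conjugate points) to certify that the strip parametrization is a diffeomorphism with Jacobian bounded below, and finish with the change-of-variables plus Cauchy--Schwarz lower bound for modulus. Your reduction of the antipodal case $\sigma(x,y)=\pi$ to a nearby non-antipodal pair is also fine. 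So your proposal supplies exactly the details the paper chose to outsource; there is no discrepancy in approach, only in the level of detail.
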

    
    \begin{lemm}\label{lemm:radial}
    The map $\psi \colon Z_{2} \rightarrow Z_{2}$ is $L$-bi-Lipschitz if $g$ is $L$-bi-Lipschitz.
    \end{lemm}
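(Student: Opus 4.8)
The plan is to transport the statement to the closed unit disk through the stereographic chart $P$, and then exploit that the radial extension $\widetilde{\psi}$ fixes every circle $\{ |z| = r \}$ setwise. Write $\overline{\mathbb{D}} = P( \overline{Z}_{2} )$ and $\lambda( r ) = 2 / ( 1 + r^{2} )$, and let $\ell_{\lambda}$ be the length functional and $d_{\lambda}$ the associated length distance of the conformal density $\lambda$ on $\overline{\mathbb{D}}$. Since the spherical metric in the $P$-coordinates is $\lambda^{2} g_{E}$ and a closed hemisphere is geodesically convex, $\sigma|_{ \overline{Z}_{2} }$ is the intrinsic distance of $\overline{Z}_{2}$, and $P$ is an isometry $( \overline{Z}_{2}, \sigma ) \rightarrow ( \overline{\mathbb{D}}, d_{\lambda} )$ with $\ell_{\sigma}( \gamma ) = \ell_{\lambda}( P \circ \gamma )$ for every curve $\gamma$ in $\overline{Z}_{2}$. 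Under this identification $\psi$ becomes $\widetilde{\psi}$, so it suffices to prove that $\widetilde{\psi} \colon ( \overline{\mathbb{D}}, d_{\lambda} ) \rightarrow ( \overline{\mathbb{D}}, d_{\lambda} )$ is $L$-bi-Lipschitz whenever $g$ is.

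The decisive point is the identity $\lambda \circ \widetilde{\psi} = \lambda$, valid because $\widetilde{\psi}( r e^{ i \theta } ) = r e^{ i \widetilde{G}( \theta ) }$ has modulus $r$; this is the mechanism by which the conformal distortions introduced by $P$ and by $P^{-1}$ cancel rather than accumulate. It gives, for every absolutely continuous $\gamma \colon [ 0, 1 ] \rightarrow \overline{\mathbb{D}}$, with $v$ the Euclidean metric speed,
\[
    \ell_{ \lambda }( \widetilde{\psi} \circ \gamma )
    =
    \int_{ 0 }^{ 1 } \lambda\bigl( | \widetilde{\psi}( \gamma( t ) ) | \bigr) \, v_{ \widetilde{\psi} \circ \gamma }( t ) \, d\mathcal{L}^{1}( t )
    =
    \int_{ 0 }^{ 1 } \lambda\bigl( | \gamma( t ) | \bigr) \, v_{ \widetilde{\psi} \circ \gamma }( t ) \, d\mathcal{L}^{1}( t ).
\]
Next I would recall from the discussion preceding the lemma (ultimately \cite[Theorem 2.2]{Kal:14}) that, $g$ being $L$-bi-Lipschitz, so is $\widetilde{\psi}$ for the Euclidean metric on $\overline{\mathbb{D}}$, hence so is $\widetilde{\psi}^{-1}$; in particular $\widetilde{\psi}^{\pm 1}$ are Euclidean $L$-Lipschitz, thus absolutely continuous on every rectifiable curve with $v_{ \widetilde{\psi}^{\pm 1} \circ \gamma } \leq L \, v_{ \gamma }$ $\mathcal{L}^{1}$-a.e. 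Since also $\lambda \circ \widetilde{\psi}^{-1} = \lambda$, substituting into the displayed identity and its analogue for $\widetilde{\psi}^{-1}$ yields $L^{-1} \ell_{ \lambda }( \gamma ) \leq \ell_{ \lambda }( \widetilde{\psi} \circ \gamma ) \leq L \, \ell_{ \lambda }( \gamma )$ for every rectifiable $\gamma$ in $\overline{\mathbb{D}}$, and the same bound with $\widetilde{\psi}$ replaced by $\widetilde{\psi}^{-1}$.

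It then remains to pass from these length comparisons to a two-sided distance comparison. Since $( \overline{\mathbb{D}}, d_{\lambda} )$ is a length space, for distinct $z, w \in \overline{\mathbb{D}}$ and each $\varepsilon > 0$ there is a curve $\gamma$ from $z$ to $w$ with $\ell_{\lambda}( \gamma ) \leq ( 1 + \varepsilon ) d_{\lambda}( z, w )$; applying the length comparison,
\[
    d_{\lambda}( \widetilde{\psi}( z ), \widetilde{\psi}( w ) )
    \leq
    \ell_{\lambda}( \widetilde{\psi} \circ \gamma )
    \leq
    L ( 1 + \varepsilon ) d_{\lambda}( z, w ),
\]
so $d_{\lambda}( \widetilde{\psi}( z ), \widetilde{\psi}( w ) ) \leq L \, d_{\lambda}( z, w )$ after $\varepsilon \rightarrow 0^{+}$. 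The same estimate for $\widetilde{\psi}^{-1}$ gives $d_{\lambda}( \widetilde{\psi}^{-1}( a ), \widetilde{\psi}^{-1}( b ) ) \leq L \, d_{\lambda}( a, b )$, and choosing $a = \widetilde{\psi}( z )$, $b = \widetilde{\psi}( w )$ produces the matching lower bound $L^{-1} d_{\lambda}( z, w ) \leq d_{\lambda}( \widetilde{\psi}( z ), \widetilde{\psi}( w ) )$. Undoing the identification with $P$ shows that $\psi$ is $L$-bi-Lipschitz. (Alternatively, and closer to the modulus-theoretic language of the paper, one feeds the length comparison into the positive-modulus pencils of near-length-minimizing curves provided by \Cref{lemm:pencil} joining small balls around $z$ and $w$.)

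I expect the only genuinely substantive step to be the cancellation $\lambda \circ \widetilde{\psi} = \lambda$, which is exactly what keeps the bi-Lipschitz constant from degrading under the non-isometric conjugation by the conformal map $P$; the remaining points — identifying $\sigma|_{ \overline{Z}_{2} }$ with the conformally Euclidean distance $d_{\lambda}$, the absolute-continuity bookkeeping behind the displayed identity, and the length-to-distance conversion — are routine.
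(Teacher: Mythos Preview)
Your proof is correct and follows essentially the same approach as the paper: both hinge on the observation that the radial extension preserves $|z|$ and hence the conformal factor $\lambda$, so the spherical and Euclidean bi-Lipschitz constants of $\widetilde{\psi}$ agree. The only cosmetic difference is that the paper packages this as a pullback-metric computation $\widetilde{\psi}^{*}I$ yielding the pointwise operator norm $\max\{|\widetilde{G}'(\theta)|,|\widetilde{G}'(\theta)|^{-1}\}$ and then invokes \Cref{lemm:pencil}, whereas you integrate the identity $\lambda\circ\widetilde{\psi}=\lambda$ directly along curves and use that $(\overline{\mathbb{D}},d_{\lambda})$ is a length space---a route you yourself note is interchangeable with the pencil-lemma alternative.
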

    \begin{proof}
    We refer the interested reader to \cite[Section 2]{Kal:14} for the proof of the fact that $\widetilde{\psi}$ is bi-Lipschitz if $\widetilde{g}$ (equivalently $g$) is bi-Lipschitz. We take this as a given.
    
    Since $\widetilde{\psi}$ is bi-Lipschitz, it has a differential at $\mathcal{L}^{2}$-almost every point in $\mathbb{D}$. Given this fact, the following computations are understood to hold at $\mathcal{L}^{2}$-almost every $( x, y ) = r e^{ i \theta }$ in the unit disk.
    
    The pullback $\widetilde{\psi}^{*}I$ is a diagonal matrix with respect to the basis $(dr, d\theta)$, with diagonal $4/(1+r^2)^2$ and $4 |\widetilde{G}'( \theta ) |^2 r^2 / ( 1 + r^2 )^2$. Hence the maximum of the operator norms of $D\widetilde{\psi} \colon ( T\mathbb{D}, I ) \rightarrow ( T\mathbb{D}, I )$ and its inverse is equal to $L( r e^{i\theta} ) = \max\left\{ |\widetilde{G}'( \theta )|, |\widetilde{G}'( \theta )|^{-1} \right\}$. Then, if $L'$ denotes the essential supremum of $L(r e^{ i \theta } )$, \Cref{lemm:pencil} implies that $\psi$ is $L'$-bi-Lipschitz. On the other hand, $L'$ is the bi-Lipschitz constant of $g$.
    \end{proof}
    
    \begin{proof}[Proof of \Cref{thm:mass:accumulation}]
    We first claim that "(1) $\Rightarrow$ (2)". \Cref{lemm:radial} provides us with an $L$-bi-Lipschitz $\psi \colon Z_{2} \rightarrow Z_{2}$ extension of the given $L$-bi-Lipschitz $g$. We define $H(x) = \widetilde{\iota}_{1}(x)$ for each $x \in \overline{Z}_{1}$ and $H(x) = \widetilde{\iota}_{2} \circ \psi(x)$ otherwise. \Cref{lemm:hausdorff} implies that $H$ is $L$-bi-Lipschitz at the seam, and \Cref{lemm:jumping:overtheseam} implies that $H$ is $L$-bi-Lipschitz everywhere.
    
    Notice that if $H \colon \mathbb{S}^{2} \rightarrow \widetilde{Z}$ is $L'$-bi-Lipschitz, we may choose $C = (L')^{4}$ as an upper bound for the 2-dimensional Hausdorff lower density. Hence "(2) $\Rightarrow$ (3)" follows, quantitatively. Lastly, "(3) $\Rightarrow$ (1)" follows from Propositions \ref{cor:Lipschitz} and \ref{cor:Lipschitz:north}. In fact, given $C \geq 1$ for which the lower density bound \eqref{eq:massupperbound:recall} holds, $g$ is $L'$-bi-Lipschitz for $L'$ solving $C = f(1/L')$. Since $f(\epsilon) \geq 1/\pi \epsilon$ for every $0 < \epsilon \leq 1$, we have $C \pi \geq L'$. Hence $g$ is $C \pi$-bi-Lipschitz.
    \end{proof}
    
    \begin{rem}\label{rem:sharpness}
    The estimates between the constants in "$(3) \Rightarrow (1)$" in \Cref{thm:mass:accumulation} can be improved in two ways. First, the constants $C_{1}$ and $C_{2}$ in \eqref{eq:massupperbound:recall:density:south} and \eqref{eq:massupperbound:recall:density:north} satisfy $\max\left\{ C_{1}, C_{2} \right\} \leq C - 1/2$, so $g$ is $ ( C - 1/2 )\pi$-bi-Lipschitz.
    
    The second improvement is obtained by using the constant $L' = L( C - 1/2 )$ from \Cref{defi:uniquenumber}. Then $g$ is $L'$-bi-Lipschitz, where $L' \leq ( C - 1/2) \pi$.
    
    These improvements imply that the bi-Lipschitz constant of $g$ converges to $1$ as $C \rightarrow 1^{+}$. These facts also improve \Cref{cor:QC:Jacobianproblem} and the following result, \Cref{thm:QC:Jacobianproblem}.
    \end{rem}
    
    Before proving \Cref{cor:QC:Jacobianproblem}, we investigate a related problem. To this end, suppose that we are given Riemann maps $\phi_{i} \colon Z_{i} \rightarrow \Omega_{i}$ with $\Omega_{1}$ and $\Omega_{2}$ denoting the complementary components of a welding curve $\mathcal{C}$, and set $g = \phi_{2}^{-1} \circ \phi_{1}|_{ \mathbb{S}^{1} }$.
\begin{prop}\label{thm:QC:Jacobianproblem}
    Let $K, C \geq 1$. The welding homeomorphism $g$ is $\pi (KC)^{2}$-bi-Lipschitz if there exists a $K$-quasiconformal homeomorphism $h \colon \mathbb{S}^{2} \rightarrow \mathbb{S}^{2}$ such that for both $i = 1,2$,
    \begin{equation}
        \label{eq:jacobian:comparable}
        C^{-1}
        J_{ h }(x)
        \leq
        J_{ \phi_{i}^{-1} }(x)
        \leq
        CJ_{h}(x)
        \quad\text{ for $\mathcal{H}^{2}_{ \mathbb{S}^{2} }$-a.e. $x \in \Omega_{i}$.}
    \end{equation}
    Conversely, if $g$ is $L$-bi-Lipschitz, then there exists $L^{4}$-quasiconformal homeomorphism $h \colon \mathbb{S}^{2} \rightarrow \mathbb{S}^{2}$ such that \eqref{eq:jacobian:comparable} holds for $C = L^{2}$.
\end{prop}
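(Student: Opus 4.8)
The plan is to treat the two implications separately. The converse is a direct construction, while the forward implication rests on Theorem~\ref{thm:mass:accumulation} and on showing that the quotient map $\widetilde{\pi}$ of \eqref{eq:quotientmap}, after precomposition with $h^{-1}$, is a bi-Lipschitz parametrization of $\widetilde{Z}$.

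For the converse, suppose $g$ is $L$-bi-Lipschitz. Then $\widetilde{Z}$ is bi-Lipschitz equivalent to $\mathbb{S}^{2}$, so by \Cref{prop:quotientmap} the map $\widetilde{\pi}$ built from the given Riemann maps $\phi_{i}$ is a $1$-quasiconformal homeomorphism. By the implication ``(1)$\Rightarrow$(2)'' of \Cref{thm:mass:accumulation} --- concretely via \Cref{lemm:radial} and the construction in its proof --- there is an $L$-bi-Lipschitz $H\colon\mathbb{S}^{2}\to\widetilde{Z}$ with $H|_{\overline{Z}_{1}}=\widetilde{\iota}_{1}$ and $H|_{Z_{2}}=\widetilde{\iota}_{2}\circ\psi$, where $\psi\colon Z_{2}\to Z_{2}$ is the $L$-bi-Lipschitz radial extension of $g$. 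I would then set $h\coloneqq H^{-1}\circ\widetilde{\pi}\colon\mathbb{S}^{2}\to\mathbb{S}^{2}$; as a composition of a $1$-quasiconformal and an $L^{2}$-quasiconformal homeomorphism it is $L^{2}$-quasiconformal, hence in particular $L^{4}$-quasiconformal. On $\Omega_{1}$ one computes $h=\phi_{1}^{-1}$, so $J_{h}=J_{\phi_{1}^{-1}}$; on $\Omega_{2}$ one computes $h=\psi^{-1}\circ\phi_{2}^{-1}$, so $J_{h}=(J_{\psi^{-1}}\circ\phi_{2}^{-1})\,J_{\phi_{2}^{-1}}$ with $L^{-2}\le J_{\psi^{-1}}\le L^{2}$. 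Thus \eqref{eq:jacobian:comparable} holds with $C=L^{2}$.

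For the forward implication, assume the $K$-quasiconformal $h$ with \eqref{eq:jacobian:comparable} exists. First I record, using \Cref{lemm:pullbackmeasure:modulus} and the conformality of $\phi_{i}$, that $J_{\widetilde{\pi}}=J_{\phi_{i}^{-1}}$ on $\Omega_{i}$ while $J_{\widetilde{\pi}}=0$ a.e.\ on $\mathcal{C}$; combined with \eqref{eq:jacobian:comparable} this gives $C^{-1}J_{h}\le J_{\widetilde{\pi}}\le CJ_{h}$ a.e.\ on $\mathbb{S}^{2}\setminus\mathcal{C}$, and $J_{\widetilde{\pi}}\le CJ_{h}$ a.e.\ on all of $\mathbb{S}^{2}$. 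The crucial step --- and the main obstacle --- is to show that $\widetilde{\pi}$ is a homeomorphism. By \Cref{thm:failure:terrible} and \Cref{lemm:inclusion}, if $\widetilde{\pi}$ fails to be injective then some closed subarc $I\subset\mathcal{C}$ is collapsed and $\omega_{1}|_{I},\omega_{2}|_{I}$ are mutually singular, so $\mathcal{H}^{1}_{\mathcal{C}}(\mathrm{Tn}(\mathcal{C})\cap I)=0$ by \Cref{lemm:harmonicmeasure:abs}; I would rule this out by noting that the Jacobian comparison makes $h\circ\phi_{i}\colon Z_{i}\to h(\Omega_{i})$ a $K$-quasiconformal map whose Jacobian is comparable to $1$, hence (by the distortion and Sobolev estimates) locally bi-Lipschitz, which forces $h(\mathcal{C})$ to be rectifiable with a tangent set of positive $\mathcal{H}^{1}$-measure on every subarc, contradicting the collapse via \Cref{lemm:mutualABS:tan}. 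Granting that $\widetilde{\pi}$ is a homeomorphism, \Cref{prop:quotientmap} together with \Cref{lemm:regularity} shows it is $1$-quasiconformal, so $\rho_{\widetilde{\pi}}^{2}=J_{\widetilde{\pi}}$ and $\rho_{\widetilde{\pi}^{-1}}^{2}=J_{\widetilde{\pi}^{-1}}$ a.e.

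With $\widetilde{\pi}$ a $1$-quasiconformal homeomorphism I set $\Phi\coloneqq\widetilde{\pi}\circ h^{-1}\colon\mathbb{S}^{2}\to\widetilde{Z}$. The chain rule for weak upper gradients, \Cref{prop:williams:L-Wversion} applied to $h^{-1}$, and $J_{\widetilde{\pi}}\le CJ_{h}$ give $\rho_{\Phi}^{2}\le(J_{\widetilde{\pi}}\circ h^{-1})\,KJ_{h^{-1}}\le KC$ a.e., since $J_{h}(h^{-1})\,J_{h^{-1}}=1$ a.e.; symmetrically, using $J_{\widetilde{\pi}}\ge C^{-1}J_{h}$ off $\mathcal{C}$ (which suffices, as $\widetilde{\pi}^{-1}$ lands off $\mathcal{C}$ at $\mathcal{H}^{2}_{\widetilde{Z}}$-a.e.\ point of $\widetilde{Z}$ because $\widetilde{\pi}(\mathcal{C})=Q(S_{Z})$ is negligible) one gets $\rho_{\Phi^{-1}}^{2}\le KC$ a.e. Since $\widetilde{Z}$ and $\mathbb{S}^{2}$ are length spaces, \Cref{lemm:pencil} upgrades these bounds to genuine Lipschitz estimates, exactly as in the proof of \Cref{lemm:radial}, so $\Phi$ is $(KC)^{1/2}$-bi-Lipschitz and hence $\Phi^{-1}\colon\widetilde{Z}\to\mathbb{S}^{2}$ is an $(KC)^{1/2}$-bi-Lipschitz homeomorphism. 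This is condition (2) of \Cref{thm:mass:accumulation} with $L'=(KC)^{1/2}$, and ``(2)$\Rightarrow$(3)$\Rightarrow$(1)'' then yields $C'=(KC)^{2}$ and that $g$ is $\pi(KC)^{2}$-bi-Lipschitz. The only genuinely delicate point is the homeomorphism claim for $\widetilde{\pi}$; the rest is bookkeeping with Jacobians, weak upper gradients, and the length-space structure.
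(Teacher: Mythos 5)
Your converse direction is correct and is essentially the paper's argument: you build the bi-Lipschitz parametrization $H$ from the radial extension, note that $\widetilde{\pi}$ is $1$-quasiconformal by \Cref{prop:quotientmap}, and take $h=H^{-1}\circ\widetilde{\pi}$ (the paper writes $h=\Psi\circ\widetilde{\pi}$ with $\Psi=H^{-1}$); the Jacobian bookkeeping with $J_{\psi^{-1}}$ is fine. The endgame of your forward direction (bounding $\rho_{\Phi}^{2}\leq KC$, upgrading to Lipschitz via \Cref{lemm:pencil}, and feeding $L'=\sqrt{KC}$ into ``(2)$\Rightarrow$(3)$\Rightarrow$(1)'' of \Cref{thm:mass:accumulation}) also matches the paper. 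The problems are in the two intermediate steps of the forward direction.

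First, your argument that $\widetilde{\pi}$ is a homeomorphism does not work as stated. \Cref{lemm:harmonicmeasure:abs} and \Cref{lemm:mutualABS:tan} concern $\mathrm{Tn}(\mathcal{C})$, the tangent set of the \emph{original} welding curve, and the harmonic measures $\omega_{i}$ of $\Omega_{i}$. Your contradiction is derived from rectifiability of $h(\mathcal{C})$, but $h$ is only quasiconformal: tangent points of $h(\mathcal{C})$ do not correspond to tangent points of $\mathcal{C}$, and the boundary correspondence of a quasiconformal map can be completely singular, so rectifiability of $h(\mathcal{C})$ gives no information about $\mathcal{H}^{1}_{\mathcal{C}}(\mathrm{Tn}(\mathcal{C})\cap I)$ or about the mutual singularity of $\omega_{1}|_{I}$ and $\omega_{2}|_{I}$. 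The ingredient you do have --- that $h\circ\phi_{i}$ is locally $\sqrt{KC}$-bi-Lipschitz, hence (since $\overline{Z}_{i}$ is geodesic) $\sqrt{KC}$-Lipschitz up to the boundary --- already yields the conclusion much more directly, and this is what the paper does: the two Lipschitz maps agree across the seam, so the induced map $\Psi$ on $(Z,d_{Z})$ is $\sqrt{KC}$-Lipschitz and injective, which forces $d_{Z}$ to be a genuine distance and $\widetilde{\pi}$ to be a homeomorphism. Second, your ``symmetric'' bound $\rho_{\Phi^{-1}}^{2}\leq KC$ presupposes $\rho_{\widetilde{\pi}^{-1}}^{2}=J_{\widetilde{\pi}^{-1}}$, i.e.\ the $1$-quasiconformality of $\widetilde{\pi}$, which you justify by \Cref{prop:quotientmap} and \Cref{lemm:regularity}; but the hypothesis of \Cref{lemm:regularity} (Sobolev regularity of $\widetilde{\pi}^{-1}$) is exactly what is unproven at that stage --- the paper explicitly records in \Cref{ques:nocollapsing} that this is open for a general homeomorphic $\widetilde{\pi}$. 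The paper avoids this by proving $\Psi=\Phi^{-1}$ is Lipschitz first (from $h\circ\phi_{i}$ on the closed hemispheres and \Cref{lemm:jumping:overtheseam}), and then obtains the regularity of $\Psi^{-1}$ from the modulus inequality $\Mod\Gamma\leq K\Mod\Psi^{-1}\Gamma$ together with \Cref{lemm:techincal:lemma}.
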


    \begin{proof}
    We first assume that $g \colon \mathbb{S}^{1} \rightarrow \mathbb{S}^{1}$ is $L$-bi-Lipschitz. Then \Cref{thm:mass:accumulation} provides us with an $L$-bi-Lipschitz homeomorphism $\Psi \colon \widetilde{Z} \rightarrow \mathbb{S}^{2}$. \Cref{prop:quotientmap} and \eqref{eq:quotientmap} imply that $\widetilde{\pi} \colon \mathbb{S}^{2} \rightarrow \widetilde{Z}$ defined via the formula
    \begin{equation}
        \label{eq:1-QC:recall}
        \widetilde{ \pi }(x)
        =
        \left\{
        \begin{split}
            &\widetilde{\iota}_{1} \circ \phi_{1}^{-1}(x),
            &&x \in \overline{ \Omega_{1} },
            \\
            &\widetilde{\iota}_{2} \circ \phi_{2}^{-1}(x),
            &&x \in \Omega_{2}
        \end{split}
        \right.
    \end{equation}
    is a $1$-quasiconformal homeomorphism. Therefore, $h \coloneqq \Psi \circ \widetilde{\pi} \colon \mathbb{S}^{2} \rightarrow \mathbb{S}^{2}$ is $K$-quasiconformal for $K = L^4$, and as $\Psi$ is $L$-bi-Lipschitz, the Jacobians of $h$ and $\widetilde{\pi}$ are comparable with comparison constant $C = L^2$.
    
    Next, we are given a Jordan curve $\mathcal{C} \subset \mathbb{S}^{2}$ corresponding to a welding homeomorphism $g = \phi_{2}^{-1} \circ \phi_{1}|_{ \mathbb{S}^{1} }$, a $K$-quasiconformal homeomorphism $h \colon \mathbb{S}^{2} \rightarrow \mathbb{S}^{2}$, and a constant $C \geq 1$ such that
    \begin{equation}
        \label{eq:comparison}
        C^{-1}
        J_{ h }(x)
        \leq
        J_{ \widetilde{\pi} }(x)
        \leq
        C
        J_{ h }(x)
        \quad\mathcal{H}^{2}_{ \mathbb{S}^{2} }\text{-a.e. $x \in \mathbb{S}^{2} \setminus \mathcal{C}$}.
    \end{equation}
    For $i = 1,2$, the composition $h \circ \phi_{i}$ is $K$-quasiconformal with Jacobian bounded from above $C$ and below by $C^{-1}$, respectively; here we apply \eqref{eq:comparison}. \Cref{prop:williams:L-Wversion} (ii) and Hadamard's inequality imply that $C^{-1} \leq \rho_{ h \circ \phi_{i} }^{2} \leq K C$ $\mathcal{H}^{2}_{ \mathbb{S}^{2} }$-almost everywhere in $Z_{i}$. \Cref{lemm:pencil} implies that the homeomorphism $h \circ \phi_{i}$ is locally $L'$-bi-Lipschitz for $L' = \sqrt{KC}$.
    
    Since, for both $i = 1, 2$, $\overline{Z}_{i}$ is geodesic, it is immediate that $h \circ \phi_{i} \colon \overline{Z}_{i} \rightarrow \mathbb{S}^{2}$ is $L'$-Lipschitz. Since this holds for both $i = 1,2$, the construction of $d_{Z}$ implies that whenever $x, y \in \mathbb{S}^{1}$, $\sigma( h \circ \phi_{1}(x), h \circ \phi_{1}(y) ) \leq L' d_{\widetilde{Z}}( \widetilde{\iota}_{1}(x), \widetilde{\iota}_{1}(y) )$. \Cref{lemm:jumping:overtheseam} \eqref{eq:energyminimal:chain} establishes the same inequality for each $x, y \in \overline{Z}_{1}$. Hence the mapping $\widetilde{\pi}$ defined by the expression \eqref{eq:1-QC:recall} is a homeomorphism and $\Psi \coloneqq h \circ \widetilde{\pi}^{-1}$ is $L'$-Lipschitz on the southern hemisphere. A similar argument shows that $\Psi$ is $L'$-Lipschitz on both of the hemispheres. Then \Cref{lemm:jumping:overtheseam} \eqref{eq:energyminimal:chain:seam} implies that $\Psi$ is $L'$-Lipschitz everywhere.
    
    Since $\Mod \Gamma \leq K \Mod \Psi^{-1} \Gamma$ for all path families (recall \Cref{prop:quotientmap}), we have $\Psi^{-1} \in N^{1,2}( \mathbb{S}^{2}, \widetilde{Z} )$. On the other hand, $\Psi( Q(S_{Z}) )$ has negligible $\mathcal{H}^{2}_{ \mathbb{S}^{2} }$-measure and $\Psi^{-1}$ is locally $L'$-Lipschitz in the complement of that set. In particular, almost every absolutely continuous $\gamma \colon \left[0, 1\right] \rightarrow \mathbb{S}^{2}$ has zero length in $\Psi( Q( S_{Z} ) )$ and $\Psi^{-1} \circ \gamma$ is absolutely continuous. As a consequence, $\mathcal{H}^{1}_{ \widetilde{Z} }( Q( S_{Z} ) \cap | \Psi^{-1} \circ \gamma | ) = 0$.
    
    Denoting $E = Q( S_{Z} ) \cap | \Psi^{-1} \circ \gamma |$ and $\rho = L' \chi_{ \mathbb{S}^{2} }$, we conclude from \Cref{lemm:techincal:lemma} that $\ell( \Psi^{-1} \circ \gamma ) \leq \int_{ \gamma } \rho \,ds \leq L' \ell( \gamma )$. \Cref{lemm:pencil} implies that $\Psi^{-1}$ is $L'$-Lipschitz.
    
    We have verified that $\Psi$ is $L'$-bi-Lipschitz. By applying the implications "(2) $\Rightarrow$ (3) $\Rightarrow$ (1)" in \Cref{thm:mass:accumulation}, we conclude that $g$ is $L$-bi-Lipschitz for $L = \pi (L')^{4} = \pi (KC)^{2}$.
    \end{proof}
    Next, we prove \Cref{cor:QC:Jacobianproblem}. This essentially follows from \Cref{thm:QC:Jacobianproblem}.
    \begin{proof}[Proof of \Cref{cor:QC:Jacobianproblem}]
    We claim that $g \colon \mathbb{S}^{1} \rightarrow \mathbb{S}^{1}$ is bi-Lipschitz if and only if there exists a quasiconformal homeomorphism $h \colon \mathbb{S}^{2} \rightarrow \mathbb{S}^{2}$ and a $1$-quasiconformal homeomorphism $\varphi \colon \mathbb{S}^{2} \rightarrow \widetilde{Z}$ such that $J_{\varphi}$ and $J_{ h }$ are comparable.
    
    If such $\varphi$ and $h$ exist, we may assume that $\phi_{i} = \varphi^{-1} \circ \widetilde{\iota}_{i}|_{ Z_{i} }$ is a Riemann map for both $i = 1,2$. Then \Cref{thm:QC:Jacobianproblem} shows that $g$ is bi-Lipschitz.
    
    Conversely, if $g$ is bi-Lipschitz, \Cref{thm:mass:accumulation} provides a bi-Lipschitz homeomorphism $\Psi \colon \widetilde{Z} \rightarrow \mathbb{S}^{2}$. Then \Cref{thm:welding:positive} implies the existence of a $1$-quasiconformal homeomorphism $\pi \colon \mathbb{S}^{2} \rightarrow \widetilde{Z}$ such that $\phi_{i} = \pi^{-1} \circ \widetilde{\iota}_{i}|_{ Z_{i} }$ is a Riemann map for $i = 1,2$. We may also assume that $\Psi \circ \widetilde{\iota}_{i}|_{ Z_{i} }$ is orientation-preserving for $i = 1,2$, by post-composing $\Psi$ with a suitable reflection, if need be. Defining $h = \Psi \circ \pi$ implies that the assumptions of \Cref{thm:QC:Jacobianproblem} hold for $g$.
    
    Since \Cref{thm:mass:accumulation} and \Cref{thm:QC:Jacobianproblem} are quantitative, so is \Cref{cor:QC:Jacobianproblem}.
    \end{proof}

\section{Mappings of finite distortion}\label{sec:finite:distortion}
    In this section, we establish \Cref{thm:QS:abs} and \Cref{thm:biconformal:abs}.

    \begin{defi}
    Let $\Omega, \Omega' \subset \mathbb{S}^{2}$ be open. A homeomorphism $\psi \colon \Omega \rightarrow \Omega'$ is a \emph{mapping of finite distortion} if $\psi \in N^{1,1}( \Omega, \mathbb{S}^{2} )$; second, the determinant $J( D\psi )$ of the differential $D\psi$ is nonnegative and integrable; lastly, there exists a function $1 \leq K_{\psi}' < \infty$ for which
    \begin{equation}
        \label{eq:definition:pointwisedistortion}
        \abs{ D\psi }_{g}^{2}
        \leq
        K_{ \psi }' J( D\psi )
        \quad
        \mathcal{H}^{2}_{ \mathbb{S}^{2} }\text{-a.e. in $\Omega$.}
    \end{equation}
    Here $\abs{ D\psi }_{g}$ refers to the operator norm of the differential $D\psi$. We let $K_{\psi}$ denote a smallest Borel function which is bounded from below by $\chi_{ \Omega  }$ and for which \eqref{eq:definition:pointwisedistortion} holds.
    \end{defi}

    \begin{defi}\label{defi:admissibledistortion}
    A smooth strictly increasing function $\mathcal{A} \colon \left[1, \infty\right) \rightarrow \left[0, \infty\right)$ is \emph{admissible} if
    \begin{enumerate}
        \item $\mathcal{A}(1) = 0$,
        \item $\int_{ 1 }^{ \infty } t^{-2}\mathcal{A}(t) \,d\mathcal{L}^{1}(t) = \infty$, and
        \item $t \mapsto t \mathcal{A}'(t)$ is increasing for large values $t$, and converges to $\infty$ as $t \rightarrow \infty$.
    \end{enumerate}
    \end{defi}

    We obtain the same class of admissible $\mathcal{A}$ if we replace (2) with the condition
    \begin{equation*}
        \int_{ 1 }^{ \infty } t^{-1} \mathcal{A}'(t) \,d\mathcal{L}^{1}(t) = \infty.
    \end{equation*}
    This follows from the fact that $\mathcal{A}(s)/s \leq 4 \int_{ s }^{ 2 s } t^{-2} \mathcal{A}(t) \,d\mathcal{L}^{1}(t)$ whenever $s \geq 1$ and the integration by parts formula.

    \begin{defi}\label{defi:admissible}
    Let $\Omega, \Omega' \subset \mathbb{S}^{2}$ be open, and $\psi \colon \Omega \rightarrow \Omega'$ a homeomorphism. We say that $\psi$ is \emph{admissible} if $\psi$ is a mapping of finite distortion and there exists an admissible $\mathcal{A}$ with
    \begin{equation}
        \label{eq:exponentialintegrability}
        \int_{ \Omega }
            e^{ \mathcal{A}( K_{\psi} ) }
        \,d\mathcal{H}^{2}_{ \mathbb{S}^{2} }
        <
        \infty.
    \end{equation}
    If $\mathcal{A}(t) = pt - p$ for some $p > 0$, we say that $\psi$ has \emph{exponentially integrable distortion}.
    \end{defi}
    
    We recall some properties of such $\psi$. First, $\psi$ satisfies Lusin's Condition ($N$) \cite[Theorem 1.1]{Ka:Ko:Ma:On:Zh:03}. Second, $\psi^{-1} \in N^{1,2}( \Omega', \Omega )$ \cite[Corollary 1.2]{Ko:On:06}; this implies that $\psi^{-1}$ satisfies Lusin's Condition ($N$) \cite[Theorem 3.3.7]{Ast:Iwa:Mar:09}. Third, the Jacobian $J( D\psi )$ appearing on the right-hand side of \eqref{eq:definition:pointwisedistortion} coincides with the Jacobian $J_{\psi}$ we defined in \Cref{sec:sobolev} \cite{Ka:Ko:Ma:On:Zh:03}.
    
    In this section, we show the following theorem.
    \begin{thm}\label{thm:mod}
    Suppose that $g \colon \mathbb{S}^{1} \rightarrow \mathbb{S}^{1}$ is a homeomorphism, $g^{-1}$ absolutely continuous, and there exists a homeomorphism $\psi \colon \overline{Z}_{2} \rightarrow \overline{Z}_{2}$ extending $g$ with $\psi|_{ Z_{2} }$ admissible. Then $\widetilde{Z}$ is quasiconformally equivalent to $\mathbb{S}^{2}$.
    \end{thm}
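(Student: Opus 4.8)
The plan is to realize $\widetilde{Z}$ as the homeomorphic image of $\mathbb{S}^{2}$ under a mapping of admissible distortion whose inverse is a Sobolev map, to use this to verify that $\widetilde{Z}$ is reciprocal, and then to invoke \Cref{thm:raj}. First I would observe that the absolute continuity of $g^{-1}$ forces $v_{g} > 0$ $\mathcal{H}^{1}_{\mathbb{S}^{1}}$-almost everywhere: if $v_{g}$ vanished on a subset of $\mathbb{S}^{1}$ of positive length, then by \Cref{lemm:hausdorff} the map $\widetilde{\iota}_{1}$ would collapse a nondegenerate subarc of $S_{Z}$, and $g^{-1}$ would carry a null set onto one of positive length. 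Together with \Cref{lemm:inclusion} and \Cref{thm:failure:terrible} this yields $\widetilde{Z} = (Z, d_{Z})$, and \Cref{lemm:hausdorff} now gives that $\mathcal{H}^{1}_{\widetilde{Z}}(\widetilde{\iota}_{1}(B)) = 0$ if and only if $\mathcal{H}^{1}_{\mathbb{S}^{1}}(B) = 0$ for Borel $B \subset \mathbb{S}^{1}$. I would then define $H \colon \mathbb{S}^{2} \to \widetilde{Z}$ by $H|_{\overline{Z}_{1}} = \widetilde{\iota}_{1}$ and $H|_{\overline{Z}_{2}} = \widetilde{\iota}_{2} \circ \psi$; the two definitions agree on $\mathbb{S}^{1}$ because $\psi|_{\mathbb{S}^{1}} = g$ and $\widetilde{\iota}_{1} = \widetilde{\iota}_{2} \circ g$ there, and each $\widetilde{\iota}_{i}$ is injective since $\widetilde{Z} = (Z, d_{Z})$, so $H$ is a homeomorphism.

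\emph{Regularity of $H$ and of $H^{-1}$.} Because $\widetilde{\iota}_{i}$ is a local isometry on $Z_{i}$ and the seam is $\mathcal{H}^{2}$-negligible, $H$ is a mapping of finite distortion with $K_{H} = \chi_{Z_{1}} + \chi_{Z_{2}} K_{\psi}$, and $\int_{\mathbb{S}^{2}} e^{\mathcal{A}(K_{H})}\,d\mathcal{H}^{2}_{\mathbb{S}^{2}} \leq \mathcal{H}^{2}_{\mathbb{S}^{2}}(Z_{1}) + \int_{Z_{2}} e^{\mathcal{A}(K_{\psi})}\,d\mathcal{H}^{2}_{\mathbb{S}^{2}} < \infty$ for the admissible $\mathcal{A}$ (\Cref{defi:admissible}, \Cref{defi:admissibledistortion}) witnessing admissibility of $\psi|_{Z_{2}}$; the $N^{1,1}$-regularity of $H$ across the seam follows by gluing the Sobolev maps on the two closed hemispheres along the rectifiable, $\mathcal{H}^{2}$-null curve $S_{Z}$, invoking \Cref{lemm:techincal:lemma} with $E = |\gamma| \cap \mathbb{S}^{1}$ after noting (via \Cref{lemm:negligible}) that the paths with positive length in $\mathbb{S}^{1}$ form a family of modulus zero. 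For the inverse, $H^{-1} = \widetilde{\iota}_{1}^{-1}$ on $\widetilde{\iota}_{1}(Z_{1})$ with upper gradient $1$, while $H^{-1} = \psi^{-1} \circ \widetilde{\iota}_{2}^{-1}$ on $\widetilde{\iota}_{2}(Z_{2})$ with $\psi^{-1} \in N^{1,2}(Z_{2}, Z_{2})$ by \cite[Corollary 1.2]{Ko:On:06}; setting $\rho = \chi_{\widetilde{\iota}_{1}(Z_{1})} + \chi_{\widetilde{\iota}_{2}(Z_{2})}\,(\rho_{\psi^{-1}} \circ \widetilde{\iota}_{2}^{-1}) + \infty \cdot \chi_{Q(S_{Z})} \in L^{2}(\widetilde{Z})$ and arguing as in \Cref{lemm:quotient:Sobolev} — for almost every absolutely continuous $\gamma$ in $\widetilde{Z}$ one has $\int_{\gamma}\rho\,ds < \infty$, hence $\mathcal{H}^{1}_{\widetilde{Z}}(|\gamma| \cap Q(S_{Z})) = 0$, and then the displayed null-set equivalence gives $\mathcal{H}^{1}_{\mathbb{S}^{2}}(H^{-1}(|\gamma| \cap Q(S_{Z}))) = 0$, so \Cref{lemm:techincal:lemma} yields $\ell(H^{-1} \circ \gamma) \leq \int_{\gamma}\rho\,ds$ — one concludes that $\rho$ is a $2$-weak upper gradient of $H^{-1}$ and $H^{-1} \in N^{1,2}(\widetilde{Z}, \mathbb{S}^{2})$.

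\emph{Reciprocality and conclusion.} The lower bound in \eqref{upper:bound} holds for any metric surface by \eqref{eq:lower:bound}, and \eqref{point:zero:modulus} is the classical spherical estimate at points of $\widetilde{Z} \setminus Q(S_{Z})$, where $\widetilde{Z}$ is locally isometric to $\mathbb{S}^{2}$. What remains is to bound the product of conjugate quadrilateral moduli from above and to establish \eqref{point:zero:modulus} at seam points, and for this I would transport the relevant quadrilaterals and small ring condensers to $\mathbb{S}^{2}$ via $H$ and apply the condenser and modulus inequalities of Koskela--Onninen \cite{Ko:On:06} for mappings of finite distortion with admissible (in particular exponentially integrable) distortion — applicable precisely because $H^{-1} \in N^{1,2}(\widetilde{Z}, \mathbb{S}^{2})$ — together with the Stoilow factorization theorem \cite[Chapter 20]{Ast:Iwa:Mar:09} reducing the planar analysis of $\psi$ to that of a homeomorphic solution of a Beltrami equation. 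Since conjugate quadrilateral moduli on $\mathbb{S}^{2}$ multiply to $1$, this produces a uniform $\kappa$ in \eqref{upper:bound} and shows that the condenser moduli around a seam point decay to $0$, so $\widetilde{Z}$ is reciprocal in the sense of \Cref{defi:reciprocal}; as $\widetilde{Z}$ is then a metric surface homeomorphic to $\mathbb{S}^{2}$ (via $H$) with finite Hausdorff $2$-measure, \Cref{thm:raj} provides the desired quasiconformal homeomorphism $\widetilde{Z} \to \mathbb{S}^{2}$. I expect this last step to be the main obstacle: since $\psi|_{Z_{2}}$ has only integrable, not bounded, distortion, neither $H$ nor $H^{-1}$ is quasiconformal, so the crude modulus inequalities with a fixed multiplicative constant are unavailable, and the reciprocality bounds must be extracted from the delicate degenerate-distortion condenser estimates, where the admissibility of $\mathcal{A}$ and the $N^{1,2}$-regularity of $H^{-1}$ enter essentially.
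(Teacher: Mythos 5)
Your setup is sound and matches the paper: the homeomorphism property of $H$ from the absolute continuity of $g^{-1}$, the regularity $H \in N^{1,1}$, $H^{-1} \in N^{1,2}$ with exponentially integrable $K_{H} = \chi_{Z_{1}} + \chi_{Z_{2}}K_{\psi}$, and the verification of \eqref{point:zero:modulus} at seam points by pushing ring condensers through $H$ and invoking the Koskela--Onninen capacity estimates are exactly \Cref{lemm:measurable:sobolev}, \Cref{lemm:annuluscondenser} and \Cref{lemm:points:negligible}. The gap is the step you yourself flag as "the main obstacle": the quadrilateral upper bound in \eqref{upper:bound}. Transporting a quadrilateral $Q \subset \widetilde{Z}$ to $\mathbb{S}^{2}$ via $H$ and using that conjugate moduli on $\mathbb{S}^{2}$ multiply to $1$ requires a two-sided modulus comparison with a \emph{uniform} constant for both conjugate families, i.e.\ quasiconformality of $H$; what you actually have is only the weighted inequality $\Mod H\Gamma \leq \int \widetilde{\rho}^{2}K_{H}$ with $K_{H}$ unbounded, which degenerates on small quadrilaterals concentrated where $K_{\psi}$ is large. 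The condenser estimates of \cite{Ko:On:06} give asymptotic decay of ring capacities, not uniform quadrilateral modulus bounds, so they cannot close this step, and your appeal to Stoilow factorization is not attached to any concrete mechanism for doing so.

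The paper avoids verifying \eqref{upper:bound} altogether. From \eqref{point:zero:modulus} on a patch $\widetilde{R}$ and the universal lower bound \eqref{eq:lower:bound} alone, \cite[Proposition 9.1]{Raj:17} already produces the candidate chart $f=(u,v)$ with $u$ an energy minimizer. One then shows $f$ is conformal off the seam (\Cref{lemm:conjugate}), pulls $f$ and the conjugate-minimizer chart $f'$ back to $N^{1,1}$ planar homeomorphisms $f\circ H$ and $f'\circ H$ with the \emph{same} Beltrami coefficient and exponentially integrable distortion (\Cref{lemm:pullbackregularity}), and applies Stoilow factorization to conclude $f'\circ f^{-1}$ is conformal, whence $v = Mu'$ and $f$ is $1$-quasiconformal (\Cref{lemm:components}, \Cref{prop:parametrization}). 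The globalization then goes through \Cref{thm:iko} (covering the seam by such charts), not through \Cref{thm:raj}. This direct construction of local $1$-quasiconformal coordinates is the substantive content missing from your argument; without it, or some replacement for the upper bound in \eqref{upper:bound}, the proof is incomplete.
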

    Note that \Cref{thm:biconformal:abs} is a consequence of \Cref{thm:mod} so it suffices to verify \Cref{thm:mod}.
\begin{defi}\label{defi:standingassumptions}
    Given $x_{0} \in \mathbb{S}^{1}$ and $\pi > R_{0} > 0$, set $\widetilde{Q} \coloneqq \overline{B}_{ \mathbb{S}^{2} }( x_{0}, R_{0} ) \subset \mathbb{S}^{2}$. We define $H(x) = \widetilde{\iota}_{1}(x)$ if $x \in \widetilde{Q} \cap \overline{Z}_{1}$ and $\widetilde{\iota}_{2} \circ \psi(x)$ if $x \in \widetilde{Q} \cap Z_{2}$, and denote $\widetilde{R} = H( \widetilde{Q} ) \subset \widetilde{Z}$.
\end{defi}

\begin{prop}\label{thm:local:recip}
    If $\widetilde{R}$ and $H$ are as in \Cref{defi:standingassumptions}, then $H$ is a homeomorphism and there exists a $1$-quasiconformal homeomorphism $f = (u, v) \colon \widetilde{R} \rightarrow \left[0, 1\right] \times \left[0, M\right]$ for some $M > 0$.
\end{prop}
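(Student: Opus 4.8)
The plan is to prove this in three movements: that $H$ is a homeomorphism onto $\widetilde{R}$; that the uniformization problem for $\widetilde{R}$ pushes back through $H$ to a Beltrami equation on the planar disk $\widetilde{Q}$ with exponentially integrable distortion; and that solving that equation and reparametrizing produces the desired $1$-quasiconformal chart. That $H$ is a homeomorphism I would deduce from the hypothesis on $g^{-1}$. Absolute continuity of $g^{-1}$ is equivalent to $\mathcal{H}^{1}_{\mathbb{S}^{1}} \ll g^{*}\mathcal{H}^{1}_{\mathbb{S}^{1}}$, i.e.\ to $v_{g} > 0$ $\mathcal{H}^{1}_{\mathbb{S}^{1}}$-a.e.\ in the decomposition $g^{*}\mathcal{H}^{1}_{\mathbb{S}^{1}} = v_{g}\mathcal{H}^{1}_{\mathbb{S}^{1}} + \mu^{\perp}$. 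By \Cref{lemm:hausdorff} and \eqref{eq:identity:Z}, for distinct $x, y \in \mathbb{S}^{1}$ the connecting subarc $B$ has $\mathcal{H}^{1}_{\widetilde{Z}}( \widetilde{\iota}_{1}( B ) ) = \int_{B} \min\{ 1, v_{g} \}\, d\mathcal{H}^{1}_{\mathbb{S}^{1}} > 0$, so $\widetilde{\iota}_{1}|_{\mathbb{S}^{1}}$ is injective; being $1$-Lipschitz with metric speed $\min\{ 1, v_{g} \}$, it carries $\mathcal{H}^{1}$-null subsets of $\mathbb{S}^{1}$ to $\mathcal{H}^{1}$-null subsets of $\widetilde{Z}$ and, since $v_{g} > 0$ a.e., conversely as well. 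Since $\psi$ extends $g$, the branches $\widetilde{\iota}_{1}$ and $\widetilde{\iota}_{2} \circ \psi$ agree on $\widetilde{Q} \cap \mathbb{S}^{1}$, so $H$ is continuous; combined with the injectivity of $\widetilde{\iota}_{i}|_{Z_{i}}$ (\Cref{lemm:inclusion}), of $\widetilde{\iota}_{1}|_{\mathbb{S}^{1}}$, and the disjointness of $\widetilde{\iota}_{1}( Z_{1} )$ and $\widetilde{\iota}_{2}( Z_{2} )$ (\Cref{lemm:jumping:overtheseam}), $H$ is a continuous injection of the compact disk $\widetilde{Q}$, hence a homeomorphism onto $\widetilde{R}$.

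Next I would transfer the conformal structure. Off the seam $Q( S_{Z} ) \cap \widetilde{R}$, $H^{-1}$ is a local isometry on $\widetilde{\iota}_{1}( Z_{1} \cap \widetilde{Q} )$ and equals $\psi^{-1} \circ \widetilde{\iota}_{2}^{-1}$ on $\widetilde{\iota}_{2}( \psi( Z_{2} \cap \widetilde{Q} ) )$; since $\psi|_{Z_{2}}$ is admissible, $\psi^{-1} \in N^{1,2}$ by \cite[Corollary 1.2]{Ko:On:06}, and $\psi$, $\psi^{-1}$ satisfy Lusin's Condition $(N)$. Identifying a neighbourhood of $\widetilde{Q}$ conformally with a planar domain, let $\mu$ be the measurable Beltrami coefficient on $\widetilde{Q}$ equal to $0$ on $\widetilde{Q} \cap \overline{Z}_{1}$ and equal to the Beltrami coefficient of $\psi|_{Z_{2}}$ on $\widetilde{Q} \cap Z_{2}$; its distortion $K_{\mu} = (1+|\mu|)/(1-|\mu|)$ is $1$ on the lower half and equals $K_{\psi}$ on the upper half, so $e^{\mathcal{A}( K_{\mu} )} \in L^{1}( \widetilde{Q} )$ for an admissible $\mathcal{A}$. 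Solving the Beltrami equation for $\mu$ — first uniformizing $\widetilde{Q}$ conformally onto $\overline{\mathbb{D}}$, then invoking the existence theory for mappings of exponentially integrable distortion together with the Stoilow factorization \cite[Chapter 20]{Ast:Iwa:Mar:09}, and finally composing with the conformal map of $\overline{\mathbb{D}}$, normalized at four marked boundary points (two of them the points of $\mathbb{S}^{1} \cap \partial \widetilde{Q}$), onto a rectangle $[0,1] \times [0, M]$ — yields a homeomorphism $\Phi \colon \widetilde{Q} \to [0,1] \times [0, M]$ of exponentially integrable distortion with Beltrami coefficient $\mu$ a.e., and with $\Phi^{-1} \in N^{1,2}$.

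Then I would set $f \coloneqq \Phi \circ H^{-1} \colon \widetilde{R} \to [0,1] \times [0, M]$, a homeomorphism. Off the seam, $f$ is conformal: on $\widetilde{\iota}_{1}( Z_{1} \cap \widetilde{Q} )$ it is $\Phi \circ \widetilde{\iota}_{1}^{-1}$ with $\Phi$ conformal there and $\widetilde{\iota}_{1}^{-1}$ a local isometry, while on $\widetilde{\iota}_{2}( \psi( Z_{2} \cap \widetilde{Q} ) )$ it is $( \Phi \circ \psi^{-1} ) \circ \widetilde{\iota}_{2}^{-1}$, where $\Phi \circ \psi^{-1}$ is conformal because $\Phi$ and $\psi$ share the same Beltrami coefficient on $Z_{2} \cap \widetilde{Q}$ (these cancel by the chain rule) and $\widetilde{\iota}_{2}^{-1}$ is a local isometry. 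Hence $\rho_{f}^{2} = J_{f}$ and $\rho_{f^{-1}}^{2} = J_{f^{-1}}$ hold $\mathcal{H}^{2}$-a.e., and $\rho_{f} \in L^{2}( \widetilde{R} )$, $\rho_{f^{-1}} \in L^{2}( [0,1] \times [0,M] )$ since the rectangle has finite area. To promote these pointwise identities to $\Mod \Gamma = \Mod f\Gamma$ for all path families — equivalently, by \Cref{prop:williams:L-Wversion}, to $K_{O}( f ) \le 1$ and $K_{O}( f^{-1} ) \le 1$ — I would use that the seam and its $\Phi$-image are $\mathcal{H}^{2}$-null, so by \Cref{lemm:negligible} almost every path has zero length there, that $H$ restricted to $\mathbb{S}^{1} \cap \widetilde{Q}$ is measure-$(N)$ in both directions by the first step, and then apply \Cref{lemm:techincal:lemma} with the compact set $E$ a trace-restricted piece of the seam; the $\mathcal{H}^{1}$-absolute continuity of the boundary correspondence of the conformal piece $\Phi|_{\widetilde{Q} \cap \overline{Z}_{1}}$ needed here is supplied by the harmonic-measure and tangent-point analysis of \Cref{lemm:harmonicmeasure:abs} and \Cref{lemm:mutualABS:tan}. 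This gives $f \in N^{1,2}( \widetilde{R}, \mathbb{R}^{2} )$ and $f^{-1} \in N^{1,2}$ with $\rho_{f}^{2} = J_{f}$, $\rho_{f^{-1}}^{2} = J_{f^{-1}}$, so $f$ is $1$-quasiconformal.

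The main obstacle is precisely this last step: $f$ is visibly conformal off the seam, but the seam, though $\mathcal{H}^{2}$-null, has positive $\mathcal{H}^{1}$-measure, so it cannot be discarded outright and one must control paths that run into it from the $\widetilde{R}$ side and into its image from the rectangle side. The three features that make this go through, and that pin down the hypotheses, are: the absolute continuity of $g^{-1}$, used via $v_{g} > 0$ a.e.\ both to prevent the seam from collapsing and to make $H|_{\mathbb{S}^{1} \cap \widetilde{Q}}$ measure-$(N)$ in both directions; the exponential integrability of the distortion of $\psi|_{Z_{2}}$, which via \cite{Ko:On:06} delivers $\psi^{-1}, \Phi^{-1} \in N^{1,2}$ and makes the $\mathrm{ACL}$ and condenser estimates applicable; and the tangent-point description of the seam from \Cref{sec:harm:weld}. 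The remaining ingredients — that $\mu$ has exponentially integrable distortion, the cancellation of Beltrami coefficients, the conformal reparametrization onto a rectangle, and the finiteness of its area — are routine.
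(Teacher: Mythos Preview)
Your first step (that $H$ is a homeomorphism via $v_g > 0$ a.e.) is correct and matches \Cref{lemm:measurable:sobolev}. The gap is in your final step: to get $f = \Phi \circ H^{-1} \in N^{1,2}(\widetilde{R}, \mathbb{R}^2)$ via \Cref{lemm:techincal:lemma}, you need $\Phi|_{\mathbb{S}^1 \cap \widetilde{Q}}$ to satisfy Lusin's Condition $(N)$ for $\mathcal{H}^1$. Writing $\mathcal{C}' = \Phi(\mathbb{S}^1 \cap \widetilde{Q})$ and $\omega_1$ for harmonic measure from the conformal side $\Phi(Z_1 \cap \widetilde{Q})$, this amounts to $\mathcal{H}^1_{\mathcal{C}'} \ll \omega_1$, i.e.\ $\mathcal{H}^1_{\mathcal{C}'}(\mathcal{C}' \setminus \mathrm{Tn}(\mathcal{C}')) = 0$. \Cref{lemm:harmonicmeasure:abs} and \Cref{lemm:mutualABS:tan} only give mutual absolute continuity on the tangent set; once one recognizes $\mathcal{C}'$ as a local welding arc for $g$ and feeds in $v_g > 0$ a.e., the most one can extract is $\omega_1(\mathcal{C}' \setminus \mathrm{Tn}(\mathcal{C}')) = 0$. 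Nothing in the hypotheses rules out $\mathcal{C}'$ carrying positive, even non-$\sigma$-finite, $\mathcal{H}^1$-measure off its tangent points, and then $\Phi|_{\mathbb{S}^1}$ fails $(N)$ and your upper-gradient inequality for $f$ breaks across the seam. This is exactly the obstruction flagged in \Cref{rem:regularity}: Sobolev regularity of a map \emph{into} the Euclidean side, across an exceptional set inside a non-smooth metric surface, does not come for free.

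The paper circumvents this by building $f$ on the $\widetilde{R}$ side. First $H^{-1} \in N^{1,2}$ and the capacitary estimate of \cite{Ko:On:06} give \eqref{point:zero:modulus} on $\widetilde{R}$ (\Cref{lemm:points:negligible}); this makes Rajala's energy-minimizer map $f = (u,v)$ available \cite[Proposition~9.1]{Raj:17}, with $u \in N^{1,2}(\widetilde{R})$ \emph{by construction}. Stoilow factorization enters only afterwards: the pullbacks $f \circ H$ and $f' \circ H$ (for the transverse energy minimizer $u'$) both carry the Beltrami coefficient of $H$ (\Cref{lemm:pullbackregularity}), so $f' \circ f^{-1}$ is conformal and one reads off $v = M u'$ (\Cref{lemm:components}), whence $v \in N^{1,2}(\widetilde{R})$ too. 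With $f \in N^{1,2}$ secured and $\rho_f^2 = J_f$ off the negligible seam, $K_O(f) \le 1$; \Cref{prop:outer:to:maximal} upgrades this to quasiconformality, $f^{-1} \in N^{1,2}$ then follows automatically, and $1$-quasiconformality drops out. No property of the image curve $f(Q(S_Z))$ beyond having zero area is ever needed.
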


\begin{proof}[Proof of \Cref{thm:mod} assuming \Cref{thm:local:recip}]
    We cover the seam in $\widetilde{Z}$ by the interiors of $\widetilde{R}$ as in \Cref{defi:standingassumptions}. This implies that $\widetilde{Z}$ can be covered by quasiconformal images of planar domains, and the quasiconformal equivalence of $\widetilde{Z}$ and $\mathbb{S}^{2}$ follows from \Cref{thm:iko}.
\end{proof}

    The following lemma is a key step in proving \Cref{thm:local:recip}.
\begin{lemm}\label{lemm:measurable:sobolev}
       The $H$ from \Cref{defi:standingassumptions} is a homeomorphism, $H \in N^{1,1}( \widetilde{Q}, \widetilde{R} )$ and $H^{-1} \in N^{1,2}( \widetilde{R}, \widetilde{Q} )$. Furthermore, $H$ satisfies Lusin's Conditions ($N$) and ($N^{-1}$).
\end{lemm}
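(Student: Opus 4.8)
The plan is to analyse $H$ on the three pieces of $\widetilde{Q}$ separately — the open spherical caps $\widetilde{Q}\cap Z_{1}$ and $\widetilde{Q}\cap Z_{2}$, where $\widetilde{\iota}_{1}$ and $\widetilde{\iota}_{2}$ are local isometries, and the equatorial arc $\widetilde{Q}\cap\mathbb{S}^{1}$, which is two-dimensionally negligible — and to patch the pieces together with \Cref{lemm:techincal:lemma}. First I would record the one place where the absolute continuity of $g^{-1}$ enters: writing $g^{*}\mathcal{H}^{1}_{\mathbb{S}^{1}}=v_{g}\mathcal{H}^{1}_{\mathbb{S}^{1}}+\mu^{\perp}$ as in \Cref{lemm:hausdorff}, absolute continuity of $g^{-1}$ forces $v_{g}>0$ $\mathcal{H}^{1}_{\mathbb{S}^{1}}$-almost everywhere (on $M=\{v_{g}=0\}$ away from the support of $\mu^{\perp}$ one has $\mathcal{H}^{1}_{\mathbb{S}^{1}}(g(M))=0$, hence $\mathcal{H}^{1}_{\mathbb{S}^{1}}(M)=0$). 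Consequently \Cref{lemm:hausdorff} yields $\mathcal{H}^{1}_{\widetilde{Z}}(\widetilde{\iota}_{1}(A))=\int_{A}\min\{1,v_{g}\}\,d\mathcal{H}^{1}_{\mathbb{S}^{1}}$ with a density that is positive a.e., so $\widetilde{\iota}_{1}$ carries $\mathcal{H}^{1}_{\mathbb{S}^{1}}$-null subsets of $\mathbb{S}^{1}$ to $\mathcal{H}^{1}_{\widetilde{Z}}$-null sets and conversely. Together with \Cref{lemm:inclusion} (the fibres of $\widetilde{\iota}_{1}$ are compact connected subsets of $\mathbb{S}^{1}$ once they are not singletons, and a connected set of zero $\mathcal{H}^{1}$-measure is a point) this makes $\widetilde{\iota}_{1}$ injective on $\overline{Z}_{1}$; since $\widetilde{\iota}_{2}$ is injective on the open hemisphere $Z_{2}$, since $\widetilde{\iota}_{1}(Z_{1})$ and $\widetilde{\iota}_{2}(Z_{2})$ are open (their $Q$-preimages being the open hemispheres), and since $\widetilde{\iota}_{1}(Z_{1})$, $\widetilde{\iota}_{2}(Z_{2})$, $Q(S_{Z})$ are pairwise disjoint, $H$ is a continuous injection of the compactum $\widetilde{Q}$, hence a homeomorphism onto $\widetilde{R}$.

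For $H\in N^{1,1}(\widetilde{Q},\widetilde{R})$ the candidate $1$-weak upper gradient is $\rho=\chi_{\widetilde{Q}\cap Z_{1}}+\rho_{\psi}\chi_{\widetilde{Q}\cap Z_{2}}$, where $\rho_{\psi}=\lvert D\psi\rvert_{g}$ is the minimal weak upper gradient of the mapping of finite distortion $\psi|_{Z_{2}}$; this lies in $L^{1}(\widetilde{Q})$ because $\psi|_{Z_{2}}\in N^{1,1}$ (in fact $\rho_{\psi}\in L^{q}$ for all $q<2$, using $\rho_{\psi}^{2}\le K_{\psi}J_{\psi}$, $J_{\psi}\in L^{1}$, and that $e^{\mathcal{A}(K_{\psi})}\in L^{1}$ forces $K_{\psi}\in L^{r}$ for every $r$). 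To verify $\rho$ is a $1$-weak upper gradient I would discard two $1$-modulus-zero families: the paths covering a set of positive $\mathcal{H}^{1}_{\mathbb{S}^{1}}$-measure inside $\mathbb{S}^{1}$ (\Cref{lemm:negligible} applied to $\infty\cdot\chi_{\mathbb{S}^{1}}$, using the area formula \eqref{eq:areaformula} and $\mathcal{H}^{2}_{\mathbb{S}^{2}}(\mathbb{S}^{1})=0$), and the paths with a bad subpath in $Z_{2}$ (\cite[Proposition 6.3.2]{HKST:15} together with the subpath monotonicity of modulus). For a remaining path $\gamma$ the set $E=\lvert\gamma\rvert\cap\mathbb{S}^{1}$ is compact with $\mathcal{H}^{1}_{\widetilde{Z}}(H(E))=0$ by the previous paragraph, and on each closed interval $I\subset[0,1]\setminus\gamma^{-1}(E)$ the curve $\gamma|_{I}$ lies in a single cap, where $H$ is $\widetilde{\iota}_{1}$ or $\widetilde{\iota}_{2}\circ\psi$, so $\ell(H\circ\gamma|_{I})\le\int_{\gamma|_{I}}\rho\,ds$; \Cref{lemm:techincal:lemma} then upgrades this to $d_{\widetilde{Z}}(H\gamma(0),H\gamma(1))\le\ell(H\circ\gamma)\le\int_{\gamma}\rho\,ds$. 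Membership of $H$ in $L^{1}(\widetilde{Q},\widetilde{R})$ is immediate since $\widetilde{R}$ is bounded.

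The claim $H^{-1}\in N^{1,2}(\widetilde{R},\widetilde{Q})$ is proved dually, and here the admissibility of $\psi|_{Z_{2}}$ is indispensable: by \cite[Corollary 1.2]{Ko:On:06}, $\psi^{-1}\in N^{1,2}(Z_{2},Z_{2})$, so $\rho_{\psi^{-1}}\in L^{2}(Z_{2})$. Transporting across the local isometry $\widetilde{\iota}_{2}$, the function $\widetilde{\rho}=\chi_{\widetilde{\iota}_{1}(Z_{1})}+(\rho_{\psi^{-1}}\circ\widetilde{\iota}_{2}^{-1})\chi_{\widetilde{\iota}_{2}(Z_{2})}$ lies in $L^{2}(\widetilde{R})$ (the characteristic-function summand is bounded on the bounded set $\widetilde{R}$). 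I would then run the same scheme at $p=2$: discard the paths in $\widetilde{R}$ covering a set of positive $\mathcal{H}^{1}_{\widetilde{Z}}$-measure in $Q(S_{Z})$ (\Cref{lemm:negligible} with $\infty\cdot\chi_{Q(S_{Z})}$, using $\mathcal{H}^{2}_{\widetilde{Z}}(Q(S_{Z}))=0$) and those with a bad subpath in $\widetilde{\iota}_{2}(Z_{2})$; for a remaining $\gamma$ the compact set $E=\lvert\gamma\rvert\cap Q(S_{Z})$ satisfies $\mathcal{H}^{1}_{\mathbb{S}^{2}}(H^{-1}(E))=\mathcal{H}^{1}_{\mathbb{S}^{1}}(\widetilde{\iota}_{1}^{-1}(E))=0$ by the first paragraph, and on the complementary intervals $H^{-1}$ is $\widetilde{\iota}_{1}^{-1}$ or $\psi^{-1}\circ\widetilde{\iota}_{2}^{-1}$, so \Cref{lemm:techincal:lemma} makes $\widetilde{\rho}$ a $2$-weak upper gradient of $H^{-1}$. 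The Lusin conditions then follow by splitting an exceptional set over the three pieces: local isometries preserve $\mathcal{H}^{2}$; $\psi|_{Z_{2}}$ satisfies ($N$) by \cite[Theorem 1.1]{Ka:Ko:Ma:On:Zh:03} and $\psi^{-1}$ satisfies ($N$) because $\psi^{-1}\in N^{1,2}$ by \cite[Theorem 3.3.7]{Ast:Iwa:Mar:09}; and the seam has two-dimensionally negligible image under both $H$ and $H^{-1}$.

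The main obstacle is precisely that the seam is \emph{not} collapsed here — in contrast to the situation of \Cref{thm:failure:terrible}, its image carries $\mathcal{H}^{1}$-mass $\int\min\{1,v_{g}\}>0$ — so it cannot simply be ignored. The purpose of \Cref{lemm:techincal:lemma} combined with the two-sided comparison between $\mathcal{H}^{1}_{\widetilde{Z}}(\widetilde{\iota}_{1}(A))$ and $\int_{A}\min\{1,v_{g}\}$ (which is exactly what $v_{g}>0$ a.e. provides) is to guarantee that, in both directions, almost every path sends the part of itself lying over the seam to an $\mathcal{H}^{1}$-null set, so that the local-isometry and finite-distortion estimates on the complementary intervals can be glued across it; establishing these ``almost every path avoids a substantial but null set'' statements, and checking that the relevant upper gradients genuinely lie in $L^{1}$ and $L^{2}$ respectively, is the heart of the argument.
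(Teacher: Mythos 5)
Your proposal is correct and follows essentially the same route as the paper: the same decomposition into the two hemispherical pieces plus the $\mathcal{H}^{2}$-negligible seam, the same use of \Cref{lemm:techincal:lemma} to glue the local-isometry and finite-distortion estimates across the seam, the same entry point for the absolute continuity of $g^{-1}$ (namely $v_{g}>0$ a.e.\ in \Cref{lemm:hausdorff}, so that $\mathcal{H}^{1}$-null sets transfer in both directions across $\widetilde{\iota}_{1}$), and the same appeal to \cite[Corollary 1.2]{Ko:On:06} for $\psi^{-1}\in N^{1,2}$. The only cosmetic difference is that you derive injectivity of $\widetilde{\iota}_{1}$ explicitly from the fibre structure in \Cref{lemm:inclusion}, where the paper cites \Cref{lemm:hausdorff} and \Cref{lemm:jumping:overtheseam} directly.
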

\begin{proof}
    The absolute continuity of $g^{-1}$ implies for the Lebesgue decomposition $g^{*}\mathcal{H}^{1} = v_{g} \mathcal{H}^{1} + \mu^{\perp}$ that $\left\{ v_{g} = 0 \right\}$ has negligible $\mathcal{H}^{1}_{ \mathbb{S}^{1} }$-measure in an open neighbourhood of $\mathbb{S}^{1} \cap \widetilde{Q}$. Then \Cref{lemm:hausdorff} and \Cref{lemm:jumping:overtheseam} imply that $H$ is a homeomorphism.
    
    We recall from \Cref{lemm:inclusion} the fact that the inclusion maps $\widetilde{\iota}_{1}|_{ Z_{1} } \colon Z_{1} \rightarrow \widetilde{Z}$ and $\widetilde{\iota}_{2}|_{ Z_{2} } \colon Z_{2} \rightarrow \widetilde{Z}$ are $1$-Lipschitz local isometries. This implies that $H$ and its inverse are absolutely continuous in measure; the seam has negligible Hausdorff $2$-measure.
    
    In the following proof, we write $\widetilde{\rho}_{i}$ for functions defined on $\widetilde{Q} \cap Z_{i} \subset \mathbb{S}^{2}$ and $\rho_{i} = ( \widetilde{\rho}_{i} \circ \widetilde{\iota}_{i}^{-1} )$ on $\widetilde{R} \cap \widetilde{\iota}_{i}( Z_{i} ) \subset \widetilde{Z}$ for $i = 1,2$.
    
    Since $\psi^{-1} \in N^{1,2}( \widetilde{Q} \cap Z_{2}, \mathbb{S}^{2} )$, for $i = 1, 2$, there exists an upper gradient $\widetilde{\rho}_{i} \in L^{2}( \widetilde{Q} \cap Z_{i} )$ of $H^{-1} \circ \widetilde{\iota}_{i}|_{ Z_{i} \cap \widetilde{Q} }$ for $i = 1,2$. We fix such functions and denote $\rho \coloneqq \chi_{ \widetilde{R} \cap \widetilde{\iota}_{1}( Z_{1} ) }\rho_{1} + \chi_{ \widetilde{R} \cap \iota_{2}( Z_{2} ) }\rho_{2} \in L^{2}( \widetilde{R} )$.

    Let $\Gamma_{0}$ denote the collection of non-constant paths on $\widetilde{R} \subset \widetilde{Z}$ which have positive length in the seam $Q( S_{Z} )$ or along which $\rho$ fails to be integrable. Since $\rho + \infty \cdot \chi_{ Q( S_{Z} ) }$ is $L^{2}$-integrable, \Cref{lemm:negligible} yields $\Mod \Gamma_{0} = 0$.

    Consider next an absolutely continuous path $\gamma \colon \left[0, 1\right] \rightarrow \widetilde{R}$ in the complement of $\Gamma_{0}$. Then $\theta = H^{-1} \circ \gamma$ is such that $\mathcal{H}^{1}_{ \mathbb{S}^{2} }( | \theta | \cap \mathbb{S}^{1} ) = 0$. Indeed, since $\gamma$ has zero length in the seam, the area formula \eqref{eq:areaformula} implies $\mathcal{H}^{1}_{ \widetilde{Z} }( | \gamma | \cap Q( S_Z ) ) = 0$. This implies $\mathcal{H}^{1}_{ \mathbb{S}^{1} }( | \theta | \cap \mathbb{S}^{1} ) = 0$ due to \Cref{lemm:hausdorff} and the absolute continuity of $g|_{ \mathbb{S}^{1} \cap \widetilde{Q} }^{-1}$. Since $\mathcal{H}^{1}_{ \mathbb{S}^{1} }( | \theta | \cap \mathbb{S}^{1} ) = 0$, the assumptions of \Cref{lemm:techincal:lemma} are satisfied. Hence
    \begin{equation*}
        \ell( \theta )
        \leq
        \int_{ \gamma }
            \rho
        \,ds
        <
        \infty.
    \end{equation*}
    This implies that $H^{-1}$ has an $L^{2}$-integrable weak gradient, so $H^{-1} \in N^{1,2}( \widetilde{R}, \widetilde{Q} )$.

    Lastly, we claim that $H \in N^{1,1}( \widetilde{Q}, \widetilde{R} )$. To this end, we observe that $H_{ \widetilde{Q} \cap Z_{i} }$ has an upper gradient $\widetilde{ \rho }_{i} \in L^{1}( \widetilde{Q} \cap Z_{i} )$, and denote $\widetilde{ \rho } = \sum_{ i = 1 }^{ 2 } \chi_{ \widetilde{Q} \cap Z_{i} } \widetilde{ \rho }_{i} \in L^{1}( \widetilde{Q} )$. Now $\widetilde{ \rho }$ is integrable along $1$-almost every absolutely continuous path $\gamma \colon \left[0, 1\right] \rightarrow \widetilde{Q}$ and $1$-almost every such path has zero length in $\mathbb{S}^{1}$. Having fixed a path $\gamma$ with these properties, \Cref{lemm:hausdorff} implies that $\theta = H \circ \gamma$ has zero length in the seam. The inequality $\ell( \theta ) \leq \int_{ \gamma } \rho \,ds$ follows from \Cref{lemm:techincal:lemma}. This yields that $H \in N^{1,1}( \widetilde{Q}, \widetilde{R} )$.
\end{proof}
\begin{rem}\label{rem:regularity}
    The Sobolev regularity $H^{-1} \in N^{1,2}( \widetilde{Q}, \widetilde{R} )$ is crucial in the following. Typically, the Sobolev regularity of the inverse of a Sobolev homeomorphism is a subtle issue in the metric surface setting.
    
    To highlight the issue, we recall \cite[Example 6.1]{Iko:Rom:20}. There an example of a metric surface $X$ was constructed for which there exists a $1$-Lipschitz homeomorphism $H \colon \mathbb{R}^{2} \rightarrow X$ with $\Mod \Gamma \leq \Mod H \Gamma$ for all path families, but $H^{-1} \not\in N^{1,2}( X, \mathbb{R}^{2} )$. In fact, $H$ is a local isometry outside a Cantor set $E \subset \mathbb{R} \times \left\{0\right\}$ of positive $\mathcal{L}^{1}$-measure and $H(E)$ has negligible $\mathcal{H}^{1}_{X}$-measure. The key point is that $X$ is not reciprocal; recall \Cref{defi:reciprocal}.
\end{rem}

    We define the following auxiliary function for later use:
    \begin{equation*}
        P( t )
        \coloneqq
        \left\{
        \begin{split}
            &t^{2},
            &&0 \leq t < 1,
            \\
            &\frac{ t^{2} }{ \mathcal{A}^{-1}( \log t^{2} ) },
            &&t \geq 1.
        \end{split}
        \right.
    \end{equation*}
    We note that for every $a \in \left[ 0, \infty \right)$,
    \begin{equation}
        \label{eq:important:integrability}
        P( a )
        \leq
        e^{ \mathcal{A}( K_{H} ) }
        +
        \frac{ a^{2} }{ K_{H} }
        \quad
        \text{for $\mathcal{H}^{2}_{ \mathbb{S}^{2}}$-a.e. in $\widetilde{Q}$}.
    \end{equation}
    This follows by first observing that $a^{2} < e^{ \mathcal{A}( K_{H} ) }$ implies $P( a ) \leq e^{ \mathcal{A}( K_{H} ) }$ and otherwise $P( a ) \leq \frac{ a^{2} }{ K_{H} }$.
    
    Also, for any measurable function $\widetilde{\rho} \colon \widetilde{Q} \rightarrow \left[ 0, \infty \right]$,
    \begin{equation}
        \label{eq:important:integrability:L1}
        \int_{ \widetilde{Q}}
            P( \widetilde{\rho} )
        \,d\mathcal{H}^{2}_{ \mathbb{S}^{2} }
        <
        \infty
        \quad\text{implies}\quad
        \int_{ \widetilde{Q} }
            \widetilde{\rho}
        \,d\mathcal{H}^{2}_{ \mathbb{S}^{2} }
        <
        \infty.
    \end{equation}
    The implication \eqref{eq:important:integrability:L1} follows since $\mathcal{A}'(t) t$ is increasing for large $t$ and converges to infinity as $t \rightarrow \infty$. Consequently, there exists $t_{1} \geq 1$ for which the derivative of $h(t) = e^{ \mathcal{A}(t) }/t^2$ is bounded from below by $h(t)/t$ for every $t \geq t_{1}$. This implies the existence of $t_{0} \geq 1$ such that $h(t) \geq 1$ for every $t \geq t_{0}$. This is equivalent to saying that $P( t ) \geq t$ for every $t \geq t_{0}$. This yields \eqref{eq:important:integrability:L1}.
    
    We set $K_{\psi}( x ) = \abs{ D\psi }_{g}^{2}/J( D( \psi ) )( x )$ and $K_{ \psi^{-1} }( x ) = \abs{ D( \psi^{-1} ) }_{g}^{2}/ J( D( \psi^{-1} ) )$. Observe that $K_{ \psi } = K_{ \psi^{-1} } \circ \psi$ $\mathcal{H}^{2}_{ \mathbb{S}^{2} }$-almost everywhere.
    
    We set $K_{H}( x ) = 1$ if $x \in \widetilde{Q} \cap \overline{Z}_{1}$ and $K_{H}( x ) = K_{ \psi }(x) $ in $x \in \widetilde{Q} \cap Z_{2}$. Then
    \begin{equation}
        \label{eq:exponentialintegrability:H}
        \int_{ \widetilde{Q} }
            e^{ \mathcal{A}( K_{H} ) }
        \,d\mathcal{H}^{2}_{ \mathbb{S}^{2} }
        <
        \infty.
    \end{equation}
    Also, $K_{ H^{-1} } \coloneqq \rho_{ H^{-1} }^{2}/J_{ H^{-1} }$ satisfies $K_{H} = K_{ H^{-1} } \circ H$ $\mathcal{H}^{2}_{ \widetilde{Z} }$-almost everywhere, since, outside a $\mathcal{H}^{2}_{ \widetilde{Z} }$-negligible set, either the number is one, or $\rho_{ H^{-1} }^2 \circ \widetilde{\iota}_{2} = \abs{ D( \psi^{-1} ) }_{g}^2$, $J_{ H^{-1} } \circ \widetilde{\iota}_{2} = J( D( \psi^{-1} ) )$, and $K_{ \psi } = K_{ \psi^{-1} } \circ \psi$.
    
    For every $z \in \widetilde{Q}$ and every pair $0 < r < r_{0}$, we denote $\Gamma( z, r, r_{0} ) \coloneqq \Gamma( \overline{B}_{\mathbb{S}^{2}}( z, r ), \widetilde{Q} \setminus B_{\mathbb{S}^{2}}( z, r_0 ); \widetilde{Q} )$.
    \begin{lemm}\label{lemm:annuluscondenser}
    For every $z \in \widetilde{Q}$ and $0 < r < r_{0}$ with $\widetilde{Q} \setminus B_{ \mathbb{S}^{2} }( z, r_{0} ) \neq \emptyset$, 
    \begin{equation}
        \label{eq:negligiblemodulus:point}
        \Mod H \Gamma( z, r, r_{0} )
        \leq
        \inf
        \left\{
            \int_{ \widetilde{Q} }
                \widetilde{\rho}^{2} K_{H}
            \,d\mathcal{H}^{2}_{ \mathbb{S}^{2} }
            \colon
            \text{ $\widetilde{\rho}$ is admissible for $\Gamma( z, r, r_{0} )$}
        \right\}.
    \end{equation}
    \end{lemm}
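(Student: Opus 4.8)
The plan is to run the standard pullback-of-weights argument, using the Sobolev regularity $H^{-1} \in N^{1,2}(\widetilde{R}, \widetilde{Q})$ and the Lusin conditions from \Cref{lemm:measurable:sobolev}. Fix a Borel function $\widetilde{\rho} \colon \widetilde{Q} \to [0, \infty]$ admissible for $\Gamma(z, r, r_{0})$. It suffices to produce a Borel function $\rho'$ on $\widetilde{R}$ that is $2$-weakly admissible for $H\Gamma(z, r, r_{0})$ and satisfies $\int_{\widetilde{R}} (\rho')^{2} \,d\mathcal{H}^{2}_{\widetilde{Z}} \leq \int_{\widetilde{Q}} \widetilde{\rho}^{2} K_{H} \,d\mathcal{H}^{2}_{\mathbb{S}^{2}}$: then $\Mod H\Gamma(z, r, r_{0})$ is bounded by this integral, and taking the infimum over admissible $\widetilde{\rho}$ gives \eqref{eq:negligiblemodulus:point}. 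The natural candidate is $\rho' \coloneqq (\widetilde{\rho} \circ H^{-1}) \rho_{H^{-1}}$ on $\widetilde{R}$, extended by zero to $\widetilde{Z}$, where $\rho_{H^{-1}} \in L^{2}(\widetilde{R})$ is the minimal weak upper gradient of $H^{-1} \colon \widetilde{R} \to \widetilde{Q}$. Since $H^{-1}$ is continuous and $\widetilde{\rho}$, $\rho_{H^{-1}}$ are Borel, $\rho'$ is Borel.

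First I would compute the energy of $\rho'$. Applying the change-of-variables formula to the homeomorphism $H^{-1}$, which is legitimate since $H^{-1} \in N^{1,1}(\widetilde{R}, \widetilde{Q})$ satisfies Lusin's condition $(N)$ (equivalently, $H$ satisfies $(N^{-1})$), and then using the a.e.\ identities $\rho_{H^{-1}}^{2} = K_{H^{-1}} J_{H^{-1}}$ and $K_{H} = K_{H^{-1}} \circ H$ recorded above, one obtains
\[
    \int_{\widetilde{Q}} \widetilde{\rho}^{2} K_{H} \,d\mathcal{H}^{2}_{\mathbb{S}^{2}}
    =
    \int_{\widetilde{R}} (\widetilde{\rho} \circ H^{-1})^{2} (K_{H} \circ H^{-1}) J_{H^{-1}} \,d\mathcal{H}^{2}_{\widetilde{Z}}
    =
    \int_{\widetilde{R}} (\widetilde{\rho} \circ H^{-1})^{2} \rho_{H^{-1}}^{2} \,d\mathcal{H}^{2}_{\widetilde{Z}}
    =
    \int_{\widetilde{R}} (\rho')^{2} \,d\mathcal{H}^{2}_{\widetilde{Z}},
\]
where the a.e.\ equality $K_{H} \circ H^{-1} = K_{H^{-1}}$ on $\widetilde{R}$ follows by transporting $K_{H} = K_{H^{-1}} \circ H$ through $H$ and using that $H$ satisfies both $(N)$ and $(N^{-1})$; in fact only the inequality ``$\geq$'' between the two ends is needed, and that already follows from the area inequality for $H^{-1}$.

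Next I would verify that $\rho'$ is $2$-weakly admissible for $H\Gamma(z, r, r_{0})$. Since $\rho_{H^{-1}} \in L^{2}(\widetilde{R})$ and $H^{-1} \in N^{1,2}(\widetilde{R}, \widetilde{Q})$, for $2$-almost every path $\beta \colon [0, 1] \to \widetilde{R}$ the function $\rho_{H^{-1}}$ is integrable along $\beta$ (\Cref{lemm:negligible}) and the path $\gamma \coloneqq H^{-1} \circ \beta$ is absolutely continuous with $v_{\gamma} \leq (\rho_{H^{-1}} \circ \beta) v_{\beta}$ $\mathcal{L}^{1}$-a.e., by \cite[Proposition 6.3.2]{HKST:15}. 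In particular this holds for $2$-almost every $\beta \in H\Gamma(z, r, r_{0})$, and for such $\beta$ the path $\gamma$ is a rectifiable member of $\Gamma(z, r, r_{0})$, its image lying in $\widetilde{Q}$ with endpoints in $\overline{B}_{\mathbb{S}^{2}}(z, r)$ and in $\widetilde{Q} \setminus B_{\mathbb{S}^{2}}(z, r_{0})$. Admissibility of $\widetilde{\rho}$ and the speed bound then give
\[
    1 \leq \int_{\gamma} \widetilde{\rho} \,ds = \int_{0}^{1} (\widetilde{\rho} \circ \gamma) v_{\gamma} \,d\mathcal{L}^{1} \leq \int_{0}^{1} (\widetilde{\rho} \circ H^{-1} \circ \beta)(\rho_{H^{-1}} \circ \beta) v_{\beta} \,d\mathcal{L}^{1} = \int_{\beta} \rho' \,ds,
\]
so $\rho'$ is admissible for $H\Gamma(z, r, r_{0})$ off a family of $2$-modulus zero. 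Hence $\Mod H\Gamma(z, r, r_{0}) \leq \int_{\widetilde{R}} (\rho')^{2} \,d\mathcal{H}^{2}_{\widetilde{Z}} = \int_{\widetilde{Q}} \widetilde{\rho}^{2} K_{H} \,d\mathcal{H}^{2}_{\mathbb{S}^{2}}$, and taking the infimum over admissible $\widetilde{\rho}$ proves \eqref{eq:negligiblemodulus:point}.

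The one genuinely delicate point is the claim that, for $2$-almost every $\beta \in H\Gamma(z, r, r_{0})$, the pullback $\gamma = H^{-1} \circ \beta$ is rectifiable and inherits the speed estimate $v_{\gamma} \leq (\rho_{H^{-1}} \circ \beta) v_{\beta}$; this is precisely what the regularity $H^{-1} \in N^{1,2}(\widetilde{R}, \widetilde{Q})$ of \Cref{lemm:measurable:sobolev} --- the essential consequence of the absolute continuity of $g^{-1}$, cf.\ \Cref{rem:regularity} --- is tailored to deliver. The remaining steps are a routine change of variables and the standard fact that $2$-weak admissibility suffices to bound the modulus from above.
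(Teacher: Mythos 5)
Your proof is correct and takes essentially the same route as the paper's: the same test function $(\widetilde{\rho}\circ H^{-1})\,\rho_{H^{-1}}$, weak admissibility established through $H^{-1}\in N^{1,2}(\widetilde{R},\widetilde{Q})$ and \cite[Proposition 6.3.2]{HKST:15}, and a change of variables to identify the energy (you run the substitution through $H^{-1}$ rather than through $H$, which is equivalent given the Lusin conditions of \Cref{lemm:measurable:sobolev}). No gaps.
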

    \begin{proof}
    Fix an admissible function $\widetilde{\rho}$ for $\Gamma( z, r, r_{0} )$. Then for almost every $\gamma \in H \Gamma( z, r, r_{0} )$, $H^{-1} \circ \gamma$ is absolutely continuous, and
    \begin{equation*}
        1
        \leq
        \int_{ H^{-1} \circ \gamma }
            \widetilde{\rho}
        \,ds
        \leq
        \int_{ \gamma }
            ( \widetilde{\rho} \circ H^{-1} ) \rho_{H^{-1}}
        \,ds.
    \end{equation*}
    In particular, $\rho = ( \widetilde{\rho} \circ H^{-1} ) \rho_{H^{-1}}$ is weakly admissible for $H\Gamma( z, r, r_{0} )$. Consequently,
    \begin{equation*}
        \Mod H \Gamma( z, r,r_{0} )
        \leq
        \int_{ \widetilde{R} }
            \rho^{2}
        \,d\mathcal{H}^{2}_{\widetilde{Z}}.
    \end{equation*}
    The change of variables formula for $H$ and the fact that the seam $Q( S_{Z} )$ is $\mathcal{H}^{2}_{\widetilde{Z}}$-negligible establish the claim, after taking the infimum over such $\widetilde{\rho}$.
    \end{proof}

    Having observed \Cref{lemm:annuluscondenser} and \eqref{eq:exponentialintegrability:H}, the capacitary estimate \cite[Theorem 5.3]{Ko:On:06} implies that keeping $r_{0}$ fixed in \eqref{eq:negligiblemodulus:point}, we obtain $\Mod H\Gamma( z, r, r_{0} ) \rightarrow 0$ as $r \rightarrow 0^{+}$. A key point is that $\mathcal{A}$ in \eqref{eq:exponentialintegrability:H} is admissible. Since $H$ is a homeomorphism, this implies that \eqref{point:zero:modulus} holds for every $y \in \mathrm{int}( \widetilde{R} ) \subset \widetilde{Z}$. By repeating the argument with a slightly larger $\widetilde{Q}$, we conclude the following.
    \begin{lemm}\label{lemm:points:negligible}
    The identity \eqref{point:zero:modulus} holds for every $y \in \widetilde{R} \subset \widetilde{Z}$.
    \end{lemm}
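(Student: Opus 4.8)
The plan is to reduce the lemma to the interior case established in the discussion preceding it — that \eqref{point:zero:modulus} holds at every point of $\mathrm{int}(\widetilde{R})$, by combining \Cref{lemm:annuluscondenser}, the exponential integrability \eqref{eq:exponentialintegrability:H} of the distortion of $H$, and the capacitary estimate \cite[Theorem 5.3]{Ko:On:06} — and then to recover the boundary points of $\widetilde{R}$ by re-running the construction on a slightly larger coordinate ball.

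Concretely, I would argue as follows. Fix $y \in \widetilde{R}$ and $R > 0$ with $\widetilde{Z} \setminus B_{\widetilde{Z}}(y, R) \neq \emptyset$; we must verify \eqref{point:zero:modulus} at $y$. Since $\widetilde{R} = H(\widetilde{Q})$ with $\widetilde{Q} = \overline{B}_{\mathbb{S}^{2}}(x_{0}, R_{0})$, $\pi > R_{0} > 0$, and $H$ a homeomorphism onto $\widetilde{R}$, there is a unique $z \in \widetilde{Q}$ with $H(z) = y$. Choose $R_{0}' \in (R_{0}, \pi)$, set $\widetilde{Q}' = \overline{B}_{\mathbb{S}^{2}}(x_{0}, R_{0}')$, and let $H'$ and $\widetilde{R}'$ be as in \Cref{defi:standingassumptions} but with $\widetilde{Q}'$ in place of $\widetilde{Q}$; then $H'|_{\widetilde{Q}} = H$, so $H'(z) = y$, and $\sigma(z, x_{0}) \leq R_{0} < R_{0}'$ shows $z \in \mathrm{int}\,\widetilde{Q}'$. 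The key point is that every ingredient behind the interior case — that $H$ is a homeomorphism with $H^{-1} \in N^{1,2}(\widetilde{R}, \widetilde{Q})$ (\Cref{lemm:measurable:sobolev}), the condenser bound of \Cref{lemm:annuluscondenser}, and the exponential integrability \eqref{eq:exponentialintegrability:H} — was deduced solely from the hypotheses of \Cref{thm:mod} ($g^{-1}$ absolutely continuous, $\psi|_{Z_{2}}$ admissible) and from $\widetilde{Q}$ being a closed spherical ball of radius $< \pi$; for \eqref{eq:exponentialintegrability:H} one uses only $\mathcal{A}(1) = 0$ on $\widetilde{Q}' \cap \overline{Z}_{1}$ together with the admissibility of $\psi$ restricted to $\widetilde{Q}' \cap Z_{2} \subset Z_{2}$. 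All of this therefore holds verbatim for $\widetilde{Q}', H', \widetilde{R}'$, and the same capacitary argument gives \eqref{point:zero:modulus} at every point of $\mathrm{int}(\widetilde{R}')$. Since $z \in \mathrm{int}\,\widetilde{Q}'$ and $H'(\mathrm{int}\,\widetilde{Q}')$ is an open subset of the surface $\widetilde{Z}$ contained in $\widetilde{R}'$, it follows that $y = H'(z) \in \mathrm{int}(\widetilde{R}')$, hence \eqref{point:zero:modulus} holds at $y$. As $y$ was an arbitrary point of $\widetilde{R}$, the lemma follows.

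I do not expect a genuine obstacle here: the analytic substance lies entirely in the capacitary estimate invoked in the paragraph above, and the present step is just an enlargement of the coordinate ball. The only thing worth a sentence of care is the verification that none of the auxiliary statements — the homeomorphism property and $N^{1,2}$-regularity of $H^{-1}$, the condenser estimate of \Cref{lemm:annuluscondenser}, and the exponential integrability \eqref{eq:exponentialintegrability:H} — is sensitive to the radius of the coordinate ball, which is immediate because each is proved by working locally and $\widetilde{Q}'$ is again a spherical ball of radius $< \pi$ on which $H'$ is assembled from $\psi$ exactly as $H$ is.
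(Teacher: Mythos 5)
Your proposal is correct and is exactly the paper's argument: the paper establishes \eqref{point:zero:modulus} on $\mathrm{int}(\widetilde{R})$ via \Cref{lemm:annuluscondenser}, \eqref{eq:exponentialintegrability:H} and the capacitary estimate of Koskela--Onninen, and then disposes of the boundary points with the single sentence ``by repeating the argument with a slightly larger $\widetilde{Q}$,'' which is precisely the enlargement $\widetilde{Q}\subset\widetilde{Q}'$ you carry out. Your added check that none of the auxiliary lemmas is sensitive to the radius of the coordinate ball is the right (and only) thing to verify, so there is nothing to correct.
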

    
    Fix a decomposition $\widetilde{\xi}_{1}, \widetilde{\xi}_{2}, \widetilde{\xi}_{3}, \widetilde{\xi}_{4}$ of $\partial \widetilde{Q}$ of four arcs overlapping only at their end points, labelled in cyclic order consistently with the orientation of $\mathbb{S}^{2}$. For each $i$, we denote $\xi_{i} = H( \widetilde{\xi}_{i} )$.
    
    Given the validity of \eqref{point:zero:modulus} for each $y \in \widetilde{R}$ and the universal lower bound \eqref{eq:lower:bound}, \cite[Proposition 9.1]{Raj:17} yields the existence of a homeomorphism $f = (u,v) \colon \widetilde{R} \rightarrow \left[0, 1\right] \times \left[0, M\right]$ with the following properties:
    \begin{itemize}
        \item $u \in N^{1,2}(\widetilde{R})$ with $2E(u) \eqqcolon M$ \cite[Section 4]{Raj:17};
        \item $u^{-1}( 0 ) = \xi_{1}$, $u^{-1}( 1 ) = \xi_{3}$, $v^{-1}( 0 ) = \xi_{2}$, and $v^{-1}( M ) = \xi_{4}$ \cite[Theorem 5.1 and Proposition 7.3]{Raj:17};
        \item The minimal weak upper gradient $\rho_{u}$ is weakly admissible for the path family $\Gamma( \xi_{1}, \xi_{3}; \widetilde{R} )$ and is a minimizer, i.e., $M = \Mod \Gamma( \xi_{1}, \xi_{3}; \widetilde{R} )$ \cite[Section 4-5]{Raj:17};
        \item For every Borel set $E \subset \widetilde{R}$, $\mathcal{L}^{2}( f(E) ) = \int_{ E } \rho_{u}^{2} \,d\mathcal{H}^{2}_{\widetilde{Z}}$. In particular, the Jacobian of $f$ coincides with $\rho_{u}^{2}$ \cite[Proposition 8.2]{Raj:17}.
    \end{itemize}
    The third point implies that if $u' \in N^{1,2}(\widetilde{R})$ has the same boundary values as $u$ in $\xi_{1} \cup \xi_{3}$, the Dirichlet energies satisfy $E( u ) \leq E( u' )$. Given this, we say that $u$ is an \emph{energy minimizer} for $\Gamma( \xi_{1}, \xi_{3}; \widetilde{R} )$.
    
    During the proof of \Cref{prop:parametrization}, the Beltrami differential of $H$ is defined to be zero in $\mathrm{int}( \widetilde{Q} ) \cap \overline{ Z_{1} }$, and coincide with the one of $\psi$ in $\mathrm{int}( \widetilde{Q} ) \cap Z_{2}$.
    
    \begin{prop}\label{prop:parametrization}
    The map $f = (u, v) \colon \widetilde{R} \rightarrow \left[0, 1\right] \times \left[0, M\right]$ is a $1$-quasiconformal homeomorphism.
    \end{prop}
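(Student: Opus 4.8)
The plan is to manufacture, by solving the Beltrami equation of $H$, an auxiliary $1$-quasiconformal chart on $\widetilde{R}$, and then to recognise the already-constructed homeomorphism $f$ as the classical conformal parametrisation of a planar Jordan quadrilateral read through that chart. Let $\mu$ denote the Beltrami differential of $H$ on $\interior(\widetilde{Q})$, read off the seam in the conformal coordinates supplied by the local isometries $\widetilde{\iota}_{1}^{-1}$ and $\widetilde{\iota}_{2}^{-1}$; thus $\mu$ vanishes on $\interior(\widetilde{Q}) \cap \overline{Z}_{1}$ and equals the Beltrami differential of $\psi$ on $\interior(\widetilde{Q}) \cap Z_{2}$, and by \eqref{eq:exponentialintegrability:H} its distortion $K_{H}$ satisfies $e^{\mathcal{A}(K_{H})} \in L^{1}(\widetilde{Q})$ with $\abs{\mu} < 1$ $\mathcal{H}^{2}_{\mathbb{S}^{2}}$-almost everywhere. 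First I would extend $\mu$ by zero to $\mathbb{S}^{2}$ and invoke the existence theory for Beltrami equations of exponentially integrable distortion and the Stoilow factorization \cite[Chapter 20]{Ast:Iwa:Mar:09} to obtain a homeomorphism $\Phi \colon \mathbb{S}^{2} \rightarrow \mathbb{S}^{2}$ of finite distortion, normalised so that $\Phi(\interior\widetilde{Q})$ is a bounded planar Jordan domain, solving $\overline{\partial}\Phi = \mu\,\partial\Phi$ with pointwise distortion equal to $K_{H}$. By the discussion in \Cref{sec:finite:distortion}, $\Phi|_{\interior\widetilde{Q}}$ is admissible, so $\Phi^{-1} \in N^{1,2}$ and both $\Phi$ and $\Phi^{-1}$ satisfy Lusin's Conditions ($N$) and ($N^{-1}$).

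Next I would prove that $G \coloneqq \Phi \circ H^{-1} \colon \widetilde{R} \rightarrow \Phi(\widetilde{Q})$ is $1$-quasiconformal. Using \Cref{lemm:measurable:sobolev} (so that $H$ and $H^{-1}$ are $\mathcal{H}^{2}$-almost everywhere differentiable, lie in the stated Sobolev classes, and satisfy both Lusin conditions) together with the analogous properties of $\Phi$, the chain rule holds at $\mathcal{H}^{2}_{\widetilde{Z}}$-almost every $w = H(z) \in \widetilde{R}\setminus Q(S_{Z})$ once the exceptional sets of $\Phi$ are transported by $H$, which is licit because $H$ satisfies Lusin's Condition ($N$). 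In the conformal coordinates near $w$ one has $DG(w) = D\Phi(z) \circ (DH(z))^{-1}$; writing $DH(z) = \partial H(z)\,\bigl(\mathrm{id} + \mu(z)\,\overline{(\cdot)}\bigr)$ and $D\Phi(z) = \partial \Phi(z)\,\bigl(\mathrm{id} + \mu(z)\,\overline{(\cdot)}\bigr)$, the antilinear factors cancel and $DG(w) = \bigl(\partial \Phi(z)/\partial H(z)\bigr)\,\mathrm{id}$ is a $\mathbb{C}$-linear similarity. Hence $\rho_{G}^{2} = J_{G}$ and $\rho_{G^{-1}}^{2} = J_{G^{-1}}$ almost everywhere; since $G$ and $G^{-1}$ satisfy Lusin's Condition ($N$), the change of variables formula gives $\int_{\widetilde{R}} \rho_{G}^{2}\,d\mathcal{H}^{2}_{\widetilde{Z}} = \mathcal{L}^{2}(\Phi(\widetilde{Q})) < \infty$ and $\int_{\Phi(\widetilde{Q})} \rho_{G^{-1}}^{2}\,d\mathcal{L}^{2} = \mathcal{H}^{2}_{\widetilde{Z}}(\widetilde{R}) < \infty$, the latter being finite because $H$ is an isometry on the $Z_{1}$-part and $\psi$ has integrable Jacobian on the $Z_{2}$-part. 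Applying \Cref{prop:williams:L-Wversion} to $G$ and to $G^{-1}$ now yields $\Mod \Gamma = \Mod G\Gamma$ for every path family, i.e. $G$ is $1$-quasiconformal.

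It remains to identify $f \circ G^{-1} \colon \Phi(\widetilde{Q}) \rightarrow \left[0, 1\right] \times \left[0, M\right]$. Since $G$ is $1$-quasiconformal it preserves the Dirichlet energy of real-valued Sobolev functions, so $u \circ G^{-1} \in N^{1,2}(\Phi(\widetilde{Q}))$ has the same energy as $u$, the same pair of boundary arcs $G(\xi_{1})$ and $G(\xi_{3})$, and is therefore again the energy minimiser of the corresponding quadrilateral; on a planar Jordan quadrilateral this minimiser is harmonic, and since Rajala's construction is natural under the $1$-quasiconformal map $G$ and, specialised to the plane, reduces to the classical conformal parametrisation, the pair $f \circ G^{-1} = (u \circ G^{-1}, v \circ G^{-1})$ is the conformal homeomorphism onto $\left[0, 1\right] \times \left[0, M\right]$, $M = \Mod \Gamma(\xi_{1}, \xi_{3}; \widetilde{R})$. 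In particular $f \circ G^{-1}$ is $1$-quasiconformal, hence so is $f = (f \circ G^{-1}) \circ G$. The main obstacle is the second step: making the almost-everywhere chain rule and the Sobolev bookkeeping for $\Phi \circ H^{-1}$ rigorous when $H$ lies only in $N^{1,1}$ and $\Phi$ only in an Orlicz--Sobolev class below $W^{1,2}$. This is where the $N^{1,2}$-regularity of inverses of mappings of exponentially integrable distortion \cite{Ko:On:06} and the two Lusin conditions of \Cref{lemm:measurable:sobolev} are essential, and where one must verify that $\left\{ J_{H^{-1}} = 0 \right\}$ is $\mathcal{H}^{2}_{\widetilde{Z}}$-negligible — which follows from the finite distortion of $H^{-1}$ combined with Lusin's Condition ($N^{-1}$) for $H$.
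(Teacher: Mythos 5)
Your strategy---solve the Beltrami equation of $H$ to get a planar map $\Phi$ with the same Beltrami coefficient, show $G=\Phi\circ H^{-1}$ is a $1$-quasiconformal chart, and read $f$ through it---is genuinely different from the paper's, which never forms such a mixed composition: the paper applies the Stoilow factorization only to the \emph{planar} maps $\widetilde f=f\circ H$ and $\widetilde f'=f'\circ H$ (\Cref{lemm:components}) and then verifies $\rho_f^2=J_f$ directly. Your route has the right core idea but two genuine gaps.

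First, the $1$-quasiconformality of $G=\Phi\circ H^{-1}$ is exactly the hard part, and your argument for it is a pointwise linear-algebra identity plus an appeal to \Cref{prop:williams:L-Wversion}. That theorem requires $G\in N^{1,2}_{\loc}(\widetilde R,\mathbb{R}^2)$ (and, for the reverse modulus inequality, $G^{-1}\in N^{1,2}_{\loc}$) with the computed pointwise quantity actually serving as a weak upper gradient along almost every path, including paths meeting the seam $Q(S_Z)$. Here $H$ lies only in $N^{1,1}$ and $\Phi$ only in an Orlicz--Sobolev class below $W^{1,2}$; compositions of such maps need not be Sobolev, and an a.e.\ chain rule does not by itself yield an upper-gradient inequality. \Cref{rem:regularity} records precisely a $1$-Lipschitz homeomorphism onto a metric surface satisfying one modulus inequality whose inverse nevertheless fails to be $N^{1,2}$, so this cannot be waved away. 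You name this as ``the main obstacle'' but do not resolve it; resolving it requires path-by-path arguments of the type in \Cref{lemm:techincal:lemma}, \Cref{lemm:measurable:sobolev}, and Lemmas \ref{lemm:conjugate}--\ref{lemm:pullbackregularity}, i.e.\ essentially the work your chart was meant to replace.

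Second, the terminal identification is incomplete. Granting $G$ $1$-quasiconformal, energy minimisation identifies only $u\circ G^{-1}$ as the harmonic minimiser on the planar quadrilateral; it says nothing about $v\circ G^{-1}$. The claim that ``Rajala's construction is natural under $G$,'' so that $v\circ G^{-1}$ is automatically $M$ times the conjugate function, is asserted without proof, and it is precisely to pin down $v$ that the paper proves $v=Mu'$ in \Cref{lemm:components} via the Stoilow factorization. Without some version of that step you conclude only that the first coordinate of $f\circ G^{-1}$ is harmonic, not that $f\circ G^{-1}$ is conformal, and hence not that $f$ is $1$-quasiconformal.
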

    
    The proof of \Cref{prop:parametrization} is split into several lemmas.
    \begin{lemm}\label{lemm:conjugate}
    Let $0 < a < b < 1$ and $0 < c < d < M$ for which
    \begin{align*}
        Q^{0} &= \left\{ x \in \widetilde{R} \colon f(x) \in \left[a,b\right]\times\left[c,d\right] \right\} \subset \mathrm{int}( \widetilde{R} ) \setminus Q( S_{Z} ).
    \end{align*}
    Then $f|_{ \mathrm{int}( Q^0 ) }$ is a $1$-quasiconformal homeomorphism.
    \end{lemm}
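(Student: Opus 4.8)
The plan is to reduce the statement to the classical fact that a conformal homeomorphism between a Riemannian surface and a planar domain is $1$-quasiconformal. Since $Q^{0}$ is connected (it is homeomorphic, via $f$, to the rectangle $[a,b]\times[c,d]$) and is contained in $\mathrm{int}(\widetilde{R})\setminus Q(S_{Z})=\bigl(\widetilde{\iota}_{1}(Z_{1})\cap\mathrm{int}(\widetilde{R})\bigr)\sqcup\bigl(\widetilde{\iota}_{2}(Z_{2})\cap\mathrm{int}(\widetilde{R})\bigr)$, it lies in one of the two pieces, say $\widetilde{\iota}_{i}(Z_{i})$. By \Cref{lemm:inclusion}, $\widetilde{\iota}_{i}$ restricts to an isometry from $Z_{i}\subset\mathbb{S}^{2}$ onto that piece, so $\widetilde{\iota}_{i}^{-1}$ identifies $\mathrm{int}(Q^{0})$ with an open subset $U\subset\mathbb{S}^{2}\setminus\mathbb{S}^{1}$ carrying the round metric; since $f$ is a homeomorphism onto $[0,1]\times[0,M]$, it maps $\mathrm{int}(Q^{0})$ homeomorphically onto $(a,b)\times(c,d)$. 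Fixing an isothermal (say stereographic) coordinate $z$ on $U$, with the round metric written as $\lambda^{2}\,|dz|^{2}$, it remains to verify that the resulting map $z\mapsto(u,v)$ is holomorphic; then $f|_{\mathrm{int}(Q^{0})}$ is conformal, and \Cref{prop:williams:L-Wversion}, applied to $f|_{\mathrm{int}(Q^{0})}$ and to its inverse, yields that it is $1$-quasiconformal.

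First I would show that $u$ is harmonic on $U$. By construction $u$ minimizes the Dirichlet energy among $N^{1,2}$ functions on $\widetilde{R}$ with the prescribed boundary values on $\xi_{1}\cup\xi_{3}$; a standard gluing argument then shows that $u$ is locally energy minimizing on the open set $\mathrm{int}(\widetilde{R})\setminus Q(S_{Z})$, since a local competitor of strictly smaller energy could be pasted back into $u$ to contradict global minimality. As the two-dimensional Dirichlet energy is a conformal invariant, in the chart $z$ the function $u$ is a weakly harmonic $W^{1,2}$ function, hence smooth and harmonic by Weyl's lemma. Moreover $\mathcal{H}^{2}_{\widetilde{Z}}=\lambda^{2}\,d\mathcal{L}^{2}$ and $\rho_{u}=\lambda^{-1}|\nabla u|$ in this chart, so the area identity $\mathcal{L}^{2}(f(E))=\int_{E}\rho_{u}^{2}\,d\mathcal{H}^{2}_{\widetilde{Z}}$ becomes $\mathcal{L}^{2}(f(E))=\int_{E}|\nabla u|^{2}\,d\mathcal{L}^{2}$; equivalently, the Jacobian of the charted map $z\mapsto(u,v)$ equals $|\nabla u|^{2}$ almost everywhere.

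Finally, I would identify $v|_{U}$ with the classical harmonic conjugate of $\pm u$. In Rajala's construction the level sets of $v$ are the maximal gradient curves of $u$, and $v$ increases across them at the rate prescribed by the measure $\rho_{u}^{2}\,d\mathcal{H}^{2}_{\widetilde{Z}}=|\nabla u|^{2}\,d\mathcal{L}^{2}$; along such a curve this gives $dv=|\nabla u|\,d\ell$, the same growth rate a harmonic conjugate has. Together with the normalization of $v$ on $\xi_{2}$ and $\xi_{4}$, this forces $v\in N^{1,2}_{\loc}(U)$ with $\nabla v$ equal almost everywhere to the $90^{\circ}$ rotation of $\nabla u$, so $u+iv$ is holomorphic in $z$ (with orientation fixed by the cyclic labelling of the $\xi_{j}$). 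Hence $\rho_{f}^{2}=|\nabla u|^{2}=J_{f}$ almost everywhere on $\mathrm{int}(Q^{0})$, and the conclusion follows as above. The main obstacle is precisely this last identification: harmonicity of $u$ together with the Jacobian identity alone still permit $v$ to differ from the harmonic conjugate by a shear $cu$ (which would render $f$ only $K$-quasiconformal with $K>1$ for $c\neq0$), so one must genuinely use how $v$ is assembled from the gradient curves of $u$ in \cite[Sections 4--8]{Raj:17}, or invoke the bound $K_{O}(f)=1$ for Rajala's parametrization, to exclude it.
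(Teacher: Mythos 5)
Your opening reduction---$Q^{0}$ is connected, hence lies in a single $\widetilde{\iota}_{i}( Z_{i} )$, and $\widetilde{\iota}_{i}$ is an isometry there, so one may work in an isothermal chart on $\mathbb{S}^{2}$---and your derivation of the harmonicity of $u$ (local energy minimization by a pasting argument, conformal invariance of the Dirichlet energy, Weyl's lemma) coincide with the paper's proof. The gap is exactly where you flag ``the main obstacle'': you never exclude the shear $v \mapsto v + c u$, and neither of your two proposed escapes works as stated. First, the Sobolev regularity of $v$ on $\mathrm{int}( Q^{0} )$ is not free: the paper obtains it by adapting \cite[Proposition 11.1]{Raj:17}, whose hypothesis (the upper bound in \eqref{upper:bound}) is supplied by the observation that every quadrilateral $Q' \subset Q^{0}$ satisfies $\Mod \Gamma( \xi_{1}', \xi_{3}'; Q' ) \Mod \Gamma( \xi_{2}', \xi_{4}'; Q' ) = 1$, precisely because $Q^{0}$ is isometric to a closed Jordan subdomain of $\mathbb{S}^{2}$. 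Second, ``$K_{O}(f) = 1$ for Rajala's parametrization'' is not an available fact: Proposition 11.1 only yields $\rho_{f} \leq 2000 \sqrt{\kappa}\, \rho_{u}$, hence $K_{O}(f) \leq 2000^{2}\kappa$ even when $\kappa = 1$; the equality $K_{O}(f) = 1$ is essentially the statement you are trying to prove, so invoking it is circular.

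The paper closes this gap by a mechanism different from your gradient-curve analysis. Having established $f|_{ \mathrm{int}( Q^{0} ) } \in N^{1,2}$ and the harmonicity of $u \circ \iota_{i}|_{V}$, it combines Rajala's computation $\Mod \Gamma( \xi_{1}^{0}, \xi_{3}^{0}; Q^{0} ) = (d-c)/(b-a)$ (equation (57) of \cite[Lemma 10.2]{Raj:17}) with the exact reciprocality identity \eqref{eq:1-reciprocality}. This pins the conformal modulus of the quadrilateral $V$ to that of the target rectangle $\left[a,b\right] \times \left[c,d\right]$, so the Riemann map of $V$ onto that rectangle respecting the marked sides exists; comparing boundary values of the two components then identifies $f \circ \iota_{i}|_{V}$ with this Riemann map, which is what rules out the shear (a nonzero shear would be incompatible with $v$ being constant on $\xi_{2}^{0}$ and $\xi_{4}^{0}$ once the modulus of the image quadrilateral is forced to be $(d-c)/(b-a)$). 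To repair your argument you must either reproduce this modulus comparison or genuinely carry out the identification of $\nabla v$ with the rotation of $\nabla u$ from Rajala's construction; as written, the proof stops one step short of the conclusion.
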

    \begin{proof}
    For the duration of the proof, we denote
    \begin{align*}
        \xi_{1}^{0} &= f^{-1}( \left\{a\right\} \times \left[c,d\right] ),
        \quad
        \xi_{2}^{0} = f^{-1}( \left[0, 1\right] \times \left\{c\right\} ),
        \\
        \xi_{3}^{0} &= f^{-1}( \left\{b\right\} \times \left[c,d\right] ),
        \quad
        \xi_{4}^{0} = f^{-1}( \left[0, 1\right] \times \left\{d\right\} ). 
    \end{align*}
    There exists a Jordan domain $V \subset \mathrm{int}( Q ) \cap Z_{i}$, for some $i = 1,2$, such that $\widetilde{\iota}_{i}( \overline{V} ) = Q^{0}$. Equation (57) \cite[Lemma 10.2]{Raj:17} states that
    \begin{equation*}
        \Mod \Gamma( \xi_{1}^{0}, \xi_{3}^{0}; Q^{0} ) = \frac{ d - c }{ b - a }.
    \end{equation*}
    Since $\widetilde{ \iota_{i} }$ is $1$-Lipschitz and a local isometry in $\overline{V}$, we have for every quadrilateral $Q' \subset Q^{0}$,
    \begin{equation}
        \label{eq:1-reciprocality:prime}
        \Mod \Gamma( \xi_{1}', \xi_{3}'; Q' ) \Mod \Gamma( \xi_{2}', \xi_{4}'; Q' ) = 1.
    \end{equation}
    In particular, we have
    \begin{equation}
        \label{eq:1-reciprocality}
        \Mod \Gamma( \xi_{1}^{0}, \xi_{3}^{0}; Q^{0} ) \Mod \Gamma( \xi_{2}^{0}, \xi_{4}^{0}; Q^{0} ) = 1.
    \end{equation}
    We wish to apply \cite[Proposition 11.1]{Raj:17}. There Rajala assumes that \eqref{upper:bound} holds for some $\kappa \geq 1$ and concludes that $2000 \cdot \sqrt{\kappa} \rho_{u}$ is a weak upper gradient of $f$. We do not assume this. However, a quick inspection of the proof shows that given any open set $\Omega \subset \mathrm{int}( Q^{0} )$, the property \eqref{eq:1-reciprocality:prime} implies that $2000 \cdot \chi_{\mathrm{int}(Q^0)} \cdot \rho_{u}$ is a weak upper gradient of $f|_{ \mathrm{int}(Q^0) }$ in $\Omega$. By exhausting $\mathrm{int}( Q^{0} )$ by such open sets, we conclude that $f|_{ \mathrm{int}(Q^{0}) } \in N^{1,2}( \mathrm{int}(Q^0); \mathbb{R}^{2} )$.
    
    Since $u \in N^{1,2}( \widetilde{R} )$ is a continuous energy minimizer, the composition $u \circ \iota_{i}|_{ V }$ is harmonic \cite[Weyl's lemma]{Ast:Iwa:Mar:09}. The Riemann mapping theorem, the Sobolev regularity of $f|_{ \mathrm{int}(Q^{0}) }$, the boundary values of the components of $f|_{ Q^{0} }$, and \eqref{eq:1-reciprocality} imply that $f \circ \iota_{i}|_{ V }$ is a Riemann map. In particular, $f|_{ \mathrm{int}( Q^{0} ) }$ is a $1$-quasiconformal homeomorphism.
    \end{proof}
    
    \begin{lemm}\label{lemm:pullbackregularity}
    The composition $\widetilde{f} = f \circ H \colon \widetilde{Q} \rightarrow \left[0, 1\right] \times \left[0, M\right]$ is an element of $N^{1,1}( \mathrm{int}( \widetilde{Q} ), \mathbb{R}^{2} )$. Moreover, the Beltrami differential of $\widetilde{f}$ coincides with the one of $H$ and \eqref{eq:exponentialintegrability:H} holds for $K_{ \widetilde{f} }$ in place of $K_{ H }$.
    \end{lemm}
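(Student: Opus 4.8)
The plan is to exploit that, away from the seam, the auxiliary map $f$ is not merely $1$-quasiconformal but an honest smooth conformal diffeomorphism, so that postcomposing $H$ with $f$ changes nothing of substance on the two hemispheres, and the only real work is again at the seam, exactly as in the proof of \Cref{lemm:measurable:sobolev}. First I would record that $f$ is $1$-quasiconformal on $\mathrm{int}(\widetilde{R})\setminus Q(S_Z)$: this open set is exhausted by the coordinate rectangles $\mathrm{int}(Q^{0})$ of \Cref{lemm:conjugate}, on each of which $f$ is $1$-quasiconformal. Since by \Cref{lemm:inclusion} the inclusions $\widetilde{\iota}_{i}$ are local isometries on $Z_{i}$, the compositions $f\circ\widetilde{\iota}_{1}$ on $Z_{1}\cap\mathrm{int}(\widetilde{Q})$ and $f\circ\widetilde{\iota}_{2}$ on the relevant part of $Z_{2}$ are $1$-quasiconformal maps of smooth Riemannian surfaces into $\mathbb{R}^{2}$, hence (by Weyl's lemma) smooth conformal diffeomorphisms.

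Using this, the a.e.\ differentiability of the mapping of finite distortion $\psi$, the chain rule, and the fact that $\psi^{-1}$ satisfies Lusin's Condition $(N)$, I would conclude that $\widetilde{f}$ is differentiable $\mathcal{H}^{2}_{\mathbb{S}^{2}}$-a.e.\ in $\mathrm{int}(\widetilde{Q})$ and that at a.e.\ point $\abs{D\widetilde{f}}_{g}^{2}=K_{H}\cdot J(D\widetilde{f})$, with $K_{H}$ as in \Cref{defi:standingassumptions}: on $Z_{1}$ this is the conformality identity $\abs{D\widetilde{f}}_{g}^{2}=J(D\widetilde{f})$, while on $Z_{2}$ the conformal outer factor $f\circ\widetilde{\iota}_{2}$ rescales $\abs{\cdot}_{g}^{2}$ and $J(\cdot)$ by the same factor, so the distortion of $\widetilde{f}$ equals that of $\psi$. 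In particular $K_{\widetilde{f}}=K_{H}$ a.e., so \eqref{eq:exponentialintegrability:H} holds with $K_{\widetilde{f}}$ in place of $K_{H}$. The same conformality, together with the fact that postcomposition with a conformal homeomorphism preserves the Beltrami coefficient, identifies the Beltrami differential of $\widetilde{f}$ with that of $H$; one only normalizes $f$ so that $f\circ\widetilde{\iota}_{i}$ is orientation-preserving, which is automatic once it holds on one hemisphere.

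For the integrability statement I would argue as follows. The map $\widetilde{f}=f\circ H$ satisfies Lusin's Condition $(N)$: $H$ does by \Cref{lemm:measurable:sobolev}, and $f$ does because off the seam it is $1$-quasiconformal while $\mathcal{L}^{2}(f(Q(S_Z)\cap\widetilde{R}))=\int_{Q(S_Z)\cap\widetilde{R}}\rho_{u}^{2}\,d\mathcal{H}^{2}_{\widetilde{Z}}=0$ by the Jacobian identity for $f$ and the $\mathcal{H}^{2}_{\widetilde{Z}}$-negligibility of the seam (\Cref{lemm:hausdorff}). Hence $\int_{\widetilde{Q}}J(D\widetilde{f})\,d\mathcal{H}^{2}_{\mathbb{S}^{2}}\le\mathcal{L}^{2}([0,1]\times[0,M])=M$. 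Combining this with $\abs{D\widetilde{f}}_{g}^{2}=K_{\widetilde{f}}J(D\widetilde{f})$, the pointwise bound \eqref{eq:important:integrability}, and \eqref{eq:exponentialintegrability:H} for $K_{\widetilde{f}}$, I get $\int_{\widetilde{Q}}P(\abs{D\widetilde{f}}_{g})\,d\mathcal{H}^{2}_{\mathbb{S}^{2}}<\infty$, and then $\abs{D\widetilde{f}}_{g}\in L^{1}(\widetilde{Q})$ by \eqref{eq:important:integrability:L1}. To upgrade this to $\widetilde{f}\in N^{1,1}(\mathrm{int}(\widetilde{Q}),\mathbb{R}^{2})$, I would follow the template of \Cref{lemm:measurable:sobolev}: show that $(\rho_{u}\circ H)\rho_{H}$, which equals $\abs{D\widetilde{f}}_{g}$ a.e., is a $1$-weak upper gradient of $\widetilde{f}$. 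By \Cref{lemm:negligible}, $1$-a.e.\ absolutely continuous $\gamma\colon[0,1]\to\mathrm{int}(\widetilde{Q})$ has finite integral of $(\rho_{u}\circ H)\rho_{H}+\infty\cdot\chi_{\mathbb{S}^{1}\cap\widetilde{Q}}$, hence zero length in $\mathbb{S}^{1}$; for such $\gamma$, $H\circ\gamma$ is absolutely continuous \cite[Proposition 6.3.2]{HKST:15} and has zero length in the seam by \Cref{lemm:hausdorff} and the $1$-Lipschitz property of $\widetilde{\iota}_{1}$, so $\mathcal{H}^{1}_{\widetilde{Z}}(\abs{H\circ\gamma}\cap Q(S_Z))=0$ by \eqref{eq:areaformula}. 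On each closed subinterval of $[0,1]\setminus\gamma^{-1}(\mathbb{S}^{1})$ the path stays in a region where $\widetilde{f}$ is a smooth conformal map composed with $\psi$, so, after discarding a further family of $1$-modulus zero coming from the good path family of $\psi$ pulled into $\widetilde{Q}$ via the subpath inequality, the upper gradient inequality holds there; \Cref{lemm:techincal:lemma} with $E=\abs{\gamma}\cap\mathbb{S}^{1}$ then bridges across the seam.

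The step I expect to be the main obstacle is this last invocation of \Cref{lemm:techincal:lemma}, whose hypothesis demands $\mathcal{H}^{1}_{\mathbb{R}^{2}}(\widetilde{f}(\abs{\gamma}\cap\mathbb{S}^{1}))=\mathcal{H}^{1}_{\mathbb{R}^{2}}(f(\abs{H\circ\gamma}\cap Q(S_Z)))=0$. We know this set has vanishing $\mathcal{H}^{2}_{\mathbb{S}^{2}}$- and $\mathcal{H}^{1}_{\widetilde{Z}}$-measure, but one still has to rule out $f$ manufacturing $1$-dimensional measure from it; I would handle this by exploiting $f\in N^{1,2}(\widetilde{R})$ and that the restriction of $H\circ\gamma$ to its seam-intersection is, after reparametrization by arc length, a rectifiable curve (the seam being a compact connected set of finite $\mathcal{H}^{1}_{\widetilde{Z}}$-measure by \Cref{lemm:hausdorff}), so that $f$ does not increase its length. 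Everything else above is a reshuffling of the arguments already carried out for $H$ and for the energy minimizer $f$.
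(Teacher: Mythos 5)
Your first two paragraphs (the conformality of $f\circ\widetilde{\iota}_{i}$ off the seam, the identification of the Beltrami differentials and distortions of $\widetilde{f}$ and $H$, and the integrability of $\abs{D\widetilde{f}}_{g}$ via \eqref{eq:important:integrability} and \eqref{eq:important:integrability:L1}) agree in substance with the paper. The gap is exactly where you suspected. To apply \Cref{lemm:techincal:lemma} to $\widetilde{f}=f\circ H$ with $E=\abs{\gamma}\cap\mathbb{S}^{1}$ you must verify $\mathcal{H}^{1}_{\mathbb{R}^{2}}\bigl(f(\abs{H\circ\gamma}\cap Q(S_{Z}))\bigr)=0$, and your justification --- that $f\in N^{1,2}(\widetilde{R})$ and the seam is rectifiable, ``so that $f$ does not increase its length'' --- is not valid. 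Membership in $N^{1,2}$ controls $f$ only along $2$-almost every curve; the seam is one fixed curve and may be exceptional for the weak upper gradient of $f$, so nothing prevents $f$ from sending an $\mathcal{H}^{1}_{\widetilde{Z}}$-null subset of $Q(S_{Z})$ to a set of positive length. This is precisely the failure mode the paper warns about (\Cref{rem:regularity} and the Cantor-set construction of \Cref{sec:nonexample2}, where a $1$-Lipschitz Sobolev homeomorphism collapses a set of positive length to a null set, so that its inverse expands a null set). At this stage no absolute continuity of $f$ along the seam has been established, so the hypothesis of \Cref{lemm:techincal:lemma} cannot be checked this way.

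The paper crosses the seam by treating the two coordinates of $f=(u,v)$ asymmetrically, which avoids any $\mathcal{H}^{1}$-control of $f$ on $Q(S_{Z})$. For $\widetilde{u}=u\circ H$ one uses that $u$ has a genuine upper gradient $\rho_{0}\in L^{2}(\widetilde{R})$, valid on \emph{every} rectifiable path and in particular on $H\circ\gamma$ even when it meets the seam; the chain rule with $H\in N^{1,1}(\widetilde{Q},\widetilde{R})$ then makes $(\rho_{0}\circ H)\rho_{H}$ a $1$-weak upper gradient of $\widetilde{u}$ directly, with $L^{1}$-integrability from \eqref{eq:important:integrability} and \eqref{eq:important:integrability:L1}. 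For $\widetilde{v}=v\circ H$, whose upper gradient inequality is only known off the seam (via \Cref{lemm:conjugate}), the paper instead verifies the ACL property across $\mathbb{S}^{1}\cap\mathrm{int}(\widetilde{Q})$ by a removability argument for closed sets of $\sigma$-finite Hausdorff $1$-measure (a modification of \cite[Theorem 35.1]{Vai:71}), using only the continuity of $\widetilde{v}$ and the $L^{1}$-integrability of its gradient off $\mathbb{S}^{1}$. To salvage your route you would need to prove that $f$ is absolutely continuous along the seam, which is not available here; otherwise you should switch to one of these two mechanisms.
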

    \begin{proof}
    Given \Cref{lemm:conjugate}, the Beltrami differential of $\widetilde{f}$ and $H$ coincide $\mathcal{H}^{2}_{ \mathbb{S}^{2} }$-almost everywhere in $\mathrm{int}( \widetilde{Q} ) \setminus \mathbb{S}^{1}$, i.e., $\mathcal{H}^{2}_{ \mathbb{S}^{2} }$-almost everywhere in $\mathrm{int}( \widetilde{Q} )$. The result also implies that the pointwise distortions of $\widetilde{f}$ and $H$ coincide $\mathcal{H}^{2}_{ \mathbb{S}^{2} }$-almost everywhere in $\mathrm{int}( \widetilde{Q} )$.
    
    Next, we show that $\widetilde{u} = u \circ H \in N^{1,1}( \widetilde{Q} )$. We recall that $H \in N^{1,1}( \widetilde{Q}, \widetilde{R} )$. Moreover, if $\rho_{0} \in L^{2}( \widetilde{R} )$ is an upper gradient of $u$, the function $\rho = ( \rho_{0} \circ H ) \rho_{H}$ is a $1$-weak upper gradient of $\widetilde{u}$ with
    \begin{equation*}
        \int_{ \widetilde{Q} }
            P( \rho )
        \,d\mathcal{H}^{2}_{Z}
        \leq
        \int_{ \widetilde{Q} }
            e^{ \mathcal{A}( K_{H} ) }
        \,d\mathcal{H}^{2}_{Z}
        +
        \norm{ \rho_{0} }_{ L^{2}(Q) }^{2}
        <
        \infty,
    \end{equation*}
    where we apply \eqref{eq:important:integrability} and the distortion inequality $\rho_{H}^{2} \leq K_{H} J_{H}$. The $L^{1}( \widetilde{Q} )$-integrability of $\rho$ follows from \eqref{eq:important:integrability:L1}, so $\widetilde{u} \in N^{1,1}( \widetilde{Q} )$.
    
    Let $\widetilde{v} = v \circ H$. \Cref{lemm:conjugate} implies that $\rho = ( \rho_{0} \circ H ) \rho_{H} \in L^{1}( \widetilde{Q} )$ is a $1$-weak upper gradient of $\widetilde{v}$ in every open $U \subset \mathrm{int}( \widetilde{Q} ) \setminus \mathbb{S}^{1}$. Therefore, $\widetilde{v} \in N^{1,1}( \mathrm{int}( \widetilde{Q} ) \setminus \mathbb{S}^{1} )$. Given the continuity of $\widetilde{v}$, we actually have $\widetilde{v} \in N^{1,1}( \mathrm{int}( \widetilde{Q} ) )$. This is seen by verifying the ACL (absolute continuity on lines) property for $\widetilde{v}|_{ \mathrm{int}( \widetilde{Q} ) }$ on charts covering $\mathbb{S}^{1} \cap \mathrm{int}( \widetilde{Q} )$. The ACL property on charts follows from a minor modification of the proof in \cite[Theorem 35.1]{Vai:71} showing that closed sets with $\sigma$-finite Hausdorff $1$-measure are quasiconformally removable. This implies that $\rho$ is a $1$-weak upper gradient of $\widetilde{v}$ on $\mathrm{int}( \widetilde{Q} )$. The claim follows from this.
    \end{proof}
    
    \begin{lemm}\label{lemm:components}
    Let $u'$ denote the energy minimizer for $\Gamma( \xi_{2}, \xi_{4}; Q )$. Then $v = M u'$.
    \end{lemm}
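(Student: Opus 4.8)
The plan is to identify $v/M$ with the energy minimizer $u'$ for $\Gamma(\xi_{2}, \xi_{4}; \widetilde{R})$, so that $v = M u'$. First, such a $u'$ is well defined: applying \cite[Proposition 9.1]{Raj:17} together with the constructions of \cite[Sections 4--5]{Raj:17} to the quadrilateral $\widetilde{R}$ with the pairs $(\xi_{1}, \xi_{3})$ and $(\xi_{2}, \xi_{4})$ interchanged --- permitted because \eqref{point:zero:modulus} holds at every $y \in \widetilde{R}$ by \Cref{lemm:points:negligible} and because \eqref{eq:lower:bound} is universal --- produces $u' \in N^{1,2}(\widetilde{R})$ that is continuous, satisfies $u'|_{\xi_{2}} = 0$ and $u'|_{\xi_{4}} = 1$, has minimal weak upper gradient $\rho_{u'}$ that is weakly admissible for and extremal in $\Gamma(\xi_{2}, \xi_{4}; \widetilde{R})$, and has $2 E( u' ) = \Mod \Gamma( \xi_{2}, \xi_{4}; \widetilde{R} ) =: M'$; moreover $u'$ is the unique minimizer of the Dirichlet energy among functions of $N^{1,2}(\widetilde{R})$ with boundary values $0$ on $\xi_{2}$ and $1$ on $\xi_{4}$.

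The first, and most delicate, step is to show that $v \in N^{1,2}(\widetilde{R})$ with $\rho_{v} = \rho_{u}$ $\mathcal{H}^{2}_{\widetilde{Z}}$-almost everywhere. Over $\mathrm{int}(\widetilde{R}) \setminus Q(S_{Z})$, which is covered by the sets $\mathrm{int}(Q^{0})$ of \Cref{lemm:conjugate}, the map $f$ is $1$-quasiconformal, so on each such set $f \circ \iota_{i}|_{V}$ is a Riemann map; the Cauchy--Riemann equations give $|\nabla(u \circ \iota_{i})| = |\nabla(v \circ \iota_{i})|$ $\mathcal{L}^{2}$-a.e.\ on $V$, and since $\widetilde{\iota}_{i}$ is a local isometry on $Z_{i}$ (\Cref{lemm:inclusion}) this transports to $v \in N^{1,2}_{\loc}(\mathrm{int}(\widetilde{R}) \setminus Q(S_{Z}))$ with $\rho_{v} = \rho_{u}$ there. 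Since $v$ is continuous (a coordinate of the homeomorphism $f$), $\rho_{u} \in L^{2}(\widetilde{R})$, and the seam $Q(S_{Z}) \cap \widetilde{R}$ is closed with finite $\mathcal{H}^{1}_{\widetilde{Z}}$-measure (because $\widetilde{\iota}_{1}|_{\mathbb{S}^{1}}$ is $1$-Lipschitz), the removability argument used for $\widetilde{v}$ in the proof of \Cref{lemm:pullbackregularity} --- the modification of \cite[Theorem 35.1]{Vai:71}, combined with \eqref{eq:areaformula} --- upgrades this to $v \in N^{1,2}(\widetilde{R})$ with $\rho_{v} = \rho_{u}$ a.e. Hence $2 E( v/M ) = M^{-2} \int_{\widetilde{R}} \rho_{u}^{2} \, d\mathcal{H}^{2}_{\widetilde{Z}} = M^{-2} \cdot 2 E( u ) = 1/M$.

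Next I would show $M' = 1/M$. For the upper bound, $v$ is continuous with $v|_{\xi_{2}} = 0$ and $v|_{\xi_{4}} = M$, so every rectifiable $\gamma \in \Gamma(\xi_{2}, \xi_{4}; \widetilde{R})$ has $\int_{\gamma} \rho_{v} \, ds \geq M$; thus $\rho_{u}/M = \rho_{v}/M$ is weakly admissible for $\Gamma(\xi_{2}, \xi_{4}; \widetilde{R})$, giving $M' \leq M^{-2} \norm{ \rho_{u} }_{L^{2}(\widetilde{R})}^{2} = 1/M$. For the lower bound, let $\rho$ be admissible for $\Gamma(\xi_{2}, \xi_{4}; \widetilde{R})$; for a.e.\ $s \in (0,1)$ the level set $u^{-1}(s) = f^{-1}(\{s\} \times [0,M])$ is a rectifiable arc joining $\xi_{2}$ to $\xi_{4}$, so $\int_{u^{-1}(s)} \rho \, d\mathcal{H}^{1}_{\widetilde{Z}} \geq 1$ by \eqref{eq:areaformula}, and the coarea inequality for the energy minimizer $u$ \cite[Section 7]{Raj:17} gives $\int_{\widetilde{R}} \rho \, \rho_{u} \, d\mathcal{H}^{2}_{\widetilde{Z}} \geq \int_{0}^{1} \left( \int_{u^{-1}(s)} \rho \, d\mathcal{H}^{1}_{\widetilde{Z}} \right) ds \geq 1$; Cauchy--Schwarz then yields $\norm{ \rho }_{L^{2}(\widetilde{R})}^{2} \geq 1/\norm{ \rho_{u} }_{L^{2}(\widetilde{R})}^{2} = 1/M$, and taking the infimum over such $\rho$ gives $M' \geq 1/M$. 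Therefore $2 E(v/M) = 1/M = M' = 2 E(u')$. Since $v/M$ and $u'$ both belong to $N^{1,2}(\widetilde{R})$, share the boundary values $0$ on $\xi_{2}$ and $1$ on $\xi_{4}$, and both attain the minimal Dirichlet energy $1/(2M)$, the uniqueness of the energy minimizer forces $v/M = u'$, i.e.\ $v = M u'$.

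The main obstacle is the regularity assertion $v \in N^{1,2}(\widetilde{R})$ with the sharp gradient identity $\rho_{v} = \rho_{u}$: a priori $v$ is merely continuous and only known to be Sobolev off the negligible seam, so one must feed the local conformality of $f$ from \Cref{lemm:conjugate} through a quasiconformal removability argument for the $\sigma$-finite-$\mathcal{H}^{1}$ seam, which --- as \Cref{rem:regularity} emphasizes --- is the subtle point in the metric surface setting.
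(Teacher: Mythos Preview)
Your strategy --- identifying $v/M$ with the energy minimizer $u'$ directly, via an energy computation and uniqueness --- is different from the paper's. The paper constructs the second homeomorphism $f'=(u',v')$ from $u'$, pulls both $f$ and $f'$ back to $\widetilde Q\subset\mathbb{S}^2$ through $H$, and uses \Cref{lemm:pullbackregularity} to see that $\widetilde f=f\circ H$ and $\widetilde f'=f'\circ H$ have the same Beltrami differential (that of $H$) with admissible distortion. The Stoilow factorization theorem then forces $\varphi=f'\circ f^{-1}$ to be conformal between the two rectangles, and the identity $v=Mu'$ is read off from what a conformal rectangle map does to the coordinate projections. Your route would be more elementary and would avoid the Stoilow machinery, but it does not go through as written.

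The gap is exactly the step you flag as the main obstacle, namely the claim that the removability argument for $\widetilde v$ on $\widetilde Q$ transfers to $v$ on $\widetilde R$. The proof of \Cref{lemm:pullbackregularity} obtains $\widetilde v\in N^{1,1}(\widetilde Q)$ by verifying the ACL property on Euclidean charts of $\mathbb{S}^2$ and invoking \cite[Theorem~35.1]{Vai:71}; that argument lives on a smooth surface with line-by-line coordinates. On $\widetilde R\subset\widetilde Z$ there are no charts yet --- producing them is the very goal of this section --- so no ACL characterization is available. If you instead try to push the upper-gradient inequality for $v$ along a generic path $\gamma$ in $\widetilde R$ via \Cref{lemm:techincal:lemma}, you would need $\mathcal H^1_{\mathbb R}\bigl(v(|\gamma|\cap Q(S_Z))\bigr)=0$, and nothing rules out Cantor-staircase behaviour of $v\circ\gamma$ across $\gamma^{-1}(Q(S_Z))$: continuity of $v$ and $\mathcal H^2_{\widetilde Z}(Q(S_Z))=0$ do not suffice. \Cref{rem:regularity} is precisely a warning that such removability can fail on metric surfaces before reciprocality is established. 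This breaks your upper bound $M'\le 1/M$ (which needs $\rho_u/M$ to be admissible for $\Gamma(\xi_2,\xi_4;\widetilde R)$, i.e.\ the upper-gradient inequality for $v$ across the seam) and hence the comparison $E(v/M)=E(u')$. Your coarea lower bound $M'\ge 1/M$ survives, since it uses only properties of $u$, but on its own it is not enough. The paper's detour through $\mathbb{S}^2$ and Stoilow factorization is there specifically to sidestep this obstruction.
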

    \begin{proof}
    Similarly to $f$ and $\widetilde{f}$, let $f' = (u', v')$ and $\widetilde{f}'$ denote the homeomorphisms obtained from the energy minimizer $u'$ for $\Gamma( \xi_{2}, \xi_{4}; \widetilde{R} )$. Let $R'$ denote the image of $f'$ and $R$ the image of $f$.
    
    \Cref{lemm:pullbackregularity} shows that the Beltrami differentials of $\widetilde{f}$ and $\widetilde{f}'$ coincide with one another $\mathcal{H}^{2}_{ \mathbb{S}^{2} }$-almost everywhere and their distortion satisfies \eqref{eq:exponentialintegrability:H} for an admissible $\mathcal{A}$. Then the Stoilow factorization theorem \cite[Theorems 20.5.1, 20.5.2]{Ast:Iwa:Mar:09} implies that $\varphi = \widetilde{f}' \circ \widetilde{f}^{-1}$ is conformal; note also that $\varphi = f' \circ f^{-1}$.
    
    Since $\varphi$ is conformal, the energy minimizer $\pi_{1}$ for $\Gamma( f'(\xi_{2}), f'(\xi_{4}); R' )$ is such that $\pi_{1} \circ \varphi$ is the energy minimizer for $\Gamma( f(\xi_{2}), f(\xi_{4}); R )$. On the other hand, here $\pi_{1}$ is the projection to the $x$-axis and $\pi_{1} \circ \varphi$ is $M^{-1}$ times the projection to the $y$-axis. Since $\varphi = f' \circ f^{-1}$, the equality $u' = \pi_{1} \circ \varphi \circ f = M^{-1} v$ follows.
    \end{proof}
    
    \begin{proof}[Proof of \Cref{prop:parametrization}]
    \Cref{lemm:components} implies that $f = (u,v) \in N^{1,2}( \widetilde{R}, \mathbb{R}^{2} )$. Furthermore, \Cref{lemm:conjugate} implies $\rho_{f}^{2} = J_{f} \in L^{1}( \widetilde{R} )$. Hence $\Mod \Gamma \leq \Mod f \Gamma$ for every path family in $\widetilde{R}$. This improves to $K$-quasiconformality for some $K \geq 1$ due to \Cref{prop:outer:to:maximal}. As $f( Q(S_{Z}) \cap \widetilde{R} )$ is negligible due to the change of variables formula for $f$, and as $f^{-1}$ is $1$-quasiconformal outside $f( S_{Z} \cap \widetilde{R} )$, we immediately obtain $\Mod \Gamma \leq \Mod f^{-1} \Gamma$ for every path family in $f( \widetilde{R} )$. Thus $f$ is $1$-quasiconformal.
    \end{proof}
    
    \begin{proof}[Proof of \Cref{thm:local:recip}]
    This is proved by \Cref{prop:parametrization}.
    \end{proof}
    
    \begin{rem}\label{rec:localization:scheme}
    Notice that if \Cref{lemm:points:negligible} holds for a given homeomorphism $g \colon \mathbb{S}^{1} \rightarrow \mathbb{S}^{1}$ having an admissible extension, even without assuming the absolutely continuity of $g^{-1}$, the rest of the proof of \Cref{prop:parametrization} (and \Cref{thm:local:recip}) go through the same way.
    \end{rem}

\begin{proof}[Proof of \Cref{thm:QS:abs}]
    Given a quasisymmetry $g \colon \mathbb{S}^{1} \rightarrow \mathbb{S}^{1}$, its \emph{Beurling--Ahlfors} extension $\psi \colon \overline{ Z }_{2} \rightarrow \overline{ Z }_{2}$ is a quasisymmetry and $\psi|_{ Z_{2} }$ is $K$-quasiconformal for some $K \geq 1$ \cite{Ah:Beu:56}. Thus, if $g^{-1}$ is absolutely continuous, $g$ satisfies the assumptions of \Cref{thm:biconformal:abs}. Alternatively, if $H$ is as in \Cref{defi:standingassumptions}, \Cref{lemm:annuluscondenser} implies that $H^{-1}$ has outer dilatation $K_{O}( H^{-1} ) \leq K$. \Cref{prop:outer:to:maximal} implies that $H$ is quasiconformal; this self-improves to $K$-quasiconformality. Clearly $H$ extends to a $K$-quasiconformal homeomorphism $H \colon \mathbb{S}^{2} \rightarrow \widetilde{Z}$.
\end{proof}

\section{Concluding remarks}\label{sec:obstructions}
\subsection{A point of positive capacity}\label{sec:nonexample1}
    For a general orientation-preserving homeomorphism $g \colon \mathbb{S}^{1} \rightarrow \mathbb{S}^{1}$, the $\widetilde{Z}$ can have points of positive capacity (in the sense that \eqref{point:zero:modulus} can fail) even if $g$ is locally bi-Lipschitz in the complement of a single point. For example, having fixed arbitrary $1 < \alpha < \beta$, we consider the homeomorphism $h \colon \mathbb{R} \rightarrow \mathbb{R}$ defined by
    \begin{equation}
        \label{eq:badexample}
        h(x)
        =
        \left\{
        \begin{split}
            &x^{\alpha},
            \quad
            &&\text{$x \geq 0$},
            \\
            &-(-x)^{\beta}, 
            \quad
            &&\text{$x < 0$}.
        \end{split}
        \right.
    \end{equation}
    We construct a homeomorphism $g \colon \mathbb{S}^{1} \rightarrow \mathbb{S}^{1}$ by restricting $h$ to the interval $\left[-1, 1\right]$, extending the restriction to $\mathbb{R}$ periodically, and by considering the covering map $\theta(t) = ( \cos( \pi t ), \sin( \pi t), 0 )$, and a homeomorphism $g \colon \mathbb{S}^{1} \rightarrow \mathbb{S}^{1}$ satisfying $g \circ \theta = \theta \circ h^{-1}$. Then $g^{-1}$ is an $L$-Lipschitz homeomorphism for some $L \geq 1$, and one can check directly from the definition of $d_{Z}$ that the inclusion map $\widetilde{ \iota }_{1} \colon \overline{Z}_{1} \rightarrow \widetilde{Z}$ is $L$-bi-Lipschitz onto its image.

    Let $x_{0} \in \widetilde{Z}$ denote the point corresponding to $( 1, 0, 0 )$. By using the techniques from \Cref{sec:Ahlfors}, we can show that $\widetilde{Z} \setminus \left\{ x_{0} \right\}$ can be covered by bi-Lipschitz images of planar domains. Then \cite[Theorem 1.3]{Iko:19} implies that $\widetilde{Z} \setminus \left\{ x_{0} \right\}$ is $1$-quasiconformally equivalent to a Riemannian surface (that is homeomorphic to a planar domain). Such a Riemannian surface can be conformally embedded into $\mathbb{S}^{2}$ \cite[Section III.4]{Ah:Sa:60}. Hence there exists a $1$-quasiconformal embedding $\psi \colon \widetilde{Z} \setminus \left\{ x_0 \right\} \rightarrow \mathbb{S}^{2}$.
    
    We claim that the complement of the image of $\psi$ is a non-trivial continuum (which is equivalent to the failure of \eqref{point:zero:modulus} at $x_{0}$). Indeed, otherwise $\psi$ would extend to a $1$-quasiconformal homeomorphism and $g$ would be a welding homeomorphism, as a consequence of \Cref{thm:welding:positive}. This would contradict both \cite[Example 1]{Oik:61} and \cite[Theorem 3]{Vai:89}, where both of these result show that $g$ is not a welding homeomorphism.
    
    In contrast, if we set $\alpha = \beta \geq 1$ in \eqref{eq:badexample}, the homeomorphism $g$ is a quasisymmetry, so $\widetilde{Z}$ is quasiconformally equivalent to $\mathbb{S}^{2}$, as a consequence of \Cref{thm:QS:abs}.

\subsection{Points of positive capacity}\label{sec:nonexample2}
    We construct another example for which points of positive capacity occur. To this end, consider a Cantor set $E \subset \left[0, 1\right]$ and
    \begin{equation}
        \label{eq:Cantor}
        h(x)
        =
        \left\{
        \begin{split}
            &( \mathcal{L}^{1}( \left[0,1\right] \setminus E ) )^{-1}
            \int_{ 0 }^{ x } \chi_{ \mathbb{R} \setminus E }(y) \,d\mathcal{L}^{1}(y),
            \quad
            &&\text{$0 \leq x \leq 1$},
            \\
            &x, 
            \quad
            &&\text{otherwise}.
        \end{split}
        \right.
    \end{equation}
    Then $h \colon \mathbb{R} \rightarrow \mathbb{R}$ is a Lipschitz homeomorphism coinciding with the identity map outside $\left(0,1\right)$.
    
    Next, consider the Möbius transformation $\theta_{1}( z ) = (z-i)/(z+i)$ from the upper half-space $\overline{ \mathbb{H} }$ onto the Euclidean unit disk $\overline{ \mathbb{D} }$. Let $\theta_{2}(x,y) = ( 2x/(1+x^2+y^2), 2y/(1+x^2+y^2), ( 1 - x^2 - y^2 )/( 1 + x^2 + y^2 ) )$. Then $\theta \coloneqq \theta_{2} \circ \theta_{1} \colon \overline{ \mathbb{H} } \rightarrow \overline{Z}_{2}$ defines a $1$-quasiconformal homeomorphism, given that $\theta_{2}^{-1}$ is a(n orientation-reversing) stereographic projection.
    
    There exists a unique homeomorphism $g \colon \mathbb{S}^{1} \rightarrow \mathbb{S}^{1}$ satisfying $g \circ \theta = \theta \circ h^{-1}$. We see from \eqref{eq:Cantor} that $g^{-1}$ is $L$-Lipschitz and $\widetilde{\iota}_{1}$ is $L$-bi-Lipschitz with a constant $L$ depending only on $\mathcal{L}^{1}( E )$. In particular, $\widetilde{Z} = ( Z, d_{Z} )$.
    
    We denote $E' = \widetilde{\iota}_{2}( \theta(E) ) \subset \widetilde{Z}$, and apply \cite[Theorem 1.3]{Iko:19} as in \Cref{sec:nonexample1}, and find a $1$-quasiconformal embedding $\psi \colon \widetilde{Z} \setminus E' \rightarrow \mathbb{S}^{2}$.
    
    Consider on $\mathbb{R}^{2}$ the distance $d_{E}$ obtained as follows: For each absolutely continuous $\gamma \colon \left[0, 1\right] \rightarrow \mathbb{R}^{2}$, denote $\ell_{E}( \gamma ) \coloneqq \int_{ \gamma } \chi_{ \mathbb{R}^{2} \setminus E } \,ds$. We set $d_{E}( x, y ) = \inf \ell_{E}( \gamma )$, the infimum taken over absolutely continuous paths joining $x$ to $y$.
    
    We denote $X = ( \mathbb{R}^{2}, d_{E} )$. The change of distance map $H \colon \mathbb{R}^{2} \rightarrow X$ is a $1$-Lipschitz homeomorphism that is a local isometry on $\mathbb{R}^{2} \setminus E$. Moreover, if $\theta \colon \left[0, 1\right] \rightarrow \mathbb{R}^{2}$ is absolutely continuous, the metric speeds satisfy
    \begin{equation}
        \label{eq:metricspeed:image}
        v_{ H \circ \theta }
        =
        \left( \chi_{ \mathbb{R}^{2} \setminus E } \circ \theta \right) \cdot v_{\theta}
        \quad\text{$\mathcal{L}^{1}$-almost everywhere}.
    \end{equation}
    The composition $G = \widetilde{\iota}_{2} \circ \theta \circ \left( H|_{ \left[-1, 2\right] \times \left[0,1\right] } \right)^{-1}$ is a $1$-quasiconformal homeomorphism. This follows from \Cref{lemm:techincal:lemma}, the equalities $\mathcal{H}^{1}_{ \widetilde{Z} }( E' ) = 0 = \mathcal{H}^{1}_{ X }( H(E) )$, together with \Cref{lemm:hausdorff} and \eqref{eq:metricspeed:image}.
    
    We consider a Cantor set $E$ obtained from \cite[Example 6.1]{Iko:Rom:20}. The key property of $E$ is the following: there exists a path family $\Gamma$ on $\left[0, 1\right]^{2}$, each path joining $(0,0)$ to $(1,0)$, such that $\Mod H \Gamma \geq ( 4 \pi )^{-1}$ and $\Mod \Gamma = 0$. Given that $G$ is $1$-quasiconformal, the points $\widetilde{\iota}_{2}( \theta(x) )$, where $x = (0,0), (1,0)$, fail \eqref{point:zero:modulus}. Consequently, $\widetilde{Z}$ is not quasiconformally equivalent to $\mathbb{S}^{2}$, and the embedding $\psi$ does not have a quasiconformal extension $\Psi \colon \widetilde{Z} \rightarrow \mathbb{S}^{2}$.
    \begin{ques}\label{ques:extension}
    Are there Cantor sets $E$ with $\mathcal{L}^{1}( E ) > 0$ such that a quasiconformal embedding $\psi \colon \widetilde{Z} \setminus E' \rightarrow \mathbb{S}^{2}$ extends to a quasiconformal homeomorphism $\Psi \colon \widetilde{Z} \rightarrow \mathbb{S}^{2}$?
    \end{ques}
    Given a compact set $F \subset Y$ with $Y = \mathbb{R}^{2}$ or $Y = \mathbb{S}^{2}$, we say that $F$ has \emph{zero absolute area} if every $1$-quasiconformal embedding $f \colon Y \setminus F \rightarrow \mathbb{S}^{2}$ satisfies $\mathcal{H}^{2}_{ \mathbb{S}^{2} }( \mathbb{S}^{2} \setminus f( Y \setminus F ) ) = 0$.
    
    We expect that the quasiconformal extension $\Psi$ exists if and only if the set $F = \mathbb{S}^{2} \setminus \psi( \widetilde{Z} \setminus E' )$ has zero absolute area; the "only if"-direction follows by applying the techniques used in \Cref{sec:harm:weld}, by noting that the composition $( f \circ \psi )^{-1}$ has a continuous, monotone, and surjective extension $\widetilde{\pi}$ with $\Mod \Gamma \leq \Mod \widetilde{\pi} \Gamma$ for all path families. We expect that the "if"-direction follows from \cite[Theorems 1.3 and 1.4, together with Lemma 5.1]{Iko:Rom:20}.

    If $E$ in \Cref{ques:extension:reformulate} has zero absolute area, \cite[Theorem 1.3]{Iko:Rom:20} implies that the change of distance map $H$ is a $1$-quasiconformal homeomorphism. Given that the $G$ above is $1$-quasiconformal, one readily verifies that $\widetilde{\iota}_{2}$ is a $1$-quasiconformal homeomorphism onto its image. We ask the following.
    \begin{ques}\label{ques:extension:reformulate}
    Let $E$, $g$, and $\psi$ be as in \Cref{ques:extension}. If $\widetilde{\iota}_{2} \colon \overline{Z}_{2} \rightarrow \widetilde{Z}$ is a $1$-quasiconformal parametrization of its image, does $\psi \colon \widetilde{Z} \setminus E' \rightarrow \mathbb{S}^{2}$ extend to a quasiconformal homeomorphism $\Psi \colon \widetilde{Z} \rightarrow \mathbb{S}^{2}$?
    
    In particular, if $E$ has zero absolute area, does $F = \mathbb{S}^{2} \setminus \psi( \widetilde{Z} \setminus E' )$ have zero absolute area?
    \end{ques}
    It follows from \cite[Theorem 1.1 and Proposition 1.2]{Iko:20} that the inclusion map $\widetilde{\iota}_{2}$ is a $1$-quasiconformal homeomorphism if and only if there exists a quasiconformal homeomorphism $h \colon \widetilde{\iota}_{2}( \overline{Z}_{2} ) \rightarrow \overline{ \mathbb{D} }$, where $\overline{ \mathbb{D} }$ is the closed Euclidean unit disk.

\subsection{Welding homeomorphisms}
    We consider a welding homeomorphism $g \colon \mathbb{S}^{1} \rightarrow \mathbb{S}^{1}$ with welding curve $\mathcal{C} \subset \mathbb{S}^{2}$. Consider the monotone mapping $\widetilde{\pi} \colon \mathbb{S}^{2} \rightarrow \widetilde{Z}$ obtained from \eqref{eq:quotientmap}.
    \begin{ques}\label{ques:nocollapsing}
    If $\widetilde{\pi}$ is a homeomorphism, is it a $1$-quasiconformal homeomorphism?
    \end{ques}
    We showed in \Cref{thm:failure:terrible} that if $\widetilde{\pi}$ is not a homeomorphism, then $\widetilde{Z}$ is not quasiconformally equivalent to $\mathbb{S}^{2}$; the collapsing creates points of positive capacity --- by which we mean that \eqref{point:zero:modulus} fails --- in $\widetilde{Z}$. \Cref{ques:nocollapsing} asks if the collapsing is the only obstruction for quasiconformal uniformization. \Cref{lemm:regularity} reduces the question to understanding when $\widetilde{\pi}^{-1} \in N^{1,2}( \widetilde{Z}, \mathbb{S}^{2} )$.

\subsection{Quasisymmetries}    
    Observe that the assumptions of \Cref{thm:QS:abs} are satisfied by every quasisymmetry $g \colon \mathbb{S}^{1} \rightarrow \mathbb{S}^{1}$ that is \emph{strongly quasisymmetric} \cite{Sem:86} \cite{Bis:88} \cite{Ast:Zin:91} \cite{Bis:Jon:94}: for every $\epsilon > 0$ there exists $\delta > 0$ such that for every subarc $I \subset \mathbb{S}^{1}$ and Borel set $E \subset I$,
\begin{equation*}
    \mathcal{H}^{1}_{ \mathbb{S}^{1} }( E )
    \leq
    \delta \mathcal{H}^{1}_{ \mathbb{S}^{1} }( I )
    \quad\text{implies}\quad
    \mathcal{H}^{1}_{ \mathbb{S}^{1} }( g(E) )
    \leq
    \epsilon \mathcal{H}^{1}_{ \mathbb{S}^{1} }( g(I) ).
\end{equation*}
    The welding curves corresponding to strongly quasisymmetric homeomorphisms are special cases of the \emph{asymptotically conformal} quasicircles; see \cite{Pom:78}. One might ask whether or not $\widetilde{Z}$ is quasiconformally equivalent to $\mathbb{S}^{2}$ whenever $g$ is a welding homeomorphism corresponding to such a curve. Corollary 4 of \cite{Pom:78} provides us with an example of asymptotically conformal quasicircle $\mathcal{C}$ which has an uncountable number of tangent points, with the tangent points dense in $\mathcal{C}$, but they also have zero 1-dimensional Hausdorff measure. 
    \begin{lemm}\label{lemm:asymptotic:conformal}
    There exists a quasisymmetric $g \colon \mathbb{S}^{1} \rightarrow \mathbb{S}^{1}$ with asymptotically conformal welding curve $\mathcal{C}$ such that $\widetilde{Z}$ is not homeomorphic to $\mathbb{S}^{2}$.
    \end{lemm}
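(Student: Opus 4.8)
The plan is to build $\widetilde{Z}$ as a wedge of two $2$-spheres by forcing the entire seam to collapse to a point, and to read off the required $g$ from Pommerenke's construction. I would begin with \cite[Corollary 4]{Pom:78}, which produces an asymptotically conformal Jordan curve whose tangent-point set $\mathrm{Tn}(\mathcal{C})$ is uncountable and dense in $\mathcal{C}$ but satisfies $\mathcal{H}^{1}_{\mathcal{C}}(\mathrm{Tn}(\mathcal{C})) = 0$; after a stereographic projection we may assume $\mathcal{C} \subset \mathbb{S}^{2}$. Being asymptotically conformal, $\mathcal{C}$ is in particular a quasicircle, so if $\phi_{i} \colon Z_{i} \rightarrow \Omega_{i}$ denote Riemann maps onto the two complementary Jordan domains of $\mathcal{C}$, the associated welding homeomorphism $g = \phi_{2}^{-1} \circ \phi_{1}|_{\mathbb{S}^{1}}$ is a quasisymmetry: welding homeomorphisms of quasicircles extend to quasiconformal self-maps of $\mathbb{S}^{2}$ and hence are quasisymmetric. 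Thus $g$ is a quasisymmetry whose welding curve is the asymptotically conformal curve $\mathcal{C}$.

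The heart of the argument is to show that $g^{*}\mathcal{H}^{1}_{\mathbb{S}^{1}}$ and $\mathcal{H}^{1}_{\mathbb{S}^{1}}$ are mutually singular. I would apply \Cref{lemm:harmonicmeasure:abs} with $E = \mathcal{C}$: the harmonic measures satisfy $\omega_{1}(\mathcal{C}) = \omega_{2}(\mathcal{C}) = 1 > 0$, and since $\mathcal{H}^{1}_{\mathcal{C}}(\mathrm{Tn}(\mathcal{C})) = 0$, the lemma yields that $\omega_{1}$ and $\omega_{2}$ are mutually singular on $\mathcal{C}$. Translating this through $\phi_{1}$ as in the proof of \Cref{lemm:mutualABS:tan} — where one writes $g^{*}\mathcal{H}^{1}_{\mathbb{S}^{1}} = v_{g}\mathcal{H}^{1}_{\mathbb{S}^{1}} + \mu^{\perp}$, sets $h = v_{g} \circ \phi_{1}^{-1}$, and obtains $\omega_{2} = h\,\omega_{1} + (\phi_{1})_{*}\mu^{\perp}$ — the mutual singularity of $\omega_{1}$ and $\omega_{2}$ forces $h = 0$ $\omega_{1}$-almost everywhere, hence $v_{g} = 0$ $\mathcal{H}^{1}_{\mathbb{S}^{1}}$-almost everywhere. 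Consequently $g^{*}\mathcal{H}^{1}_{\mathbb{S}^{1}} = \mu^{\perp}$ is mutually singular with $\mathcal{H}^{1}_{\mathbb{S}^{1}}$.

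Once this is established, I would invoke \Cref{rem:collapsing}, which records (as a consequence of \Cref{thm:failure:terrible}) that mutual singularity of $g^{*}\mathcal{H}^{1}_{\mathbb{S}^{1}}$ and $\mathcal{H}^{1}_{\mathbb{S}^{1}}$ is equivalent to $Q(S_{Z})$ being a singleton, and that in this case $\widetilde{Z}$ is not homeomorphic to $\mathbb{S}^{2}$. For completeness one can see the last assertion directly: collapsing each boundary circle of the two closed hemispheres to a single common point gives a space homeomorphic to a wedge of two $2$-spheres, and a punctured neighbourhood of the wedge point is disconnected, so that point has no neighbourhood homeomorphic to $\mathbb{R}^{2}$. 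This completes the proof.

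The entire difficulty lies in \cite[Corollary 4]{Pom:78}: one needs a quasicircle which is asymptotically conformal yet whose tangent-point set is $\mathcal{H}^{1}$-null, and that result supplies exactly such a curve. Everything else is bookkeeping with the machinery of \Cref{sec:harm:weld}: the standard fact that a quasicircle yields a quasisymmetric welding homeomorphism, and the identification — already carried out inside the proof of \Cref{lemm:mutualABS:tan} — of the vanishing of $\mathcal{H}^{1}_{\mathcal{C}}(\mathrm{Tn}(\mathcal{C}))$ with the vanishing of the absolutely continuous part $v_{g}$ of $g^{*}\mathcal{H}^{1}_{\mathbb{S}^{1}}$. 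I do not anticipate any genuine obstacle beyond quoting these facts carefully.
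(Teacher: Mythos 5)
Your proposal is correct and follows essentially the same route as the paper, which deduces the lemma from Pommerenke's Corollary 4, \Cref{lemm:mutualABS:tan} (whose internal computation you reproduce via \Cref{lemm:harmonicmeasure:abs}), and \Cref{thm:failure:terrible} via \Cref{rem:collapsing}. The only addition is your direct topological verification that the wedge of two spheres is not a manifold, which the paper leaves implicit.
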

    \Cref{lemm:asymptotic:conformal} follows from \Cref{thm:failure:terrible}, \Cref{lemm:mutualABS:tan}, and the cited example.
    \begin{ques}\label{ques:QS}
    Is the answer to \Cref{ques:nocollapsing} yes if we also assume that $g \colon \mathbb{S}^{1} \rightarrow \mathbb{S}^{1}$ is a quasisymmetry?
    \end{ques}
    To answer \Cref{ques:QS} negatively, one needs to construct a quasisymmetry $\psi \colon \overline{Z}_{2} \rightarrow \overline{Z}_{2}$, with $g = \psi|_{ \mathbb{S}^{1} }$, for which the measures $g^{*}\mathcal{H}^{1}_{ \mathbb{S}^{1} }$ and $\mathcal{H}^{1}_{ \mathbb{S}^{1} }$ are not mutually singular in any subarc $I \subset \mathbb{S}^{1}$, yet the corresponding $\widetilde{Z}$ is not quasiconformally equivalent to $\widetilde{Z}$. Equivalently, one only needs to show that the homeomorphism $H \colon \mathbb{S}^{2} \rightarrow \widetilde{Z}$, coinciding with $\widetilde{\iota}_{1}$ in $Z_{1}$ and with $\widetilde{\iota}_{2} \circ \psi$ in $Z_{2}$, is not quasiconformal. By arguing as in the proof of \Cref{lemm:regularity}, one sees that $H$ is quasiconformal if and only if $H^{-1} \in N^{1,2}( \widetilde{Z}, \mathbb{S}^{2} )$.
    
\bibliographystyle{alpha}
\bibliography{bibliography} 

\end{document}